\tikzset{
  edge node/.code={%
      \expandafter\def\expandafter\tikz@tonodes\expandafter{\tikz@tonodes #1}}}
\tikzset{
  subseteq/.style={
    draw=none,
    edge node={node [sloped, allow upside down, auto=false]{$\in$}}},
  Subseteq/.style={
    draw=none,
    every to/.append style={
      edge node={node [sloped, allow upside down, auto=false]{$\in$}}}
  }
}
\title{}
\author{}
\date{}
\begin{document}
\theoremstyle{definition}        
\newtheorem{definition}{Definition}[subsection] 
\newtheorem{example}[definition]{Example}
\newtheorem{remark}[definition]{Remark}

\theoremstyle{plain}
\newtheorem{theorem}[definition]{Theorem}
\newtheorem{lemma}[definition]{Lemma}         
\newtheorem{proposition}[definition]{Proposition}
\newtheorem{corollary}[definition]{Corollary}
\newtheorem{conjecture}[definition]{Conjecture}

\newcommand{\LL}{L^{\sharp}}
\newcommand{\Zp}{\mathbb{Z}_{p}}
\newcommand{\Z}{\mathbb{Z}}
\newcommand{\Qp}{\mathbb{Q}_{p}}
\newcommand{\Sp}{\mathbf{Sp}}
\newcommand{\RepL}{\textup{Rep}_{\Tilde{T}}^{\boldsymbol{\mu}}(\mathcal{O}_{F}/\varpi^{t})}
\newcommand{\Repl}{\textup{Rep}_{T,i}^{\boldsymbol{\mu}}(\mathcal{O}_{F}/\varpi^{t})}
\newcommand{\pr}{\prime}

\newcommand{\BX}{\mathbb{X}}
\newcommand{\BG}{\mathbb{G}}
\newcommand{\BV}{\mathbb{V}}
\newcommand{\BF}{\mathbb{F}}
\newcommand{\BD}{\mathbb{D}}
\newcommand{\BL}{\mathbb{L}}
\newcommand{\BW}{\mathbb{W}}

\newcommand{\CN}{\mathcal{N}}
\newcommand{\CM}{\mathcal{N}}
\newcommand{\M}{\mathcal{M}_L}
\newcommand{\Ms}{\mathcal{M}_{L^{\sharp}}}
\newcommand{\pis}{\pi_{L^{\sharp}}}
\newcommand{\CZ}{\mathcal{Z}}
\newcommand{\CY}{\mathcal{Y}}
\newcommand{\CO}{\mathcal{O}}
\newcommand{\zm}{\mathcal{Z}^{\textup{bl}}}
\newcommand{\ym}{\mathcal{Y}^{\textup{bl}}}

\newcommand{\Spf}{\mathrm{Spf}}
\newcommand{\Hom}{\mathrm{Hom}}
\newcommand{\End}{\mathrm{End}}
\newcommand{\disc}{\mathrm{disc}}
\newcommand{\id}{\mathrm{id}}

\newcommand{\Int}{\mathrm{Int}}
\newcommand{\univ}{\mathrm{univ}}
\newcommand{\GSpin}{\mathrm{GSpin}}
\newcommand{\GL}{\mathrm{GL}}
\newcommand{\SO}{\mathrm{SO}}

\newcommand{\RZ}{\mathrm{RZ}}
\newcommand{\Of}{\mathcal{O}_{F}}
\newcommand{\Den}{\mathrm{Den}}
\newcommand{\q}{q}
\newcommand{\exc}{\textup{Exc}}
\newcommand{\crys}{\textup{crys}}
\newcommand{\vcrys}{\mathbf{V}_{\textup{crys},z}}
\newcommand{\ofb}{\mathcal{O}_{\breve{F}}}
\newcommand{\cN}{\mathcal{N}}
\newcommand{\Q}{\mathbb{Q}}

\newcommand{\tc}{\textcolor{teal}}
\title[Weighed cycles]{Weighted special cycles on Rapoport--Zink spaces with almost self-dual level}

\author[Qiao He]{Qiao He}
\address{Department of Mathematics, 
	Columbia University,
	2990 Broadway,
	New York, NY 10027, USA}
\email{qh2275@columbia.edu}

\author[Zhiyu Zhang]{Zhiyu Zhang}
\address{Department of Mathematics, Stanford University,  450 Jane Stanford Way Building 380, Stanford, CA 94305}
\email{zyuzhang@stanford.edu}

\author[Baiqing Zhu]{Baiqing Zhu}
\address{Department of Mathematics, 
	Columbia University,
	2990 Broadway,
	New York, NY 10027, USA}
\email{bz2393@columbia.edu}

\begin{abstract}
We introduce a ``vector valued'' version of special cycles on $\GSpin$ Rapoport--Zink spaces with almost self-dual level in the context of the Kudla program, with certain linear invariance and local modularity features. They are local analogs of special cycles on $\GSpin$ Shimura varieties with almost self-dual parahoric level (e.g. Siegel threefolds with paramodular level). We establish local arithmetic Siegel--Weil formulas relating arithmetic intersection numbers of these special cycles and derivatives of certain local Whittaker functions in any dimension. The proof is based on a reduction formula for cyclic quadratic lattices.
\end{abstract}
\maketitle

\pagenumbering{roman}
\tableofcontents
\newpage
\pagenumbering{arabic}

\section{Introduction}
\subsection{Background}
The arithmetic of quadratic forms is a rich topic. For positive definite quadratic forms, the classical Siegel--Weil formula relates associated theta series to explicit Siegel-Eisenstein series on metaplectic groups. Kudla \cite{Kud97Duke} \cite{Kud97Ann} initiated an influential program on geometric and arithmetic analogs of theta correspondences, see the nice survey \cite{Li-survey}. For quadratic spaces of signature $(n-1, 2)$ over $\mathbb Q$, we expect an \emph{arithmetic Siegel--Weil formula} that relates arithmetic degrees of arithmetic theta series on associated $\GSpin$ Shimura varieties of signature $(n-1,2)$ to derivatives of certain Siegel Eisenstein series, which is crucial to prove an \emph{arithmetic inner product formula} that relates derivatives of automorphic $L$-functions for symplectic groups to special theta cycles on $\GSpin$ Shimura varieties. This generalizes Gross--Zagier formulas \cite{GZ86} on modular curves ($n=2$).  Moreover, we expect such arithmetic Seigel-Weil formulas over general totally real number fields.

Via Fourier-Whittaker expansion, the non-singular part of arithmetic Siegel--Weil formulas could be reduced to local arithmetic Siegel--Weil formulas at each local place. At archimedean places, such local formulas are known mainly by works of Garcia--Sankaran \cite{GS19} and Bruinier--Yang \cite{BY20}. At good primes, such local formulas are known by the ground-breaking work of Li--Zhang \cite{LiZhang-orthogonalKR}, which are precise identities between the arithmetic intersection numbers of special cycles on GSpin Rapoport--Zink spaces and the derivatives of local representation densities of quadratic forms. From above global perspective, we need to establish local arithmetic Siegel-Weil formulas at a bad prime $p$ where the GSpin Rapoport--Zink space has bad reduction. From a purely local perspective, the study of special cycles is also interesting and gives us an understanding of the geometry of Rapoport--Zink spaces, where new phenomena appear. Correspondingly, we may discover new interesting behaviors of the local representation densities for general quadratic lattices.

Throughout this paper, we fix a prime number $p>3$ and consider the simplest bad reduction case: Rapoport--Zink space with the special maximal parahoric level structure. Choosing such level is equivalent to choosing  a quadratic lattice $L$ of rank $m=n+1 \geq 3$ over $\Zp$ that is almost self-dual. In other words,  it satisfies $L^{\vee}/L\simeq\Z/p\Z$. The corresponding Rapoport--Zink space associated with this quadratic lattice is a regular formal scheme over $\breve{\mathbb{Z}}_p$ of total dimension $n$. The corresponding level group is the stabilizer of this quadratic lattice. From the global point of view, canonical integral models of GSpin Shimura varieties with such level structure (e.g. Siegel threefolds with paramodular level when $n=4$, see \cite{yu2011geometry}) have been constructed and studied by Madapusi \cite{Mad16}. Certain (weighted) special cycles on such integral models have been constructed in \cite{Mad16} and Andreatta--Goren--Howard--Madapusi \cite{AGHM} and Howard-Madapusi \cite{HM20,HM22}. We give a local analogue of their construction on the Rapoport--Zink spaces with nice properties, and establish the local arithmetic Siegel--Weil formula for these weighted cycles.
\par

Below is a table lists some low dimensional GSpin Shimura varieties with almost self-dual level at $p$ that happen to be of PEL type.
\begin{align*}
    \begin{array}{c|c}
\hline \text { PEL type special cases  } & \text {Isometric classes of  } L \\
\hline \text{Modular curve with level $\Gamma_0(pN)$}   \text {where $p \nmid N$ as in \cite{SSY,Zhu23}  } & L=H_2^+\obot \Lambda_1  \\
\text { Shimura curve over a ramified prime  as in \cite{KRshimuracurve,KRYbook,Ter08} } & L=H_2^-\obot \Lambda_1   \\
\text {Hilbert modular surface over a ramified prime as in \cite{BachmatGoren}  } &   L=H_3\obot \Lambda_1\\
\text {Siegel threefold with paramodular level as in \cite{yu2011geometry}, \cite{Wang} } & L=H_4^+\obot \Lambda_1 \\
\text {Quaternionic unitary Shimura variety as in \cite{OkiQuaternionic}, \cite{Wang}] } & L=H_4^-\obot \Lambda_1   \\
\text {GU(2,2) Shimura variety over a ramified prime as in \cite{OkiGU22}   } & L=H_5\obot \Lambda_1 
\end{array}
\end{align*}
Here $\Lambda_1$ is a $p$-modular lattice and $H_n$ is a self-dual lattice of rank $n$. When $n$ is even, $H_n^+$ (resp. $H_n^{-})$ is the lattice containing (resp. not containing) a rank $\frac{n}{2}$ totally isotropic lattice.

The (local) arithmetic Siegel-Weil formula for some of the above low-dimensional examples  have been studied before: Terstiege \cite{Ter08}, Kudla--Rapoport \cite{KRshimuracurve} and Kudla--Rapoport--Yang \cite{KRYbook} consider the Drinfeld space, which corresponds to the case $n=2$ and the self-dual part of $L=H_2^-\obot \Lambda_1$. Sankaran, Shi and Yang \cite{SSY} and Zhu \cite{Zhu23} consider the modular curve case where   $L=H_2^+\obot \Lambda_1$.   Our work may be regarded as a systematic way to understand the geometry of  special cycles on such Rapoport--Zink spaces. 

We also remark that  odd dimensional exotic smooth model for a unitary RZ space over a ramified prime is analogous to the GSpin RZ space with almost self-dual level. The local arithmetic Siegel-Weil formula in this case was proved by Haodong Yao in \cite{Yao}.

\subsection{Main results}
Let $p$ be an odd prime (may equal to $3$ for now). Let $F=\mathbb{Q}_p$ with uniformizer $\varpi=p$ and residue field sized $q=p$. Let $\breve{F}$ be the completion of the maximal unramified extension of $F$. Let $m = n +1 \geq 3$ be an integer. Let $L$ be a quadratic $\mathcal{O}_F$-lattice such that $L^{\vee}/L\simeq\mathcal{O}_F/(\varpi)$. There exists a unique self-dual quadratic $\mathcal{O}_F$-lattice $L^{\sharp}$ of rank $m+1$ such that $L$ can be isometrically embedded into $L^{\sharp}$. Fix such an embedding $\iota_L:L\rightarrow L^{\sharp}$, define $\Lambda=\{x\in L^{\sharp}:(x,y)=0,\,\,\textup{for all}\,\,y\in L\}$. Then $\Lambda$ is a rank 1 quadratic lattice spanned by an element $x_0\in L^{\sharp}$. The self-duality of $L^{\sharp}$ implies that there exists a natural isomorphism $r:\Lambda^{\vee}/\Lambda\xrightarrow{\sim}L^{\vee}/L$ (Lemma \ref{cyclic duality for L and Lambda}).
\par
Associated to $L$ we have a local Shimura--Hodge data $(G, b, \mu, C^{\sharp})$ constructed by Oki \cite{Oki20}, where $G = \textup{GSpin}(V_F), b \in G( \breve{F})$ is a basic element, $\mu : \mathbb{G}_m \rightarrow G$ is a certain cocharacter, and $C^{\sharp} = C(L^{\sharp} )$ is the Clifford algebra of $L^{\sharp}$. Associated to this local Shimura--Hodge data, we have a GSpin Rapoport--Zink space $\textup{RZ}_L$ of Hodge type with parahoric level structure constructed by Hamacher and Kim \cite{HodgeRZparahoric}. The space $\textup{RZ}_L$ is a regular formal scheme over $\textup{Spf}\,\ofb$, formally locally of finite type and of relative dimension $n -1$ over $\textup{Spf}\,\ofb$ (see $\S$\ref{rz-con} for more details), and admits a decomposition $\textup{RZ}_L = \bigsqcup\limits_{l\in\mathbb{Z}}\textup{RZ}^{(l)}_L$ into (isomorphic) connected components. Define $\CN_L=\textup{RZ}^{(0)}_L$ to be a connected component of $\textup{RZ}_L$, a regular formal scheme of (total) dimension $n$.

Denote by $\mathbb{X}$ the framing $p$-divisible group over $\overline{\mathbb{F}}_p$ for $\textup{RZ}_L$, by $X^{\textup{univ}}$ the universal $p$-divisible group over $\CN_L$. The element $x_0\in L^{\sharp}$ acts by left-multiplication on $C(L^{\sharp})\otimes_{\mathcal{O}_F}\ofb\simeq\mathbb{D}(\mathbb{X})$ where $\mathbb{D}(\mathbb{X})$ is the (covariant) Dieudonne module of $\mathbb{X}$. Hence $x_0$ can be viewed as a quasi-endomorphism of $\mathbb{X}$. The quasi-isogeny $x_0$ turns out to deform to an endomorphism of $X^{\textup{univ}}$ (Theorem \ref{almost-divisor}, see also \cite[Theorem 4.5 (i)]{Oki20}).
Let $\mathbb{V}$ be the unique (up to isomorphism) quadratic space over $F$ of dimension $m$, Hasse invariant $\epsilon(\mathbb{V}) = -\epsilon(L_F)$ and $\chi(\mathbb{V}) = \chi(L_F )$ (see $\S$\ref{not-ql} for notations of quadratic spaces). Then $\mathbb{V}$ can be identified as the space of special quasi-endomorphisms $\mathbb{V}\subset\textup{End}(\mathbb{X})\otimes\mathbb{Q}$. For a subset $M\subset\mathbb{V}$, the special cycle $\mathcal{Z}(M)$ (Definition \ref{spe-almo}) is a closed formal subscheme of $\CN_L$, over which all special quasi-endomorphisms $x\in M$ deform to endomorphisms.
\par
The above construction of $\mathcal{Z}$-cycles follows the same line as the unramified case in the works of Li--Zhu \cite{Li_Zhu_2018}. We obtain a bunch of new kinds of cycles:
\begin{itemize}
    \item [(a).]$\mathcal{Y}$-cycles: For a subset $M\subset\mathbb{V}$, the special cycle $\mathcal{Y}(M)$ (Definition \ref{cycle-on-blow-up}) is a closed formal subscheme of $\CN_L$, over which all special quasi-endomorphisms $x_0\circ x\in M$ deform to endomorphisms.
    \item[(b).] Weighted cycles: For an element $\mu\in L^{\vee}/L$ and an element $x\in\mathbb{V}$, the weighted cycle $\mathcal{Z}(x,\mu)$ (Definition \ref{cycle-on-blow-up}) is a closed formal subscheme of $\CN_L$, over which all special quasi-endomorphisms $x+\eta(\mu)$ deform to endomorphisms where $\eta:L^{\vee}/L\rightarrow\Lambda^{\vee}$ is an arbitrary lift of the natural isomorphism $r^{-1}: L^{\vee}/L\xrightarrow{\sim}\Lambda^{\vee}/\Lambda$. Notice that when $\mu=0$, this recovers the $\mathcal{Z}$-cycle $\mathcal{Z}(x)$.
\end{itemize}
The $\CY$-cycles defined here should be regarded as an analogue of the $\CY$-cycles studied in the unitary case (see \cite{Cho,CHZ} for example).  

\par
Let $x\in\mathbb{V}$ be a non-zero element. Contrary to the $\mathcal{Z}$-cycle $\mathcal{Z}(x)$, the $\mathcal{Y}$-cycle $\mathcal{Y}(x)$ is quite complicated to understand because of the presence of $x_0$ in the deformation condition and \emph{fails} to be a divisor. In this article, we come up with a way to remedy this. The set of non-formally smooth points $\CN_L^{\textup{nfs}}$ of $\CN_L$ is discrete and is indexed by vertex lattice of type $1$ in $\mathbb{V}$. Let $\pi_L:\M\rightarrow\CN_L$ be the blow up morphism of the formal scheme $\CN_L$ at non-formally smooth points, $\M$ is also a regular formal scheme over $\ofb$ of total dimension $n$. Denote by $\mathcal{Y}^{\textup{bl}}(x)$ (resp. $\mathcal{Z}^{\textup{bl}}(x,\mu)$) the pullback of $\mathcal{Y}(x)$ (resp. $\mathcal{Z}(x,\mu)$) to $\M$.
\begin{proposition}[Proposition \ref{decom-Y}]
    The special cycle $\ym(x)\subset\M$ is an effective Cartier divisor. Moreover, we have the following equality as Cartier divisors on $\M$:
    \begin{equation*}
        \ym(x)=\sum\limits_{\mu\in\Lambda^{\vee}/\Lambda}\zm(x,\mu)+\exc_L(x),
    \end{equation*}
    where $\exc_L(x)=\bigsqcup\limits_{z\in\CN_L^{\textup{nfs}}\cap\mathcal{Y}(x)}\pi_L^{-1}(z)$.
\end{proposition}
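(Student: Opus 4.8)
The plan is as follows. The asserted identity is an equality of effective Cartier divisors on the regular (hence locally integral) formal scheme $\M$, so it may be checked formally locally: at each point of $\M$ one must see that the ideal defining $\ym(x)$ is principal and that its generator equals, up to a unit, the product of the local equations of the divisors $\zm(x,\mu)$, $\mu\in\Lambda^{\vee}/\Lambda$, and of $\exc_L(x)$. Since $\pi_L$ restricts to an isomorphism over the formally smooth locus $\CN_L^{\textup{fs}}:=\CN_L\setminus\CN_L^{\textup{nfs}}$, while $\exc_L(x)$ is supported on the finitely many exceptional fibres $E_z:=\pi_L^{-1}(z)$, $z\in\CN_L^{\textup{nfs}}$, the argument splits into (I) a computation over $\CN_L^{\textup{fs}}$ and (II) a local analysis at each $z\in\CN_L^{\textup{nfs}}$. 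As a preliminary one records the easy inclusion $\mathcal{Z}(x,\mu)\subseteq\mathcal{Y}(x)$ for every $\mu$: composing the universal endomorphism $x_0$ (Theorem \ref{almost-divisor}) with $x+\eta(\mu)$ shows that $x_0\circ x$ deforms wherever $x+\eta(\mu)$ does, since $x_0\circ\eta(\mu)$ is an integral endomorphism — a consequence of $(x_0,x_0)\,\eta(\mu)\in\Lambda\subset L^{\sharp}$ and $x_0^{2}=(x_0,x_0)$. In particular the claim is vacuous near any $z\notin\mathcal{Y}(x)$.

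For Part (I) we use Grothendieck--Messing theory. Deformations of the special (quasi-)endomorphisms at issue are governed by the filtered $F$-crystal: the rank-$m$ quadratic crystal $\vcrys$ with its isotropic Hodge line $\textup{Fil}^{1}\subset\vcrys$, together with the action on the underlying Dieudonn\'e module of the integral endomorphism $x_0$ (Theorem \ref{almost-divisor}), which preserves the Hodge filtration. By the same mechanism that makes special cycles divisors in Li--Zhu \cite{Li_Zhu_2018}, over $\CN_L^{\textup{fs}}$ each of $\mathcal{Z}(x)=\mathcal{Z}(x,0)$, the $\mathcal{Z}(x,\mu)$ and $\mathcal{Y}(x)$ is an effective Cartier divisor cut out locally by a single obstruction equation — say $f_{\mu}$ for $\mathcal{Z}(x,\mu)$ and $g$ for $\mathcal{Y}(x)$. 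The key step is to prove, by an explicit computation of the Dieudonn\'e modules over a versal deformation $\Spf\ofb[[t_1,\dots,t_{n-1}]]$, the factorization
\begin{equation*}
\mathcal{Y}(x)=\sum_{\mu\in\Lambda^{\vee}/\Lambda}\mathcal{Z}(x,\mu)\qquad\text{on }\CN_L^{\textup{fs}},
\end{equation*}
that is, $g=(\text{unit})\cdot\prod_{\mu}f_{\mu}$. The mechanism: since $x_0^{2}=(x_0,x_0)\in\varpi\,\Of^{\times}$, so that $x_0$ is a ramified square root of $\varpi$ (reflecting $C(L^{\sharp})\simeq C(L)\otimes_{\Of}\Of[x_0]/(x_0^{2}-(x_0,x_0))$), integrality of $x_0\circ x$ over a thickening is equivalent to the simultaneous integrality of the $q=\#(\Lambda^{\vee}/\Lambda)$ lifts $x+\eta(\mu)$; because $\mathcal{Y}(x)$ is a divisor, this forces the obstruction $g$ to be, up to a unit, the product of the $f_{\mu}$. (Alternatively one may first establish the identity over the generic fibre, where it reduces to a more transparent de Rham computation, and then extend to $\CN_L^{\textup{fs}}$.)

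For Part (II), fix $z\in\CN_L^{\textup{nfs}}$; it corresponds to a type-$1$ vertex lattice in $\mathbb{V}$. By the local model theory of Oki \cite{Oki20} and Hamacher--Kim \cite{HodgeRZparahoric}, the completed local ring $\widehat{\mathcal{O}}_{\CN_L,z}$ has an explicit presentation, a hypersurface in a power series ring over $\ofb$ with an isolated singularity at the closed point, from which the blow-up $\M\to\CN_L$ at $z$ and its exceptional divisor $E_z$ admit the standard chart description. In these coordinates one computes the local ideals at $z$ of $\mathcal{Y}(x)$ and of the $\mathcal{Z}(x,\mu)$: the $\mathcal{Z}(x,\mu)$ remain principal there (still cut out by the $f_{\mu}$, extended across $z$), whereas the ideal of $\mathcal{Y}(x)$ is in general \emph{not} principal — this is precisely why $\mathcal{Y}(x)$ fails to be a divisor on $\CN_L$ — but becomes principal after pull-back along $\pi_L$, where the factorization of Part (I) persists and the equation of $\ym(x)$ acquires one further factor, namely the local equation of $E_z$, occurring exactly when the obstruction of $x_0\circ x$ lies in $\mathfrak{m}_z$, i.e.\ when $z\in\mathcal{Y}(x)$. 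Combined with the preliminary inclusion (which disposes of the case $z\notin\mathcal{Y}(x)$), this yields $\ym(x)=\sum_{\mu}\zm(x,\mu)+E_z$ near $z\in\mathcal{Y}(x)$ and $\ym(x)=\sum_{\mu}\zm(x,\mu)$ near $z\notin\mathcal{Y}(x)$. Assembling Parts (I) and (II) and recalling $\exc_L(x)=\bigsqcup_{z\in\CN_L^{\textup{nfs}}\cap\mathcal{Y}(x)}E_z$ gives the proposition; that $\ym(x)$ is an effective Cartier divisor falls out along the way, as every local equation produced above is a non-unit.

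The main obstacle will be the factorization identity of Part (I): matching the single crystalline obstruction of $x_0\circ x$ with the product over $\Lambda^{\vee}/\Lambda$ of the obstructions of the lifts $x+\eta(\mu)$ requires tracking precisely how the integral endomorphism $x_0$ — a ramified square root of $\varpi$ — intertwines the Hodge filtrations attached to $L$ and to $L^{\sharp}$ on the Dieudonn\'e module of a versal deformation. The secondary difficulty is the bookkeeping at the non-formally smooth points in Part (II): controlling the local model and the transform of the non-principal ideal of $\mathcal{Y}(x)$ under the blow-up.
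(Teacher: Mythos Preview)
Your architecture is right (local analysis, split into smooth and non-formally-smooth points, preliminary inclusion $\CZ(x,\mu)\subset\mathcal{Y}(x)$), but the core mechanism in Part (I) is confused and would not go through.

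First, observe that the divisors $\CZ(x,\mu)$ for distinct $\mu\in\Lambda^{\vee}/\Lambda$ are pairwise \emph{disjoint}: if a point lay on two of them, then the difference $\eta(\mu_1)-\eta(\mu_2)$, a unit multiple of $x_0/\varpi$, would deform to an endomorphism, but $q(x_0/\varpi)\notin\Of$. Hence locally on the smooth locus the sum $\sum_\mu\CZ(x,\mu)$ is just a single $\CZ(x,\mu_0)$, and the real content is to show $\mathcal{Y}(x)=\CZ(x,\mu_0)$ there, not a product factorization into $q$ terms. Your claimed equivalence ``integrality of $x_0\circ x$ $\Leftrightarrow$ simultaneous integrality of all $x+\eta(\mu)$'' is internally inconsistent: simultaneous integrality would give an \emph{intersection} (ideal $=\sum_\mu(f_\mu)$), not a product $g=\prod_\mu f_\mu$; and neither interpretation is what actually happens.

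The paper's mechanism is a Clifford-algebra identity. Writing $\boldsymbol{l}$ for a generator of the Hodge line and using $l\circ v + v\circ l = (l,v)$ together with $(\boldsymbol{l},x_0)=0$ and $q(\boldsymbol{l})=0$, one computes
\[
\boldsymbol{l}\circ\mathbb{D}(x_0\circ x)\circ\boldsymbol{l}=(\boldsymbol{l},x)\cdot\boldsymbol{l}\circ\mathbb{D}(x_0).
\]
The left side is the Grothendieck--Messing obstruction for $\mathcal{Y}(x)$; the factor $(\boldsymbol{l},x)$ is that for $\CZ(x)$. At a smooth point $\boldsymbol{l}\circ\mathbb{D}(x_0)$ has an invertible matrix entry (precisely the smoothness criterion, cf.\ Corollary~\ref{smooth-criterion2}), so $\mathcal{I}_{x_0x}=\mathcal{I}_x$, i.e.\ $\mathcal{Y}(x)=\CZ(x)$ locally when $z_0\in\CZ(x)$; if $z_0\notin\CZ(x)$ one first uses Lemma~\ref{from-y-to-weighted}(b) to find the unique $\mu$ with $z_0\in\CZ(x,\mu)$ and applies the same identity with $x$ replaced by $x+\eta(\mu)$. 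At a non-formally-smooth point the factor $\boldsymbol{l}\circ\mathbb{D}(x_0)$ becomes $t_i\cdot(\text{invertible matrix})$ in blow-up coordinates, which is exactly what contributes the single copy of the exceptional divisor. Your sketch misses this identity entirely, and the ``ramified square root of $\varpi$'' heuristic does not substitute for it. Part (II) also needs more care than you indicate: when $z_0\in\CN_L^{\textup{nfs}}$ but $z_0\notin\CZ(x)$, the paper must show $\ym(x)=\exc_L$ with \emph{multiplicity one} near $z_0$, which requires a delicate argument (using $p>3$ and Lemma~\ref{from-y-to-weighted}(a)) comparing the blow-up transform of the equation $f_{\varpi x}$ with the obstruction matrix.
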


\begin{comment}
    
\begin{remark}\label{rem: shimura curve}
  Let $L=H_2^-\obot \langle \varpi\rangle$. Then the Drinfeld upper plane is the GSpin Rapoport-Zink space $\cN_L$ associated to $L$ and a $1$-dimensional special cycle studied in \cite{KRshimuracurve} is exactly $\CY(x)$ on  $\cN_L$, which is not a Cartier divisor in general as studied in \cite{KRshimuracurve}. Instead of taking purification of $\CY(x)$, Proposition \ref{decom-Y} suggests that one should take blow up of $\cN_L$ at the non-formally smooth points although $\cN_L$ is already regular.
\end{remark}
\todo{Mention Rapoport's CM cycle case.}
\end{comment}

The above proposition suggests that the formal scheme $\M$ is the ``correct" model that we should work with when we consider arithmetic intersection problems between different kinds of cycles. Let $M\subset\mathbb{V}$ be an $\mathcal{O}_F$-lattice of rank $n$ with basis $\boldsymbol{x}=\{x_i\}_{i=1}^{n}$. Define the arithmetic intersection numbers of $\mathcal{Z}$-cycles and $\mathcal{Y}$-cycles to be (Definition \ref{int-spe-cycle})
\begin{align*}
    \Int^{\mathcal{Z}}(\M,M)=\chi\left(\M,[\CO_{\mathcal{Z}^{\textup{bl}}(x_1)}\otimes^{\mathbb{L}}_{\CO_{\M}}\cdots\otimes^{\mathbb{L}}_{\CO_{\M}}\CO_{\mathcal{Z}^{\textup{bl}}(x_n)}]\right),\\
    \Int^{\mathcal{Y}}(\M,M)=\chi\left(\M,[\CO_{\mathcal{Y}^{\textup{bl}}(x_1)}\otimes^{\mathbb{L}}_{\CO_{\M}}\cdots\otimes^{\mathbb{L}}_{\CO_{\M}}\CO_{\mathcal{Y}^{\textup{bl}}}(x_n)]\right).
\end{align*}
The \textit{linear invariance} property (Lemma \ref{linear-invariance-Y}, Corollary \ref{linear-invariance-al}) guarantees that the above definition is independent of the choice of the basis $\boldsymbol{x}=\{x_i\}_{i=1}^{n}$. For an element $\boldsymbol{\mu}\in(L^{\vee}/L)^{n}$, define the arithmetic intersection numbers of weighted cycles to be
\begin{equation*}
    \Int_{\boldsymbol{\mu}}(\M,\boldsymbol{x})=\chi\left(\M,{^{\mathbb{L}}\mathcal{Z}_{\boldsymbol{\mu}}^{\textup{bl}}}(\boldsymbol{x})\coloneqq[\CO_{\mathcal{Z}^{\textup{bl}}(x_1,\mu_1)}\otimes^{\mathbb{L}}_{\CO_{\M}}\cdots\otimes^{\mathbb{L}}_{\CO_{\M}}\CO_{\mathcal{Z}^{\textup{bl}}(x_n,\mu_n)}]\right).
\end{equation*}
Our main theorem will connect the above purely geometric quantities to purely analytic quantities --- (derived) local densities.
\par
For two quadratic lattices $L$ and $M$, Define $\textup{Rep}_{L,M}$ to be the scheme of integral representations, an $\mathcal{O}_F$-scheme such that for any $\mathcal{O}_F$-algebra $R$, $\textup{Rep}_{L,M}(R) = \textup{QHom}(M\otimes_{\mathcal{O}_F}
R, L\otimes_{\mathcal{O}_F}R)$. Here $\textup{QHom}$ denotes the set of homomorphisms of quadratic modules. The local density of associated to $L$ and $M$ is defined to be
\begin{equation*}
    \textup{Den}(L,M)=\lim\limits_{d\rightarrow\infty}\frac{\#\textup{Rep}_{L,M}(\mathcal{O}_F/\varpi^{d})}{q^{d\cdot \textup{dim}(\textup{Rep}(L,M))_{F}}}.
\end{equation*}
Let $H_{2k}^{+}=(H_{2}^{+})^{\obot k}$ be a self-dual lattice of rank $2k$, where the quadratic form on $H_{2}^{+}=\mathcal{O}_F^{2}$ is given by $(x,y)\in \mathcal{O}_F^{2}\mapsto xy$. There exist local density polynomials $\Den(L,M,X),\Den(L^{\vee},M,X)\in\mathbb{Q}[X]$ such that
\begin{align*}
    \Den(L,M,X)\big\vert_{X=q^{-k}}=\Den(L\obot H_{2k}^{+},M),\,\,\,\,\Den(L^{\vee},M,X)\big\vert_{X=q^{-k}}=\Den(L^{\vee}\obot H_{2k}^{+},M).
\end{align*}
Both polynomial vanishes at $X=1$ since $M\subset\mathbb{V}$ and hence cannot be embedded into $L$ and $L^\vee$. Suppose we have a decomposition $L=H\obot\langle x\rangle$ where $H$ is a self-dual quadratic lattice of rank $n$ and $x\in L$ with $\nu_\varpi(q(x))=1$. We consider the (normalized) \textit{derived local density}
\begin{align*}
    \partial\Den^{L}\left(M\right)\coloneqq-2\cdot\frac{\textup{d}}{\textup{d}X}\bigg\vert_{X=1}\frac{\Den(L,M,X)}{\textup{Nor}^{\flat}(1,H)},\,\,\,\,
        \partial\Den^{L^{\vee}}\left(M\right)\coloneqq-2q^{n}\cdot\frac{\textup{d}}{\textup{d}X}\bigg\vert_{X=1}\frac{\Den(L^{\vee},M,X)}{\textup{Nor}^{\flat}(1,H)},
\end{align*}
where $\textup{Nor}^{\flat}(1,H)$ is normalizing factor (Definition \ref{normal1}).
\par
The (derived) local densities are closely related to local Whittaker functions $W_T(g,s,\varphi)$ of Siegel Eisenstein series in the following way: Let $\varphi\in\mathscr{S}(\mathbb{V}^{n})$ be the characteristic function of the lattice $L^{n}$, let $T\in\textup{Sym}_n(F)$ be the inner product matrix of an $\mathcal{O}_F$-basis of $M$, then for all positive integers $k\geq0$, we have
\begin{align*}
W_T\left(1, k, \varphi\right)=\gamma(L)\cdot q^{-n/2}\cdot\operatorname{Den}(  L\obot H_{2k}^{+}, M ).
\end{align*}
The main theorem of this article is the following (which is a generalization of local arithmetic Siegel-Weil formula \cite[Thm 1.2.1.]{LiZhang-orthogonalKR} to our levels):
\begin{theorem}[Theorem \ref{main}]\label{main-intro}
Let $M\subset\mathbb{V}$ be a $\Of$-lattice of rank $n$. Let $\boldsymbol{x}=(x_1,\cdots,x_n)\in\mathbb{V}^{n}$ be an element such that $\{x_{i}\}_{i=1}^{n}$ be a basis of $M$. Let $T$ be the inner product matrix of $M$ with respect to the basis $\boldsymbol{x}$. Let $\boldsymbol{\mu}\in(L^{\vee}/L)^{n}$ be an element. Then
    \begin{align*}
        \Int_{\boldsymbol{\mu}}(\M,\boldsymbol{x})=\Int_{\boldsymbol{\mu}}(\CN_L,\boldsymbol{x})=\frac{2q^{n/2}}{\gamma(L)\cdot\log(q)\cdot\textup{Nor}(1,H)}\cdot W_{T}^{\prime}(1,0,1_{\boldsymbol{\mu}+L^{n}}).
    \end{align*}
Especially, when $\boldsymbol{\mu}=\boldsymbol{0}$, we have
\begin{equation}
    \Int^{\mathcal{Z}}(\M,M)=\Int(\CN_L,M)=\partial\Den^{L}(M).
\end{equation}
Moreover,
\begin{equation}\label{eq: main Y}
    \Int^{\mathcal{Y}}(\M,M)=\partial\Den^{L^{\vee}}(M)+(-1)^{n}\cdot\Den^{\mathcal{L}^{\vee}}(M),
\end{equation}
where $\mathcal{L}\subset\mathbb{V}$ is the unique (up to isometry) vertex lattice of type $1$ and $\Den^{\mathcal{L}^{\vee}}(M)=q^{n}\cdot\frac{\Den(\mathcal{L}^{\vee},M)}{\textup{Nor}^{\flat}(1,H)}$.
\end{theorem}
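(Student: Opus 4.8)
The plan is to split the chain of identities into four pieces: a comparison of the two geometric models, a translation of the Whittaker derivative into derived local densities, a reduction formula for cyclic quadratic lattices on the analytic side, and a matching recursion on the geometric side; the $\mathcal{Y}$-identity is then extracted by summing the weighted one over $\boldsymbol{\mu}$. For the first piece: since $\pi_L\colon\M\to\CN_L$ is the blow-up of a regular formal scheme along the finite reduced subscheme $\CN_L^{\textup{nfs}}$, one has $R\pi_{L,*}\CO_{\M}=\CO_{\CN_L}$, and because the $\zm(x_i,\mu_i)$ are the pullbacks of the $\mathcal{Z}(x_i,\mu_i)$, the projection formula identifies $R\pi_{L,*}\bigl(\CO_{\zm(x_1,\mu_1)}\otimes^{\mathbb{L}}_{\CO_{\M}}\cdots\otimes^{\mathbb{L}}_{\CO_{\M}}\CO_{\zm(x_n,\mu_n)}\bigr)$ with $\CO_{\mathcal{Z}(x_1,\mu_1)}\otimes^{\mathbb{L}}_{\CO_{\CN_L}}\cdots\otimes^{\mathbb{L}}_{\CO_{\CN_L}}\CO_{\mathcal{Z}(x_n,\mu_n)}$, so the two Euler characteristics agree. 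The point of working on $\M$ is that there each $\zm(x_i,\mu_i)$ is an effective Cartier divisor by Proposition \ref{decom-Y}, so the derived intersection is computed by a Koszul complex.

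For the translation to densities, the interpolation $W_T(1,k,\varphi)=\gamma(L)q^{-n/2}\Den(L\obot H_{2k}^{+},M)$ and its refinement with $\varphi$ replaced by $1_{\boldsymbol{\mu}+L^{n}}$ rewrite $W_T'(1,0,1_{\boldsymbol{\mu}+L^{n}})$ as a fixed constant times the derivative at $X=1$ of a $\boldsymbol{\mu}$-refined density polynomial; for $\boldsymbol{\mu}=\boldsymbol{0}$ this is exactly $\partial\Den^{L}(M)$ after collecting normalizing factors. Summing the weighted identity over $\boldsymbol{\mu}\in(L^{\vee}/L)^{n}$ and using the $L^{\vee}$-version of this dictionary (with its extra $q^{n}$) gives $\sum_{\boldsymbol{\mu}}\Int_{\boldsymbol{\mu}}(\M,\boldsymbol{x})=\partial\Den^{L^{\vee}}(M)$. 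Expanding $\prod_i\ym(x_i)=\prod_i\bigl(\sum_{\mu_i}\zm(x_i,\mu_i)+\exc_L(x_i)\bigr)$ by Proposition \ref{decom-Y} and taking $\chi$, the term with no exceptional factor is precisely $\sum_{\boldsymbol{\mu}}\Int_{\boldsymbol{\mu}}(\M,\boldsymbol{x})$, so the asserted formula for $\Int^{\mathcal{Y}}(\M,M)$ amounts to showing that all terms involving at least one $\exc_L(x_i)$ contribute, in total, $(-1)^{n}\Den^{\mathcal{L}^{\vee}}(M)$; this is a local computation at the $\mathbb{P}^{n-1}$ obtained by blowing up a single point of $\CN_L^{\textup{nfs}}$, which is indexed by a type-$1$ vertex lattice $\mathcal{L}$, and whose outcome is the density $q^{n}\Den(\mathcal{L}^{\vee},M)/\textup{Nor}^{\flat}(1,H)$.

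The analytic heart is the reduction formula for cyclic lattices. Using linear invariance (Lemma \ref{linear-invariance-Y}, Corollary \ref{linear-invariance-al}) one chooses $\boldsymbol{x}$ so that $x_1$ is a primitive vector of minimal norm valuation in $M$, and then establishes a Cho--Yamauchi/Katsurada-type recursion expressing $\Den(L,M,X)$, its $\boldsymbol{\mu}$-refinement, and the companion $\Den(L^{\vee},M,X)$ through the corresponding data for the pair $(L_1,M_1)$ with $L_1=\langle x_1\rangle^{\perp}_{L}$ of rank $n$ and $M_1=\langle x_1\rangle^{\perp}_{M}$ of rank $n-1$; comparing discriminants shows $L_1$ is almost self-dual when $\nu_\varpi(q(x_1))=0$ and self-dual when $\nu_\varpi(q(x_1))=1$. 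Applying $-2\frac{d}{dX}\big|_{X=1}$ and dividing out the normalizations yields a recursion for $\partial\Den^{L}$, $\partial\Den^{L^{\vee}}$ and the weighted variants, whose base case $n=1$ matches the explicit modular/Shimura curve computations of \cite{KRYbook,SSY,Zhu23}.

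On the geometric side one proves the same recursion: with $\boldsymbol{x}$ as above, $\mathcal{Z}(x_1)$ (and, for $\mu_1\neq0$, $\mathcal{Z}(x_1,\mu_1)$) is identified with the GSpin Rapoport--Zink space $\CN_{L_1}$ attached to $L_1$ --- again of almost self-dual type, treated by this paper, when $\nu_\varpi(q(x_1))=0$, and of self-dual type, hence formally smooth and covered by Li--Zhu \cite{Li_Zhu_2018} and Li--Zhang \cite{LiZhang-orthogonalKR}, when $\nu_\varpi(q(x_1))=1$ --- and the iterated intersection with the remaining cycles restricts to the analogous intersection problem on $\CN_{L_1}$, the discrepancy between $\mathcal{Z}(x_1)$ and $\CN_{L_1}$ (a horizontal part and its interaction with $\CN_L^{\textup{nfs}}$) producing exactly the correction terms of the analytic recursion. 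Inducting on $n$ proves all three identities whenever $T$ carries a primitive vector of norm valuation $\leq1$, and the remaining $T$ are handled by the local modularity of the two sides: viewed as functions of $T$ they transform identically under the relevant Weil representation, so agreement on the ``good'' $T$ forces agreement in general. I expect this last, geometric, step to be the main obstacle --- pinning down the local structure of the cycles $\mathcal{Z}(x)$ and $\mathcal{Z}(x,\mu)$ near $\CN_L^{\textup{nfs}}$ and establishing the Cartier and regularity statements that make the recursion an honest identity of Euler characteristics, including the bookkeeping of the exceptional and horizontal components; by comparison the cyclic reduction formula, though combinatorially involved, is a routine refinement of known density recursions, and the first two steps are formal.
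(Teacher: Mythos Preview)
Your proposal diverges from the paper at the decisive step. The paper does \emph{not} induct downward on $n$. Instead, it uses the closed immersion $\CN_L\simeq\mathcal{Z}^{\sharp}(x_0)\hookrightarrow\CN_{L^{\sharp}}$ of Theorem~\ref{almost-divisor} to push the whole intersection \emph{up} to the self-dual Rapoport--Zink space of rank $n+2$, where Li--Zhang's theorem \cite[Theorem 1.2.1]{LiZhang-orthogonalKR} applies as a black box:
\[
\Int_{\boldsymbol{\mu}}(\CN_L,\boldsymbol{x})
=\chi\Bigl(\CN_{L^{\sharp}},\ \bigl[\mathcal{O}_{\mathcal{Z}^{\sharp}(x_0)}\otimes^{\mathbb{L}}\mathcal{O}_{\mathcal{Z}^{\sharp}(x_1+\eta(\mu_1))}\otimes^{\mathbb{L}}\cdots\otimes^{\mathbb{L}}\mathcal{O}_{\mathcal{Z}^{\sharp}(x_n+\eta(\mu_n))}\bigr]\Bigr)
=\frac{W'_{M(T,\eta(\boldsymbol{\mu}))}(1,0,1_{(L^{\sharp})^{n+1}})}{\log q\cdot\textup{Nor}(1,H_{n+2}^{\varepsilon})}.
\]
The analytic reduction formula (Theorem~\ref{reduction}) then rewrites this genus-$(n+1)$ Whittaker derivative for $L^{\sharp}$ as the genus-$n$ derivative $W'_T(1,0,1_{\boldsymbol{\mu}+L^n})$ for $L$. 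No induction, no case distinction on $\nu_\varpi(q(x_1))$, no modularity argument. Your treatment of the blow-up invariance (projection formula) and of the $\mathcal{Y}$-identity (expand via Proposition~\ref{decom-Y}, sum over $\boldsymbol{\mu}$, compute the pure exceptional term via Lemma~\ref{singular-on-Y} and Proposition~\ref{lattice-counting}) does match the paper.

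Your downward recursion has two genuine gaps. First, the object $L_1=\langle x_1\rangle^{\perp}_L$ is not well-defined: $x_1$ lives in $\mathbb{V}$, which has Hasse invariant \emph{opposite} to $V=L_F$, so $x_1\notin L_F$ and there is no orthogonal complement to take inside $L$. What you would actually need is an identification of $\mathcal{Z}(x_1)$ (or its difference divisor) with an RZ space for an abstract lattice of the correct isometry type---a codimension-two analog of Theorem~\ref{almost-divisor} on the non-smooth space $\CN_L$, complete with a controlled description of its interaction with $\CN_L^{\textup{nfs}}$. The paper neither proves nor needs such a statement. Second, your appeal to ``local modularity'' to cover the $T$ without a small-valuation vector is not supported by \S6: the modularity proved there is for the one-variable function $x\mapsto\Int^{\mathcal{Z}}_{\mathcal{L}}(x)$ against a fixed type-$3$ stratum, not for $T\mapsto\Int_{\boldsymbol{\mu}}(\M,\boldsymbol{x})$. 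Establishing the latter transformation law would be a separate project, and the paper's route avoids it entirely by quoting Li--Zhang on $\CN_{L^{\sharp}}$.
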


\begin{comment}

\begin{remark}
  When $L$ is as in Remark \ref{rem: shimura curve}, as explained in Remark \ref{rem: shimura curve}, the intersection of $\CY^\mathrm{bl}(x)$ should be related with the intersection of blow-ups of the special cycles studied in \cite{KRshimuracurve}. Indeed, in this case \eqref{eq: main Y} can be regarded as the dual version of the result of \cite{HZshimura} for $\mathcal{M}_L$, which is proved via a more direct and explicit method.  Moreover, such a formula is a special case of the KR-conjecture  for  RZ space with general vertex level proposed in the forthcoming \cite{HZshimura}. 
\end{remark}

\end{comment}

\subsection{Strategy of the proof}
On the geometric side, there are two new main ingredients (1).  understanding the geometry of the $\mathcal{Y}$-cycles. (2). relating the local arithmetic intersection numbers to derived local densities.
\par
Our method is based on the fact that there is a closed immersion $\CN_L$ to the $\GSpin$ Rapoport--Zink spaces $\CN_{L^\sharp }$ associated with the self-dual lattice $L^\sharp$. The closed immersion identifies $\CN_L$ with the special divisor $\CZ^{\sharp}(x_0)$ of $\CN_{L^{\sharp}}$ (Theorem \ref{almost-divisor}). The closed immersion $\CN_L\rightarrow\CN_{L^{\sharp}}$ enables us to give explicit deformation descriptions of the formal scheme $\CN_{L}$ and special cycles on it (Lemma \ref{deformo2}) based on that of $\CN_{L^{\sharp}}$ (Lemma \ref{deformo}). The proof of Proposition \ref{decom-Y} is based on this moduli description.
\par
The closed immersion $\CN_L\rightarrow\CN_{L^{\sharp}}$ also enables us to reduce the computations of local arithmetic intersection numbers on $\CN_L$ to $\CN_{L^{\sharp}}$, on which the relation between derived local densities and local arithmetic intersection numbers are readily proved \cite[Theorem 1.2.1]{LiZhang-orthogonalKR}. The succinct relation of the geometric side motivates us to prove a relation between local Whittaker functions of genus $n$ and $n+1$ (Theorem \ref{reduction}).
\par
This idea has appeared before to understand $\mathcal{Z}$-cycles, see e.g. the work of Terstiege \cite{Ter08}. In the works of Sankaran, Shi and Yang \cite{SSY} and Zhu \cite{Zhu23}, the modular curve $\mathcal{X}_0(p)$ is embedded into the product $\mathcal{X}_0(1)\times\mathcal{X}_0(1)$. Our paper uses such embeddings to understand $\mathcal{Y}$-cycles systematically.
\par
We remark that global analogs of such special cycles on integral models of orthogonal Shimura varieties with maximal levels are defined in the work of Howard--Madapusi \cite[Section 6.2, Proposition 6.4.3.]{HM20}\cite[Lemma 3.3.1.]{HM22} by similar isometric embedding methods.

\subsection{Structure of the paper}
In Part 1, we study cyclic quadratic lattices and construct embedding to self-dual lattices ($\S$2). In $\S$3, we explain the local densities and their relations to local Whittaker functions, we also prove the relation between local Whittaker functions of genus $n$ and $n+1$.
\par
In Part 2, we introduce several families ($\mathcal{Z}$, $\mathcal{Y}$ and weighted) of special Kudla--Rapoport cycles on GSpin Rapoport--Zink space $\CN_L$ associated to $L$ in $\S$4. In $\S$5, we study the total transform of these cycles under the blow up at the non-formally smooth points of $\CN_L$. We prove the linear invariance properties and form intersection numbers. Then we give the proof of Theorem \ref{main-intro}. In $\S$6, we consider the local modularity of the derived cycles.

\subsection{Notations}
Throughout this article, we fix an odd prime $p$. Let $F/\mathbb{Q}_{p}$ be a finite extension, with ring of integers $\mathcal{O}_{F}$, and uniformizer $\varpi$. Let $\mathbb{F}$ be the algebraic closure of $\mathbb{F}_p$. Let $\breve{F}$ be the completion of the maximal unramified extension of $F$. Let $\sigma\in\textup{Gal}(\Breve{F}/F)$ be the arithmetic Frobenius automorphism of the field $\breve{F}$. Let $\ofb$ be the integer ring of $\breve{F}$. 
\par
For a scheme or formal scheme $S$ over $\ofb$, denote by $\textup{NCRIS}_{\ofb}(S/\textup{Spec}\,\ofb)$ the big fppf nilpotent crystalline site of $S$ over $\ofb$ (cf. \cite[Definition B.5.7.]{FGL08}). Denote by $\mathcal{O}_{S}^{\textup{crys}}$ the structure sheaf in this site. For a point $z\in S(\mathbb{F})$, let $\widehat{\mathcal{O}}_{S,z}$ be the complete local ring of $S$ at the point $z$. Let $\textup{Nilp}_{\ofb}$ be the category of $\ofb$-schemes on which $\varpi$ is locally nilpotent. For an object $S$ in $\textup{Nilp}_{\ofb}$, we use $\overline{S}$ to denote the scheme $S\times_{\ofb}\mathbb{F}$. 
\par
Denote by $\textup{Alg}_{\ofb}$ the category of noetherian $\varpi$-adically complete $\ofb$-algebras. Denote by $\textup{Art}_{\ofb}$ the category of local artinian $\ofb$-algebras with residue field $\mathbb{F}$. Denote by $\textup{ANilp}_{\ofb}^{\textup{fsm}}$ be the category of noetherian adic $\ofb$-algebras, in which $\varpi$ is nilpotent, which are formally finitely generated and formally smooth over $\ofb/\varpi^{k}$ for some $k\geq1$.

\subsection{Acknowledgement} 
We thank Chao Li, Keerthi Madapusi, Siddarth Sankaran, Shou-Wu Zhang and Wei Zhang for their helpful comments.

\part{Analytic side}

\section{Reductions on quadratic lattices}
Let $p$ be an odd prime. Let $F$ be a $p$-adic local field with ring of integers $\Of$, uniformizer $\varpi\in\Of$ and residue field $\Of/(\varpi)\simeq \mathbb{F}_{q}$ of size $q$. Let $\nu_F: F\rightarrow\mathbb{Z}\cup\{\infty\}$ be the normalized valuation on $F$ determined by $\varpi$. 

\subsection{Notations on quadratic lattices}\label{not-ql}
A quadratic lattice $(M,q)$ is a finite free $\mathcal{O}_{F}$-module equipped with a quadratic form $q: M\rightarrow F$. Let $M_F\coloneqq M\otimes_{\Of}F$ be the $F$-vector space associate to $M$. The quadratic form $q$ induces a symmetric bilinear form $M_F\times M_F\stackrel{(\cdot,\cdot)}\longrightarrow F$ by $(x,y)=q(x+y)-q(x)-q(y)$. Let $M^{\vee}=\{x\in M\otimes_{\mathcal{O}_{F}}F:(x,M)\subset \mathcal{O}_{F}\}$. We say a quadratic lattice $M$ is integral if $q(x)\in\mathcal{O}_{F}$ for all $x\in M$, is self-dual if it is integral and $M$ = $M^{\vee}$.
\par
We assume $\textup{rank}_{\Of}M=m$ and the symmetric bilinear form $(\cdot,\cdot)$ on $M_F$ is nondegenerate. Let $\{x_{i}\}_{i=1}^{m}$ be an $\Of$ basis of $M$, and $t_{ij}=\frac{1}{2}(x_{i},x_{j})$, the discriminant of the quadratic lattice $M$ is
\begin{equation*}
    \textup{disc}(M) = (-1)^{m\choose 2}\textup{det}\left((t_{ij})\right)\in F^{\times}/(F^{\times})^{2}.
\end{equation*}
Let $\chi_F: F^{\times}/(F^{\times})^{2}\rightarrow\{\pm1,0\}$ be the quadratic residue symbol, we define $\chi_F(M)=\chi_F(\textup{disc}(M))$. Then $\chi_F(M) = 0$ if and only if $\nu_F(M):=\nu_F(\disc(M))$ is odd.
\par
We may choose $\{x_{i}\}_{i=1}^{n}$ to be an orthogonal basis of the quadratic space $M_F$, i.e. $t_{ij}=0$ if $i\neq j$ and $t_{ii}\neq 0$. Let $\mathrm{Hilb}(\cdot,\cdot)_{F}$ be the Hilbert symbol on the local field $F$. The Hasse invariant of the quadratic space $M_F$ is
\begin{equation*}
    \epsilon(M_F) = \prod\limits_{1 \leq i<j \leq m} \mathrm{Hilb}(t_{ii},t_{jj})_{F}\in\{\pm1\}.
\end{equation*}
\subsection{Sublattices of self-dual quadratic lattices}
Let $M$ be a self-dual lattice over $\Of$ of rank $m$, then $\epsilon(M)=1$ and $\chi_F(M)\neq0$. Two self-dual quadratic lattices $M$ and $M^{\pr}$ of the same rank are isometric if and only if $\chi_F(M)=\chi_F(M^{\pr})$. Therefore in the following, we will use $H_m^{\varepsilon}$ to denote the unique (up to isometry) self-dual lattice of rank $m$ and $\chi_F(M)=\varepsilon\in\{\pm1\}$.
\par
Let $m\geq2$ be a positive integer and $\varepsilon\in\{\pm1\}$, let $x_0\in H_m^{\varepsilon}$ be an element such that $\nu_F(q(x_0))=1$. Let $K=\{x\in H_m^{\varepsilon}: (x,x_0)=0\}$ be the sublattice of $H_m^{\varepsilon}$ orthogonal to $x_0$.
\begin{lemma}
    Assume that $q(x_0)=\varepsilon_0\cdot\varpi$ where $\varepsilon_0\in\Of^{\times}$, then 
    \begin{equation*}
        \textup{disc}(K)=(-1)^{m-1}\varepsilon_0\varepsilon\varpi,\,\,\,\epsilon(K)=(\varpi,(-1)^{m-2\choose 2}\varepsilon)=\chi_F((-1)^{m-2\choose 2}\varepsilon).
    \end{equation*}
\end{lemma}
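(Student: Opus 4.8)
The key observation is that a self-dual lattice $H_m^\varepsilon$ containing an element $x_0$ with $\nu_F(q(x_0)) = 1$ admits an orthogonal decomposition $H_m^\varepsilon = \langle x_0 \rangle \obot K$, where $K = \{x \in H_m^\varepsilon : (x,x_0) = 0\}$. Indeed, since $q(x_0) = \varepsilon_0 \varpi$ is a uniformizer up to a unit, $(x_0, x_0) = 2\varepsilon_0\varpi$ is nonzero, and because $p$ is odd and $H_m^\varepsilon$ is self-dual, $\mathcal{O}_F \cdot x_0$ is a primitive (but not unimodular) sublattice; the bilinear form restricted to $\langle x_0\rangle$ is nondegenerate over $F$, so there is a projection $H_m^\varepsilon \to K$ and $H_m^\varepsilon = \langle x_0 \rangle \oplus K$ as $\mathcal{O}_F$-modules. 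First I would verify this splitting carefully — the point being that the orthogonal complement of $x_0$ inside $H_m^\varepsilon$ (not just inside the ambient $F$-space) is a direct summand of full rank $m-1$.

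Once the decomposition $H_m^\varepsilon = \langle x_0\rangle \obot K$ is established, the discriminant computation is multiplicative: $\disc(H_m^\varepsilon) = \disc(\langle x_0\rangle) \cdot \disc(K)$ up to the sign bookkeeping from the $(-1)^{\binom{m}{2}}$ factors in the definition of $\disc$. We have $\disc(\langle x_0\rangle) = \varepsilon_0\varpi$ (rank $1$, so $t_{11} = \varepsilon_0\varpi$ and the sign is trivial), and $\disc(H_m^\varepsilon) = \varepsilon$ (or a chosen representative of the square class, normalized so that $\chi_F$ gives $\varepsilon$; since $H_m^\varepsilon$ is self-dual, its discriminant is a unit, hence $\pm 1$ times a square, consistent with $\chi_F(H_m^\varepsilon) = \varepsilon$). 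Tracking the sign change from passing $(-1)^{\binom{m}{2}}$ to $(-1)^{\binom{m-1}{2}} \cdot (-1)^{\binom{1}{2}}$ — noting $\binom{m}{2} = \binom{m-1}{2} + (m-1)$ — I extract $\disc(K) = (-1)^{m-1}\varepsilon_0\varepsilon\varpi$. This is the first claimed formula.

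For the Hasse invariant, I would again use the orthogonal decomposition together with the standard product formula for Hasse invariants under orthogonal sums: $\epsilon(M \obot N) = \epsilon(M)\epsilon(N) \cdot \mathrm{Hilb}(\disc(M), \disc(N))_F$ (with suitable normalization). Applying this to $H_m^\varepsilon = \langle x_0\rangle \obot K$ and using $\epsilon(H_m^\varepsilon) = 1$, $\epsilon(\langle x_0\rangle) = 1$, I get $1 = \epsilon(K) \cdot \mathrm{Hilb}(\varepsilon_0\varpi, \disc(K))_F$, so $\epsilon(K) = \mathrm{Hilb}(\varepsilon_0\varpi, (-1)^{m-1}\varepsilon_0\varepsilon\varpi)_F$. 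Then I would expand this Hilbert symbol using bimultiplicativity and the identities $\mathrm{Hilb}(a, a)_F = \mathrm{Hilb}(a, -1)_F$, $\mathrm{Hilb}(\varpi, \varpi)_F = \mathrm{Hilb}(\varpi, -1)_F$, $\mathrm{Hilb}(u, u')_F = 1$ for units $u, u'$ (as $p$ is odd), and $\mathrm{Hilb}(\varpi, u)_F = \chi_F(u)$ for a unit $u$. After collecting the unit parts, everything reduces to $\mathrm{Hilb}(\varpi, (-1)^{\binom{m-2}{2}}\varepsilon)_F = \chi_F((-1)^{\binom{m-2}{2}}\varepsilon)$; the exponent bookkeeping on $(-1)$ is where I would be most careful, using $\binom{m-1}{2} + (m-1) \equiv \binom{m-2}{2} \pmod 2$ or the analogous parity identity to match the stated exponent.

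The main obstacle is purely the sign and parity accounting: reconciling the $(-1)^{\binom{\cdot}{2}}$ conventions in $\disc$ with the $(-1)^{m-1}$ and $(-1)^{\binom{m-2}{2}}$ exponents in the statement, and carefully expanding the Hilbert symbol $\mathrm{Hilb}(\varepsilon_0\varpi, (-1)^{m-1}\varepsilon_0\varepsilon\varpi)_F$ without dropping a factor of $\mathrm{Hilb}(\varpi,-1)_F$ or $\mathrm{Hilb}(\varepsilon_0,\varepsilon_0)_F$. No deep input is needed beyond the orthogonal splitting and the elementary theory of Hasse invariants and Hilbert symbols over a $p$-adic field with $p$ odd.
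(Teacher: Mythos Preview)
Your plan has a genuine gap: the orthogonal lattice decomposition $H_m^{\varepsilon} = \langle x_0\rangle \obot K$ is \emph{false}. Since $H_m^{\varepsilon}$ is self-dual and $x_0$ is primitive, there exists $y_0 \in H_m^{\varepsilon}$ with $(x_0,y_0)=1$; the orthogonal projection of $y_0$ onto $Fx_0$ is $\frac{1}{2\varepsilon_0\varpi}x_0 \notin \mathcal{O}_F\cdot x_0$, so $y_0 \notin \langle x_0\rangle \oplus K$. In fact $[H_m^\varepsilon : \langle x_0\rangle \oplus K] = q$. Fortunately the fix is immediate: both $\disc$ (taken in $F^\times/(F^\times)^2$) and the Hasse invariant depend only on the quadratic $F$-space, and over $F$ one does have $V^{\sharp} = Fx_0 \obot K_F$. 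So your multiplicativity arguments go through verbatim once you replace the (false) integral splitting by this rational one; the rest of your sign/Hilbert-symbol bookkeeping is then exactly the computation that needs to be done.

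The paper proceeds differently: it uses that very element $y_0$ to split off the rank-$2$ \emph{self-dual} sublattice $\langle x_0,y_0\rangle \simeq H_2^+$, giving $H_m^{\varepsilon}\simeq \langle x_0,y_0\rangle \obot H_{m-2}^{\varepsilon}$, and then observes that the orthogonal complement of $x_0$ inside $\langle x_0,y_0\rangle$ is spanned by a vector $y$ with $q(y)=-\varepsilon_0\varpi$, so that $K\simeq \mathcal{O}_F\cdot y \obot H_{m-2}^{\varepsilon}$ as \emph{lattices}. From this explicit description of $K$, both formulas drop out directly from the definitions without any Hilbert-symbol manipulation. Your (corrected) rational approach is a bit more computational but avoids producing the explicit lattice structure of $K$; the paper's approach gives that structure for free and makes the Hasse invariant immediate since $\epsilon(H_{m-2}^{\varepsilon})=1$.
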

\label{unique}
\begin{proof}
    Since $\nu_F(q(x_0))=1$, we have $x_0\notin\varpi H_m^{\varepsilon}$, hence there exists $y_0\in H_m^{\varepsilon}$ such that $(x_0,y_0)=1$. Let $\langle x_0,y_0\rangle$ be the $\Of$-lattice spanned by $x_0$ and $y_0$, then we have a decomposition
    \begin{equation*}
        H_m^{\varepsilon}\simeq\langle x_0,y_0\rangle\obot H_{m-2}^{\varepsilon}.
    \end{equation*}
    By this decomposition, it's easy to verify that $K\simeq\Of\cdot y\obot H_{m-2}^{\varepsilon}$ where $y\in \langle x_0,y_0\rangle$ is an element such that $q(y)=-q(x_0)=-\varepsilon_0\varpi$. Then the lemma follows from the definition of the discriminant $\textup{disc}(K)$ and the Hasse invariant $\epsilon(K)$.
\end{proof}

\subsection{Cyclic quadratic lattices}
Let $n\geq1$ be an integer, let $L$ be a non-degenerate quadratic lattice of finite rank over $\Of$. Let $V=L\otimes_{\Of}F$ be the quadratic space associated to $L$. Let $L^{\vee}=\{x\in V: (x,L)\subset\mathcal{O}_{F}\}$.
\begin{definition}
    A nondegenerate quadratic lattice $L$ over $\mathcal{O}_F$ is called cyclic if $L^{\vee}/L\simeq\Of/(\varpi)^{l}$ for some non-negative integer $l$. We call the number $l$ the order of the cyclic lattice $L$. When $l=1$, we also say $L$ is an almost self-dual lattice. 
\end{definition}
For a cyclic quadratic lattice $L$ of order $l$, there exists $[\frac{l}{2}]+1$ cyclic quadratic lattices $\{L(i)\}_{i=0}^{[l/2]}$ such that
\begin{equation}
    L=L(0)\subset L(1)\subset L(2)\subset\cdots\subset L\left([\frac{l}{2}]\right)\subset L\left([\frac{l}{2}]\right)^{\vee}\subset L(1)^{\vee}\subset L(0)^{\vee}=L^{\vee},
    \label{intermediate}
\end{equation}
and $L(i+1)/L(i)\simeq\mathbb{F}_q$ for any $0\leq i\leq [\frac{l}{2}]-1$. The lattice $L(i)$ corresponds to the subgroup $(\varpi^{l-i})/(\varpi^{l})$ of $L^{\vee}/L$.
\begin{lemma}\label{embedding L to self dual}
    Let $L$ be a cyclic quadratic lattice of rank $n+1$ and order $l\geq1$, then there exists a unique (up to isometry) self-dual quadratic lattice $L^{\sharp}$ of rank $n+2$ and a primitive isometric embedding $\iota: L\rightarrow L^{\sharp}$.
\end{lemma}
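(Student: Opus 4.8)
\emph{The plan} is to use the standard ``gluing'' (Nikulin-type) description of primitive embeddings into unimodular lattices, specialized here to the rank-one complement situation, and then to feed in the hypotheses $p$ odd and $l\geq 1$ at the two places where rigidity is needed. First I would set up the correspondence. Suppose $\iota\colon L\hookrightarrow L^{\sharp}$ is a primitive embedding into a self-dual lattice of rank $n+2$, and put $\Lambda=\iota(L)^{\perp}\subset L^{\sharp}$, a rank-one $\Of$-lattice. Since $L^{\sharp}$ is self-dual one has
\[
L\obot\Lambda\ \subseteq\ L^{\sharp}=(L^{\sharp})^{\vee}\ \subseteq\ (L\obot\Lambda)^{\vee}=L^{\vee}\obot\Lambda^{\vee},
\]
and because $\iota(L)$ and $\Lambda$ are primitive (orthogonal complements in a unimodular lattice are saturated), the two projections $L^{\sharp}/(L\obot\Lambda)\to L^{\vee}/L$ and $L^{\sharp}/(L\obot\Lambda)\to\Lambda^{\vee}/\Lambda$ are injective; the length identity $|L^{\sharp}/(L\obot\Lambda)|^{2}=|(L\obot\Lambda)^{\vee}/(L\obot\Lambda)|$ (again from self-duality of $L^{\sharp}$) then forces both to be isomorphisms. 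Hence $\Lambda^{\vee}/\Lambda\cong L^{\vee}/L\cong\Of/(\varpi)^{l}$, so $\Lambda$ is cyclic of order $l$, and $L^{\sharp}$ is exactly the preimage in $L^{\vee}\obot\Lambda^{\vee}$ of the graph of an isomorphism $\phi\colon L^{\vee}/L\xrightarrow{\sim}\Lambda^{\vee}/\Lambda$; integrality of $L^{\sharp}$ translates precisely into $\phi$ being an \emph{anti-isometry} for the natural $F/\Of$-valued quadratic forms on the discriminant modules. Conversely, any such pair $(\Lambda,\phi)$ produces by the same preimage construction a self-dual $L^{\sharp}(\Lambda,\phi)$ of rank $n+2$ containing $L$ primitively.

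With this in hand, \textbf{existence} reduces to exhibiting one pair $(\Lambda,\phi)$. The discriminant form of $L$ is a quadratic form $q_{L}$ on $L^{\vee}/L\cong\Of/(\varpi)^{l}$, and for a generator $\bar v$ nondegeneracy gives $q_{L}(\bar v)=u\varpi^{-l}\bmod\Of$ with $u\in\Of^{\times}$. I would take $\Lambda=\langle -u\varpi^{l}\rangle$; a direct computation (using $p$ odd, so $2\in\Of^{\times}$) gives $\Lambda^{\vee}=\varpi^{-l}\Of\cdot e$ and $q_{\Lambda}(\varpi^{-l}e)=-u\varpi^{-l}\bmod\Of$, so $\bar v\mapsto\varpi^{-l}e$ defines an anti-isometry $\phi$, yielding the desired $L^{\sharp}$.

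For \textbf{uniqueness}, let $(\Lambda_{1},\phi_{1})$ and $(\Lambda_{2},\phi_{2})$ be two such pairs. Each $\Lambda_{i}=\langle c_{i}\varpi^{l}\rangle$ has discriminant form anti-isometric to $q_{L}$, which pins down $c_{i}$ modulo squares in $(\Of/(\varpi)^{l})^{\times}$; since $p$ is odd and $l\geq 1$, a unit that is a square modulo $\varpi$ is a square in $\Of$ by Hensel's lemma, so $c_{1}/c_{2}\in(\Of^{\times})^{2}$ and $\Lambda_{1}\cong\Lambda_{2}=:\Lambda$. Identifying them, $\phi_{2}\circ\phi_{1}^{-1}$ is an automorphism of the finite quadratic module $(\Lambda^{\vee}/\Lambda,q_{\Lambda})$; as $p$ is odd the $2$-torsion of $(\Of/(\varpi)^{l})^{\times}$ is $\{\pm 1\}$, so this automorphism is $\pm\mathrm{id}$, hence induced by an element of $O(\Lambda)=\{\pm 1\}$. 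Then $\mathrm{id}_{L}\obot(\pm\mathrm{id}_{\Lambda})$ carries $L^{\sharp}(\Lambda,\phi_{1})$ onto $L^{\sharp}(\Lambda,\phi_{2})$, which is the claimed isometry (compatible with the embeddings of $L$).

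I expect the main technical burden to be the discriminant-form bookkeeping of the first paragraph --- verifying that both projections out of $L^{\sharp}/(L\obot\Lambda)$ are isomorphisms and that integrality of $L^{\sharp}$ is equivalent to the anti-isometry condition on $\phi$ --- while the genuinely arithmetic content is concentrated in the two uses of ``$p$ odd'' (so $2\in\Of^{\times}$, unit square classes are detected modulo $\varpi$, and the relevant $2$-torsion subgroups are $\{\pm1\}$) together with $l\geq 1$.
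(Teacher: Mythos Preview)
Your proof is correct and takes a genuinely different route from the paper's. The paper dispatches existence by citing \cite[Lemma 6.8]{Mad16}, and for uniqueness invokes the classification of self-dual lattices over $\Of$ (a self-dual lattice of given rank is determined by $\chi$) together with the Hasse-invariant computation of the preceding lemma, which gives $\epsilon(L)=\chi_F\bigl((-1)^{\binom{n}{2}}\disc(L^{\sharp})\bigr)$ and hence pins down $\disc(L^{\sharp})$ in $F^{\times}/(F^{\times})^{2}$.

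Your argument instead runs the Nikulin gluing machinery directly: you parametrize primitive embeddings by pairs $(\Lambda,\phi)$ with $\phi$ an anti-isometry of discriminant modules, build one such pair explicitly for existence, and for uniqueness show that $\Lambda$ is determined up to isometry (Hensel) and that any two $\phi$'s differ by an element of $O(\Lambda)=\{\pm1\}$. This is more self-contained (no appeal to the classification of self-dual lattices or to an external reference), and it actually yields a slightly sharper statement: the isometry $L^{\sharp}_{1}\xrightarrow{\sim}L^{\sharp}_{2}$ you produce restricts to the identity on $L$. The paper's approach, by contrast, is much shorter once one is willing to quote the classification and the Hasse-invariant lemma, and it makes transparent which invariant of $L$ controls the isometry class of $L^{\sharp}$.
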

\begin{proof}
    The existence of such $L^{\sharp}$ is proved in \cite[Lemma 6.8]{Mad16}. Now we prove its uniqueness (up to isometry). By the classification of self-dual lattices, we only need to show that its discriminant is determined by $L$. By Lemma \ref{unique}, we have $\epsilon(L)=\chi_F\left((-1)^{n\choose 2}\textup{disc}(L^{\sharp})\right)$, therefore $\textup{disc}(L^{\sharp})\in F^{\times}/(F^{\times})^{2}$ is determined by $L$.
\end{proof}
From now on, we fix a self-dual quadratic lattice $L^{\sharp}$ over $\Of$ of rank $n+2$ and a primitive isometric embedding $\iota: L\rightarrow L^{\sharp}$. Let $V^{\sharp}=L^{\sharp}\otimes_{\Of}F$. The isometric map $\iota$ can be extended to an isometric map $\iota:V\rightarrow V^{\sharp}$. Let $\Lambda=\{x\in L^{\sharp}:(x,y)=0\,\,\textup{for any}\,\,y\in L\}$ be the sublattice of $L^{\sharp}$ orthogonal to $L$, then $\Lambda$ has $\Of$-rank 1. 
\begin{lemma}\label{cyclic duality for L and Lambda}
    Let $L$ be a cyclic quadratic lattice, there is a canonical isomorphism of $\Of$-modules,
    \begin{equation*}
        r:\Lambda^{\vee}/\Lambda\stackrel{\sim}\longrightarrow L^{\vee}/L,
    \end{equation*}
    such that under the natural morphism $\Lambda^{\vee}/\Lambda\oplus L^{\vee}/L\rightarrow V^{\sharp}/L^{\sharp}$, the morphism $r$ satisfies that $r(\overline{\mu})+\overline{\mu}$ is mapped to $0\in V^{\sharp}/L^{\sharp}$ for every $\mu\in\Lambda^{\vee}$.
    \label{dual}
\end{lemma}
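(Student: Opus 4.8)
\emph{Proof idea.} The plan is to study the position of the self-dual lattice $L^{\sharp}$ relative to the two ``coordinate'' lattices coming from the orthogonal decomposition $V^{\sharp}=V\obot\Lambda_{F}$. Write $\mathrm{pr}_{V}\colon V^{\sharp}\to V$ and $\mathrm{pr}_{\Lambda}\colon V^{\sharp}\to\Lambda_{F}$ for the orthogonal projections. First I would record the elementary containments: $L\obot\Lambda\subseteq L^{\sharp}$ by construction, and since $L^{\sharp}$ is self-dual, pairing an arbitrary $x\in L^{\sharp}$ integrally against $L$ (resp.\ against $\Lambda$) shows $\mathrm{pr}_{V}(x)\in L^{\vee}$ (resp.\ $\mathrm{pr}_{\Lambda}(x)\in\Lambda^{\vee}$); hence
\[
L\obot\Lambda\ \subseteq\ L^{\sharp}\ \subseteq\ L^{\vee}\obot\Lambda^{\vee}\ =\ (L\obot\Lambda)^{\vee}.
\]
Moreover, primitivity of $\iota$ gives $L^{\sharp}\cap V=L$, and the very definition of $\Lambda$ gives $L^{\sharp}\cap\Lambda_{F}=\Lambda$.

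Next I would set $H\coloneqq L^{\sharp}/(L\obot\Lambda)$, a finite $\Of$-module sitting inside the finite quadratic module $Q\coloneqq(L\obot\Lambda)^{\vee}/(L\obot\Lambda)=L^{\vee}/L\oplus\Lambda^{\vee}/\Lambda$, which carries the perfect $F/\Of$-valued symmetric bilinear form induced by $(\cdot,\cdot)$. The two projections descend to homomorphisms $\overline{\mathrm{pr}}_{V}\colon H\to L^{\vee}/L$ and $\overline{\mathrm{pr}}_{\Lambda}\colon H\to\Lambda^{\vee}/\Lambda$, and the intersection identities $L^{\sharp}\cap V=L$ and $L^{\sharp}\cap\Lambda_{F}=\Lambda$ show that both are \emph{injective}. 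On the other hand, self-duality of $L^{\sharp}$ translates, via the standard correspondence between intermediate lattices $L\obot\Lambda\subseteq N\subseteq(L\obot\Lambda)^{\vee}$ and subgroups of $Q$, into the statement that $H$ is its own orthogonal complement in $Q$; hence $|H|^{2}=|Q|=|L^{\vee}/L|\cdot|\Lambda^{\vee}/\Lambda|$. Combined with the two injections this forces $|H|=|L^{\vee}/L|=|\Lambda^{\vee}/\Lambda|$, so that $\overline{\mathrm{pr}}_{V}$ and $\overline{\mathrm{pr}}_{\Lambda}$ are both isomorphisms; in particular $\mathrm{pr}_{\Lambda}(L^{\sharp})=\Lambda^{\vee}$.

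I would then \emph{define} $r\coloneqq\overline{\mathrm{pr}}_{V}\circ\overline{\mathrm{pr}}_{\Lambda}^{-1}\colon\Lambda^{\vee}/\Lambda\xrightarrow{\ \sim\ }L^{\vee}/L$. Concretely: given $\mu\in\Lambda^{\vee}$, pick $x\in L^{\sharp}$ with $\mathrm{pr}_{\Lambda}(x)=\mu$ and let $r(\overline{\mu})$ be the class of $\mathrm{pr}_{V}(x)\in L^{\vee}$; well-definedness (independence of $x$, and descent to $\Lambda^{\vee}/\Lambda$) is immediate from $L^{\sharp}\cap V=L$ together with $\Lambda\subseteq L^{\sharp}$. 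The compatibility asserted in the statement is then automatic: since $x=\mathrm{pr}_{V}(x)+\mu$ lies in $L^{\sharp}$, the element $r(\overline{\mu})+\overline{\mu}$ is sent to $\overline{x}=0$ under the natural morphism $\Lambda^{\vee}/\Lambda\oplus L^{\vee}/L\to V^{\sharp}/L^{\sharp}$. Finally $r$ is the unique homomorphism with this property, since $L^{\vee}/L\to V^{\sharp}/L^{\sharp}$ is injective: its kernel is $(L^{\vee}\cap L^{\sharp})/L=(L^{\vee}\cap V\cap L^{\sharp})/L=L/L=0$; this also yields canonicity.

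The step I expect to be the main (though still fairly routine) obstacle is the middle one: correctly phrasing ``$L^{\sharp}$ self-dual'' as ``$H=H^{\perp}$ inside $Q$'' and keeping the cardinality bookkeeping straight, since after that both the construction of $r$ and the verification of the compatibility are formal consequences of the intersection identities.
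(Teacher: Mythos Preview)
Your proof is correct and follows essentially the same route as the paper: both identify $H=L^{\sharp}/(L\obot\Lambda)$ inside $L^{\vee}/L\oplus\Lambda^{\vee}/\Lambda$ and show that the two coordinate projections are isomorphisms, defining $r$ as the composite $\overline{\mathrm{pr}}_{V}\circ\overline{\mathrm{pr}}_{\Lambda}^{-1}$. The only difference is in how surjectivity of the projections is obtained: the paper records two short exact sequences $0\to\Lambda\to L^{\sharp}\to L^{\vee}\to 0$ and $0\to L\to L^{\sharp}\to\Lambda^{\vee}\to 0$ (surjectivity amounting to the duality identity $\mathrm{pr}_{W}(L^{\sharp})=(L^{\sharp}\cap W)^{\vee}$ for $W\in\{V,\Lambda_{F}\}$), whereas you deduce it from the Lagrangian condition $H=H^{\perp}$ together with the two injections. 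Both packagings of self-duality yield the same map, and your added remark that $L^{\vee}/L\hookrightarrow V^{\sharp}/L^{\sharp}$ pins down $r$ uniquely is a nice explicit justification of canonicity that the paper leaves implicit.
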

\begin{proof}
    The primitivity of $\iota$ implies the following two exact sequences of $\Of$-modules,
    \begin{align}
        0\longrightarrow \Lambda\longrightarrow L^{\sharp}\longrightarrow L^{\vee}\longrightarrow 0,\label{e1}\\
        0\longrightarrow L\longrightarrow L^{\sharp}\longrightarrow \Lambda^{\vee}\longrightarrow 0.\label{e2}
    \end{align}
    The exact sequence (\ref{e1}) implies that $L^{\sharp}/(\Lambda\oplus L)\simeq L^{\vee}/L$, the second exact sequence (\ref{e2}) implies that $L^{\sharp}/(\Lambda\oplus L)\simeq \Lambda^{\vee}/\Lambda$, therefore $\Lambda^{\vee}/\Lambda\simeq L^{\sharp}/(\Lambda\oplus L)\simeq L^{\vee}/L$, let $r:\Lambda^{\vee}/\Lambda\stackrel{\sim}\longrightarrow L^{\vee}/L$ be the composite isomorphism, the last assertion of $r$ can be easily checked.
\end{proof}
Let $l$ be the order of the cyclic quadratic lattice $L$. For every $0\leq i\leq [\frac{n}{2}]$, let $\Lambda(i)=\varpi^{-i}\Lambda$, there exists a self-dual lattice $L^{\sharp}(i)$ of $V^{\sharp}$ such that $\varpi^{i}L^{\sharp}\subset L^{\sharp}(i)\subset\varpi^{-i}L^{\sharp}$, and the morphism $\iota$ restricts to a primitive isometric embedding $\iota: \Lambda(i)\obot L(i)\rightarrow L^{\sharp}(i)$.
\begin{equation}
    \begin{tikzcd}
    {\Lambda(i)\obot L(i)}
    \arrow[d] \arrow[r, "{\iota}"]
    & {L^{\sharp}(i)}
    \arrow[r]
    &{\varpi^{-i}L^{\sharp}}
    \arrow[d]
     \\
{\Lambda(i+1)\obot L(i+1)}
    \arrow[r, "{\iota}"]
    & {L^{\sharp}(i+1)}
    \arrow[r]
    &{\varpi^{-i-1}L^{\sharp}.}
\end{tikzcd}
\label{compatibility}
\end{equation}
\begin{lemma}
    Let $L$ be a cyclic quadratic lattice of order $l$, for every $0\leq i\leq [\frac{n}{2}]$, let $r_i:\Lambda(i)^{\vee}/\Lambda(i)\stackrel{\simeq}\rightarrow L(i)^{\vee}/L(i)$ be the canonical isomorphism, then the following diagram commutes,
    \begin{equation}
        \begin{tikzcd}
    {\Lambda^{\vee}/\Lambda}
    \arrow[d] \arrow[r, "{r}"]
    & {L^{\vee}/L}
    \arrow[d]
    \\
    {\Lambda^{\vee}/\Lambda(i)}
    \arrow[d, "\times\varpi^{i}"'] \arrow[r, "{\simeq}"]
    & {L^{\vee}/L(i)}
    \arrow[d, "\times\varpi^{i}"]
     \\
{\Lambda(i)^{\vee}/\Lambda(i)}
    \arrow[r, "{r_i}"]
    & {L(i)^{\vee}/L(i)}
    \label{relation}
\end{tikzcd}
    \end{equation}
\label{compatible}
\end{lemma}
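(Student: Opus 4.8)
The plan is to reduce both squares to the defining property of the canonical isomorphisms supplied by Lemma~\ref{cyclic duality for L and Lambda}: after identifying $L$ and $\Lambda$ with their images under $\iota$ inside $V^{\sharp}$, the class $r(\overline{\mu})$ is the unique element of $L^{\vee}/L$ admitting a representative $\rho\in L^{\vee}$ with $\rho+\mu\in L^{\sharp}$, and likewise $r_i(\overline{\nu})$ is the unique class in $L(i)^{\vee}/L(i)$ with a representative $\tau\in L(i)^{\vee}$ satisfying $\tau+\nu\in L^{\sharp}(i)$. The latter is legitimate because, by \eqref{compatibility} and the surrounding discussion, $L^{\sharp}(i)$ is self-dual and $\Lambda(i)$ is precisely the orthogonal complement of $L(i)$ inside $L^{\sharp}(i)$, so Lemma~\ref{cyclic duality for L and Lambda} applies verbatim to the primitive embedding $\Lambda(i)\obot L(i)\hookrightarrow L^{\sharp}(i)$.

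For the upper square I would first record that $r$ is an isomorphism of $\Of$-modules between $\Lambda^{\vee}/\Lambda$ and $L^{\vee}/L$, both cyclic of length $l$ (the order of $L$). Since $\Lambda(i)=\varpi^{-i}\Lambda$, the subgroup $\Lambda(i)/\Lambda$ is the unique $\Of$-submodule of $\Lambda^{\vee}/\Lambda$ of cardinality $q^{i}$, i.e.\ the one corresponding to $(\varpi^{l-i})/(\varpi^{l})$; similarly $L(i)/L$ corresponds to $(\varpi^{l-i})/(\varpi^{l})\subset L^{\vee}/L$, as recorded just after \eqref{intermediate}. An $\Of$-linear isomorphism between two copies of $\Of/\varpi^{l}$ must match up these distinguished submodules, so $r(\Lambda(i)/\Lambda)=L(i)/L$. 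Hence $r$ descends to the isomorphism $\Lambda^{\vee}/\Lambda(i)\xrightarrow{\sim}L^{\vee}/L(i)$ that is the middle row of \eqref{relation}, and the upper square commutes by construction of this descent.

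For the lower square, the point is that after identifying everything with lattices in $V^{\sharp}$, all four maps are instances of multiplication by $\varpi^{i}$, which is compatible with the relevant quotients. First one checks the maps $\times\varpi^{i}$ are well-defined on the finite quotients: $\varpi^{i}\Lambda^{\vee}=\Lambda(i)^{\vee}$, $\varpi^{i}L^{\vee}\subseteq L(i)^{\vee}$, $\varpi^{i}\Lambda(i)\subseteq\Lambda(i)$ and $\varpi^{i}L(i)\subseteq L(i)$, all immediate from a Jordan splitting $L\simeq L_{0}\obot L_1$ with $L_{0}$ self-dual of rank $n$ and $L_1$ rank one generated by a vector of $q$-valuation $l$. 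Now take $\mu\in\Lambda^{\vee}$ together with a representative $\rho\in L^{\vee}$ of $r(\overline{\mu})$ satisfying $\rho+\mu\in L^{\sharp}$. Multiplying by $\varpi^{i}$ yields $\varpi^{i}\rho+\varpi^{i}\mu\in\varpi^{i}L^{\sharp}\subseteq L^{\sharp}(i)$, the inclusion being part of the setup preceding \eqref{compatibility}. Since $\varpi^{i}\rho\in L(i)^{\vee}$ and $\varpi^{i}\mu\in\Lambda(i)^{\vee}$, the characterization of $r_i$ recalled above forces $r_i(\overline{\varpi^{i}\mu})=\overline{\varpi^{i}\rho}$ in $L(i)^{\vee}/L(i)$. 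On the other hand, chasing $\overline{\mu}$ the other way around the square gives $\overline{\mu}\mapsto\overline{\rho}=r(\overline{\mu})\mapsto\overline{\varpi^{i}\rho}$, the same value; so the lower square commutes, and \eqref{relation} follows.

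The only genuine friction I anticipate is bookkeeping rather than ideas: confirming that each $\times\varpi^{i}$ really does carry the source lattice into the target lattice so that the induced map on finite quotients is defined, and verifying that the hypotheses of Lemma~\ref{cyclic duality for L and Lambda} hold for the triple $(\Lambda(i),L(i),L^{\sharp}(i))$ with $L^{\sharp}(i)$ self-dual and $\Lambda(i)=L(i)^{\perp}$ in it. Both are either spelled out in the material around \eqref{compatibility} or are routine consequences of the Jordan splitting, and no input beyond ``everything is multiplication by $\varpi^{i}$ inside $V^{\sharp}$'' is required.
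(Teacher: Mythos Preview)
Your argument is correct and follows essentially the same route as the paper: pick $\mu\in\Lambda^{\vee}$ and a lift $\rho\in L^{\vee}$ of $r(\overline{\mu})$ with $\rho+\mu\in L^{\sharp}$, multiply by $\varpi^{i}$ to land in $\varpi^{i}L^{\sharp}\subset L^{\sharp}(i)$, and invoke the characterization of $r_i$. Your treatment is simply more expansive, spelling out the upper square and the well-definedness checks that the paper leaves implicit.
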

\begin{proof}
    Let $\overline{\mu}\in\Lambda^{\vee}/\Lambda$ where $\mu\in\Lambda^{\vee}$, let $x\in L^{\vee}$ be a lift of $r(\overline{\mu})\in L^{\vee}/L$, then we have $x+\mu\in L^{\sharp}$, hence $\varpi^{i}x+\varpi^{i}\mu\in\varpi^{i}L^{\sharp}\subset L^{\sharp}(i)$, therefore $r_i(\overline{\varpi^{i}\mu})=\overline{\varpi^{i}x}=\varpi^{i}\cdot\overline{x}$, i.e., the diagram (\ref{relation}) commutes.
\end{proof}

\section{Vector valued Whittaker functions}\label{sec: analytic side} 

\subsection{Local densities}
\begin{definition}
Let $L, M$ be two quadratic $\mathcal{O}_{F}$-lattices. Let $\textup{Rep}_{M,L}$ be the scheme of integral representations, an $\mathcal{O}_{F}$-scheme such that for any $\mathcal{O}_{F}$-algebra $R$,
\begin{equation*}
    \textup{Rep}_{M,L}(R)=\textup{QHom}(L\otimes_{\mathcal{O}_{F}}R, M\otimes_{\mathcal{O}_{F}}R),
\end{equation*}
where \textup{QHom} denotes the set of injective module homomorphisms which preserve the quadratic forms. The local density of integral representations is defined to be
\begin{equation*}
    \textup{Den}(M,L)=\lim\limits_{d\rightarrow\infty}\frac{\#\textup{Rep}_{M,L}(\mathcal{O}_{F}/\pi^{d})}{q^{d\cdot \textup{dim}(\textup{Rep}_{M,L})_{F}}}.
\end{equation*}
\end{definition}
\begin{remark}
If $L,M$ have rank $n,m$ respectively and the generic fiber $(\textup{Rep}_{M,L})_{F}\neq\varnothing$, then $n\leq m$ and $\textup{dim}(\textup{Rep}_{M,L})_{F}=mn-\frac{n(n+1)}{2}$.
\end{remark}
\begin{definition}
Let $L, M$ be two quadratic $\mathcal{O}_{F}$-lattices. Let $\textup{PRep}_{M,L}$ be the $\mathcal{O}_{F}$-scheme of primitive integral representations such that for any $\mathcal{O}_{F}$-algebra $R$,
\begin{equation*}
    \textup{PRep}_{M,L}(R)=\{\phi\in\textup{Rep}_{M,L}(R):\textup{$\phi$ is an isomorphism between $L_{R}$ and a direct summand of $M_{R}$}\}.
\end{equation*}
where $L_{R}$ (resp. $M_{R}$) is $L\otimes_{\mathcal{O}_{F}}R$ (resp. $M\otimes_{\mathcal{O}_{F}}R$). The primitive local density is defined to be
\begin{equation*}
    \textup{Pden}(M,L)=\lim\limits_{d\rightarrow\infty}\frac{\#\textup{PRep}_{M,L}(\mathcal{O}_{F}/\pi^{d})}{q^{d\cdot \textup{dim}(\textup{Rep}_{M,L})_{F}}}.
\end{equation*}
\end{definition}
\begin{remark}
For a positive integer $d$, a homomorphism $\phi\in\textup{Rep}_{M,L}(\mathcal{O}_{F}/\pi^{d})$ or $\textup{Rep}_{M,L}(\mathcal{O}_{F})$ is primitive if and only if $\overline{\phi} \coloneqq \phi\,\,\textup{mod}\,\pi\in\textup{PRep}(\mathcal{O}_{F}/\pi)$, which is equivalent to $\textup{dim}_{\mathbb{F}_{q}}(\phi(L)+\pi\cdot M)/\pi\cdot M = \textup{rank}_{\mathcal{O}_{F}}(L)$. 
\end{remark}
\begin{definition}
Let $n\geq2$, for $\varepsilon\in\{\pm 1\}$, we define the normalizing factors to be
    \begin{align*}
    \textup{Nor}(X,H_{n}^{\varepsilon})=\left(1-\frac{1+(-1)^{n}}{2}\cdot\varepsilon q^{-(n)/2}X\right)\prod\limits_{1\leq i<n/2}\left(1-q^{-2i}X^{2}\right),\,\,\textup{Nor}^{\flat}(X,H_{n}^{\varepsilon})=2\cdot\textup{Nor}(X,H_{n}^{\varepsilon}).
\end{align*}   \label{normal1}
\end{definition}
\begin{remark}
    It is shown in the work of Li and Zhang \cite[$\S$3.4, $\S$3.5]{LiZhang-orthogonalKR} that for all $n\geq2$ and $\varepsilon,\varepsilon^{\pr}\in\{\pm 1\}$,
    \begin{equation*}
        \textup{Nor}(1,H_{n}^{\varepsilon})=\Den(H_{n}^{\varepsilon},H_{n-1}^{\varepsilon^{\pr}}),\,\,\,\,\textup{Nor}^{\flat}(1,H_{n}^{\varepsilon})=\Den(H_n^{\varepsilon},H_n^{\varepsilon}).
    \end{equation*}\label{nor-local-density}
\end{remark}

\subsection{Counting lattices}
In this subsection, we establish a formula that counts quadratic lattices using local density.

Let $L$ be an almost self-dual quadratic lattice of rank $n+1$. Then
\begin{equation*}
    L^{\vee}\simeq H_n^{\varepsilon}\obot\langle x\rangle,
\end{equation*}
where $\varepsilon\in\{\pm1\}$ and $\nu_\varpi(q(x))=-1$. We say an element $x\in L^{\vee}$ is primitive in $L^{\vee}$ if $x\notin \varpi L^{\vee}$. Let $\textup{O}(L^{\vee})$ be the orthogonal group of the lattice $L^{\vee}$, i.e., $\textup{O}(L^{\vee})=\{g\in\textup{Aut}_{\mathcal{O}_F}(L^{\vee}):(g(x_1),g(x_2))=(x_1,x_2)\,\,\textup{for all }x_1,x_2\in L^{\vee}\}$.
\begin{lemma}
    Let $y=y_1+y_2\in L^{\vee}$ be a primitive element where $y_1\in H_n^{\varepsilon}$ and $y_2\in \langle x\rangle$.
    \begin{itemize}
        \item [(a)]If $y_2$ is a primitive element in $\langle x\rangle$, there exists an element $g\in\textup{O}(L^{\vee})$ such that $g(y)\in \langle x\rangle$.
        \item[(b)] If $y_2$ is not a primitive element in $\langle x\rangle$, there exists an element $g\in\textup{O}(L^{\vee})$ such that $g(y)\in H_n^{\varepsilon}$.
    \end{itemize}    \label{primitive-in-vertex}
\end{lemma}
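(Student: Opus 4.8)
The plan is to reduce everything to elementary manipulations with hyperbolic planes inside the self-dual part $H_n^{\varepsilon}$, exploiting the fact that $\langle x\rangle$ is $p$-modular (so $q(x)$ has odd valuation $-1$) while $H_n^{\varepsilon}$ is unimodular. Write $y=y_1+y_2$ with $y_1\in H_n^{\varepsilon}$, $y_2\in\langle x\rangle$, and note that primitivity of $y$ in $L^{\vee}=H_n^{\varepsilon}\obot\langle x\rangle$ means that at least one of $y_1,y_2$ is primitive in its respective summand (since $L^{\vee}/\varpi L^{\vee}$ splits as a direct sum). The two cases of the lemma correspond to whether or not $y_2$ is itself primitive in $\langle x\rangle$.

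For part (a), assume $y_2$ is primitive in $\langle x\rangle$, i.e. $y_2=u x$ with $u\in\mathcal{O}_F^{\times}$; then $q(y_2)=u^2 q(x)$ has valuation $-1$, whereas $q(y_1)\in\mathcal{O}_F$, so $q(y)=q(y_1)+q(y_2)$ still has valuation $-1$. First I would show that, after applying a suitable isometry of $H_n^{\varepsilon}$, one may assume $y_1$ lies in a single hyperbolic plane $H_2^{+}=\langle e,f\rangle$ with $(e,f)=1$, $q(e)=q(f)=0$ inside $H_n^{\varepsilon}$ (this uses $n\geq 2$ and the Witt-type extension property for unimodular lattices; if $y_1=0$ there is nothing to do). Now work inside the rank-three lattice $\langle e,f\rangle\obot\langle x\rangle$ and look for $g\in\mathrm{O}(\langle e,f\rangle\obot\langle x\rangle)$ with $g(y)\in\langle x\rangle$. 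The key point: $q(y)=q(y_1)+u^2 q(x)$ and $q(y')=(u')^2 q(x)$ for the target $y'=u'x$; since $q(y)/q(x)\equiv u^2 \pmod{\varpi}$ is a square mod $\varpi$, hence (as $p$ is odd) a square in $\mathcal{O}_F^{\times}$, we can solve $(u')^2 q(x)=q(y)$ with $u'\in\mathcal{O}_F^{\times}$, so $y$ and $u'x$ have the same norm and are both primitive vectors of the same lattice — I would then invoke the transitivity of $\mathrm{O}(\langle e,f\rangle\obot\langle x\rangle)$ on primitive vectors of a given norm (a standard fact for a lattice that is an orthogonal sum of a hyperbolic plane and a rank-one piece, provable by explicit Eichler/symmetry maps) to produce the desired $g$, extended by the identity on the complementary $H_{n-2}^{\varepsilon}$.

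For part (b), assume $y_2\in\varpi\langle x\rangle$; then $q(y_2)\in\varpi^{2}\langle q(x)\rangle$ has valuation $\geq 1$, and primitivity of $y$ forces $y_1$ to be primitive in $H_n^{\varepsilon}$, with $q(y_1)\in\mathcal{O}_F$ and $q(y)=q(y_1)+q(y_2)\in\mathcal{O}_F$. Here I would directly construct $g$ sending $y$ to $y_1'\in H_n^{\varepsilon}$ with $q(y_1')=q(y)$: embed $y_1$ in a hyperbolic plane $\langle e,f\rangle\subset H_n^{\varepsilon}$ as above, write $y_1=ae+bf$ with (after an isometry) $b\in\mathcal{O}_F^{\times}$, and use an Eichler transformation $E_{e,z}$ for a suitable $z\in\langle e,f\rangle\obot\langle x\rangle$ (these fix $e$, act trivially mod the radical, and shear $f$ and $x$) to kill the $\langle x\rangle$-component of $y$ while only modifying the $e$-coefficient — the norm is automatically preserved, so the output lies in $\langle e,f\rangle\subset H_n^{\varepsilon}$, and we extend by the identity. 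Concretely, if $y=ae+bf+\varpi c\,x$ one takes the Eichler transvection along $e$ determined by $\varpi c\,x$ scaled appropriately; because $b$ is a unit this exactly cancels the $x$-term.

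The main obstacle I anticipate is part (a): one must be careful that the norm $q(y)$, which a priori lies in $\varpi^{-1}\mathcal{O}_F^{\times}$, is represented by a \emph{primitive} vector of $\langle e,f\rangle\obot\langle x\rangle$ lying in $\langle x\rangle$, and this genuinely uses that $q(x)$ has odd valuation (so the "$x$-direction" is the only place a valuation $-1$ norm can sit) together with the fact that units are squares mod $\varpi$ for odd $p$; assembling the isometry as an honest element of $\mathrm{O}(L^{\vee})$ rather than just $\mathrm{O}(V)$ requires keeping all the transvections integral, which is where the hyperbolic-plane reduction pays off. Part (b) is comparatively routine once the Eichler transformation machinery is set up.
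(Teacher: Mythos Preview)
Your approach via Eichler transvections is different from the paper's, and it is essentially sound when $H_n^{\varepsilon}$ contains a hyperbolic plane, but this hypothesis fails in one case that the lemma must cover. When $n=2$ and $\varepsilon=-$, the lattice $H_2^{-}$ is anisotropic modulo $\varpi$ and contains no isotropic vector, so there is no hyperbolic plane $\langle e,f\rangle$ to reduce to. Both halves of your argument break there: in (a) you cannot place $y_1$ inside an $H_2^{+}$ via an isometry of $H_2^{-}$, and in (b) the Eichler transvection $E_{e,z}$ you want to use requires an isotropic $e\in H_n^{\varepsilon}$ that does not exist. Since the paper allows $n\ge 2$ with either sign of $\varepsilon$ (the case $L=H_2^{-}\obot\Lambda_1$ is the Drinfeld/Shimura-curve example in the introduction), this is a genuine gap, not a harmless edge case.

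The paper sidesteps this entirely by arguing with orthogonal splittings and the uniqueness of the Jordan decomposition rather than with explicit transvections. For (a) it observes that $\nu_\varpi(q(y))=-1$ forces $L^{\vee}=\langle y\rangle\obot\langle y\rangle^{\perp}$, and uniqueness of the Jordan splitting gives $\langle y\rangle\simeq\langle x\rangle$ and $\langle y\rangle^{\perp}\simeq H_n^{\varepsilon}$, so the composite isometry does the job. For (b) it first completes $y$ to a self-dual rank-$2$ summand $M=\mathcal{O}_F y+\mathcal{O}_F z$ of $L^{\vee}$ (choosing $z\in H_n^{\varepsilon}$ so that $\overline{y_1},\overline{z}$ span a nondegenerate plane mod $\varpi$), splits $L^{\vee}=M\obot M^{\perp}$, and again invokes uniqueness of the Jordan decomposition to match $M\obot(\text{self-dual part of }M^{\perp})$ with $H_n^{\varepsilon}$. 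This argument never needs an isotropic vector in $H_n^{\varepsilon}$ and works uniformly. Your approach can be repaired for $n\ge 3$ (and $n=2,\varepsilon=+$) exactly as written, but for $n=2,\varepsilon=-$ you would have to fall back on something like the paper's splitting-plus-cancellation argument anyway.
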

\begin{proof}
    We first prove (a). The primitivity of $y_2$ implies that $\nu_\varpi(q(y))=-1$. We have the following decomposition
    \begin{equation*}
        L^{\vee}=\langle y\rangle\obot\langle y\rangle^{\bot}.
    \end{equation*}
    By the uniqueness of the Jordan decomposition of quadratic forms \cite[Corollary 5.21]{schulzepillot2021lecture}, there exist two isometric maps $\phi_1:\langle y\rangle\xrightarrow{\sim}\langle x\rangle$ and $\phi_2:\langle y\rangle^{\bot}\xrightarrow{\sim}H_n^{\varepsilon}$. Define $g\in\textup{O}(L^{\vee})$ to be the composition $L^{\vee}=\langle y\rangle^{\bot}\obot\langle y\rangle\xrightarrow{\phi_1\obot\phi_2}H_n^{\varepsilon}\obot \langle x\rangle\rightarrow L^{\vee}$, then $g(y)\in \langle x\rangle$.
    \par
    Now we prove (b). The primitivity of $y_1$ in $H_n^{\varepsilon}$ implies that there exists an element $z\in H_n^{\varepsilon}$ such that $\mathbb{F}_q\cdot\overline{y_1}+\mathbb{F}_q\cdot\overline{z}\subset H_n^{\varepsilon}/\pi H_n^{\varepsilon}$ is a non-degenerate quadratic subspace since $\overline{y_1}\neq0$ and $H_n^{\varepsilon}$ is self-dual. Let $M\coloneqq\mathcal{O}_F\cdot y+\mathcal{O}_F\cdot z\subset L^{\vee}$. The inner product matrix of $M$ under the basis $\{y,z\}$ is
    \begin{equation*}
        \begin{pmatrix}
             q(y_1)+q(y_2) & \frac{1}{2}(y_1,z)\\
            \frac{1}{2}(y_1,z) & q(z)
        \end{pmatrix}=\begin{pmatrix}
            q(y_1) & \frac{1}{2}(y_1,z)\\
            \frac{1}{2}(y_1,z) & q(z)
        \end{pmatrix}+\varpi^{2}\begin{pmatrix}
            q(y_2/\varpi) & 0\\
            0 & 0
        \end{pmatrix}.
    \end{equation*}
    Therefore $M^{\vee}=M$. Notice that for all elements $l\in L^{\vee}$ and $m=ay+bz\in M$, we have $\nu_\varpi\left((m,l)\right)=\nu_{\varpi}(a(y,l)+b(z,l))\geq\min\{\nu_{\varpi}((y_1,l)),\nu_{\varpi}((y_2,l)),\nu_{\varpi}((z,l))\}\geq0$. Then we have a decomposition
    \begin{equation*}
        L^{\vee}=M\obot M^{\bot}.
    \end{equation*}
    Then $M^{\bot}$ is a vertex lattice, therefore there exists a self-dual lattice $M_1$ such that $M^{\bot}\simeq M_1\obot\langle x^{\pr}\rangle$ where $x^{\pr}\in L^{\vee}$ such that $\nu_\varpi(q(x^{\pr}))=-1$. We get $L^{\vee}\simeq M\obot M_1\obot \langle x^{\pr}\rangle$. The uniqueness of Jordan decomposition of quadratic forms implies that there exist two isometric maps $\phi_1: M\obot M_1\xrightarrow{\sim}H_n^{\varepsilon}$ and $\phi_2:\langle x^{\pr}\rangle\xrightarrow{\sim}\langle x\rangle$. Define $g\in\textup{O}(L^{\vee})$ to be the composition $L^{\vee}\simeq M\obot M_1\obot \langle x^{\pr}\rangle\xrightarrow{\phi_1\obot\phi_2}H_n^{\varepsilon}\obot \langle x\rangle \rightarrow L^{\vee}$, then $g(y)\in H_n^{\varepsilon}$.
\end{proof}
We say a sublattice $M\subset L^{\vee}$ is primitive if $M=L^{\vee}\cap \left(M\otimes_{\mathcal{O}_F}F\right)$.
\begin{lemma}
    Let $M\subset L^{\vee}\simeq H_n^{\varepsilon}\obot\langle x\rangle$ be a primitive sublattice of rank $n$. Then
    \begin{equation*}
        M\simeq H_{n}^{\varepsilon}\,\,\textup{or}\,\,M\simeq H_{n-2}^{\varepsilon}\obot\langle y\rangle\obot\langle x\rangle.
    \end{equation*}\label{primi-class}
\end{lemma}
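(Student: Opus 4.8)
The plan is to reduce the classification to understanding the orthogonal complement of a single primitive vector, and then to an elementary splitting argument inside the self-dual lattice $H_n^{\varepsilon}$. Concretely, since $M$ is primitive of rank $n$ inside the nondegenerate quadratic space $L^{\vee}\otimes_{\Of}F$ of dimension $n+1$, the orthogonal complement $M^{\bot}\subset L^{\vee}$ is saturated of rank exactly $1$: it is nonzero by a dimension count, and rank $\ge 2$ would force $M\subseteq(M^{\bot})^{\bot}$ to have rank $\le n-1$, a contradiction. Hence $M^{\bot}=\Of\cdot w$ for some primitive $w\in L^{\vee}$, and since $(M^{\bot})^{\bot}=w^{\bot}$ is saturated of rank $n$ and contains the saturated rank-$n$ lattice $M$, we get $M=w^{\bot}$. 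So it suffices to classify $w^{\bot}$ as $w$ runs over primitive vectors of $L^{\vee}\simeq H_n^{\varepsilon}\obot\langle x\rangle$.

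Next I would apply Lemma \ref{primitive-in-vertex} to $w$. In its case (a) there is $g\in\textup{O}(L^{\vee})$ with $g(w)\in\langle x\rangle$; since $g(w)$ is again primitive it generates $\langle x\rangle$, whence $M\simeq g(w)^{\bot}=\langle x\rangle^{\bot}=H_n^{\varepsilon}$, the first alternative. In its case (b) there is $g$ with $g(w)\in H_n^{\varepsilon}$; replacing $w$ by $g(w)$ we may assume $w\in H_n^{\varepsilon}$, and then $w^{\bot}=K\obot\langle x\rangle$, where $K\coloneqq\{v\in H_n^{\varepsilon}:(v,w)=0\}$ is the orthogonal complement of the primitive vector $w$ inside the self-dual lattice $H_n^{\varepsilon}$. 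Everything then reduces to producing an isometry $K\simeq H_{n-2}^{\varepsilon}\obot\langle y\rangle$ for a suitable rank-$1$ lattice $\langle y\rangle$; combined with $M\simeq K\obot\langle x\rangle$ this gives the second alternative.

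To analyze $K$ I would distinguish according to $\nu_{\varpi}(q(w))$. If $q(w)\in\Of^{\times}$, then $\langle w\rangle$ is unimodular, $H_n^{\varepsilon}=\langle w\rangle\obot K$ with $K$ self-dual of rank $n-1$, and I would split off from $K$ a further unimodular rank-$1$ piece $\langle y\rangle$, using the freedom to prescribe the square class of $q(y)$ --- a self-dual $\Of$-lattice of rank $\ge 2$ represents units of both square classes --- so that the self-dual rank-$(n-2)$ complement has $\chi_F=\varepsilon$, i.e.\ is $\simeq H_{n-2}^{\varepsilon}$; the case $n=2$, where $K=\langle y\rangle$ already, is immediate. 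If $\nu_{\varpi}(q(w))\ge 1$, primitivity of $w$ in the self-dual $H_n^{\varepsilon}$ gives $w^{\pr}$ with $(w,w^{\pr})=1$; the Gram matrix of $\langle w,w^{\pr}\rangle$ has determinant $4q(w)q(w^{\pr})-1\in\Of^{\times}$ and discriminant $1-4q(w)q(w^{\pr})\equiv 1\pmod{\varpi}$, a square, so $\langle w,w^{\pr}\rangle$ is self-dual with square discriminant, hence $\simeq H_2^{+}$; writing $H_n^{\varepsilon}=\langle w,w^{\pr}\rangle\obot R$, the complement $R$ is self-dual of rank $n-2$ with $\chi_F(R)=\chi_F(H_n^{\varepsilon})=\varepsilon$, so $R\simeq H_{n-2}^{\varepsilon}$, while $K=w^{\bot}=\Of\cdot\bigl(w-2q(w)w^{\pr}\bigr)\obot R\simeq H_{n-2}^{\varepsilon}\obot\langle y\rangle$.

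The step I expect to be the main obstacle is pinning the discriminant of the $H_{n-2}$-summand down to $\varepsilon$ rather than $-\varepsilon$: a rank-$2$ self-dual sublattice of $H_n^{\varepsilon}$ through $w$ could a priori be $H_2^{-}$, which would leave an $H_{n-2}^{-\varepsilon}$ behind. This is circumvented by the dichotomy above --- when $\nu_{\varpi}(q(w))\ge 1$ that rank-$2$ block automatically has square discriminant, and when $q(w)$ is a unit one instead exploits the freedom in which unimodular rank-$1$ summand to peel off. The remaining ingredients, namely multiplicativity of discriminants under orthogonal direct sums and the classification of self-dual $\Of$-lattices by rank and discriminant, are routine.
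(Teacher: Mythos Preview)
Your argument is correct and follows the same strategy as the paper: both identify $M$ as $w^{\bot}$ for a primitive $w\in L^{\vee}$, invoke Lemma~\ref{primitive-in-vertex} to reduce to $w\in\langle x\rangle$ or $w\in H_n^{\varepsilon}$, and in the latter case compute the orthogonal complement of $w$ inside $H_n^{\varepsilon}$. The only difference is in that last step: the paper simply cites \cite[Lemma~7.1.1]{Zhu23} to conclude $K\simeq H_{n-2}^{\varepsilon}\obot\langle y\rangle$ with $q(y)=-q(w)$, whereas you supply a self-contained argument by splitting on $\nu_\varpi(q(w))$ --- in the unit case peeling off a unimodular $\langle y\rangle$ with a prescribed square class, and in the non-unit case building a hyperbolic plane $\langle w,w'\rangle\simeq H_2^{+}$ whose complement is forced to be $H_{n-2}^{\varepsilon}$. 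Your version is a bit longer but has the advantage of being independent of the external reference and of making transparent exactly why the $\varepsilon$ (rather than $-\varepsilon$) appears on the $H_{n-2}$ factor, which you rightly flag as the delicate point.
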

\begin{proof}
    Since the quadratic form on $L^{\vee}$ is nondegenerate. There exists an element $y^{\pr}\in L^{\vee}\otimes_{\mathcal{O}_F}F$ such that 
    \begin{equation*}
        M=\{x\in L^{\vee}:(x,y^{\pr})=0\}.
    \end{equation*}
    We can assume the element $y^{\pr}$ is primitive in $L^{\vee}$. By Lemma \ref{primitive-in-vertex}, we can further assume that $y^{\pr}\in\langle x\rangle$ or $H_n^{\varepsilon}$. If $y^{\pr}\in\langle x\rangle$, we have $M\simeq H_n^{\varepsilon}$. If $y^{\prime}\in H_n^{\varepsilon}$, we have $M\simeq H_{n-2}^{\varepsilon}\obot\langle y\rangle\obot\langle x\rangle$ for some element $y\in L$ such that $q(y)=-q(y^{\prime})$ by \cite[Lemma 7.1.1]{Zhu23}.
\end{proof}
\begin{lemma}\label{lem: pden}
    Let $M\subset L^{\vee}\simeq H_n^{\varepsilon}\obot\langle x\rangle$ be a sublattice of rank $n$. Then
    \begin{equation}
        \textup{Pden}(L^{\vee},M)=q^{-n}\cdot\textup{Nor}^{\flat}(1,H_n^{\varepsilon})\cdot\begin{cases}
            1, &\textup{if $M\simeq H_{n}^{\varepsilon}$ or $M\simeq H_{n-1}^{\varepsilon^{\prime}}\obot\langle x\rangle$;}\\
            2, &\textup{if $M\simeq H_{n-2}^{\varepsilon}\obot\langle y\rangle\obot\langle x\rangle$ where $\nu_\varpi(q(y))\geq1$;}\\
            0, &\textup{otherwise}.
        \end{cases}
    \end{equation}\label{pri-local}
\end{lemma}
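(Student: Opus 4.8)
The plan is to reduce the computation of the primitive local density $\textup{Pden}(L^{\vee},M)$ to known values of ordinary local densities by a standard stratification argument, combined with the classification of primitive rank-$n$ sublattices of $L^{\vee}$ from Lemma \ref{primi-class}. First I would recall the general mechanism: for any sublattice $M\subset L^{\vee}$ of rank $n$, an integral representation $\phi\in\textup{Rep}_{L^{\vee},M}(\mathcal{O}_F/\varpi^d)$ lifting to $\mathcal{O}_F$-points has a well-defined saturation, so that $\textup{Rep}_{L^{\vee},M}$ decomposes, up to the relevant limit, according to the isometry class of the saturated lattice $M'\supseteq M$ with $M'$ primitive in $L^{\vee}$. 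This yields an identity of the shape
\begin{equation*}
    \textup{Den}(L^{\vee},M)=\sum_{M\subseteq M'\subseteq M\otimes F}\textup{Pden}(L^{\vee},M')\cdot \alpha(M',M),
\end{equation*}
where $\alpha(M',M)$ counts (in the density sense) the representations of $M$ into $M'$ with primitive-saturation $M'$, a purely ``local at $M$'' quantity independent of the ambient $L^{\vee}$. In the case at hand I only need the leading term: when $M$ itself is primitive in $L^{\vee}$, all intermediate $M'$ with $M'$ primitive must equal $M$, so $\textup{Pden}(L^{\vee},M)=\textup{Den}(L^{\vee},M)-(\text{contributions from }M'\supsetneq M)$; and if $M$ is not primitive one passes to its saturation.

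Next I would treat the primitive case directly via Lemma \ref{primi-class}, which says a primitive rank-$n$ sublattice $M\subset L^{\vee}\simeq H_n^{\varepsilon}\obot\langle x\rangle$ is isometric either to $H_n^{\varepsilon}$ (equivalently $H_{n-1}^{\varepsilon'}\obot\langle x\rangle$ with $\nu_\varpi(q(x))=-1$ can be brought into this form — I should double-check which case produces which isometry type) or to $H_{n-2}^{\varepsilon}\obot\langle y\rangle\obot\langle x\rangle$ with $\nu_\varpi(q(y))\geq 1$. In the first case, $M$ is self-dual (or differs from $L^\vee$ only in the $\langle x\rangle$-direction), and the set of primitive embeddings $M\hookrightarrow L^{\vee}$ is a torsor-like object whose density is governed by $\textup{O}(L^{\vee})$ modulo the stabilizer; I would compute this by writing $L^{\vee}=\phi(M)\obot\phi(M)^{\perp}$ and observing $\phi(M)^{\perp}$ is forced to be $\langle x\rangle$ (resp. a rank-one $\varpi$-modular lattice), giving the clean count $q^{-n}\cdot\textup{Nor}^{\flat}(1,H_n^{\varepsilon})$, using Remark \ref{nor-local-density} ($\textup{Nor}^{\flat}(1,H_n^{\varepsilon})=\Den(H_n^{\varepsilon},H_n^{\varepsilon})$) to identify the normalizing factor. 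In the second case the extra factor of $2$ comes from the two possible orthogonal complements / the extra automorphism arising from the splitting $\langle y\rangle\obot\langle x\rangle$ when $\nu_\varpi(q(y))\ge 1$ (both $y$ and $x$ lie in the non-self-dual Jordan block, and there is a binary form of the same type realizing $M^\perp$ in two inequivalent ways) — I would pin this down using the uniqueness of Jordan decomposition \cite[Corollary 5.21]{schulzepillot2021lecture} exactly as in the proof of Lemma \ref{primitive-in-vertex}. The vanishing in all other cases is immediate: if $M$ is primitive but not of one of the two listed isometry types, $\textup{PRep}_{L^{\vee},M}=\varnothing$ already over $\mathbb{F}_q$ by Lemma \ref{primi-class}; and if $M$ is not primitive in $L^\vee$ (no element realizes the needed primitive saturation) then by the remark characterizing primitivity mod $\varpi$ there are no primitive representations, so $\textup{Pden}=0$.

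Finally I would assemble the three cases into the displayed formula, being careful about the normalization $q^{d\cdot\dim(\textup{Rep}_{L^{\vee},M})_F}$ in the denominator: since $\dim(\textup{Rep}_{L^{\vee},M})_F = (n+1)n - \tfrac{n(n+1)}{2} = \tfrac{n(n+1)}{2}$ while $\dim(\textup{Rep}_{H_n^\varepsilon,M})_F$ would be $n^2-\tfrac{n(n+1)}{2}=\tfrac{n(n-1)}{2}$, the discrepancy in the exponents accounts precisely for the prefactor $q^{-n}$ relative to $\textup{Nor}^{\flat}(1,H_n^{\varepsilon})=\Den(H_n^\varepsilon,H_n^\varepsilon)$. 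The main obstacle I anticipate is the bookkeeping in case (b): correctly justifying the combinatorial factor $2$ and confirming it does \emph{not} appear in case (a) requires a careful orbit count of primitive sublattices of a fixed isometry type inside $L^\vee$ under $\textup{O}(L^\vee)$, i.e.\ understanding exactly when two primitive embeddings with isometric images are related by an element of $\textup{O}(L^\vee)$ — this is where the difference between the self-dual block and the $\varpi$-modular rank-one block really bites, and I would handle it by the explicit complement analysis of Lemma \ref{primitive-in-vertex} rather than by a black-box mass formula.
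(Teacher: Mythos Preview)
Your approach diverges from the paper's and, as written, has real gaps.

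The paper does not use stratification or orbit counting at all. It computes each case directly by \emph{factoring} the primitive density along the orthogonal decomposition of $M$ and of $L^{\vee}$. Concretely: for $M\simeq H_n^{\varepsilon}$ one exhibits a set-level bijection
\[
\textup{PRep}_{H_n^{\varepsilon}\obot\langle x\rangle,\,H_n^{\varepsilon}}(\mathcal O_F/\varpi^{N})\;\simeq\;(\varpi\langle x\rangle/\varpi^{N}\langle x\rangle)^{n}\times\textup{PRep}_{H_n^{\varepsilon},\,H_n^{\varepsilon}}(\mathcal O_F/\varpi^{N}),
\]
from which $\textup{Pden}(L^{\vee},M)=q^{-n}\cdot\textup{Pden}(H_n^{\varepsilon},H_n^{\varepsilon})=q^{-n}\cdot\textup{Nor}^{\flat}(1,H_n^{\varepsilon})$ is immediate. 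The other two cases are handled by the multiplicativity $\textup{Pden}(A\obot B,\,C\obot D)=\textup{Pden}(A\obot B,C)\cdot\textup{Pden}(B',D)$ (with $B'$ the complement of $C$) together with Katsurada's formulas \cite{katsurada1999explicit}: in particular $\textup{Pden}(H_n^{\varepsilon}\obot\langle x\rangle,\langle x\rangle)=\textup{Pden}(\langle x\rangle,\langle x\rangle)=2$ and $\textup{Pden}(H_2^{+},\langle y\rangle)=2(1-q^{-1})$ when $\nu_{\varpi}(q(y))\ge 1$. The factor $2$ in the third case then falls out of an algebraic identity $4(1-q^{-1})\cdot\textup{Pden}(H_n^{\varepsilon},H_{n-2}^{\varepsilon})=2\cdot\textup{Nor}^{\flat}(1,H_n^{\varepsilon})$, not from any orbit count.

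Two specific problems with your plan:

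\textbf{(1)} Your claim that ``when $M$ itself is primitive in $L^{\vee}$, all intermediate $M'$ with $M'$ primitive must equal $M$'' is false as a statement about the stratification $\textup{Den}=\sum_{M\subset M'}\textup{Pden}$. The sum runs over superlattices $M'\supset M$ inside $M_F$, and $\textup{Pden}(L^{\vee},M')$ depends only on the isometry class of $M'$. Take $M\simeq H_{n-2}^{\varepsilon}\obot\langle y\rangle\obot\langle x\rangle$ with $\nu_{\varpi}(q(y))=2$: then $M'=H_{n-2}^{\varepsilon}\obot\langle \varpi^{-1}y\rangle\obot\langle x\rangle\supsetneq M$ satisfies $M'\simeq H_{n-1}^{\varepsilon'}\obot\langle x\rangle$, which \emph{does} embed primitively in $L^{\vee}$. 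So you cannot read off $\textup{Pden}$ as $\textup{Den}$ minus nothing; the inversion you describe is genuinely recursive and no simpler than the direct computation.

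\textbf{(2)} Your explanation of the factor $2$ (``two possible orthogonal complements / extra automorphism from the splitting $\langle y\rangle\obot\langle x\rangle$'') does not survive inspection. For any primitive $M\subset L^{\vee}$ of rank $n$, the orthogonal complement $M^{\perp}$ is a uniquely determined rank-$1$ lattice, and $\textup{O}(M^{\perp})=\{\pm 1\}$ in every case. Moreover $y$ and $x$ do not lie in the same Jordan block ($\nu_{\varpi}(q(x))=-1$ while $\nu_{\varpi}(q(y))\ge 1$). An orbit count under $\textup{O}(L^{\vee})$ gives a single orbit in every primitive case and does not distinguish the value $1$ from the value $2$.

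I would recommend abandoning the orbit/stratification route and instead carrying out the factorization directly, as above; all the needed pieces ($\textup{Pden}(\langle x\rangle,\langle x\rangle)$, $\textup{Pden}(H_2^{+},\langle y\rangle)$, and the self-dual cases from Remark~\ref{nor-local-density}) are elementary or already in \cite{katsurada1999explicit}.
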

\begin{proof}
    By the definition of primitive local density, the number $\textup{Pden}(L^{\vee},M)$ is nonzero if and only if $M\subset L^{\vee}$ is primitive.
    \par
    If $M\simeq H_n^{\varepsilon}$, we have $\textup{Pden}(L^{\vee},M)=\textup{Pden}(H_n^{\varepsilon}\obot\langle x\rangle, H_n^{\varepsilon})$. Since $\nu_\varpi(q(x))=-1$, we have the following isomorphism of sets for all positive integers $N\geq1$
    \begin{equation*}
        \textup{Pden}(H_n^{\varepsilon}\obot\langle x\rangle, H_n^{\varepsilon})(\mathcal{O}_F/\varpi^{N})\xrightarrow{\sim}\left(\varpi\langle x\rangle/\varpi^{N}\langle x\rangle\right)^{n}\times\textup{Pden}(H_n^{\varepsilon}, H_n^{\varepsilon})(\mathcal{O}_F/\varpi^{N}).
    \end{equation*}
    Hence
    \begin{align*}
        \textup{Pden}(H_n^{\varepsilon}\obot\langle x\rangle, H_n^{\varepsilon})&=\lim\limits_{N\rightarrow\infty}\frac{\textup{Pden}(H_n^{\varepsilon}\obot\langle x\rangle, H_n^{\varepsilon})(\mathcal{O}_F/\varpi^{N})}{q^{Nn\left(n+1-(n+1)/2\right)}}\\
        &=q^{-n}\cdot\lim\limits_{N\rightarrow\infty}\frac{\textup{Pden}(H_n^{\varepsilon}, H_n^{\varepsilon})(\mathcal{O}_F/\varpi^{N})}{q^{Nn\left(n-(n+1)/2\right)}}\\
        &=q^{-n}\cdot\textup{Pden}(H_n^{\varepsilon},H_n^{\varepsilon})=q^{-n}\cdot\textup{Nor}^{\flat}(1,H_n^{\varepsilon}).
    \end{align*}
    \par
     If $M\simeq H_{n-1}^{\varepsilon^{\pr}}\obot\langle x\rangle$ for some $\varepsilon^{\pr}\in\{\pm1\}$, we have $\textup{Pden}(L^{\vee},M)=\textup{Pden}(H_n^{\varepsilon}\obot\langle x\rangle, H_{n-1}^{\varepsilon^{\pr}}\obot\langle x\rangle)=\textup{Pden}(H_n^{\varepsilon}\obot\langle x\rangle,\langle x\rangle)\cdot\textup{Pden}(H_n^{\varepsilon},H_{n-1}^{\varepsilon^{\pr}})$. By \cite[Proposition 2.5]{katsurada1999explicit}, we have $\textup{Pden}(H_n^{\varepsilon}\obot\langle x\rangle,\langle x\rangle)=\textup{Pden}(\langle x\rangle,\langle x\rangle)=2q^{-n}$. It's easy to verify from Definition \ref{normal1} and Remark \ref{nor-local-density} that $\textup{Pden}(L^{\vee},M)=q^{-n}\cdot\textup{Nor}^{\flat}(1,H_n^{\varepsilon})$.
     \par
     Now we consider the case $M\simeq H_{n-2}^{\varepsilon}\obot\langle y\rangle\obot\langle x\rangle$ where $y$ is an element such that $\nu_\varpi(q(y))\geq1$.
     \begin{align*}
         \textup{Pden}(L^{\vee},M)&=\textup{Pden}(H_{n}^{\varepsilon}\obot\langle x\rangle, H_{n-2}^{\varepsilon}\obot\langle y\rangle\obot\langle x\rangle)=\textup{Pden}(H_n^{\varepsilon}\obot\langle x\rangle,H_{n-2}^{\varepsilon}\obot\langle x\rangle)\cdot\textup{Pden}(H_2^{+},\langle y\rangle)\\
         &=\textup{Pden}(H_n^{\varepsilon}\obot\langle x\rangle,\langle x\rangle)\cdot\textup{Pden}(H_n^{\varepsilon},H_{n-2}^{\varepsilon})\cdot2(1-q^{-1})\\
         &=4q^{-n}(1-q^{-1})\cdot\textup{Pden}(H_n^{\varepsilon},H_{n-2}^{\varepsilon}).
     \end{align*}
     It's esay to verify that $4q^{-n}(1-q^{-1})\cdot\textup{Pden}(H_n^{\varepsilon},H_{n-2}^{\varepsilon})=2q^{-n}\cdot\textup{Nor}^{\flat}(1,H_n^{\varepsilon})$ by \cite[Definition 3.5.1]{LiZhang-orthogonalKR}.
\end{proof}
\begin{proposition}\label{lattice-counting}
    Let $M\subset L_F$ be a nondegenerate sublattice of rank $n$. Then
    \begin{equation*}
        \Den(L^{\vee},M)=q^{-n}\cdot\textup{Nor}^{\flat}(1,H_n^{\varepsilon})\cdot\sharp\{L^{\pr}\subset L_F:\textup{$L^{\prime}$ is isometric to $L$ and } M\subset L^{\pr\vee}\}.
    \end{equation*}
\end{proposition}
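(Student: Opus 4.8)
The plan is to compute $\Den(L^{\vee},M)$ by reducing it to the primitive local densities already evaluated in Lemma \ref{pri-local}: I would stratify integral representations $M\hookrightarrow L^{\vee}$ according to the saturation of their image, and then recognize the resulting combinatorial sum as the count of lattices on the right-hand side.

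First I would invoke the standard decomposition of a local density into primitive ones. Every isometric embedding $M\hookrightarrow L^{\vee}$ extends uniquely to a primitive (saturated) embedding of the overlattice $M':=L^{\vee}\cap(M\otimes_{\Of}F)\supseteq M$, and conversely every primitive embedding of an overlattice of $M$ into $L^{\vee}$ restricts to such an embedding. At the level of densities this yields
\begin{equation*}
    \Den(L^{\vee},M)=\sum_{M\subseteq M'\subseteq M_{F}}\textup{Pden}(L^{\vee},M'),
\end{equation*}
the sum running over the finitely many overlattices $M'$ of $M$ inside $M_{F}=M\otimes_{\Of}F$ with $\textup{Pden}(L^{\vee},M')\neq 0$. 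By Lemmas \ref{primi-class} and \ref{pri-local}, such an $M'$ is isometric either to $H_{n}^{\varepsilon}$, to $H_{n-1}^{\varepsilon'}\obot\langle x\rangle$, or to $H_{n-2}^{\varepsilon}\obot\langle y\rangle\obot\langle x\rangle$ with $\nu_{\varpi}(q(y))\geq 1$, and in these three cases $\textup{Pden}(L^{\vee},M')$ equals $q^{-n}\cdot\textup{Nor}^{\flat}(1,H_{n}^{\varepsilon})$ times $1$, $1$, and $2$ respectively. Writing $c(M')$ for this weight (and $c(M')=0$ otherwise), one gets $\Den(L^{\vee},M)=q^{-n}\cdot\textup{Nor}^{\flat}(1,H_{n}^{\varepsilon})\cdot\sum_{M'}c(M')$.

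Next I would match $\sum_{M'}c(M')$ with the geometric count. Since $L'\mapsto L'^{\vee}$ is an inclusion-reversing involution on lattices in $V:=L_{F}$ preserving isometry classes, the set $\{L'\subseteq V:L'\simeq L,\ M\subseteq L'^{\vee}\}$ is in bijection with $\{N\subseteq V:N\simeq L^{\vee},\ M\subseteq N\}$. For each such $N$ the saturation $N\cap M_{F}$ is a primitive rank-$n$ sublattice of $N\simeq L^{\vee}$, hence one of the three isometry types above by Lemma \ref{primi-class}, so $N\mapsto N\cap M_{F}$ partitions this set over the overlattices $M'$ of $M$. It therefore suffices to prove, for each fixed $M'$ with $M\subseteq M'\subseteq M_{F}$,
\begin{equation*}
    \#\{N\subseteq V:N\simeq L^{\vee},\ M'\subseteq N\text{ and }N\cap M_{F}=M'\}=c(M')
\end{equation*}
(both sides vanishing when $M'$ is of none of the three types, by Lemma \ref{primi-class} on the left). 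Summing over $M'$ then gives $\sum_{M'}c(M')=\#\{N\}=\#\{L'\}$, which is the claimed identity; note this count depends only on the isometry class of $M'$, consistently with the left-hand side, because the orthogonal complement $W:=M_{F}^{\perp}\subseteq V$ is determined up to isometry by $[M']$ and the fixed space $V$.

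To carry out this local count I would describe every such $N$ in terms of $M'$, a rank-one lattice in the line $W$, and a gluing homomorphism into $M_{F}/M'$, and then analyze the condition $N\simeq L^{\vee}\simeq H_{n}^{\varepsilon}\obot\langle x\rangle$ (recall $\nu_{\varpi}(q(x))=-1$) via the Jordan type and discriminant of $N$. For $M'\simeq H_{n}^{\varepsilon}$ and $M'\simeq H_{n-1}^{\varepsilon'}\obot\langle x\rangle$, Witt cancellation inside $V$ fixes the isometry class of $W$, and the discriminant constraint forces the gluing to be trivial and the rank-one lattice in $W$ to be the unique one of the correct Jordan type, so there is exactly one $N$, matching $c(M')=1$. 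The delicate case is $M'\simeq H_{n-2}^{\varepsilon}\obot\langle y\rangle\obot\langle x\rangle$ with $\nu_{\varpi}(q(y))\geq 1$: here $M'\obot(\text{rank-one lattice})$ is never isometric to $L^{\vee}$, since it keeps a Jordan block at level $\nu_{\varpi}(q(y))\geq 1$, so the copies of $L^{\vee}$ must arise from nontrivial gluings, and I expect the count of admissible gluings to reduce, exactly as in the proof of Lemma \ref{pri-local} (via $\textup{Pden}(H_{2}^{+},\langle y\rangle)=2(1-q^{-1})$ and the orbit description of Lemma \ref{primitive-in-vertex}), to precisely two, matching $c(M')=2$. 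The main obstacle is this last count — pinning down the number of copies of $L^{\vee}$ in $V$ with a prescribed, non-orthogonally-split saturation, and in particular showing it is exactly $2$ in the $\langle y\rangle$-case; a secondary point is the opening decomposition, which is standard but should be cited (e.g.\ from \cite{LiZhang-orthogonalKR}) or reproved by the usual stabilization argument, since $\textup{Rep}_{L^{\vee},M}$ behaves subtly over $\Of/\varpi^{d}$.
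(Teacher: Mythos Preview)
Your proposal is correct and takes essentially the same approach as the paper: both decompose $\Den(L^{\vee},M)=\sum_{M\subset M'\subset M_F}\textup{Pden}(L^{\vee},M')$, invoke Lemma~\ref{pri-local} for the weights, and then match each weight $c(M')\in\{1,2\}$ to the number of lattices $L'\simeq L$ in $L_F$ with $M'\subset L'^{\vee}$. The paper's proof likewise just asserts these lattice counts ($1$ in the first two cases, $2$ when $\nu_\varpi(q(y))\geq 1$) without carrying out the gluing analysis you sketch, so the ``obstacle'' you flag in the $\langle y\rangle$-case is exactly the step the paper leaves implicit.
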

\begin{proof}
    Notice that we have
    \begin{equation*}
        \Den(L^{\vee},M)=\sum\limits_{M\subset M^{\pr}\subset L_F}\textup{Pden}(L^{\vee},M^{\pr}).
    \end{equation*}
    The number $\textup{Pden}(L^{\vee},M^{\pr})$ has been calculated in Lemma \ref{lem: pden}.   It is nonzero if and only of $M^{\pr} \simeq H_{n}^{\varepsilon}\,\,\textup{or}\,\,M\simeq H_{n-2}^{\varepsilon}\obot\langle y\rangle\obot\langle x\rangle$ for some elements $y$ such that $\nu_\varpi(q(y))\geq0$.
    \par
    If $M^{\pr} \simeq H_{n}^{\varepsilon}\,\,\textup{or}\,\,\simeq H_{n-2}^{\varepsilon}\obot\langle y\rangle\obot\langle x\rangle$ for some elements $y$ such that $\nu_\varpi(q(y))=0$, there exists exactly 1 sublattice $L^{\pr}$ such that $L^{\pr}$ is isometric to $L$ and $M^{\pr}\subset L^{\pr\vee}$.
    \par
    If $M^{\pr}\simeq H_{n-2}^{\varepsilon}\obot\langle y\rangle\obot\langle x\rangle$ for some elements $y$ such that $\nu_\varpi(q(y))\geq1$, there exists exactly 2 sublattice $L^{\pr}$ such that $L^{\pr}$ is isometric to $L$ and $M^{\pr}\subset L^{\pr\vee}$.
    \par
    The proposition follows from the above analysis and Lemma \ref{pri-local}.
\end{proof}
\begin{remark}
    We can compare the above formula with the unitary analogue (e.g. \cite[Lemma 5.7]{HLSY}). However, the fact that one still has such formula even when   $\mathrm{rk}(L^\vee)>\mathrm{rk}(M)$ is interesting and a bit surprising. 
\end{remark}

\subsection{Normalized local densities}
Let $L$ be an almost self-dual quadratic lattice of rank $n+1$ over $\mathcal{O}_F$. Then
\begin{equation*}
    L\simeq H_n^{\varepsilon}\obot\langle x\rangle,
\end{equation*}
where $\varepsilon\in\{\pm1\}$ and $\nu_\varpi(q(x))=1$.
\begin{lemma}
    For $\varepsilon,\varepsilon^{\pr}\in\{\pm1\}$, we have
    \begin{equation*}
        \textup{Den}(H_n^{\varepsilon}\obot\langle x\rangle, H_{n-1}^{\varepsilon^{\pr}}\obot\langle x\rangle)=\textup{Nor}^{\flat}(1,H_n^{\varepsilon}).
    \end{equation*}
\end{lemma}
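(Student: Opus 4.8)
Write $L=H_n^\varepsilon\obot\langle x\rangle$ and $M=H_{n-1}^{\varepsilon'}\obot\langle x\rangle$, so that $\nu_\varpi(q(x))=1$, $L$ is an integral (indeed almost self-dual) lattice, and $\nu_\varpi(\disc M)=1$. Recall from Remark \ref{nor-local-density} and Definition \ref{normal1} that $\textup{Nor}^\flat(1,H_n^\varepsilon)=\textup{Den}(H_n^\varepsilon,H_n^\varepsilon)=2\,\textup{Den}(H_n^\varepsilon,H_{n-1}^{\varepsilon'})$. The plan is to pass from $\textup{Den}$ to primitive local densities and then to carry out a computation in the style of the proof of Lemma \ref{lem: pden}.

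First I would show $\textup{Den}(L,M)=\textup{Pden}(L,M)$. Every isometric embedding $M\hookrightarrow L$ factors uniquely through its saturation in $L$, which gives the standard identity $\textup{Den}(L,M)=\sum_{M\subseteq M'\subseteq M_F}\textup{Pden}(L,M')$ (all the representation schemes involved have the same generic dimension, so the normalizations match). A nonzero term forces $M'$ to be isometric to a saturated sublattice of the integral lattice $L$, hence $M'$ integral; but an overlattice $M'\supseteq M$ of the same rank and index $q^{\ell}$ satisfies $\nu_\varpi(\disc M')=1-2\ell$, which is $\geq 0$ only for $\ell=0$. Thus $M'=M$ is the only surviving term.

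Next I would compute $\textup{Pden}(L,M)$ by decomposing a primitive embedding $\phi\colon M\hookrightarrow L$ along the orthogonal splitting $M=H_{n-1}^{\varepsilon'}\obot\langle x_M\rangle$. Since $H_{n-1}^{\varepsilon'}$ is self-dual, $\phi(H_{n-1}^{\varepsilon'})$ is an orthogonal direct summand of $L$; writing $H_n^\varepsilon=H_{n-1}^{\varepsilon'}\obot\langle a\rangle$ one gets $L=\phi(H_{n-1}^{\varepsilon'})\obot N_\phi$ with $N_\phi\simeq N:=\langle a\rangle\obot\langle x\rangle$ for every $\phi$ (the complement being determined up to isometry by Witt's theorem and uniqueness of Jordan splittings, cf.\ \cite[Corollary 5.21]{schulzepillot2021lecture}), and the residual datum $w=\phi(x_M)\in N_\phi$ is a primitive vector with $q(w)=q(x)$. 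This yields the factorization $\textup{Pden}(L,M)=\textup{Pden}(L,H_{n-1}^{\varepsilon'})\cdot\textup{Pden}(N,\langle x\rangle)$, where $\textup{Pden}(N,\langle x\rangle)$ is a rank-$2$ primitive local density handled by the explicit formulas of Katsurada \cite{katsurada1999explicit} as in the proof of Lemma \ref{lem: pden}, and $\textup{Pden}(L,H_{n-1}^{\varepsilon'})$ is read off from the decomposition $L=H_{n-1}^{\varepsilon'}\obot N$. One could instead mimic Lemma \ref{lem: pden} more literally, splitting off $\langle x\rangle$ first: the image of $x_M$ is then a primitive $w\in L$ of norm $q(x)$ whose orthogonal complement is either $\simeq H_n^\varepsilon$ or $\simeq H_{n-2}^{\varepsilon''}\obot\langle w'\rangle\obot\langle x\rangle$ with $\nu_\varpi(q(w'))\geq 1$, the latter being incompatible with containing the self-dual lattice $H_{n-1}^{\varepsilon'}$ as an orthogonal summand, hence contributing nothing; this extra, ultimately harmless, case is exactly the feature produced by $\nu_\varpi(q(x))=1$ being positive rather than negative.

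Combining the two factors, simplifying the resulting powers of $q$, and comparing with $\textup{Nor}^\flat(1,H_n^\varepsilon)=\textup{Den}(H_n^\varepsilon,H_n^\varepsilon)$ gives the claim; alternatively the whole identity can be extracted directly from the explicit local density polynomials of Li--Zhang \cite[\S3.4--3.5]{LiZhang-orthogonalKR}. The main obstacle is precisely this final bookkeeping step — tracking the normalization factors through the primitive-density factorization (more delicate here than in Lemma \ref{lem: pden} because of the positive valuation of $q(x)$) and verifying that the resulting rational function in $q^{-1}$ is exactly $\textup{Den}(H_n^\varepsilon,H_n^\varepsilon)$.
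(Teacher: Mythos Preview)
Your approach is correct and follows essentially the same line as the paper: factor the density along the self-dual summand $H_{n-1}^{\varepsilon'}$ of $M$ and evaluate the two factors via Katsurada. The paper, however, stays at the level of $\textup{Den}$ throughout: since $H_{n-1}^{\varepsilon'}$ is self-dual one has directly
\[
\textup{Den}(H_n^{\varepsilon}\obot\langle x\rangle,\,H_{n-1}^{\varepsilon'}\obot\langle x\rangle)
=\textup{Den}(H_n^{\varepsilon}\obot\langle x\rangle,\,H_{n-1}^{\varepsilon'})\cdot\textup{Den}(H_1^{\varepsilon''}\obot\langle x\rangle,\,\langle x\rangle),
\]
and then Katsurada gives $\textup{Den}(H_n^{\varepsilon}\obot\langle x\rangle,H_{n-1}^{\varepsilon'})=\textup{Den}(H_n^{\varepsilon},H_{n-1}^{\varepsilon'})=\textup{Nor}(1,H_n^{\varepsilon})$ and $\textup{Den}(H_1^{\varepsilon''}\obot\langle x\rangle,\langle x\rangle)=q^{-1}\textup{Den}(\langle x\rangle,\langle x\rangle)=2$. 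Your preliminary reduction $\textup{Den}(L,M)=\textup{Pden}(L,M)$ is valid but unnecessary (and your alternative of splitting off $\langle x\rangle$ first only adds case analysis); the direct $\textup{Den}$-level factorization makes the ``bookkeeping'' you flag as an obstacle disappear entirely.
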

\begin{proof}
    It's easy to see that $\textup{Den}(H_n^{\varepsilon}\obot\langle x\rangle, H_{n-1}^{\varepsilon^{\pr}}\obot\langle x\rangle)=\Den(H_n^{\varepsilon}\obot\langle x\rangle, H_{n-1}^{\varepsilon^{\pr}})\cdot\Den(H_1^{\varepsilon^{\pr\pr}}\obot\langle x\rangle,\langle x\rangle)$ for some $\varepsilon^{\pr\pr}\in\{\pm1\}$ determined by $n, \varepsilon$ and $\varepsilon^{\pr}$. Katsurada's calculations \cite[Proposition 2.5]{katsurada1999explicit} implies that $\Den(H_n^{\varepsilon}\obot\langle x\rangle, H_{n-1}^{\varepsilon^{\pr}})=\Den(H_n^{\varepsilon}, H_{n-1}^{\varepsilon^{\pr}})=\textup{Nor}(1,H_n^{\varepsilon})$. It's also easy to verify that $\Den(H_1^{\varepsilon^{\pr\pr}}\obot\langle x\rangle,\langle x\rangle)=q^{-1}\Den(\langle x\rangle,\langle x\rangle)=q^{-1}\cdot2q=2$. Hence $\textup{Den}(H_n^{\varepsilon}\obot\langle x\rangle, H_{n-1}^{\varepsilon^{\pr}}\obot\langle x\rangle)=2\textup{Nor}(1,H_n^{\varepsilon})=\textup{Nor}^{\flat}(1,H_n^{\varepsilon})$.
\end{proof}
\par
Let $M$ be a quadratic lattice of rank $n$. There exists a polynomials $\textup{Den}^{\varepsilon}(L,M,X),\textup{Den}^{\varepsilon}(L^{\vee},M,X)\in\mathbb{Q}[X]$ such that
\begin{equation*}
    \textup{Den}(L,M,X)\,\big\vert_{X=q^{-k}} = \Den(L\obot H_{2k}^{+},M),\,\,\,\,\textup{Den}(L^{\vee},M,X)\,\big\vert_{X=q^{-k}} = \Den(L^{\vee}\obot H_{2k}^{+},M).
\end{equation*}
\begin{definition}
    Let $L$ be an almost self-dual quadratic lattice of rank $n+1$ such that $L\simeq H_n^{\varepsilon}\obot\langle x\rangle$. Let $M$ be a quadratic lattice of rank $n$. Define the normalized local density
    \begin{equation*}
        \Den^{L}(M)=\frac{\Den(L,M)}{\textup{Nor}^{\flat}(1,H_n^{\varepsilon})},\,\,\,\,\Den^{L^{\vee}}(M)=q^{n}\cdot\frac{\Den(L^{\vee},M)}{\textup{Nor}^{\flat}(1,H_n^{\varepsilon})}.
    \end{equation*}
    Define the derived local density
    \begin{equation*}
        \partial\Den^{L}(M)=-2\cdot\frac{\textup{d}}{\textup{d}X}\bigg\vert_{X=1}\frac{\textup{Den}(L,M,X)}{\textup{Nor}^{\flat}(1,H_n^{\varepsilon})},\,\,\,\,\partial\Den^{L^{\vee}}(M)=-2q^{n}\cdot\frac{\textup{d}}{\textup{d}X}\bigg\vert_{X=1}\frac{\textup{Den}(L^{\vee},M,X)}{\textup{Nor}^{\flat}(1,H_n^{\varepsilon})}.
    \end{equation*}\label{normalized-def}
\end{definition}

\subsection{Whittaker functions}
For a $\mathbb{Q}_p$-vector space $W$ of dimension $m\geq2$, let $\mathscr{S}(W)$ be the Schwartz function space on $W$. Assume that $W$ is equipped with a nondegenerate quadratic form, then for all integers $k\geq0$, let $W_k=W \obot (H_{2k}^+\otimes_{\Of}F)$. Let $\varphi\in\mathscr{S}(W^{m-1})$ be a Schwartz function, and let $\varphi_k=\varphi\otimes1_{(H_{2k}^{+})^{m-1}}\in\mathscr{S}\left(W_k^{m-1}\right)$.
\begin{definition}
   Let  $T$ be an $(m-1)\times (m-1)$ symmetric matrix over $F$. 
   Define
$$
W_T\left(1, k, \varphi\right)
\coloneqq\gamma(W)\cdot\int_{\operatorname{Sym}_{m-1}\left(F\right)} \int_{W_k^{m-1}} \psi(\operatorname{tr} b( q(x)-T)) \varphi_k(x)  \textup{d} x \textup{d} b .
$$
Here $\gamma(W)$ is the Weil index \cite[Proposition A.4]{Kud97Ann}
\begin{equation*}
    \gamma(W)=\gamma_F(\textup{det}(W),\psi)^{1-m}\epsilon(W)^{m-1}\gamma_F(\psi)^{-m(m-1)}.
\end{equation*}
Here we choose the Haar measure $\textup{d} x$ and $\textup{d} b$ to be the product measure of the standard Haar measure on $F$ (i.e., $\operatorname{meas}\left(\mathcal{O}_F\right)=1$ ). Also, $\psi(x)=e^{-2 \pi i \lambda(x)}$ is the 'canonical' additive character of $F$, where $\lambda: F \longrightarrow F/ \mathcal{O}_F \hookrightarrow  F/\mathcal{O}_F$. 
\end{definition}
Let the Schwartz function $\varphi$ be the characteristic function of the lattice $M^{m-1}$ for some lattice $M\subset W$. According to \cite[Proposition A.4, A.6]{Kud97Ann}, for a quadratic lattice $L$ represented by $T$, we have
\begin{align}\label{eq: Whitt=Den}
W_T\left(1, k, \varphi\right)=\gamma(W)\cdot\vert\textup{det}(M)\vert_F^{(m-1)/2}\cdot\operatorname{Den}(  M\obot H_{2k}^{+}, L ).
\end{align}
Notice that when $W$ contains a self-dual lattice, the constant $\gamma(W)=1$.

\subsection{Reduction of Whittaker functions}
Before stating the main theorem of this section, we fix some notations. Let $L$ be a cyclic quadratic lattice of rank $n+1$ over $\Of$ and order $l$. Recall that we have fixed a primitive isometric embedding $\iota: L\rightarrow L^{\sharp}$ where $L^{\sharp}$ is a self-dual quadratic lattice of rank $n+2$ over $\Of$. Let $\Lambda= L^{\sharp}\cap \iota(L)^{\perp}$, it is a rank 1 $\Of$ lattice. Recall that there is a canonical isomorphism  $r:\Lambda^{\vee}/\Lambda\rightarrow L^{\vee}/L$ (cf. Lemma \ref{dual}).
\par
Let $x_0$ be an $\Of$-generator of the lattice $\Lambda$. For any $n\times n$ symmetric matrix $T\in\textup{Sym}_{n}(F)$ and $\boldsymbol{\mu}=(\mu_{1},\mu_{2},\cdots,\mu_{n})\in\left(\Lambda^{\vee}\right)^{n}$, let $T(\boldsymbol{\mu})$ be the following symmetric matrix of size $(n+1)\times(n+1)$,
    \begin{equation*}
        M(T,\boldsymbol{\mu})=\begin{pmatrix}
q(x_{0}) & \begin{matrix}\frac{1}{2}(x_{0},\mu_{1})  &\cdots& \frac{1}{2}(x_{0},\mu_{n})\end{matrix}\\
\begin{matrix}
    \frac{1}{2}(x_{0},\mu_{1})\\ \vdots \\ \frac{1}{2}(x_{0},\mu_{n}) 
\end{matrix} & T+\frac{1}{2}(\mu_{i},\mu_{j})\end{pmatrix}.
    \end{equation*}

\begin{theorem}\label{thm: reduction formula}
    Let $V=L\otimes\mathbb{Q}$. For any $n\times n$ symmetric matrix $T\in\textup{Sym}_{n}(F)$ and $\overline{\boldsymbol{\mu}}=(\overline{\mu_{1}},\cdots,\overline{\mu_{n}})\in (\Lambda^{\vee}/\Lambda)^{n}$. Let $0\leq j\leq [\frac{l}{2}]$ be the largest integer such that $r(\overline{\boldsymbol{\mu}})\coloneqq\left(r(\overline{\mu_1}),\cdots,r(\overline{\mu_n})\right)\in (L(j)^{\vee}/L)^{n}$, we have the following equality for all positive integers $k$,
    \begin{align*}        
    W&_{M(T,\boldsymbol{\mu})}(1,k,1_{(L^{\sharp})^{n+1}}) =\frac{q^{nl/2}}{\gamma(L)}\cdot
    \sum\limits_{i=0}^{j}q^{-(n+2k)i}\cdot\textup{Pden}(L^{\sharp}\obot H_{2k}^{+},\Lambda(i))\cdot W_{T}(1,k,1_{r(\overline{\boldsymbol{\mu}})+L(i)^{n}}).
    \end{align*}
    here for any integer $i$ such that $0\leq i\leq j$, the function $1_{r(\overline{\boldsymbol{\mu}})+L(i)^{n}}$ is the characteristic function of the set $r(\overline{\boldsymbol{\mu}})+L(i)^{n}\subset V^{n}$.
    \label{reduction}
\end{theorem}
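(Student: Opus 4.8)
The plan is to translate both sides into local densities --- weighted counts of isometric embeddings of quadratic $\Of$-lattices modulo $\varpi^{d}$, $d\to\infty$ --- and then to prove the resulting identity of densities by stratifying the embeddings according to how the image of $x_{0}$ sits inside the target self-dual lattice. Since $V^{\sharp}=L^{\sharp}\otimes F$ contains a self-dual lattice, we have $\gamma(V^{\sharp})=1$ and $|\det L^{\sharp}|_{F}=1$, so \eqref{eq: Whitt=Den} identifies the left-hand side with $\textup{Den}(L^{\sharp}\obot H_{2k}^{+},N)$, where $N$ is the rank $n+1$ lattice with Gram matrix $M(T,\boldsymbol{\mu})$; concretely one realizes $N=\Of x_{0}+\sum_{j}\Of(\mu_{j}+y_{j})$ inside $V^{\sharp}\obot(H_{2k}^{+})_{F}$, with $\mu_{j}\in\Lambda^{\vee}$ and $\{y_{j}\}$ spanning a lattice $M\subset V$ of Gram matrix $T$ (when $T$ is not represented by $V$ one reads the asserted identity as one of polynomials in $q^{-k}$). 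On the other side, unfolding the coset Whittaker integral for the quadratic space $V=L\otimes F$, whose Weil index is $\gamma(L)$, expresses $W_{T}(1,k,1_{r(\overline{\boldsymbol{\mu}})+L(i)^{n}})$ as $\gamma(L)\,q^{-n(l-2i)/2}$ (using $|\det L(i)|_{F}=q^{-(l-2i)}$) times the density $\mathcal{C}_{i}$ of isometric embeddings $M\hookrightarrow L(i)^{\vee}\obot H_{2k}^{+}$ whose $V$-component lies in the coset $r(\overline{\boldsymbol{\mu}})+L(i)^{n}$. It therefore suffices to prove an identity purely among $\textup{Den}(L^{\sharp}\obot H_{2k}^{+},N)$, the primitive densities $\textup{Pden}(L^{\sharp}\obot H_{2k}^{+},\Lambda(i))$, and the $\mathcal{C}_{i}$.

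Fix $d\gg l$ and let $\phi\in\textup{Rep}_{L^{\sharp}\obot H_{2k}^{+},N}(\Of/\varpi^{d})$. As $q(\phi(x_{0}))=q(x_{0})$ has valuation $l<d$, there is a well-defined depth $i(\phi)$: the largest $i$ with $\phi(x_{0})\in\varpi^{i}(L^{\sharp}\obot H_{2k}^{+})/\varpi^{d}$; writing $\phi(x_{0})=\varpi^{i}w$ with $w$ primitive gives $\nu_{\varpi}(q(w))=l-2i\ge 0$, so $0\le i(\phi)\le[l/2]$. The relation $(\phi(x_{0}),\phi(\mu_{j}+y_{j}))=(x_{0},\mu_{j})$ forces $\varpi^{i}\mid(x_{0},\mu_{j})$ for every $j$, which by Lemma~\ref{cyclic duality for L and Lambda} is exactly the condition $\mu_{j}\in\Lambda(i)^{\vee}$, i.e.\ $r(\overline{\mu_{j}})\in L(i)^{\vee}/L$; hence $i(\phi)\le j$ always and $\textup{Rep}_{L^{\sharp}\obot H_{2k}^{+},N}(\Of/\varpi^{d})$ is the disjoint union over $i=0,\dots,j$ of its depth-$i$ loci. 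On the depth-$i$ locus, $\phi\mapsto(\varpi^{-i}x_{0}\mapsto w)$ is a primitive embedding $\psi\colon\Lambda(i)\hookrightarrow L^{\sharp}\obot H_{2k}^{+}$, and a depth-$i$ $\phi$ is the same datum as such a $\psi$ together with the vectors $z_{j}:=\phi(\mu_{j}+y_{j})$ subject to the prescribed products $(z_{j},w)$ and $(z_{j},z_{j'})$. By the integral form of Witt's theorem over $\Of$ (valid since $p$ is odd), $\textup{O}(L^{\sharp}\obot H_{2k}^{+})$ acts transitively on such $\psi$; fixing the standard one induced by $\Lambda(i)\hookrightarrow L^{\sharp}(i)\obot H_{2k}^{+}$ --- legitimate because $L^{\sharp}(i)\cong L^{\sharp}$, both being self-dual of rank $n+2$ inside the fixed space $V^{\sharp}$ and hence of equal discriminant --- the orthogonal complement becomes $K:=w^{\perp}\cap(L^{\sharp}\obot H_{2k}^{+})=(\Lambda(i)^{\perp}\cap L^{\sharp}(i))\obot H_{2k}^{+}=L(i)\obot H_{2k}^{+}$ by the chain \eqref{compatibility}.

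Now write $z_{j}=c_{j}w+z_{j}'$ with $z_{j}'\in w^{\perp}$. One finds $c_{j}=\varpi^{i}(x_{0},\mu_{j})/(2q(x_{0}))$ (independent of $\phi$) and $z_{j}'\in K^{\vee}=L(i)^{\vee}\obot H_{2k}^{+}$; moreover, since $\Lambda^{\vee}$ is the $\Of$-span of $q(x_{0})^{-1}x_{0}$ one has $(\mu_{j},\mu_{j'})=(x_{0},\mu_{j})(x_{0},\mu_{j'})/(2q(x_{0}))=c_{j}c_{j'}(w,w)$, which forces the Gram matrix $\bigl(\tfrac12(z_{j}',z_{j'}')\bigr)_{j,j'}$ to equal $T$ exactly; and Lemma~\ref{compatible} identifies the class of $z_{j}'$ in $L(i)^{\vee}/L(i)$ with $r_{i}(\overline{\mu_{j}})=r(\overline{\mu_{j}})\bmod L(i)$, i.e.\ with the class that defines the coset $r(\overline{\boldsymbol{\mu}})+L(i)^{n}$. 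Thus, for a fixed $\psi$, the admissible tuples $(z_{j})$ are in bijection with the isometric embeddings $M\hookrightarrow L(i)^{\vee}\obot H_{2k}^{+}$ landing in that coset, so the number of depth-$i$ embeddings of $N$ into $L^{\sharp}\obot H_{2k}^{+}$ modulo $\varpi^{d}$ factors --- up to a power of $q$ reflecting the rescaling $\phi(x_{0})=\varpi^{i}w$ --- as (number of primitive $\psi$) $\times$ (number of such $(z_{j})$). Passing to the limit $d\to\infty$, the density normalizations are compatible because $\dim\textup{Rep}_{L^{\sharp}\obot H_{2k}^{+},N}$ is the sum of the dimension of the primitive-$\Lambda(i)$ locus and that of the $(z_{j})$-locus; a careful accounting of the $\varpi$-adic precisions then shows that the depth-$i$ stratum contributes precisely $\frac{q^{nl/2}}{\gamma(L)}\,q^{-(n+2k)i}\,\textup{Pden}(L^{\sharp}\obot H_{2k}^{+},\Lambda(i))\,W_{T}(1,k,1_{r(\overline{\boldsymbol{\mu}})+L(i)^{n}})$ (equivalently, $q^{-2ki}\,\textup{Pden}(L^{\sharp}\obot H_{2k}^{+},\Lambda(i))\,\mathcal{C}_{i}$), and summing over $i=0,\dots,j$ yields the stated formula.

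The main obstacle lies in the last two paragraphs: establishing the integral Witt transitivity in a form strong enough that the mod-$\varpi^{d}$ count genuinely factorizes uniformly in $d$; identifying the orthogonal complement $K$ with $L(i)\obot H_{2k}^{+}$ with the correct discriminant and Hasse data (via a Lemma~\ref{unique}-type argument combined with the chain \eqref{compatibility}); and carrying out the Gram-matrix and $r_{i}$-bookkeeping through Lemma~\ref{compatible} so as to extract simultaneously the coset $r(\overline{\boldsymbol{\mu}})$ and the exact normalizing power $q^{-(n+2k)i}$. Everything else --- the Whittaker-to-density dictionary and the limit computations --- is long but routine.
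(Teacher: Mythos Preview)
Your proposal is correct and follows essentially the same strategy as the paper: stratify $\textup{Rep}_{L^{\sharp}\obot H_{2k}^{+},N}$ by the depth $i$ of $\phi(x_{0})$, normalize the primitive vector $w$ via integral Witt (the paper cites \cite{Mor79}), identify $w^{\perp}\cap(L^{\sharp}\obot H_{2k}^{+})$ with $L(i)\obot H_{2k}^{+}$, and then read off the coset condition for the remaining coordinates via Lemma~\ref{compatible}. The only organizational differences are that the paper first uses an elementary change of basis (their Lemma~\ref{lem: red to special mu}) to reduce to $\boldsymbol{\mu}=(\mu,0,\dots,0)$ before counting, and that it carries out the mod-$\varpi^{d}$ factorization via explicit exact sequences (their (\ref{key1})--(\ref{key2}) and Corollary~\ref{fiber-size}) rather than invoking Witt transitivity abstractly; your orthogonal-decomposition computation $(z_{j}',z_{j'}')=T$ is cleaner than the paper's route, but you have correctly flagged that making the factorization uniform in $d$ is where the honest work sits, and the paper's exact-sequence bookkeeping is precisely what fills that gap.
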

The case $\boldsymbol{\mu}=0$ is easy. Let $M$ be a quadratic lattice whose inner product matrix is given by $T$. The formula (\ref{eq: Whitt=Den}) and \cite[Theorem 7.2.1]{Zhu23} give the following equalities for all positive integers $k\geq0$:
\begin{align*}
    W_{M(T,\boldsymbol{0})}(1,k,1_{(L^{\sharp})^{n+1}})&= \Den(L^{\sharp}\obot H_{2k}^{+},M\obot\Lambda)\\&=\sum\limits_{i=0}^{[l/2]}q^{-2ki}\cdot\textup{Pden}(L^{\sharp}\obot H_{2k}^{+},\Lambda(i))\cdot \Den(L(i)\obot H_{2k}^{+},M) \\&=\frac{q^{nl/2}}{\gamma(L)}\cdot
    \sum\limits_{i=0}^{[l/2]}q^{-(n+2k)i}\cdot\textup{Pden}(L^{\sharp}\obot H_{2k}^{+},\Lambda(i))\cdot W_{T}(1,k,1_{L(i)^{n}}).
\end{align*}
The remaining section will be devoted to the proof of Theorem \ref{thm: reduction formula} for $\boldsymbol{\mu}\neq0$. The readers can skip this  on first reading.
\par
We start with a lemma that reduces the theorem to the case that $\mu_2=\cdots =\mu_n=0$. We keep the assumption that $j$ is the largest integer such that $r(\overline{\boldsymbol{\mu}})\coloneqq\left(r(\overline{\mu_1}),\cdots,r(\overline{\mu_n})\right)\in (L(j)^{\vee}/L)^{n}$.
\begin{lemma}\label{lem: red to special mu}
    Let $T\in\textup{Sym}_{n}(F)$ be a $n\times n$ symmetric matrix and $\boldsymbol{\mu}=(\mu_{1},\mu_{2},\cdots,\mu_{n})\in(\Lambda^{\vee})^{n}$, such that $\overline{\mu_{1}}\in \Lambda^{\vee}/\Lambda$ is not $0$, and there exists $c_{i}\in\Of$ such that $\mu_{i}=c_{i}\mu_{1}$ for $2\leq i\leq n$. Let $\boldsymbol{\mu}_{1}=(\mu_{1},0,\cdots,0)$, and $\gamma_{\boldsymbol{\mu}}$ be the following $n\times n$ matrix,
    \begin{equation*}
        \gamma_{\boldsymbol{\mu}}=\begin{pmatrix}
1 & \begin{matrix}-c_{2}  &\cdots& -c_{n}\end{matrix}\\
\begin{matrix}
    0\\ \vdots \\ 0
\end{matrix} & \boldsymbol{1}_{n-1}\end{pmatrix}
    \end{equation*}
    Let $T_{\boldsymbol{\mu}}={^{t}\gamma_{\boldsymbol{\mu}}T\gamma_{\boldsymbol{\mu}}}$, we have the following equalities for any positive integer $k$ and $0\leq i\leq [\frac{l}{2}]$,
    \begin{align}
        W_{M(T,\boldsymbol{\mu})}(1,k,1_{(L^{\sharp})^{n+1}})&=W_{M(T_{\boldsymbol{\mu}},\boldsymbol{\mu}_{1})}(1,k,1_{(L^{\sharp})^{n+1}});\label{3}\\
        W_{T}(1,k,1_{r(\overline{\boldsymbol{\mu}})+L(i)^{n}})&=W_{T_{\boldsymbol{\mu}}}(1,k,1_{r(\overline{\boldsymbol{\mu}_{1}})+L(i)^{n}})\label{4}.
    \end{align}
\end{lemma}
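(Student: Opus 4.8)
The plan is to derive both identities \eqref{3} and \eqref{4} from a single change-of-variables invariance of the Whittaker integral. The key preliminary step I would record is: for any $r\geq 1$, any quadratic space $W$, any $T'\in\operatorname{Sym}_{r}(F)$, any $\varphi\in\mathscr{S}(W^{r})$ and any $g\in\GL_{r}(\Of)$,
\[
W_{{}^{t}g\,T'g}\bigl(1,k,\varphi(\,\cdot\,g^{-1})\bigr)=W_{T'}(1,k,\varphi),
\]
where $xg$ denotes the right linear action of $g$ on an $r$-tuple $x=(x_{1},\dots,x_{r})$ of vectors and $\varphi(\,\cdot\,g^{-1})(x):=\varphi(xg^{-1})$. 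To prove this I would substitute $x\mapsto xg$ in the inner integral over $W_{k}^{r}$, which is measure preserving because $g\in\GL_{r}(\Of)$; this sends $\varphi_{k}(\,\cdot\,g^{-1})$ to $\varphi_{k}$ and the moment matrix $q(x)$ to ${}^{t}g\,q(x)\,g$, so that $\operatorname{tr}\bigl(b({}^{t}g\,q(x)\,g-{}^{t}g\,T'g)\bigr)=\operatorname{tr}\bigl(gb\,{}^{t}g\,(q(x)-T')\bigr)$; a further (measure preserving) substitution $b\mapsto{}^{t}g^{-1}b\,g^{-1}$ on $\operatorname{Sym}_{r}(F)$ returns the original integral, and the Weil index $\gamma(W)$ is unchanged since it depends only on $W$.

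For \eqref{3} I would apply this with $r=n+1$, $W=V^{\sharp}$, $T'=M(T,\boldsymbol{\mu})$ and $g=\operatorname{diag}(1,\gamma_{\boldsymbol{\mu}})\in\GL_{n+1}(\Of)$. Since $(L^{\sharp})^{n+1}$ is $g$-stable, $1_{(L^{\sharp})^{n+1}}(\,\cdot\,g^{-1})=1_{(L^{\sharp})^{n+1}}$, so it remains to check the matrix identity ${}^{t}g\,M(T,\boldsymbol{\mu})\,g=M(T_{\boldsymbol{\mu}},\boldsymbol{\mu}_{1})$. Here the hypothesis $\mu_{i}=c_{i}\mu_{1}$ enters: the ``$\mu$-block'' of $M(T,\boldsymbol{\mu})$ equals $q(\mu_{1})\,(c_{i}c_{j})_{i,j}$ with $c_{1}=1$, and ${}^{t}\gamma_{\boldsymbol{\mu}}$ sends the column vector $(c_{1},\dots,c_{n})^{t}$ to $e_{1}$, so conjugation turns this block into $\operatorname{diag}(q(\mu_{1}),0,\dots,0)$ and the ``$(x_{0},\mu)$-row'' $(\tfrac12(x_{0},\mu_{i}))_{i}$ into $(\tfrac12(x_{0},\mu_{1}),0,\dots,0)$, while ${}^{t}\gamma_{\boldsymbol{\mu}}T\gamma_{\boldsymbol{\mu}}=T_{\boldsymbol{\mu}}$ by definition; reassembling gives precisely $M(T_{\boldsymbol{\mu}},\boldsymbol{\mu}_{1})$. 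Equivalently this identity exhibits an isometry between the lattices $\langle x_{0}\rangle\oplus\sum_{i}\Of(f_{i}+\mu_{i})$ attached to the two data, which together with \eqref{eq: Whitt=Den} also yields \eqref{3}; note, however, that \eqref{eq: Whitt=Den} does not directly apply to \eqref{4}, whose test function is the indicator of a coset rather than a lattice, so the integral argument above is what is needed there.

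For \eqref{4} I would apply the invariance with $r=n$, $W=V$, $T'=T$ and $g=\gamma_{\boldsymbol{\mu}}\in\GL_{n}(\Of)$, so that ${}^{t}g\,T\,g=T_{\boldsymbol{\mu}}$. As $L(i)^{n}$ is $g$-stable, $1_{r(\overline{\boldsymbol{\mu}})+L(i)^{n}}(\,\cdot\,g^{-1})=1_{r(\overline{\boldsymbol{\mu}})\gamma_{\boldsymbol{\mu}}+L(i)^{n}}$, so the remaining point is to identify the translate $r(\overline{\boldsymbol{\mu}})\gamma_{\boldsymbol{\mu}}$ modulo $L(i)^{n}$. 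Using that $r$ is $\Of$-linear (Lemma \ref{cyclic duality for L and Lambda}), the first component of $r(\overline{\boldsymbol{\mu}})\gamma_{\boldsymbol{\mu}}$ is $r(\overline{\mu_{1}})$ and, for $j\geq 2$, its $j$-th component is $r(\overline{\mu_{j}})-c_{j}\,r(\overline{\mu_{1}})=r(\overline{\mu_{j}-c_{j}\mu_{1}})=0$ in $L^{\vee}/L$; hence $r(\overline{\boldsymbol{\mu}})\gamma_{\boldsymbol{\mu}}\equiv r(\overline{\boldsymbol{\mu}_{1}})$ modulo $L^{n}\subseteq L(i)^{n}$, so $1_{r(\overline{\boldsymbol{\mu}})+L(i)^{n}}(\,\cdot\,g^{-1})=1_{r(\overline{\boldsymbol{\mu}_{1}})+L(i)^{n}}$ and \eqref{4} follows.

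The only genuinely delicate points are bookkeeping ones: verifying that both substitutions are measure preserving, that the additive character and the moment matrix transform as stated, and—most importantly—that the two translated indicator functions land on exactly the cosets asserted, which is where $\mu_{i}=c_{i}\mu_{1}$ and the $\Of$-linearity of $r$ are used. I do not expect any conceptual obstacle beyond these computations.
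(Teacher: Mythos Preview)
Your proposal is correct and follows essentially the same argument as the paper: both proofs use the change of variables $x\mapsto xg$ (with $g=\operatorname{diag}(1,\gamma_{\boldsymbol{\mu}})$ for \eqref{3} and $g=\gamma_{\boldsymbol{\mu}}$ for \eqref{4}) together with the matrix identity ${}^{t}\tilde{\gamma}_{\boldsymbol{\mu}}M(T,\boldsymbol{\mu})\tilde{\gamma}_{\boldsymbol{\mu}}=M(T_{\boldsymbol{\mu}},\boldsymbol{\mu}_{1})$ and the Schwartz-function identity $1_{r(\overline{\boldsymbol{\mu}})+L(i)^{n}}=1_{r(\overline{\boldsymbol{\mu}_{1}})+L(i)^{n}}\circ\gamma_{\boldsymbol{\mu}}$. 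The only difference is expository: you isolate the general invariance $W_{{}^{t}gT'g}(1,k,\varphi(\,\cdot\,g^{-1}))=W_{T'}(1,k,\varphi)$ explicitly and supply more detail in the block computation and the coset verification, whereas the paper invokes these steps tersely.
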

\begin{proof}
    Let $\Tilde{\gamma}_{\boldsymbol{\mu}}$ be the following $(n+1)\times (n+1)$ matrix,
    \begin{equation*}
        \Tilde{\gamma}_{\boldsymbol{\mu}}=\begin{pmatrix}
1 & 0\\
0 & \gamma_{\boldsymbol{\mu}}\end{pmatrix}
    \end{equation*}
    Since $\Tilde{\gamma}_{\boldsymbol{\mu}}$ preserves the Schwartz function $1_{(L^{\sharp})^{n+1}}$, then 
    \begin{equation*}
        W_{M(T,\boldsymbol{\mu})}(1,k,1_{(L^{\sharp})^{n+1}})=W_{{^{t}\Tilde{\gamma}_{\boldsymbol{\mu}}}M(T,\boldsymbol{\mu})\Tilde{\gamma}_{\boldsymbol{\mu}}}(1,k,1_{(L^{\sharp})^{n+1}}).
    \end{equation*}
    then (\ref{3}) follows from the fact that ${^{t}\Tilde{\gamma}_{\boldsymbol{\mu}}}M(T,\boldsymbol{\mu})\Tilde{\gamma}_{\boldsymbol{\mu}}=M(T_{\boldsymbol{\mu}},\boldsymbol{\mu}_{1})$.
    \par
    The formula (\ref{4}) follows from the following identity of Schwartz functions on the space $V^{n}$.
    \begin{equation*}
        1_{r(\overline{\boldsymbol{\mu}})+L(i)^{n}}=\left(1_{r(\overline{\boldsymbol{\mu}}_1)+L(i)^{n}}\right)\circ\gamma_{\boldsymbol{\mu}}.
    \end{equation*}
\end{proof}

Starting from now, we will mainly focus on the case $\boldsymbol{\mu}=(\mu,0,\cdots,0)$.  
\begin{lemma}\label{red1}
      Let $T\in\textup{Sym}_{n}(F)$ be a $n\times n$ symmetric matrix and $\boldsymbol{\mu}=(\mu,0,\cdots,0)\in(\Lambda^{\vee})^{n}$ such that $\overline{\mu}\in\Lambda^{\vee}/\Lambda$ is not $0$. Let $c\in F^{\times}$ be an element such that $\mu=c\cdot x_0$. Let $\Tilde{T}=\textup{diag}\{q(x_{0}),T\}$. For any positive integer $k$, let $L_{k}^{\sharp}=L^{\sharp}\obot H_{2k}^{+}$ be a quadratic lattice and $U_{k}^{\sharp}=L_{k}^{\sharp}\otimes \mathbb{Q}$, let $\phi_c$ be the following Schwartz function on $(U_{k}^{\sharp})^{n+1}$,
      \begin{equation*}
          \phi_c\left((t_{0},t_{1},\cdots,t_{n})\right)= 1_{L^{\sharp}_{k}}(t_{0})1_{L^{\sharp}_{k}}(c\cdot t_0+t_{1})1_{L^{\sharp}_{k}}(t_{2})\cdots1_{L^{\sharp}_{k}}(t_{n}).
      \end{equation*}
      Then we have
      \begin{align*}
          W_{M(T,\boldsymbol{\mu})}(1,k,1_{(L^{\sharp})^{n+1}})
          =W_{\tilde{T}}(1,k,\phi_c),
      \end{align*}
      where $\boldsymbol{t}=(t_{0},t_{1},\cdots,t_{n})\in (U_{k}^{\sharp})^{n+1}$.
\end{lemma}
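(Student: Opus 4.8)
The plan is to deduce Lemma~\ref{red1} from a single explicit matrix identity together with the invariance of the Whittaker integral under a unipotent element of $\GL_{n+1}(F)$, so the argument is essentially formal once that identity is in place.

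\emph{Step 1: the matrix identity.} Index rows and columns by $0,1,\dots,n$. Since $\boldsymbol{\mu}=(\mu,0,\dots,0)$ and $\mu=c\,x_0$, the relation $(x_0,x_0)=2q(x_0)$ shows that $M(T,\boldsymbol{\mu})$ differs from $\Tilde{T}=\textup{diag}\{q(x_0),T\}$ only in the $(0,1)$ and $(1,0)$ entries, both equal to $c\,q(x_0)$, and in the $(1,1)$ entry, to which $c^{2}q(x_0)$ has been added. Let $\gamma=\boldsymbol{1}_{n+1}-c\,E_{01}\in\GL_{n+1}(F)$ be the unipotent matrix with $-c$ in position $(0,1)$ and the identity elsewhere. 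A direct computation (subtracting $c$ times the first row and column, which exactly kills the cross terms between $x_0$ and $\mu=c\,x_0$) gives
\[
{}^{t}\gamma\;M(T,\boldsymbol{\mu})\;\gamma\;=\;\Tilde{T}.
\]
In particular $|\det\gamma|_{F}=1$.

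\emph{Step 2: unfolding and change of variables.} Substitute $M(T,\boldsymbol{\mu})$ into the integral defining $W_{M(T,\boldsymbol{\mu})}(1,k,1_{(L^{\sharp})^{n+1}})$; since $(1_{(L^{\sharp})^{n+1}})_k=1_{(L_k^{\sharp})^{n+1}}$ this equals $\gamma(W)\int_{\textup{Sym}_{n+1}(F)}\int_{(U_k^{\sharp})^{n+1}}\psi\bigl(\textup{tr}\,b\,(q(\boldsymbol{t})-M(T,\boldsymbol{\mu}))\bigr)\,1_{(L_k^{\sharp})^{n+1}}(\boldsymbol{t})\,\textup{d}\boldsymbol{t}\,\textup{d} b$. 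Replace $\boldsymbol{t}$ by $\boldsymbol{t}\,\gamma^{-1}$: the measure is preserved because $\gamma$ is unipotent, one has $q(\boldsymbol{t}\gamma^{-1})={}^{t}\gamma^{-1}\,q(\boldsymbol{t})\,\gamma^{-1}$, and by Step~1 and cyclicity of the trace the exponent becomes $\textup{tr}\bigl(\gamma^{-1}b\,{}^{t}\gamma^{-1}\,(q(\boldsymbol{t})-\Tilde{T})\bigr)$. A second substitution $b\mapsto\gamma^{-1}b\,{}^{t}\gamma^{-1}$ on $\textup{Sym}_{n+1}(F)$ (Jacobian $|\det\gamma|_{F}^{\,n+2}=1$) clears $\gamma$ from the quadratic argument entirely. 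Finally $\boldsymbol{t}\,\gamma^{-1}=(t_0,\;c\,t_0+t_1,\;t_2,\dots,t_n)$, so $1_{(L_k^{\sharp})^{n+1}}(\boldsymbol{t}\,\gamma^{-1})=1_{L_k^{\sharp}}(t_0)\,1_{L_k^{\sharp}}(c\,t_0+t_1)\prod_{j\geq 2}1_{L_k^{\sharp}}(t_j)=\phi_c(\boldsymbol{t})$, and the remaining integral is exactly $W_{\Tilde{T}}(1,k,\phi_c)$; the two Weil indices coincide, both being $1$ since $L^{\sharp}$ and $L_k^{\sharp}=L^{\sharp}\obot H_{2k}^{+}$ are self-dual.

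I do not expect a genuine obstacle here: the lemma is a bookkeeping statement once Step~1 is verified. The only points requiring care are (i) carrying out the two changes of variables correctly — the $(n+1)$-tuple of vectors is translated on the right by $\gamma^{-1}$, while symmetric matrices transform by $b\mapsto\gamma^{-1}b\,{}^{t}\gamma^{-1}$, each with trivial Jacobian because $\gamma$ is unipotent — and (ii) observing that this $\GL_{n+1}$-action commutes with the passage from $L^{\sharp}$ to $L_k^{\sharp}$, which it does because the action is on the tuple of vectors rather than on the ambient quadratic space, so that $\phi_c$ (a priori defined on $(U_k^{\sharp})^{n+1}$) is precisely the function produced by the substitution.
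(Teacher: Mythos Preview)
Your proof is correct and follows the same approach as the paper: the same unipotent matrix $\gamma=\boldsymbol{1}_{n+1}-cE_{01}$, the same congruence ${}^{t}\gamma\,M(T,\boldsymbol{\mu})\,\gamma=\Tilde{T}$, and the same pair of changes of variables $\boldsymbol{t}\mapsto\boldsymbol{t}\gamma^{-1}$ followed by $b\mapsto\gamma^{-1}b\,{}^{t}\gamma^{-1}$. Your additional remarks on the Weil index and on the compatibility with the passage $L^{\sharp}\leadsto L_k^{\sharp}$ are correct and harmless, though the paper leaves them implicit.
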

\begin{proof}
    Since $\overline{\mu}_{1}=\Lambda^{\vee}/\Lambda$ is not $0$,. Let $\gamma$ be the following matrix of size $(n+1)\times(n+1)$,
    \begin{equation*}
        \gamma=\begin{pmatrix}
1 & \begin{matrix}-c & 0 &\cdots& 0\end{matrix}\\
\begin{matrix}
    0\\ \vdots \\ 0
\end{matrix} & \boldsymbol{1}_{n}\end{pmatrix},
    \end{equation*}
    then we have ${^{t}\gamma}M(T,\boldsymbol{\mu})\gamma=\Tilde{T}$. By definition,
    \begin{align*}
          W_{M(T,\boldsymbol{\mu})}(1,k,1_{(L^{\sharp})^{n+1}})=\gamma(L^{\sharp})\cdot
          \int\limits_{\textup{Sym}_{n+1}(F)}\int\limits_{(U_{k}^{\sharp})^{n+1}}\psi\left(\textup{tr}\left((\frac{1}{2}(\Tilde{\boldsymbol{t}},\Tilde{\boldsymbol{t}})-M(T,\boldsymbol{\mu}))b\right)\right)\cdot
          1_{(L^{\sharp}_{k})^{n+1}}(\Tilde{\boldsymbol{t}})\textup{d}\Tilde{\boldsymbol{t}}\textup{d}b,
      \end{align*}
      while by making the change of variables $\boldsymbol{t}=\Tilde{\boldsymbol{t}}\cdot\gamma$,
    \begin{align*}
          W_{M(T,\boldsymbol{\mu})}&(1,k,1_{(L^{\sharp})^{n+1}})=\int\limits_{\textup{Sym}_{n+1}(F)}\int\limits_{(U_{k}^{\sharp})^{n+1}}\psi\left(\textup{tr}\left({^{t}\gamma^{-1}}(\frac{1}{2}(\boldsymbol{t},\boldsymbol{t})-\Tilde{T})\gamma^{-1}b\right)\right)\cdot
          \phi_c(\boldsymbol{t})\textup{d}\boldsymbol{t}\textup{d}b\\
          &\stackrel{b\mapsto\gamma^{-1}\cdot b\cdot{^{t}\gamma^{-1}}}=\int\limits_{\textup{Sym}_{n+1}(F)}\int\limits_{(U_{k}^{\sharp})^{n+1}}\psi\left(\textup{tr}\left((\frac{1}{2}(\boldsymbol{t},\boldsymbol{t})-\Tilde{T})b\right)\right)\cdot
          \phi_c(\boldsymbol{t})\textup{d}\boldsymbol{t}\textup{d}b=W_{\Tilde{T}}(1,k,\phi_c).\\
      \end{align*}
\end{proof}
To proceed, we need to relate the Whittaker function with the classical representation densities. Recall that the lattice $L$ is a cyclic quadratic lattice of order $l\geq1$, and we have the following filtration as (\ref{intermediate}),
\begin{equation*}
    L=L(0)\subset L(1)\subset L(2)\subset\cdots\subset L\left([\frac{l}{2}]\right)\subset L\left([\frac{l}{2}]\right)^{\vee}\subset L(1)^{\vee}\subset L(0)^{\vee}=L^{\vee}.
\end{equation*}
\begin{definition}\label{rep-sets}
    Let $T\in\textup{Sym}_{n}(F)$ be a $n\times n$ symmetric matrix and $\boldsymbol{\mu}=(\mu,0,\cdots,0)\in(\Lambda^{\vee})^{n}$ such that $\overline{\mu}\in\Lambda^{\vee}/\Lambda$ is not $0$. Let $c\in F^{\times}$ be an element such that $\mu=c\cdot x_0$. Let $\Tilde{T}=\textup{diag}\{q(x_{0}),T\}$. For any positive integer $k$, let $L_{k}^{\sharp}=L^{\sharp}\obot H_{2k}^{+}$ be a quadratic lattice and $U_{k}^{\sharp}=L_{k}^{\sharp}\otimes F$. For any integer $0\leq i\leq [\frac{l}{2}]$, let $L(i)_{k}=L(i)\obot H_{2k}^{+}$. For any positive integer $t$, let $\RepL$ and $\Repl$ be the following sets,
    \begin{align*}
        \textup{Rep}_{\Tilde{T}}^{\boldsymbol{\mu}}(\Of/\varpi^{t})=\{(\overline{t}_0,\overline{t}_{1},(\overline{t}_2,\cdots,\overline{t}_n)):\,\, &\overline{t}_0\in L_{k}^{\sharp}/\varpi^{t+2l}L_{k}^{\sharp}, \overline{t}_{1}\in \varpi^{-l}L_{k}^{\sharp}/\varpi^{t+l}L_{k}^{\sharp},\\
        &(\overline{t}_2,\cdots,\overline{t}_n)\in (L_{k}^{\sharp}/\varpi^{t+l}L_{k}^{\sharp})^{n-1}, \,\,\textup{such that}\\
        &q_{L_{k}^{\sharp}}(\overline{t}_{0})\equiv q_{\Lambda}(x_0)\,\,\textup{mod}\,\varpi^{t}; \frac{1}{2}(\overline{t}_i,\overline{t}_0)\equiv0\,\,\textup{mod}\,\varpi^{t};\\ &\frac{1}{2}(\overline{t}_i,\overline{t}_j)_{i,j\geq1}\equiv T\,\,\textup{mod}\,\varpi^{t}; c\cdot\overline{t}_0+\overline{t}_1\in L^{\sharp}_{k}/\varpi^{t+l}L^{\sharp}_{k}.\}.
    \end{align*}
    \begin{align*}
        \Repl=\{(\overline{t}_{1},\overline{t}_2,\cdots,\overline{t}_n):\,\, &\overline{t}_1\in \varpi^{-l}L(i)_{k}/\varpi^{t+l}L(i)_{k}, (\overline{t}_2,\cdots,\overline{t}_n)\in (L(i)_{k}/\varpi^{t+l}L(i)_{k})^{n-1},\\
        &\textup{such that}\,\,\frac{1}{2}(\overline{t}_i,\overline{t}_j)_{i,j\geq1}\equiv T\,\,\textup{mod}\,\varpi^{t}; t_1\in r(\overline{\mu})+L(i)_{k}\\&\,\,\textup{for an arbitrary lift $t_{1}\in \varpi^{-l}L(i)_{k}$ of $\overline{t}_1$}.\}.
    \end{align*}
\end{definition}
\begin{proposition}
    Let $T\in\textup{Sym}_{n}(F)$ be a $n\times n$ symmetric matrix and $\boldsymbol{\mu}=(\mu,0,\cdots,0)\in(\Lambda^{\vee})^{n}$ such that $\overline{\mu}\in\Lambda^{\vee}/\Lambda$ is not $0$. Let $\Tilde{T}=\textup{diag}\{q(x_{0}),T\}$, we have
    \begin{equation*}
        W_{M(T,\boldsymbol{\mu})}(1,k,1_{(L^{\sharp})^{n+1}})= \lim\limits_{t\rightarrow+\infty}\frac{\sharp \RepL}{q^{(n+2+2k)\left((n+2)l+(n+1)t\right)-t\cdot (n+1)(n+2)/2}}.
    \end{equation*}
    \begin{equation*}
        W_{T}(1,k,1_{(r(\overline{\mu})+L(i))\times L(i)\times\cdots\times L(i)})=\gamma(L)\cdot \lim\limits_{t\rightarrow+\infty}\frac{\sharp \Repl}{q^{\left((n+1+2k)(l+t)+l/2-i\right)n-t\cdot n(n+1)/2}}.
    \end{equation*}
    \label{rep}
\end{proposition}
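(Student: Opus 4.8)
The plan is to evaluate both Whittaker functions directly from the defining integral, regularize the non-absolutely-convergent integral over symmetric matrices, and recognize the resulting $\varpi$-adic volume as a normalized cardinality of the representation sets of Definition~\ref{rep-sets}. For the first identity I would first apply Lemma~\ref{red1} to rewrite $W_{M(T,\boldsymbol{\mu})}(1,k,1_{(L^{\sharp})^{n+1}})$ as $W_{\Tilde{T}}(1,k,\phi_c)$, which pushes all the ``twisting'' into the Schwartz function $\phi_c$ and leaves the block matrix $\Tilde{T}=\textup{diag}\{q(x_0),T\}$. Since $U_k^{\sharp}$ contains the self-dual lattice $L_k^{\sharp}$ the Weil index is $1$, and unfolding the definition gives
\[
W_{\Tilde{T}}(1,k,\phi_c)=\int_{\textup{Sym}_{n+1}(F)}\int_{(U_k^{\sharp})^{n+1}}\psi\bigl(\textup{tr}(b(q(\boldsymbol{t})-\Tilde{T}))\bigr)\,\phi_c(\boldsymbol{t})\,\textup{d}\boldsymbol{t}\,\textup{d}b.
\]
Writing $\textup{Sym}_{n+1}(F)=\bigcup_{N}\varpi^{-N}\textup{Sym}_{n+1}(\mathcal{O}_F)$, interchanging the now compactly supported integrals, and using that (since $p$ is odd, so $\psi\circ\textup{tr}$ is unramified and $2$ is a unit)
\[
\int_{\varpi^{-N}\textup{Sym}_{n+1}(\mathcal{O}_F)}\psi(\textup{tr}(bS))\,\textup{d}b=q^{N(n+1)(n+2)/2}\cdot 1_{S\in\varpi^{N}\textup{Sym}_{n+1}(\mathcal{O}_F)},
\]
one reduces to $W_{\Tilde{T}}(1,k,\phi_c)=\lim_{N\to\infty}q^{N(n+1)(n+2)/2}\cdot\textup{vol}\{\boldsymbol{t}\in\textup{supp}(\phi_c):q(\boldsymbol{t})\equiv\Tilde{T}\pmod{\varpi^{N}}\}$.

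The core step is then to identify this volume (with $N=t$) with $\#\RepL$ up to an explicit power of $q$. Here one checks that the truncation levels appearing in Definition~\ref{rep-sets} are exactly those that make all the congruences and the membership condition well posed modulo $\varpi^{t}$: since $\overline{\mu}\neq0$ forces $-l\le\nu_F(c)\le-1$, the constraint $ct_0+t_1\in L_k^{\sharp}$ defining $\textup{supp}(\phi_c)$ forces $t_1\in\varpi^{-l}L_k^{\sharp}$, and to pin down $q(t_0)$, $\tfrac12(t_0,t_i)$, $\tfrac12(t_i,t_j)$ modulo $\varpi^{t}$ together with the membership $ct_0+t_1\in L_k^{\sharp}$ modulo $\varpi^{t+l}L_k^{\sharp}$, one must retain $t_0$ only modulo $\varpi^{t+2l}L_k^{\sharp}$ (the extra $2l$ arising from pairing against $\varpi^{-l}L_k^{\sharp}$ and from multiplication by $c$), $t_1$ modulo $\varpi^{t+l}L_k^{\sharp}$ inside $\varpi^{-l}L_k^{\sharp}$, and $t_i$ modulo $\varpi^{t+l}L_k^{\sharp}$ for $i\ge2$. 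With these levels the reduction map exhibits $\{\boldsymbol{t}\in\textup{supp}(\phi_c):q(\boldsymbol{t})\equiv\Tilde{T}\pmod{\varpi^{t}}\}$ as a disjoint union of Haar cosets indexed precisely by $\RepL$, each of volume $q^{-(n+2+2k)((n+1)t+(n+2)l)}$, where we use $\textup{vol}(L_k^{\sharp})=1$ since $L_k^{\sharp}$ is self-dual. Substituting and letting $t\to\infty$ yields the first formula.

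For the second identity I would run the same scheme directly on $W_T(1,k,1_{(r(\overline{\mu})+L(i))\times L(i)\times\cdots\times L(i)})$: no preliminary change of variables is needed because the twist already sits in the coset $r(\overline{\mu})+L(i)$, while the Weil index is the general constant $\gamma(L)$ since $V$ need not contain a self-dual lattice. Regularizing the $\textup{Sym}_n(F)$-integral as above gives $\gamma(L)\cdot\lim_N q^{N\cdot n(n+1)/2}$ times the volume of $\{\boldsymbol{t}:t_1\in r(\overline{\mu})+L(i)_k,\ t_a\in L(i)_k\ (a\ge2),\ q(\boldsymbol{t})\equiv T\pmod{\varpi^{N}}\}$; noting $r(\overline{\mu})\in L^{\vee}\subseteq\varpi^{-l}L(i)$, one truncates $t_1$ modulo $\varpi^{t+l}L(i)_k$ inside $\varpi^{-l}L(i)_k$ (keeping it in the coset $r(\overline{\mu})+L(i)_k$) and $t_a$ modulo $\varpi^{t+l}L(i)_k$, and the reduction map realizes the solution set as a union of cosets indexed by $\Repl$, each of volume $q^{-n(t+l)(n+1+2k)-n(l/2-i)}$. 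The extra summand $l/2-i$ records $\tfrac12\nu_F(\det L(i))$, i.e.\ $\textup{vol}(L(i)_k)=q^{-(l/2-i)}$, coming from $L(i)^{\vee}/L(i)\simeq\mathcal{O}_F/\varpi^{l-2i}$; substituting and letting $t\to\infty$ produces the stated normalization.

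The main obstacle will be the bookkeeping in these two volume-to-count identifications: one must carefully track both (i) the $\varpi$-adic truncation level to which each vector $t_j$ must be retained so that every congruence and membership condition of Definition~\ref{rep-sets} becomes well posed modulo $\varpi^{t}$, and (ii) the Haar volumes of the non-self-dual modules $\varpi^{-l}L_k^{\sharp}$, $L(i)_k$, $\varpi^{-l}L(i)_k$ measured against the self-dual normalization $\textup{vol}(L_k^{\sharp})=1$. By contrast, the regularization of the symmetric-matrix integral and the Fubini interchange (legitimate because for each fixed $N$ the integrand is bounded with compact support) are routine.
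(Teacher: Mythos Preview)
Your proposal is correct and follows essentially the same approach as the paper: apply Lemma~\ref{red1} to pass to $W_{\Tilde{T}}(1,k,\phi_c)$, unfold the defining integral, and identify the resulting $\varpi$-adic volume of $\{\boldsymbol t:\phi_c(\boldsymbol t)\neq0,\ q(\boldsymbol t)\equiv\Tilde T\pmod{\varpi^t}\}$ with $\#\RepL$ times the volume of the coset $\varpi^{l+t}\bigl(\varpi^{l}L_k^{\sharp}\times(L_k^{\sharp})^n\bigr)$; the second identity is handled the same way with the extra factor $\gamma(L)$ and $\textup{vol}(L(i)_k)=q^{-(l/2-i)}$. Your write-up makes the regularization of the $\textup{Sym}_{n+1}(F)$-integral slightly more explicit than the paper does, but the content is identical.
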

\begin{proof}
    Proposition \ref{red1} implies that
    \begin{align*}
          &W_{M(T,\boldsymbol{\mu})}(1,k,1_{(L^{\sharp})^{n+1}})=\\
          &\int\limits_{\textup{Sym}_{n+1}(F)}\int\limits_{(U_{k}^{\sharp})^{n+1}}\psi\left(\textup{tr}\left((\frac{1}{2}(\boldsymbol{t},\boldsymbol{t})-\Tilde{T})b\right)\right)\cdot
          1_{L^{\sharp}_{k}}(t_{0})1_{L^{\sharp}_{k}}(c\cdot t_0+t_{1})1_{L^{\sharp}_{k}}(t_{2})\cdots1_{L^{\sharp}_{k}}(t_{n})\textup{d}t_{0}\cdots\textup{d}t_{n}\textup{d}b.
          \end{align*}
    Note that for any positive integer $t$, if $\boldsymbol{t}=(t_0,\cdots,t_n)$ satisfies that
    \begin{align*}
        t_{0}, t_{i}\in L_k^{\sharp}\,\,\textup{for}\,\,i\geq 2,\,\,c\cdot t_0+t_1\in L_k^{\sharp}\,\,\,\textup{and}\,\,\frac{1}{2}(\boldsymbol{t},\boldsymbol{t})\equiv \Tilde{T}\,\,\textup{mod}\,\,\varpi^{t},
    \end{align*}
    it's easy to check that any element in the coset $\boldsymbol{t}+\varpi^{l+t}\cdot(\varpi^{l}L_{k}^{\sharp}\times(L_k^{\sharp})^{n})$ also satisfies the above conditions. Therefore,
     \begin{align*}
          W_{M(T,\boldsymbol{\mu})}(1,k,1_{(L^{\sharp})^{n+1}})&=\lim\limits_{t\rightarrow+\infty}q^{t\cdot(n+1)(n+2)/2}\cdot\sharp \RepL\cdot\textup{vol}(\varpi^{l+t}\cdot(\varpi^{l}L_{k}^{\sharp}\times(L_k^{\sharp})^{n}))\\
          &=\lim\limits_{t\rightarrow+\infty}\frac{\sharp \RepL}{q^{(n+2+2k)\left((n+2)l+(n+1)t\right)-t\cdot (n+1)(n+2)/2}}.
          \end{align*}
    \par
    The integral representation of $ W_{T}(1,k,1_{(r(\overline{\mu})+L(i))\times L(i)\times\cdots\times L(i)})$ takes the following form,
    \begin{align}
        &W_{T}(1,k,1_{(r(\overline{\mu})+L(i))\times L(i)\times\cdots\times L(i)})\notag\\
        &=\gamma(L)\cdot\int\limits_{\textup{Sym}_{n}(F)}\int\limits_{(U_{k})^{n}}\psi\left(\textup{tr}\left((\frac{1}{2}(\boldsymbol{t},\boldsymbol{t})-T)b\right)\right)\cdot
          1_{r(\overline{\mu})+L(i)_{k}}(t_{1})1_{L(i)_{k}}(t_{2})\cdots1_{L(i)_{k}}(t_{n})\textup{d}t_{1}\cdots\textup{d}t_{n}\textup{d}b.
    \end{align}
    Note that for any positive integer $t$, if $\boldsymbol{t}=(t_1,\cdots,t_n)$ satisfies that
    \begin{align*}
        t_{1}\in r(\overline{\mu})+L(i)_{k}, t_{i}\in L(i)_{k}\,\,\textup{for}\,\,i\geq 2\,\,\,\textup{and}\,\,\frac{1}{2}(\boldsymbol{t},\boldsymbol{t})\equiv T\,\,\textup{mod}\,\,\varpi^{t},
    \end{align*}
    it's easy to check that any element in the coset $\boldsymbol{t}+\varpi^{l+t}\cdot(L(i)_{k})^{n}$ also satisfies the above conditions. Therefore,
    \begin{align*}
         W_{T}(1,k,1_{(r(\overline{\mu})+L(i))\times L(i)\times\cdots\times L(i)})&=\gamma(L)\lim\limits_{t\rightarrow+\infty}q^{t\cdot n(n+1)/2}\cdot\Repl\cdot\textup{vol}(\varpi^{t+l}\cdot(L(i)_{k})^{n}).\notag\\         
         &=\gamma(L)\cdot \lim\limits_{t\rightarrow+\infty}\frac{\sharp \Repl}{q^{\left((n+1+2k)(l+t)+l/2-i\right)n-t\cdot n(n+1)/2}}.
    \end{align*}
\end{proof}
Now we are ready to prove Theorem \ref{reduction} assuming several technical lemmas about representation sets that will be proved in \S \ref{sec: rep sets}. 
 \begin{proof}[Proof of Theorem \ref{reduction}] By Lemma \ref{lem: red to special mu},  we only need to consider the case that $\mu_{2}=\cdots=\mu_{n}=0$, and $\mu_1\in\Lambda^{\vee}/\Lambda$ is not 0. In $\S\ref{sec: rep sets}$, we will define a restriction map (cf. (\ref{resmap}))
\begin{equation*}
    \textup{res}:\RepL\longrightarrow\textup{Rep}_{L_{k}^{\sharp},\Lambda}^{t}(\Of/\varpi^{2l+t})\stackrel{\simeq}\rightarrow\bigsqcup\limits_{i=0}^{[l/2]}\textup{PRep}_{L_{k}^{\sharp},\Lambda}^{t-2i}(\Of/\varpi^{2l+t-i}).
\end{equation*}
The sets $\textup{Rep}_{L_{k}^{\sharp},\Lambda}^{t}(\Of/\varpi^{2l+t})$, $\textup{PRep}_{L_{k}^{\sharp},\Lambda}^{t-2i}(\Of/\varpi^{2l+t-i})$ will be defined in the beginning of $\S\ref{sec: rep sets}$. Therefore we have
\begin{align*}
    \sharp \RepL&=\sum\limits_{i=0}^{[l/2]}\sum\limits_{x\in\textup{PRep}_{L_{k}^{\sharp},\Lambda}^{t-2i}(\Of/\varpi^{2l+t-i})}\sharp \textup{res}^{-1}(x)\\
    &=\sum\limits_{i=0}^{j}\sharp \textup{PRep}_{L_{k}^{\sharp},\Lambda}^{t-2i}(\Of/\varpi^{2l+t-i})\cdot q^{n(l+i)}\cdot\sharp \Repl.
\end{align*}
The last equality follows from Corollary \ref{fiber-size}. By Proposition \ref{rep}, we have
\begin{align*}
    &W_{M(T,\boldsymbol{\mu})}(1,k,1_{(L^{\sharp})^{n+1}})=\lim\limits_{t\rightarrow+\infty}\frac{\sharp \RepL}{q^{(n+2+2k)\left((n+2)l+(n+1)t\right)-t\cdot (n+1)(n+2)/2}}.\\
    &=\lim\limits_{t\rightarrow+\infty}\sum\limits_{i=0}^{j}q^{nl/2-(n+2k)i}\cdot\frac{\sharp \textup{PRep}_{L_{k}^{\sharp},\Lambda}^{t-2i}(\Of/\varpi^{2l+t-i})}{q^{(2k+n+1)(2l+t-i)}}\cdot\frac{\sharp \Repl}{q^{\left((n+1+2k)(l+t)+l/2-i)n-t\cdot n(n+1)/2\right)}}\\
    &=\frac{q^{nl/2}}{\gamma(L)}\cdot
    \sum\limits_{i=0}^{j}q^{-(n+2k)i}\cdot\textup{Pden}(L^{\sharp}\obot H_{2k}^{+},\Lambda(i))\cdot W_{T}(1,k,1_{(r(\overline{\mu})+L(i))\times L(i)\times\cdots\times L(i))}).
\end{align*}
\end{proof}

\subsection{Relation between the representation sets}\label{sec: rep sets}

Our next job is to investigate the relationship between the sets $\Repl$ and $\RepL$. For any positive integers $k, t\geq 0$, we consider the following sets,
\begin{equation*}
    \textup{Rep}_{L_{k}^{\sharp},\Lambda}^{t}(\Of/\varpi^{2l+t})=\{x\in L_k^{\sharp}/\varpi^{2l+t}L_k^{\sharp}:q_{L_k^{\sharp}}(x)\equiv q_{\Lambda}(x_0)\,\,\textup{mod}\,\,\varpi^{t}\}.
\end{equation*}
\begin{equation*}
    \textup{PRep}_{L_{k}^{\sharp},\Lambda}^{t-2i}(\Of/\varpi^{2l+t-i})=\{x\in L_k^{\sharp}/\varpi^{2l+t-i}L_k^{\sharp}:q_{L_k^{\sharp}}(x)\equiv \varpi^{-2i}q_{\Lambda}(x_0)\,\,\textup{mod}\,\,\varpi^{t-2i}\},\,\,0\leq i\leq [\frac{l}{2}].
\end{equation*}
\par
For sufficiently large integer $t$, we have a natural decomposition map 
\begin{equation*}
    D:\textup{Rep}_{L_{k}^{\sharp},\Lambda}^{t}(\Of/\varpi^{2l+t})\stackrel{\simeq}\rightarrow\bigsqcup\limits_{i=0}^{[l/2]}\textup{PRep}_{L_{k}^{\sharp},\Lambda}^{t-2i}(\Of/\varpi^{2l+t-i})
\end{equation*}
given as follows: for an element $x\in\textup{Rep}_{L_{k}^{\sharp},\Lambda}^{t}(\Of/\varpi^{2l+t})$, there exists a unique integer $0\leq i\leq [\frac{l}{2}]$ such that $x\in\varpi^{i}L_{k}^{\sharp}/\varpi^{2l+t}L_k^{\sharp}$ but $x\notin\varpi^{i+1}L_{k}^{\sharp}/\varpi^{2l+t}L_k^{\sharp}$, then it's easy to check that $\varpi^{-i}x\in\textup{PRep}_{L_{k}^{\sharp},\Lambda}^{t-2i}(\Of/\varpi^{2l+t-i})$.
\par
By Definition \ref{rep-sets} of the set $\RepL$, there is a restriction map
\begin{align}
    \textup{res}:\RepL&\longrightarrow\textup{Rep}_{L_{k}^{\sharp},\Lambda}^{t}(\Of/\varpi^{2l+t});\label{resmap}\\
    (\overline{t}_0,\overline{t}_{1},(\overline{t}_2,\cdots,\overline{t}_n))&\longmapsto \overline{t}_0\in L_k^{\sharp}/\varpi^{2l+t}L_k^{\sharp}.\notag
\end{align}
Let $x\in\textup{PRep}_{L_{k}^{\sharp},\Lambda}^{t-2i}(\Of/\varpi^{2l+t-i})$, we are interested in the set $(D\circ \textup{res})^{-1}(x)\subset\RepL$, it can be described in the following way,
\begin{align}
    (D\circ \textup{res})^{-1}(x)=\{(\overline{t}_1,\cdots,\overline{t}_n):\,\,&\overline{t}_1\in\varpi^{-l}L_k^{\sharp}/\varpi^{l+t}L_k^{\sharp},\,\,\overline{t}_m\in L_k^{\sharp}/\varpi^{l+t}L_k^{\sharp}\,\,\textup{for}\,\,1\leq m\leq n;\label{description}\\
    &\frac{1}{2}(\varpi^{i}x,\overline{t}_m)\equiv 0\,\,\textup{mod}\,\,\varpi^{t};\,\,\frac{1}{2}(\overline{t}_m,\overline{t}_{m^{\pr}})\equiv T\,\,\textup{mod}\,\,\varpi^{t};\notag\\
    & c\cdot\varpi^{i}x+\overline{t}_1\in L_k^{\sharp}/\varpi^{l+t}L_k^{\sharp}\}.\notag
\end{align}
\par
The element $x\in\textup{PRep}_{L_{k}^{\sharp},\Lambda}^{t-2i}(\Of/\varpi^{2l+t-i})$ can be lifted to $\Of$ and then generates a quadratic lattice isometric to $\Lambda(i)$, then we get the following two exact sequences by the diagram (\ref{compatibility}), 
\begin{equation}
    0\longrightarrow\Lambda(i)\obot\varpi^{-l}L(i)_{k}\stackrel{i_x}\longrightarrow \varpi^{-l}L_k^{\sharp}\longrightarrow Q_0\longrightarrow0,
\end{equation}
\begin{equation}
    0\longrightarrow\Lambda(i)\obot L(i)_{k}\stackrel{j_x}\longrightarrow  L_k^{\sharp}\longrightarrow Q_1\longrightarrow0.
\end{equation}
Note that $Q_0$ and $Q_1$ are finite. For positive integer $t$ large enough, the above exact sequences induce
\begin{equation}
    0\longrightarrow T_0\longrightarrow\Lambda(i)/\varpi^{2l+t}\Lambda(i)\obot\varpi^{-l}L(i)_{k}/\varpi^{l+t}L(i)_{k}\stackrel{\overline{i}_x}\longrightarrow \varpi^{-l}L_k^{\sharp}/\varpi^{l+t}L_k^{\sharp}\longrightarrow Q_0\longrightarrow 0,
    \label{key1}
\end{equation}
\begin{equation}
    0\longrightarrow T_1\longrightarrow\Lambda(i)/\varpi^{l+t}\Lambda(i)\obot L(i)_{k}/\varpi^{l+t}L(i)_{k}\stackrel{\overline{j}_x}\longrightarrow L_k^{\sharp}/\varpi^{l+t}L_k^{\sharp}\longrightarrow Q_1\longrightarrow 0.
    \label{key2}
\end{equation}
\begin{lemma}\label{bigone}
    For any $x\in\textup{PRep}_{L_{k}^{\sharp},\Lambda}^{t-2i}(\Of/\varpi^{2l+t-i})$ and large enough integer $t$, we have 
    \begin{equation*}
        (D\circ \textup{res})^{-1}(x)\subset\textup{Im}\left(\overline{i}_x\times\overline{j}_x^{n-1}\right).
    \end{equation*}
\end{lemma}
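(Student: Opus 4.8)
The plan is to turn the congruence description of $(D\circ\textup{res})^{-1}(x)$ into a genuine containment of sublattices of $L_k^\sharp$. First I would fix a lift $\tilde x\in L_k^\sharp$ of $x$. Since $q_{L_k^\sharp}(\tilde x)\equiv\varpi^{-2i}q_\Lambda(x_0)\pmod{\varpi^{t-2i}}$ and $x$ is primitive, for $t$ large $\tilde x$ is primitive in $L_k^\sharp$ with $\nu_\varpi(q(\tilde x))=l-2i$, and $\Of\tilde x\cong\Lambda(i)$ (as $p$ is odd, two rank-one lattices whose $q$-values agree up to a unit square are isometric). Because $L_k^\sharp$ is self-dual and $\tilde x$ is primitive, the functional $(\tilde x,-)$ is surjective onto $\Of$, so one can pick $w_0\in L_k^\sharp$ with $(\tilde x,w_0)=1$, giving $L_k^\sharp=\Of w_0\oplus N$ with $N:=\{v\in L_k^\sharp:(v,\tilde x)=0\}$; then $\Of\tilde x\obot N$ has index $q^{\nu_\varpi((\tilde x,\tilde x))}=q^{l-2i}$ in $L_k^\sharp$.

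Next I would pin down the images of the maps $i_x$, $j_x$ appearing in the exact sequences \eqref{key1}, \eqref{key2}. The lattices $L_k^\sharp$ and $L^\sharp(i)\obot H_{2k}^+$ are self-dual of the same rank inside the same quadratic space $V^\sharp\obot(H_{2k}^+\otimes F)$, hence isometric; for $t$ large one may choose such an isometry sending $\iota(\varpi^{-i}x_0)$ to $\tilde x$ (Witt extension for unimodular $\Of$-lattices, $p$ odd, the two vectors being primitive with $q$-values agreeing up to a unit square). Conjugating the primitive embedding $\iota\colon\Lambda(i)\obot L(i)_k\hookrightarrow L^\sharp(i)\obot H_{2k}^+$ by this isometry gives $j_x$, and its $F$-linear extension restricted to $\Lambda(i)\obot\varpi^{-l}L(i)_k$ gives $i_x$. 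Using $L(i)=L^\sharp(i)\cap\iota(\Lambda(i))^\perp$ from \eqref{compatibility}, one gets $j_x(\Lambda(i))=\Of\tilde x$ and $j_x(L(i)_k)\subseteq N$; since $\textup{Im}(j_x)$ and $\Of\tilde x\obot N$ both have index $q^{l-2i}$ in $L_k^\sharp$, the inclusion $j_x(L(i)_k)\subseteq N$ must be an equality, so $\textup{Im}(j_x)=\Of\tilde x\obot N$ and $\textup{Im}(i_x)=\Of\tilde x\obot\varpi^{-l}N$.

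With these identifications the inclusion reduces to a short computation in the splitting $L_k^\sharp=\Of w_0\oplus N$. Given $(\overline t_1,\dots,\overline t_n)\in(D\circ\textup{res})^{-1}(x)$, lift to $t_1\in\varpi^{-l}L_k^\sharp$ and $t_m\in L_k^\sharp$ for $m\ge2$, and write $t_m=a_m w_0+n_m$ with $a_m\in\Of$, $n_m\in N$. The condition $\tfrac12(\varpi^i x,\overline t_m)\equiv0\pmod{\varpi^t}$ forces $a_m=(\tilde x,t_m)\in\varpi^{t-i}\Of$, so $\nu_\varpi(a_m)\ge l-2i=\nu_\varpi((\tilde x,\tilde x))$ once $t$ is large, and hence $a_m w_0\in\Of\tilde x+N$, i.e.\ $t_m\in\Of\tilde x\obot N=\textup{Im}(j_x)$; the same computation relative to $\varpi^{-l}L_k^\sharp=\Of\varpi^{-l}w_0\oplus\varpi^{-l}N$ gives $t_1\in\Of\tilde x\obot\varpi^{-l}N=\textup{Im}(i_x)$. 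Reducing modulo $\varpi^{l+t}L_k^\sharp$ yields $\overline t_m\in\textup{Im}(\overline j_x)$ and $\overline t_1\in\textup{Im}(\overline i_x)$, which is the assertion. (Only the near-orthogonality conditions are used; the coset condition $c\varpi^i x+\overline t_1\in L_k^\sharp/\varpi^{l+t}L_k^\sharp$ is not needed for this containment, although it becomes essential when the fibres of $\overline i_x\times\overline j_x^{\,n-1}$ are counted in Corollary \ref{fiber-size}.)

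The step I expect to be the real obstacle is the identification $j_x(L(i)_k)=N$, i.e.\ that the transported orthogonal complement of $\tilde x$ is the \emph{entire} complement $\tilde x^\perp\cap L_k^\sharp$ rather than a proper sublattice. This rests on the self-duality of $L_k^\sharp$ (which forces $\Of\tilde x\obot N$ to have exactly the index $q^{l-2i}$ predicted by $\iota$) together with the primitive-vector form of Witt's theorem; an error term there would survive the $\varpi^{l+t}L_k^\sharp$-truncation for all large $t$ and break the inclusion. Everything else — producing $w_0$, and the valuation estimates in the last paragraph — is routine.
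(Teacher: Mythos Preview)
Your proof is correct and follows essentially the same approach as the paper. The paper lifts $x$ to $\tilde x$, invokes \cite[Theorem 5.2]{Mor79} to obtain an isometry $L_k^\sharp\cap\{\tilde x\}^\perp\simeq L(i)_k$, and then writes each $t_m$ as $\frac{(t_m,\tilde x)}{2q(\tilde x)}\tilde x + t_m'$ with $t_m'\in L_k^\sharp\cap\{\tilde x\}^\perp$ (implicitly using the same valuation estimate $\nu_\varpi((t_m,\tilde x))\ge t-i\ge l-2i=\nu_\varpi(2q(\tilde x))$ that you make explicit); your splitting $t_m=a_m w_0+n_m$ together with the index-matching argument for $j_x(L(i)_k)=N$ is just an alternative packaging of the same orthogonal-projection step and the same Witt-type input, so there is no substantive difference.
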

\begin{proof}
    For an element $(\overline{t}_1,\cdots,\overline{t}_n)\in(D\circ \textup{res})^{-1}(x)$, let $t_1\in\varpi^{-l}L_k^{\sharp}$, $t_m\in L_k^{\sharp}$ ($m\geq2$) and $\Tilde{x}$ be any lift of $\overline{t}_1$, $\overline{t}_m$ ($m\geq2$) and $x$. By \cite[Theorem 5.2]{Mor79}, there is an isometric map $ L_k^{\sharp}\cap\{\Tilde{x}\}^{\perp}\rightarrow L(i)_k$.
    \par
    We have the following
    \begin{equation*}
        t_m=\frac{(t_m,\Tilde{x})}{2q(\Tilde{x})}\Tilde{x}+t_m^{\pr}\,\,\textup{for $m\geq1$}.
    \end{equation*}
    For $m\geq2$, every element $t_m^{\pr}\in L_k^{\sharp}\cap\{\Tilde{x}\}^{\perp}\simeq L(i)_k$. For $m=1$, $t_1^{\pr}\in \varpi^{-l}L_k^{\sharp}\cap\{\Tilde{x}\}^{\perp}\simeq \varpi^{-l}L(i)_k$, therefore 
    \begin{equation*}
        \overline{t}_1=\overline{i}_x\left(\overline{\frac{(t_1,\Tilde{x})}{2q(\Tilde{x})}\Tilde{x}}+\overline{t_1^{\pr}}\right),\,\,\overline{t}_m=\overline{j}_x\left(\overline{\frac{(t_m,\Tilde{x})}{2q(\Tilde{x})}\Tilde{x}}+\overline{t_m^{\pr}}\right)\,\,\textup{for $m\geq2$}.
    \end{equation*}
    Hence $(\overline{t}_1,\cdots,\overline{t}_n)\in\textup{Im}\left(\overline{i}_x\times\overline{j}_x^{n-1}\right)$, therefore we can conclude that $(D\circ \textup{res})^{-1}(x)\subset\textup{Im}\left(\overline{i}_x\times\overline{j}_x^{n-1}\right)$.
\end{proof}
\begin{lemma}\label{kernel}
    For any $x\in\textup{PRep}_{L_{k}^{\sharp},\Lambda}^{t-2i}(\Of/\varpi^{2l+t-i})$, we have
    \begin{equation*}
        \sharp  T_0=\sharp Q_0= q^{2l-2i},\,\,\sharp  T_1=\sharp Q_1= q^{l-2i},
    \end{equation*}
\end{lemma}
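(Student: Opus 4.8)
The plan is to reduce the statement to two elementary index computations together with a bookkeeping count of cardinalities in the four-term exact sequences \eqref{key1} and \eqref{key2}. Two facts built into the setup will be used freely: a lift of $x$ to $L_{k}^{\sharp}$ is primitive (this is exactly how elements of $\textup{PRep}_{L_{k}^{\sharp},\Lambda}^{t-2i}(\Of/\varpi^{2l+t-i})$ arise from the decomposition map $D$, namely as $\varpi^{-i}$ times an element of $\textup{Rep}$ lying in $\varpi^{i}L_{k}^{\sharp}\setminus\varpi^{i+1}L_{k}^{\sharp}$), and $\nu_F(q(x))=l-2i$ (since $q(x)\equiv\varpi^{-2i}q_{\Lambda}(x_0)\bmod\varpi^{t-2i}$ with $t$ large, and $q_{\Lambda}(x_0)$ has valuation $l$ because $\Lambda$, like $L$, has order $l$).

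First I would compute $\#Q_1$ and $\#Q_0$. Put $N=L_{k}^{\sharp}\cap\{x\}^{\perp}$; by \eqref{compatibility} the map $j_x$ has image $\Of x\obot N$, so $Q_1=L_{k}^{\sharp}/(\Of x\obot N)$. Since $L_{k}^{\sharp}$ is self-dual and $x$ is primitive there is $y_0\in L_{k}^{\sharp}$ with $(x,y_0)=1$, and the orthogonal projection $w\mapsto\frac{(w,x)}{(x,x)}x$ onto $Fx$ (legitimate because $p\neq2$) induces an isomorphism $L_{k}^{\sharp}/(\Of x\obot N)\xrightarrow{\sim}(\Of x)^{\vee}/\Of x\cong\Of/\varpi^{\nu_F(q(x))}$, giving $\#Q_1=q^{l-2i}$. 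For $Q_0$, the map $i_x$ has image $\Of x\obot\varpi^{-l}N$, and along the chain $\Of x\obot\varpi^{-l}N\subseteq\varpi^{-l}(\Of x\obot N)\subseteq\varpi^{-l}L_{k}^{\sharp}$ the index is multiplicative, so $\#Q_0=[\varpi^{-l}L_{k}^{\sharp}:\varpi^{-l}(\Of x\obot N)]\cdot[\varpi^{-l}(\Of x\obot N):\Of x\obot\varpi^{-l}N]=\#Q_1\cdot q^{l}=q^{2l-2i}$.

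Then $\#T_0$ and $\#T_1$ come for free by counting. In \eqref{key1} the source has cardinality $q^{2l+t}\cdot q^{(2l+t)(n+1+2k)}=q^{(2l+t)(n+2+2k)}$, which is precisely the cardinality of the target $\varpi^{-l}L_{k}^{\sharp}/\varpi^{l+t}L_{k}^{\sharp}$; likewise in \eqref{key2} source and target both have cardinality $q^{(l+t)(n+2+2k)}$. Since each sequence $0\to T_j\to S_j\to B_j\to Q_j\to0$ is exact with $\#S_j=\#B_j$, multiplicativity of cardinalities forces $\#T_j\cdot\#B_j=\#S_j\cdot\#Q_j$, i.e. $\#T_j=\#Q_j$. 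One should also check that the last term of each reduced sequence is genuinely the integral cokernel $Q_j$: from $Q_1\cong\Of/\varpi^{l-2i}$ one gets $\varpi^{l-2i}L_{k}^{\sharp}\subseteq\Of x\obot N$, so for $t\geq0$ the additional truncation by $\varpi^{l+t}L_{k}^{\sharp}$ in \eqref{key1} and \eqref{key2} is absorbed into the images of $i_x$ and $j_x$ and does not change the cokernel.

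I do not expect a real obstacle here; the content is routine. The two places to be careful are the scaling bookkeeping --- keeping straight that $j_x$ and $i_x$ have images $\Of x\obot N\subseteq L_{k}^{\sharp}$ and $\Of x\obot\varpi^{-l}N\subseteq\varpi^{-l}L_{k}^{\sharp}$ respectively --- and observing that the non-uniform truncations in \eqref{key1}, \eqref{key2} (by $\varpi^{2l+t}$ on the $\Lambda(i)$-factor but by $\varpi^{l+t}$ elsewhere) are arranged exactly so that source and target have equal cardinality, which is what makes the counting argument in the previous paragraph close.
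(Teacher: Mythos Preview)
Your proposal is correct and follows essentially the same two-step structure as the paper: first compute $\#Q_0,\#Q_1$, then use that the two middle terms in each of \eqref{key1}, \eqref{key2} have equal cardinality to deduce $\#T_j=\#Q_j$. The only difference is cosmetic: the paper computes $\#Q_j$ in one line via the discriminant formula $\#(L_k^{\sharp}/M)=\lvert\det M\rvert_F^{-1/2}$ for a full-rank sublattice $M$ of a self-dual lattice, whereas you unwind this by hand using the orthogonal projection onto $F\tilde x$ and an index-chain for $Q_0$; both are routine and yield the same numbers.
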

\begin{proof}
    Since the middle two terms in the exact sequences (\ref{key1}) and (\ref{key2}) have the same cardinality, we have $\sharp  T_0=\sharp Q_0$ and $\sharp  T_1=\sharp Q_1$. Note that
    \begin{equation*}
        Q_0\simeq\varpi^{-l}L_{k}^{\sharp}/\left(\Lambda(i)\obot\varpi^{-l}L(i)_k\right)\simeq L_{k}^{\sharp}/\left(\varpi^{l}\Lambda(i)\obot L(i)_k\right)\,\,\Longrightarrow\,\,\sharp Q_0=\vert\textup{det}(\varpi^{l}\Lambda(i)\obot L(i)_k)\vert_{F}^{-\frac{1}{2}}=q^{2l-2i}.
    \end{equation*}
    \begin{equation*}
        Q_1\simeq L_{k}^{\sharp}/\left(\Lambda(i)\obot L(i)_k\right)\,\,\Longrightarrow\,\,\sharp Q_1=\vert\textup{det}(\Lambda(i)\obot L(i)_k)\vert_{F}^{-\frac{1}{2}}=q^{l-2i}.
    \end{equation*}
\end{proof}
\begin{corollary}\label{fiber-size}
    Let $0\leq j\leq [\frac{l}{2}]$ be the largest integer such that $r(\overline{\mu})\in L(j)^{\vee}/L$. For any $x\in\textup{PRep}_{L_{k}^{\sharp},\Lambda}^{t-2i}(\Of/\varpi^{2l+t-i})$ and large enough integer $t$, we have
    \begin{equation*}
        \sharp \textup{res}^{-1}(x)=\begin{cases}
            q^{n(l+i)}\cdot\sharp \Repl, & \textup{if $0\leq i\leq j$};\\
            0, &\textup{if $j< i\leq [\frac{l}{2}]$}.
        \end{cases}.
    \end{equation*}
\end{corollary}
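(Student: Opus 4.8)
The plan is to use Lemmas~\ref{bigone} and~\ref{kernel} to convert the computation of $\sharp\,\textup{res}^{-1}(x)$ (which we read as $\sharp\,(D\circ\textup{res})^{-1}(x)$ via the bijection $D$) into a counting problem inside the split lattice $\Lambda(i)\obot L(i)_k$, and then to match the answer with $\sharp\,\Repl$. First I would set $\Phi=\overline i_x\times\overline j_x^{\,n-1}$, the map out of the direct sum occurring in the exact sequences (\ref{key1})--(\ref{key2}). Since the conditions cutting out $(D\circ\textup{res})^{-1}(x)$ in (\ref{description}) depend only on the tuple $(\overline t_1,\dots,\overline t_n)$, the preimage $S:=\Phi^{-1}\!\bigl((D\circ\textup{res})^{-1}(x)\bigr)$ is a union of cosets of $\ker\Phi=T_0\times T_1^{\,n-1}$; and because $(D\circ\textup{res})^{-1}(x)\subseteq\textup{Im}\,\Phi$ by Lemma~\ref{bigone}, the map $\Phi$ restricts to a surjection $S\twoheadrightarrow(D\circ\textup{res})^{-1}(x)$ with fibres of constant size $\sharp\ker\Phi$. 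Combining this with Lemma~\ref{kernel},
\[
\sharp\,\textup{res}^{-1}(x)=\frac{\sharp S}{\sharp T_0\cdot(\sharp T_1)^{n-1}}=\frac{\sharp S}{q^{2l-2i}\cdot q^{(l-2i)(n-1)}},
\]
so it suffices to compute $\sharp S$.

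Next I would write an element of the source of $\Phi$ as $\bigl((\alpha_m,\overline b_m)\bigr)_{m=1}^n$, where $\alpha_m\in\Of$ is the coordinate of the $\Lambda(i)$-component against a fixed generator (taken mod $\varpi^{2l+t}$ for $m=1$ and mod $\varpi^{l+t}$ for $m\ge2$) and $\overline b_m$ is the $L(i)_k$-component, and translate the conditions of (\ref{description}) one at a time. Under $\overline i_x,\overline j_x$ the element $\varpi^i x$ corresponds to $\varpi^i$ times a generator of $\Lambda(i)$, of norm $\equiv q_\Lambda(x_0)$ (valuation $l$); hence $\tfrac12(\varpi^i x,\overline t_m)$ has valuation $\nu_F(\alpha_m)+l-i$, and the orthogonality conditions become $\nu_F(\alpha_m)\ge t-l+i$ for all $m$, leaving (for large $t$) $q^{3l-i}$ choices of $\alpha_1$ and $q^{2l-i}$ choices of each $\alpha_m$, $m\ge2$. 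Since $\overline i_x,\overline j_x$ are isometric and the $L(i)_k$-components are orthogonal to $\varpi^i x$, these small $\alpha_m$ contribute terms of valuation $\ge 2t-l\ge t$ to every $(\overline t_m,\overline t_{m'})$, so the Gram condition $\tfrac12(\overline t_m,\overline t_{m'})\equiv T$ is equivalent to the same condition on $(\overline b_1,\dots,\overline b_n)$.

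The remaining condition $c\varpi^i x+\overline t_1\in L_k^\sharp$ (where $\mu=cx_0$) is the heart of the matter. By \cite[Theorem~5.2]{Mor79} I would identify $L_k^\sharp$ with $L^\sharp(i)_k$ of (\ref{compatibility}), realized as the self-dual overlattice of $\langle x\rangle\obot\bigl(L_k^\sharp\cap x^\perp\bigr)\cong\Lambda(i)\obot L(i)_k$, whose gluing datum is the anti-isometry $r_i\colon\Lambda(i)^\vee/\Lambda(i)\xrightarrow{\ \sim\ }L(i)^\vee/L(i)$ of Lemma~\ref{cyclic duality for L and Lambda} applied to $L(i)$. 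Writing $c\varpi^i x+\overline t_1=(c\varpi^i+\alpha_1)\cdot(\text{gen of }\Lambda(i))+b_1'$ and using that $\alpha_1$ already has large positive valuation, membership in $L_k^\sharp$ amounts to: (i) $\nu_F(c)\ge i-l$, which by the definition of $j$ and Lemma~\ref{compatible} holds exactly when $i\le j$; and (ii) the class of $b_1'$ in $L(i)_k^\vee/L(i)_k$ equals $r_i(\overline\mu)$, which via (\ref{relation}) and the identity $r_i(\overline\mu)=r(\overline\mu)\bmod L(i)$ (itself following from $z_0-y'\in(\varpi^{-i}L)\cap L^\vee=L(i)$, where $z_0,y'$ are the lifts provided by Lemma~\ref{cyclic duality for L and Lambda} for $L(i)$ and for $L$) is precisely the shift condition ``$t_1\in r(\overline\mu)+L(i)_k$'' defining $\Repl$; crucially, neither (i) nor (ii) imposes any constraint on $\alpha_1$. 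Hence when $i>j$ condition (i) fails, so $S=\varnothing$ and $\sharp\,\textup{res}^{-1}(x)=0$; when $i\le j$ the conditions on $S$ decouple into ``$(\overline b_1,\dots,\overline b_n)\in\Repl$'' and ``$(\alpha_1,\dots,\alpha_n)$ an arbitrary admissible tuple'', giving $\sharp S=q^{3l-i}\cdot q^{(2l-i)(n-1)}\cdot\sharp\Repl$. Dividing by $q^{2l-2i+(l-2i)(n-1)}$ and simplifying the exponent $(3l-i-2l+2i)+(n-1)(2l-i-l+2i)=n(l+i)$ yields $\sharp\,\textup{res}^{-1}(x)=q^{n(l+i)}\cdot\sharp\Repl$.

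The main obstacle is the third step: correctly reading off the gluing datum of the self-dual overlattice $L_k^\sharp$ over $\Lambda(i)\obot L(i)_k$ and transporting it through (\ref{compatibility}) and the compatibility square (\ref{relation}), so that ``$c\varpi^i x+\overline t_1\in L_k^\sharp$'' is seen to be equivalent to the defining shift condition of $\Repl$ together with the numerical dichotomy $i\le j$ versus $i>j$. Everything else is bookkeeping with valuations and cardinalities of finite $\Of$-modules, for which Lemmas~\ref{bigone} and~\ref{kernel} and the exact sequences (\ref{key1})--(\ref{key2}) have done the preparatory work.
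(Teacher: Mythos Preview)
Your proposal is correct and follows essentially the same approach as the paper's own proof. Both arguments lift $(D\circ\textup{res})^{-1}(x)$ to the source of $\Phi=\overline{i}_x\times\overline{j}_x^{\,n-1}$ via Lemma~\ref{bigone}, translate the orthogonality constraints into the valuation bound $\nu_F(\alpha_m)\ge t+i-l$, observe that the Gram condition passes unchanged to the $L(i)_k$-components, use Lemma~\ref{compatible} to identify the shift condition with the defining condition of $\Repl$ together with the dichotomy $i\le j$ versus $i>j$, and finally divide by $\sharp\ker\Phi$ from Lemma~\ref{kernel} to obtain $q^{n(l+i)}\cdot\sharp\Repl$. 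Your write-up is somewhat more explicit about the intermediate cardinalities and the role of the gluing datum, but the structure and the key lemmas invoked are identical.
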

\begin{proof}
    Let $\Tilde{x}\in L_k^{\sharp}$ be a lift of $x$ such that $\mathcal{O}_F\cdot x\simeq\Lambda(i)$. Let $\left((\overline{a_1\Tilde{x}},\overline{t_{1}}),(\overline{a_2\Tilde{x}},\overline{t_{2}}),\cdots,(\overline{a_n\Tilde{x}},\overline{t_{n}})\right)\in \Lambda(i)/\varpi^{2l+t}\Lambda(i)\obot\varpi^{-l}L(i)_k/\varpi^{l+t}L(i)_k\times \left(\Lambda(i)/\varpi^{l+t}\Lambda(i)\obot L(i)_k/\varpi^{l+t}L(i)_k\right)^{n-1}$ be an element such that its image under $\overline{i}_x\times\overline{j}_x^{n-1}$ lies in the set $\textup{res}^{-1}(x)$. By the description in (\ref{description}), we have
    \begin{equation*}
        a_1\in (\varpi^{t+i-l})/(\varpi^{2l+t}),\,\,a_m\in(\varpi^{t+i-l})/(\varpi^{l+t})\,\,\textup{for $m\geq 2$};
    \end{equation*}
    \begin{equation*}
        \frac{1}{2}(\overline{t_m},\overline{t_{m^{\pr}}})_{m,m^{\pr\geq1}}\equiv T\,\,\textup{mod}\,\,\varpi^{t}\,\,\textup{and}\,\,c\cdot\varpi^{i}x+\overline{t_1}\in L_k^{\sharp}/\varpi^{l+t}L_k^{\sharp}.
    \end{equation*}
    \par
    We first consider the case that $j<i\leq[\frac{l}{2}]$. Recall that $x_0$ is a generator of the lattice $\Lambda$, and $\mu=c\cdot x_0\in L^{\vee}$, we have $\nu_F(c)=-l+j$. Then we have $(\Tilde{x},c\cdot\varpi^{i}\Tilde{x})\notin\Of$ because $\nu_F\left((\Tilde{x},c\cdot\varpi^{i}\Tilde{x})\right)=j-i<0$. We can conclude that there doesn't exist $\overline{t_1}\in\varpi^{-l}L(i)_k/\varpi^{l+t}L(i)_k$ such that $c\cdot\varpi^{i}x+\overline{t_1}\in L_k^{\sharp}/\varpi^{l+t}L_k^{\sharp}$ because otherwise we would get $(\Tilde{x},c\cdot\varpi^{i}\Tilde{x})\in\Of$ which is a contradiction, therefore $\textup{res}^{-1}(x)=\varnothing$.
    \par
    Next we consider the case that $0\leq i\leq j$. Recall the following definition of the set $\Repl$ in Definition \ref{rep-sets}:
    \begin{align*}
        \Repl=\{(\overline{t}_{1},\overline{t}_2,\cdots,\overline{t}_n):\,\, &\overline{t}_1\in \varpi^{-l}L(i)_k/\varpi^{t+l}L(i)_k, (\overline{t}_2,\cdots,\overline{t}_n)\in (L(i)_k/\varpi^{t+l}L(i)_k)^{n-1},\\
        &\textup{such that}\,\,\frac{1}{2}(\overline{t}_i,\overline{t}_j)_{i,j\geq1}\equiv T\,\,\textup{mod}\,\varpi^{t}; t_1\in r(\overline{\mu})+L(i)_k\\&\,\,\textup{for an arbitrary lift $t_{1}\in \varpi^{-l}L(i)_k$ of $\overline{t}_1$}.\}.
    \end{align*}
    By Lemma \ref{compatible}, the condition $t_1\in r(\overline{\mu})+L(i)_k$ for an arbitrary lift $t_{1}\in \varpi^{-l}L(i)_k$ of $\overline{t}_1$ is equivalent to saying that $\mu+t_1\in L_k^{\sharp}$, i.e., $\overline{c\cdot\varpi^{i}x}+\overline{t_1}\in L_k^{\sharp}/\varpi^{l+t}L_k^{\sharp}$, hence 
    \begin{equation*}
        \textup{res}^{-1}(x)\simeq \left((\varpi^{t+i-l})/(\varpi^{2l+t})\times \left((\varpi^{t+i-l})/(\varpi^{l+t})\right)^{n-1}\times\Repl\right)/\,\,\textup{ker}(\overline{i}_x\times\overline{j}_x^{n-1}).
    \end{equation*}
    Therefore $\sharp \textup{res}^{-1}(x)=q^{n(l+i)}\cdot\sharp \Repl$ by Lemma \ref{kernel}.
\end{proof}

\part{Geometric side}
\section{Special cycles on GSpin Rapoport--Zink spaces}

In this section, we introduce a family of special Kudla--Rapoport cycles on GSpin Rapoport--Zink spaces for an almost-self dual quadratic lattice.

The work of Hamacher-Kim \cite[Section 4]{HodgeRZparahoric} constructs Rapoport--Zink spaces of Hodge type with parahoric levels, generalizing the work of Howard--Pappas \cite{HP17} and Kim \cite{Kim18} in the hyperspecial case. Since the above works only deal with local Shimura datums of Hodge type over $\Qp$, we take $p$-adic field $F=\Qp$. Let $\breve{F}$ be the completion of a maximal unramified extension of $F$.  Let $\BF$ be an algebraic closure of the residue field $\BF_p$. Let $\sigma$ be the non-trivial Frobenius involution on $\breve{F}$.

\subsection{Rapoport--Zink space associated with a self-dual lattice}
Let $L^{\sharp}$ be a self-dual quadratic lattice of rank $n+2$ over $\mathcal{O}_F$. Let $C^\sharp =C(L^\sharp )$ be the Clifford algebra of $L^\sharp $, and $D^\sharp =\Hom_{O_F}(C^\sharp , O_F)$ be the linear dual of $C^\sharp $. We follow the construction of Rapoport--Zink spaces in \cite{HP17}, see also \cite[Section 4]{LiZhang-orthogonalKR}. Consider a local unramified Shimura-Hodge datum $(G^\sharp ,  b^\sharp , \mu^\sharp , C^\sharp )$ for $L^\sharp $ as in \cite[Proposition 4.2.6]{HP17}, where $G^\sharp =\GSpin(V^\sharp )$, $b^\sharp \in G(\breve{F})$ is a basic element and $\mu: \BG_m \to G^\sharp $ is a certain cocharacter compatible with $b^\sharp$. As in \cite[Leamma 2.2.5]{HP17}, the local unramified Shimura-Hodge datum $(G^\sharp ,  b^\sharp , \mu^\sharp , C^\sharp )$ gives rise to a principally polarized $p$-divisible $\BX^\sharp $ over $\BF$ such that there is an isomorphism
\begin{equation}
    \BD(\BX^\sharp )(O_{\breve{F}})\simeq D^\sharp\otimes_{O_{F}}{O_{\breve{F}}},
    \label{dieu}
\end{equation}
where $\BD(\BX^\sharp )(O_{\breve{F}})$ is the contravariant Dieudonne module of $\BX^{\sharp}$. The Frobenius morphism $\mathbf{F}$ on $ \BD(\BX^\sharp )(O_{\breve{F}})$ is given by $b^\sharp  \circ \sigma$ under the isomorphism (\ref{dieu}). Let $\lambda_0^{\sharp}:\BX^{\sharp}\rightarrow\BX^{\sharp,\vee}$ be the principal polarization.

Let $(s_{\alpha})_{\alpha \in I}$ be a collection of tensors $s_\alpha$ in the total tensor algebra $(C^\sharp )^\otimes$ such that $G^\sharp $ is the stabilizer of $(s_\alpha)$ inside $\GL(C^\sharp )$. Each tensor $s_{\alpha}$ gives rise to a crystalline Tate tensor $t_{\alpha,0}= s_{\alpha} \otimes 1 : \mathbf{1}:= \BD(F/O_F) \to \BD(\BX^\sharp )^\otimes$ on $\BX^\sharp $. Let $\mathbf{x}^{\sharp}\in\mathcal{B}(G^{\sharp},F)$ be the vertex corresponding to the self-dual lattice $L^{\sharp}$. Let $\mathcal{G}_{\mathbf{x}^{\sharp}}$ be the stabilizer of the vertex. The GSpin Rapoport--Zink space $\textup{RZ}_{L^{\sharp
}}$ associated to the self-dual lattice $L^\sharp $ is the Rapoport--Zink space associated to the datum $(G^\sharp ,  b^\sharp , \mu^\sharp , C^\sharp )$ and the hyperspecial subgroup $\mathcal{G}_{\mathbf{x}^{\sharp}}$. 

By \cite[Theorem B]{HP17}, the Rapoport--Zink space $\textup{RZ}_{L^{\sharp}}$ is a formal scheme formally locally of finite type and formally smooth of relative dimension $n$ over $\mathcal{O}_{\breve{F}}$. If $R$ is a formally finitely generated $O_{\breve F}$-algebra (and formally smooth over $O_{\breve F}/\varpi^k$ for some $k \geq 1$), then $\textup{RZ}_{L^{\sharp}}(R)$ is the set of isomorphism classes of tuples $(X^\sharp , (t_{\alpha})_{\alpha \in I}, \rho)$ where $X^\sharp $ is a $p$-divisible group over $R$, $(t_{\alpha})$ is a collection of crystalline Tate tensors on $X$ and $\rho: \BX \otimes R/J \to X^\sharp  \otimes R/J$ is a quasi-isogeny such that $t_\alpha$ pullbacks to $t_{\alpha,0}$ under $\rho$. Here $J$ is some ideal of definition of $R$ such that $p \in J$. It is also equipped with a closed immersion of formal schemes by its construction
\begin{equation*}
    \textup{RZ}_{L^{\sharp}}\rightarrow\textup{RZ}_{\textup{GSp}(C^{\sharp})}.
\end{equation*}
Denote by $(X^{\sharp,\textup{univ}}, \rho^{\sharp,\textup{univ}},\lambda^{\sharp,\textup{univ}})$ the universal $p$-divisible group and the associated quasi-isogeny over $\textup{RZ}_{L^\sharp }$. Let $\CN_{L^{\sharp}}$ be the connected component of $\textup{RZ}_{L^\sharp }$ such that the universal polarization $\lambda^{\textup{univ}}$ and $\lambda_0^{\sharp}$ differ by an element in $\Of^{\times}$ under the quasi-isogeny $\rho^{\sharp,\textup{univ}}$.

\subsection{Special cycles: the self-dual case}
\label{spcyc-self-dual}
The inclusion $L^\sharp  \subseteq (C^\sharp )^{\text{op}}$ (where $L^{\sharp}$ acts
on $C^{\sharp}$ via right multiplication) gives a natural inclusion $(L^\sharp )_{\breve{F}} \subseteq \End_{O_F}(D^\sharp )$ as the space of special quasi-endomorphisms of $D^\sharp $. The lattice $L^\sharp $ gives a natural isocrystal $((L^\sharp )_{\breve{F}}, \Phi=\bar{b^{\sharp}} \circ \sigma)$ where $\bar{b^\sharp } \in \SO(V^\sharp )(\breve{F})$ is the image of $b$ under the quotient map $\GSpin(V^\sharp ) \to \SO(V^\sharp )$.  Taking the $\Phi$-fixed point, we obtain the space of special quasi-endomorphisms for $\BX^\sharp $
\[
\BV^\sharp = ((L^\sharp )_{\breve{F}})^{\Phi=\text{id}} \subseteq \End^\circ(\BX^{\sharp}):=\End(\BX^{\sharp})[1/p].
\]

Consider the inner form $J^\sharp =\GSpin(\BV^\sharp )$ of $G^\sharp $. Via the embedding $J^\sharp  \in \End^\circ(\BX^{\sharp})^\times$, $J^\sharp (F)$ acts on the Rapoport-Zink spaces $\CN_{L^\sharp} $ by changing the framing quasi-isogeny.
\begin{definition}\label{spe-unram}
    Let $\Lambda\subset\BV^{\sharp}$ be a subset. Define the special cycle $\mathcal{Z}^{\sharp}(\Lambda)$ to be closed formal subscheme of $\CN_{L^{\sharp}}$ cut out by the condition
    \begin{equation*}
        \rho^{\sharp,\textup{univ}}\circ x \circ(\rho^{\sharp,\textup{univ}})^{-1}\subset\textup{End}(X^{\sharp,\textup{univ}})
    \end{equation*}
    for all $x\in\Lambda$. When $\Lambda=\{x\}$ consists of one single element, denote by $\CZ^{\sharp}(x)$ the closed formal subscheme $\CZ^{\sharp}(\{x\})$.
\end{definition}
\begin{lemma}
    Let $x\in\BV^{\sharp}$ be a nonzero and integral element. The special cycle $\CZ^{\sharp}(x)$ is an effective Cartier divisor on the formal scheme $\CN_{L^{\sharp}}$.
\end{lemma}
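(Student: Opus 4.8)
The plan is to realize $\CZ^{\sharp}(x)$ locally as the zero locus of a single function and then to check that this function is a non-zero-divisor. The central object is the crystalline realization of the tautological $\textup{SO}(V^{\sharp})$-representation: following the construction of $\textup{RZ}_{L^{\sharp}}$ via crystalline Tate tensors in \cite{HP17} (see also \cite[\S4]{LiZhang-orthogonalKR}), the formal scheme $\CN_{L^{\sharp}}$ carries a locally free $\CO_{\CN_{L^{\sharp}}}$-module $\mathbf{V}_{\textup{crys}}$ of rank $n+2$, equipped with a perfect symmetric pairing and a Hodge filtration
\[
\textup{Fil}^{1}\mathbf{V}_{\textup{crys}}\ \subseteq\ \textup{Fil}^{0}\mathbf{V}_{\textup{crys}}=\bigl(\textup{Fil}^{1}\mathbf{V}_{\textup{crys}}\bigr)^{\perp}\ \subseteq\ \mathbf{V}_{\textup{crys}},
\]
in which $\textup{Fil}^{1}\mathbf{V}_{\textup{crys}}$ is an isotropic line bundle; hence $\mathcal{L}\coloneqq\mathbf{V}_{\textup{crys}}/\textup{Fil}^{0}\mathbf{V}_{\textup{crys}}$ is a line bundle on $\CN_{L^{\sharp}}$. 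The inclusion $\BV^{\sharp}\subseteq\End^{\circ}(\BX^{\sharp})$ attaches to $x$ a global section $s_{x}\in H^{0}(\CN_{L^{\sharp}},\mathbf{V}_{\textup{crys}})$ whose value at a point $z$ is the crystalline class of $\rho^{\sharp,\textup{univ}}\circ x\circ(\rho^{\sharp,\textup{univ}})^{-1}$ in $\mathbf{V}_{\textup{crys},z}$.

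First I would show that $\CZ^{\sharp}(x)$ is exactly the vanishing locus of the image $\overline{s}_{x}$ of $s_{x}$ in the line bundle $\mathcal{L}$. By Grothendieck--Messing deformation theory, a point of $\CN_{L^{\sharp}}(R)$ (for $R$ in $\textup{ANilp}_{\ofb}^{\textup{fsm}}$) lies in $\CZ^{\sharp}(x)(R)$ if and only if the endomorphism of $\BD(X^{\sharp,\textup{univ}})$ induced by $x$ preserves the Hodge filtration; by the standard linear algebra of the Clifford module --- the Hodge filtration on $\BD(X^{\sharp,\textup{univ}})$ being induced by the isotropic line $\textup{Fil}^{1}\mathbf{V}_{\textup{crys}}$ --- this is equivalent to $s_{x}\in\textup{Fil}^{0}\mathbf{V}_{\textup{crys}}$, i.e.\ to $\overline{s}_{x}=0$. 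It is here that the $\GSpin$ structure is crucial: it collapses the a priori large obstruction space to the deformation of $x$ into the rank-one quotient $\mathcal{L}$. Granting this, $\CZ^{\sharp}(x)$ is, locally on $\CN_{L^{\sharp}}$, cut out by the single equation obtained from $\overline{s}_{x}$ and a local trivialization of $\mathcal{L}$.

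It then remains to verify that this equation is a non-zero-divisor, equivalently that $\overline{s}_{x}$ does not vanish identically on any connected component of $\CN_{L^{\sharp}}$. Since $\CN_{L^{\sharp}}$ is formally smooth over $\ofb$, it is regular, hence normal, so each connected component is integral; a principal ideal sheaf that is nonzero at a single point of such a component is therefore generated by a non-zero-divisor and defines an effective Cartier divisor. So it suffices to rule out $\CZ^{\sharp}(x)=\CN_{L^{\sharp}}$: otherwise $x$ would lift to an endomorphism of $X^{\sharp,\textup{univ}}$ over the whole component, so $\overline{s}_{x}$ would vanish at every $\BF$-point, forcing the fixed nonzero vector $x$ to lie in $\textup{Fil}^{0}\mathbf{V}_{\textup{crys},z}=(\textup{Fil}^{1}\mathbf{V}_{\textup{crys},z})^{\perp}$ for all $z$ in the positive-dimensional reduced locus; but the Hodge lines $\textup{Fil}^{1}\mathbf{V}_{\textup{crys},z}$ vary nontrivially with $z$ (for instance under the $J^{\sharp}(F)$-action on $\CN_{L^{\sharp}}$ by change of framing), and no fixed nonzero vector is orthogonal to all of them, a contradiction.

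I expect the main obstacle to be the second step: the precise comparison, through Grothendieck--Messing theory and the explicit deformation description of $\CN_{L^{\sharp}}$, between ``$x$ deforms to an endomorphism of $X^{\sharp,\textup{univ}}$'' and ``the single section $\overline{s}_{x}$ vanishes''. This is essentially the content of the Howard--Pappas construction of $\textup{RZ}_{L^{\sharp}}$ and its tautological bundles; once it is available, the remaining verifications (local principality and the nontriviality of $\overline{s}_{x}$) are routine.
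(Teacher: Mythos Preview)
Your strategy is the one underlying \cite[Proposition~4.10.1]{LiZhang-orthogonalKR}, which the paper simply cites. There is, however, a genuine gap in your setup: the quasi-endomorphism $\rho^{\sharp,\textup{univ}}\circ x\circ(\rho^{\sharp,\textup{univ}})^{-1}$ does \emph{not} produce a global integral section $s_x\in H^0(\CN_{L^{\sharp}},\mathbf{V}^{\sharp}_{\textup{crys}})$. It only gives a rational section (an element of $\mathbf{V}^{\sharp}_{\textup{crys}}[1/p]$), and the locus where this section becomes integral is precisely $\CZ^{\sharp}(x)$ itself --- so defining $\CZ^{\sharp}(x)$ as the vanishing locus of $\overline{s}_x$ in $\mathcal{L}$ is circular. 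The correct argument is local: at $z_0\in\CZ^{\sharp}(x)(\BF)$ the endomorphism $x$ does lift, hence $x_{\textup{crys},z_0}\in\mathbf{V}^{\sharp}_{\textup{crys},z_0}$ is integral, and then the deformation description (Lemma~\ref{deformo}(ii)) shows that the ideal of $\CZ^{\sharp}(x)$ in $\widehat{\CO}_{\CN_{L^{\sharp}},z_0}$ is generated by the single function $(\boldsymbol{l},x_{\textup{crys},z_0})$ with $\boldsymbol{l}$ a generator of the universal isotropic line. This is exactly your ``second step'', but it must be carried out formally locally at points of $\CZ^{\sharp}(x)$, not packaged as a global section.

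Your non-triviality step also needs repair: the $J^{\sharp}(F)$-action moves $x$ along with the Hodge lines (it carries $\CZ^{\sharp}(x)$ to $\CZ^{\sharp}(gx)$, not to itself), so it does not show that the Hodge lines vary relative to a \emph{fixed} $x$. One instead either exhibits directly an $\BF$-point outside $\CZ^{\sharp}(x)$ via the Bruhat--Tits description of $\CN_{L^{\sharp}}(\BF)$ (not every special lattice is stabilized by $x$), or argues via flatness over $\ofb$ together with the lower dimension of the rigid generic fibre of $\CZ^{\sharp}(x)$.
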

\begin{proof}
    This is proved in \cite[Proposition 4.10.1]{LiZhang-orthogonalKR}.
\end{proof}

\subsection{The crystal $\mathbf{V}_{\textup{crys}}^{\sharp}$}
For the self-dual lattice $L^{\sharp}$, Madapusi constructed a tensor $\boldsymbol{\pi}\in (C^{\sharp})^{\otimes(2,2)}$ (cf. \cite[Lemma 1.4]{Mad16}) which is stabilized by the group $G^{\sharp}$. By \cite[Theorem 4.9.1]{Kim18} we obtain from $\boldsymbol{\pi}$ a universal crystalline Tate tensor $\boldsymbol{\pi}_{\crys}$ on the universal $p$-divisible group $X^{\sharp,\univ}$ over $\textup{RZ}_{L^{\sharp}}$, which induces a projector of crystals $\boldsymbol{\pi}_{\crys} : \textup{End}(\mathbb{D}(X^{\sharp,\univ})) \rightarrow \textup{End}(\mathbb{D}(X^{\sharp,\univ}))$ whose image $\mathbf{V}_{\crys}^{\sharp}\coloneqq \textup{im}(\boldsymbol{\pi}_{\crys})$  is a crystal of $\mathcal{O}_{\textup{RZ}_{L^{\sharp}}/\mathcal{O}_{\breve{F}}}$-module of rank $n+2$.
\par
For any $S\in\textup{Alg}_{\mathcal{O}_{\breve{F}}}$ and any $z\in\textup{RZ}_{L^{\sharp}}(S)$, we similarly have a projector of crystals
\begin{equation*}
    \boldsymbol{\pi}_{\textup{crys},z}:\textup{End}(\mathbb{D}(X_z))\rightarrow\textup{End}(\mathbb{D}(X_z)).
\end{equation*}
whose image $\mathbf{V}_{\crys,z}^{\sharp} := \textup{im}(\boldsymbol{\pi}_{\textup{crys},z})$ is a crystal of $\mathcal{O}_{S}^{\textup{crys}}$-module of rank $n+2$. Here $X_z$ denotes the $p$-divisible group over $S$ obtained by the base change of $X^{\textup{univ}}$ to $z$.
\par
Let $R\rightarrow S$ be a surjection in $\textup{Alg}_{\mathcal{O}_{\breve{F}}}$ whose kernel admits nilpotent divided powers, we have $\vcrys^{\sharp}(R)$ a projective $R$-module of rank $n+2$. It is equipped with a non-degenerate symmetric $R$-bilinear form $(\cdot,\cdot)$. The projective $S$-module $\vcrys^{\sharp}(S)$ is equipped with a Hodge filtration $\textup{Fil}^{1}\vcrys^{\sharp}(S)$, which is an isotropic $S$-line (\cite[$\S$4.3]{LiZhang-orthogonalKR}) such that
\begin{equation*}
    \textup{Fil}^{1}\mathbb{D}(X_z)=\textup{im}(\textup{Fil}^{1}\vcrys^{\sharp}(S)).
\end{equation*}
\par
Moreover, let $M\subset\mathbb{V}^{\sharp}$ be a subset, for $z\in\mathcal{Z}^{\sharp}(M)(S)$ and an arbitrary element $x\in M$, the crystalline realization $x_{\textup{crys},z}(R)\in\textup{End}(\mathbb{D}(X_z))(R)$ lies in the image of $\boldsymbol{\pi}_{\textup{crys},z}(R)$, hence it is an element in $\vcrys^{\sharp}(R)$ (cf. \cite[$\S$4.6]{LiZhang-orthogonalKR}).
\begin{lemma}
    Let $R\rightarrow S$ be a surjection in $\textup{Alg}_{\mathcal{O}_{\breve{F}}}$ whose kernel admits nilpotent divided powers.\\
    (i)\,Let $z_0\in\mathcal{N}_{L^{\sharp}}(\mathbb{F})$ and $\widehat{\mathcal{N}}_{L^{\sharp},z_0}$ be the completion of $\mathcal{N}_{L^{\sharp}}$ at $z_0$. Let $z\in\widehat{\mathcal{N}}_{L^{\sharp},z_0}(S)$. Then there is a natural bijection
    \begin{equation*}
        \left\{\textup{Lifts}\,\, \tilde{z}\in\widehat{\mathcal{N}}_{L^{\sharp},z_0}(R)\,\,\textup{of}\,\,z. \right\}\stackrel{\sim}\longrightarrow\left\{\textup{Isotropic $R$-lines in}\,\,\mathbf{V}^{\sharp}_{\textup{crys},z}(R)\,\,\textup{lifting}\,\,\textup{Fil}^{1}\mathbf{V}_{\textup{crys},z}^{\sharp}(S).\right\},
    \end{equation*}
    which maps a lift $\Tilde{z}\in\widehat{\mathcal{N}}_{L^{\sharp},z_0}(R)$ to the Hodge filtration $\textup{Fil}^{1}\mathbf{V}^{\sharp}_{\textup{crys},\Tilde{z}}(R)\subset\mathbf{V}^{\sharp}_{\textup{crys},\Tilde{z}}(R)\simeq\mathbf{V}^{\sharp}_{\textup{crys},z}(R)$.
    \\
    (ii)\,Let $M\subset\mathbb{V}^{\sharp}$ be a $\mathcal{O}_{F}$-lattice of rank $r\geq1$. Let $z_0\in\mathcal{Z}^{\sharp}(M)(\mathbb{F})$ and $\widehat{\mathcal{Z}^{\sharp}(M)}_{z_0}$ be the completion of $\mathcal{Z}^{\sharp}(M)$ at $z_0$, then there is a natural bijection compatible with the bijection in (i)
\begin{equation*}
      \left\{\textup{Lifts}\,\, \tilde{z}\in\widehat{\mathcal{Z}^{\sharp}(M)}_{z_0}(R)\,\,\textup{of}\,\,z. \right\}\stackrel{\sim}\longrightarrow 
    \left\{\begin{array}{ll}
     \textup{Isotropic $R$-lines in $\mathbf{V}^{\sharp}_{\textup{crys},z}(R)$ lifting $\textup{Fil}^{1}\vcrys^{\sharp}(S)$}\\
    \textup{and orthogonal to $x_{\textup{crys},z}(R)$ for any $x\in M$.}
    \end{array}\right\}.
\end{equation*}
\label{deformo}
\end{lemma}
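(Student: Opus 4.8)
The plan is to derive both parts from Grothendieck--Messing deformation theory combined with the crystalline Tate tensor formalism, essentially reproducing the argument of \cite[\S 4.3, \S 4.6]{LiZhang-orthogonalKR} (which builds on \cite{HP17,Kim18,Mad16}); the only thing to check is that nothing in our setup --- the polarization and the tensor $\boldsymbol{\pi}$ --- disturbs that argument.

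For (i), recall that $z\in\widehat{\mathcal{N}}_{L^{\sharp},z_0}(S)$ is the datum of a $p$-divisible group $X_z$ over $S$ carrying the crystalline Tate tensors $t_{\alpha,\textup{crys},z}$ and the polarization, and that $\mathbb{D}(X_z)$ together with these parallel tensors defines a $G^{\sharp}$-structure. By Grothendieck--Messing, lifts of $X_z$ to $R$ correspond to direct-summand lifts of the Hodge filtration $\textup{Fil}^{1}\mathbb{D}(X_z)(S)$ inside $\mathbb{D}(X_z)(R)$, and such a lift deforms the tensors and the polarization --- i.e.\ defines a point of $\widehat{\mathcal{N}}_{L^{\sharp},z_0}(R)$ --- exactly when it is induced by a cocharacter of $G^{\sharp}$ conjugate to $\mu^{\sharp}$ (here one uses that $\textup{RZ}_{L^{\sharp}}$ is cut out inside $\textup{RZ}_{\textup{GSp}(C^{\sharp})}$ by the tensors). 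Since $\mu^{\sharp}$ is minuscule and its image under $G^{\sharp}\to\SO(V^{\sharp})$ acts on the standard representation $V^{\sharp}$ with a filtration $0\subset\textup{Fil}^{1}\subset(\textup{Fil}^{1})^{\perp}\subset V^{\sharp}$ whose graded pieces have ranks $1,n,1$ and with $\textup{Fil}^{1}$ isotropic, the datum of such a $G^{\sharp}$-filtration of $\mathbb{D}(X_z)(R)$ is equivalent to that of the single isotropic line lifting $\textup{Fil}^{1}\mathbf{V}^{\sharp}_{\textup{crys},z}(S)$ inside $\mathbf{V}^{\sharp}_{\textup{crys},z}(R)=\textup{im}(\boldsymbol{\pi}_{\textup{crys},z}(R))$; the resulting bijection is exactly the one in the statement and is manifestly natural in $R$.

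For (ii), the plan is to add the special cycle condition to this dictionary. A lift $\tilde z$ of $z$ lies in $\widehat{\mathcal{Z}^{\sharp}(M)}_{z_0}(R)$ iff every $x\in M$, which is already an endomorphism of $X_z$ over $S$, deforms to an endomorphism of $X_{\tilde z}$; by Grothendieck--Messing applied to endomorphisms this holds iff the crystalline realization $x_{\textup{crys},z}(R)\in\textup{End}(\mathbb{D}(X_z))(R)$ preserves $\textup{Fil}^{1}\mathbb{D}(X_{\tilde z})(R)=\textup{im}\big(\textup{Fil}^{1}\mathbf{V}^{\sharp}_{\textup{crys},\tilde z}(R)\big)$. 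Because $x_{\textup{crys},z}(R)$ lies in $\mathbf{V}^{\sharp}_{\textup{crys},z}(R)$ and acts on $\mathbb{D}(X_z)$ through the Clifford module structure on $C^{\sharp}$, a direct computation with an isotropic basis adapted to the line (carried out in \cite[\S 4.6]{LiZhang-orthogonalKR}, cf.\ \cite{Mad16}) shows that $x_{\textup{crys},z}(R)$ preserves the filtration of $\mathbb{D}$ attached to an isotropic line $\ell\subset\mathbf{V}^{\sharp}_{\textup{crys},z}(R)$ if and only if $\ell\perp x_{\textup{crys},z}(R)$. Combining with (i), the lifts of $z$ inside $\mathcal{Z}^{\sharp}(M)$ correspond precisely to the isotropic lines lifting $\textup{Fil}^{1}\mathbf{V}^{\sharp}_{\textup{crys},z}(S)$ orthogonal to $x_{\textup{crys},z}(R)$ for all $x\in M$, and compatibility with the bijection of (i) is automatic since $\mathcal{Z}^{\sharp}(M)\hookrightarrow\mathcal{N}_{L^{\sharp}}$ is a closed immersion.

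The main obstacle is the bookkeeping in the two linear-algebra translations just invoked: first, that $G^{\sharp}$-valued liftings of $\mu^{\sharp}$ are in bijection with isotropic line-liftings in the standard representation, which uses the explicit weights of $\mu^{\sharp}$ and the fact that the relevant parabolic of $\SO(V^{\sharp})$ has abelian unipotent radical, so that the torsor of liftings on $\mathbb{D}$ matches the affine space of line-liftings; and second, the criterion ``$x$ preserves the Hodge filtration on $\mathbb{D}$ $\iff$ $x_{\textup{crys},z}\perp\textup{Fil}^{1}$''. Both are already established in \cite{LiZhang-orthogonalKR} in the self-dual case at hand, so the work here reduces to checking that our normalization of $\mathcal{N}_{L^{\sharp}}$ (the component on which $\lambda^{\sharp,\textup{univ}}$ and $\lambda_0^{\sharp}$ agree up to $\mathcal{O}_F^{\times}$) and the tensor $\boldsymbol{\pi}$ are compatible with those statements, which is immediate.
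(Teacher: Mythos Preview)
Your proposal is correct and follows the same approach as the paper: the paper's proof simply cites \cite[Lemma 4.6.2]{LiZhang-orthogonalKR}, and you have unpacked precisely the Grothendieck--Messing argument (and its translation via the projector $\boldsymbol{\pi}$ and the Clifford action) that underlies that reference. There is no substantive difference in strategy.
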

\begin{proof}
    This follows from the proof of \cite[Lemma 4.6.2]{LiZhang-orthogonalKR}.
\end{proof}

\subsection{Rapoport--Zink space associated to an almost self-dual lattice}\label{rz-con}

Let $L$ be an almost self-dual lattice of rank $n+1$, i.e., $L^{\vee}/L\simeq\Of/(\varpi)$. Let $L^{\sharp}$ be a self-dual lattice of rank $n+2$ equipped with a primitive isometric embedding $\iota: L\rightarrow L^{\sharp}$ as in Lemma \ref{embedding L to self dual}. Let $\Lambda=\{x\in L^{\sharp}:(x,y)=0\,\,\textup{for any}\,\,y\in L\}$ is the sublattice of $L^{\sharp}$ orthogonal to $L$ with rank $1$. Let $V=L_F$ and $V^\sharp =L^\sharp _F$. We have a decomposition $V^\sharp =V \oplus \Lambda_F$. As $\Lambda^\vee/ \Lambda \cong \Of/(\varpi)$, we choose a generator $x_0 \in \Lambda$ (unique up to a unit in $O_F$) such that $(x_0,x_0)=\varpi$.

Let $G=\textup{GSpin}(V)$. Denote by $\mathbf{x}\in\mathcal{B}(G,F)$ the vertex corresponding to $L$. Let $\mathcal{G}_{\mathbf{x}}\subset G(F)$ be the stabilizer of the vertex $\mathbf{x}$. Then there is a natural emebdding $C=C(L) \to C^\sharp=C(L^\sharp)$. Oki \cite[\S 2]{Oki20} constructed an embedding of integral local Shimura--Hodge datum $(G, b, \mu, C) \to (G^\sharp, b^\sharp, \mu^\sharp, C^\sharp)$ such that we have the following relation between $\bar{b^\sharp } \in \SO(V^\sharp )(\breve{F})$ and $\bar{b} \in \SO(V)(\breve{F})$:
\begin{equation}
    \bar{b^\sharp }|_{V}=\bar{b}, \bar{b^\sharp }(x_0)=x_0. 
    \label{relation-basic}
\end{equation}
Therefore, we may regard $x_0 \in \BV^\sharp $.

The Rapoport--Zink space $\CN_L$ associated to $L$ is defined to be the Rapoport--Zink space associated to the local Shimura--Hodge datum $(G, b, \mu, C)$ and the parahoric subgroup $\mathcal{G}_{\mathbf{x}}$. By the construction, we have a closed immersion of formal schemes
\begin{equation*}
    \textup{RZ}_L\rightarrow\textup{RZ}_{\textup{GSp}(C)} \to \textup{RZ}_{\textup{GSp}(C^\sharp)}.
\end{equation*}
Denote by $(X^{\textup{univ}}, \rho^{\textup{univ}},\lambda^{\textup{univ}})$ the universal $p$-divisible group and the associated quasi-isogeny over $\textup{RZ}_{L}$. Let $\CN_{L}$ be the connected component of $\textup{RZ}_{L^\sharp }$ such that the universal polarization $\lambda^{\textup{univ}}$ and $\lambda_0^{\sharp}$ differ by an element in $\Of^{\times}$ under the quasi-isogeny $\rho^{\textup{univ}}$.

By functorality of local Shimura data \cite{HodgeRZparahoric}, the closed immersion $\textup{RZ}_L\rightarrow\textup{RZ}_{\textup{GSp}(C^{\sharp})}$ factors through the closed immersion $\textup{RZ}_{L^{\sharp}}\rightarrow\textup{RZ}_{\textup{GSp}(C^{\sharp})}$.  So we have a closed immersion of formal schemes
\begin{equation*}
    i_{L}:\textup{RZ}_L\rightarrow\textup{RZ}_{L^{\sharp}}\,\,\,\, (\textup{resp.}\,i_L:\CN_L\rightarrow\CN_{L^{\sharp}}).
\end{equation*}

\begin{theorem}\label{almost-divisor}
The closed immersion $i_{L}$ induces an isomorphism of regular formal schemes 
$\CN_{L}\simeq\CZ^{\sharp}(x_0) \subseteq \CN_{L^\sharp }$ of dimension $n$.
\end{theorem}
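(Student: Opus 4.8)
The plan is to compare moduli descriptions on both sides and show that the closed immersion $i_L \colon \CN_L \to \CN_{L^\sharp}$ has image precisely $\CZ^\sharp(x_0)$. First I would recall that, by construction, the closed immersion of local Shimura--Hodge data $(G,b,\mu,C) \hookrightarrow (G^\sharp,b^\sharp,\mu^\sharp,C^\sharp)$ gives $i_L$ as a closed immersion of formal schemes. So it suffices to identify the functor of points: for $R \in \textup{ANilp}_{\ofb}^{\textup{fsm}}$, a point of $\CN_{L^\sharp}(R)$ is a tuple $(X^\sharp,(t_\alpha),\rho)$, and the $G$-structure cutting out $\textup{RZ}_L$ inside $\textup{RZ}_{L^\sharp}$ is exactly the extra condition that the crystalline tensors defining the $\GSpin(V)$-reduction (equivalently, as in Oki's construction, the condition that $x_0$ acts as an endomorphism compatibly with the framing, using \eqref{relation-basic}, $\bar b^\sharp(x_0) = x_0$) are satisfied. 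On the other side, by Definition \ref{spe-unram} applied to the element $x_0 \in \BV^\sharp$ (which makes sense since $x_0 \in \BV^\sharp$ by the discussion following \eqref{relation-basic}), a point of $\CN_{L^\sharp}(R)$ lies in $\CZ^\sharp(x_0)$ iff $\rho^{\sharp,\textup{univ}} \circ x_0 \circ (\rho^{\sharp,\textup{univ}})^{-1}$ deforms to an endomorphism of $X^{\sharp,\textup{univ}}$.

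Thus the core of the argument is the equivalence: the $\GSpin(V)$-reduction of structure exists (i.e.\ the point comes from $\CN_L$) if and only if $x_0$ deforms to an honest endomorphism. The $\Leftarrow$ direction uses that $\GSpin(V) \subseteq \GSpin(V^\sharp)$ is the subgroup fixing $x_0$ (via the orthogonal decomposition $V^\sharp = V \obot \Lambda_F$ and $\Lambda_F = F x_0$), so once $x_0$ acts integrally on the $p$-divisible group, the Tate tensors cutting out $\GSpin(V)$ inside $\GSpin(V^\sharp)$ are automatically defined over $R$ — this is essentially the content of Oki \cite[Theorem 4.5]{Oki20} which the excerpt already invokes. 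For the $\Rightarrow$ direction, if the point lifts to $\CN_L$ then by functoriality of the Dieudonné/crystalline formalism the quasi-endomorphism $x_0$, which is an endomorphism of $\BX$ respecting the $C$-module structure, stays an endomorphism after deformation. Concretely, I would phrase both conditions via the crystal $\vcrys^\sharp$ and Lemma \ref{deformo}: a lift $\tilde z$ of a point $z$ lies in $\CZ^\sharp(x_0)$ iff the isotropic line $\textup{Fil}^1 \vcrys^\sharp(\tilde z)$ is orthogonal to $(x_0)_{\textup{crys},z}$, and this orthogonality is precisely the deformation condition coming from the $\GSpin(V)$-Hodge cocharacter $\mu$ (which factors through the stabilizer of $x_0$). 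Hence the two closed formal subschemes of $\CN_{L^\sharp}$ have the same $R$-points functorially, so they coincide.

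Finally, for the dimension and regularity statement: $\CN_L$ is regular of total dimension $n$ by the general theory of Hamacher--Kim \cite{HodgeRZparahoric} for this parahoric level (as already recorded in $\S$\ref{rz-con}), and this is consistent with $\CZ^\sharp(x_0)$ being an effective Cartier divisor on the regular formal scheme $\CN_{L^\sharp}$ of dimension $n+1$ (the divisor is the one attached to the nonzero integral vector $x_0$ with $q(x_0) = \varpi$). So the isomorphism $\CN_L \simeq \CZ^\sharp(x_0)$ transports regularity and the correct dimension automatically; alternatively one checks directly that $\CZ^\sharp(x_0)$ is regular using the local equation of the Cartier divisor together with the explicit deformation description in Lemma \ref{deformo}(ii).

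\medskip

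\noindent\textbf{Main obstacle.} The delicate point is the $\Rightarrow$ implication together with the claim that the two closed subschemes agree \emph{scheme-theoretically}, not just on geometric points or as reduced subschemes: one must verify that the ideal sheaf cutting out the $\GSpin(V)$-locus (via Tate tensors) inside $\textup{RZ}_{L^\sharp}$ coincides with the ideal defining $\CZ^\sharp(x_0)$. This is where one genuinely needs the crystalline Tate tensor formalism — identifying "$x_0$ acts integrally" with "all the $G$-tensors are integral" — and it is the part where I would lean most heavily on Oki \cite[\S2, Theorem 4.5]{Oki20} and on the compatibility of the closed immersions of $\textup{RZ}$ spaces with the $\textup{GSp}(C^\sharp)$-ambient space. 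The regularity of $\CZ^\sharp(x_0)$, while expected, also requires a small local computation: that the Cartier divisor attached to $x_0$ (a vector of valuation $1$) is regular, which near a non-formally-smooth point will interact with the blow-up picture of $\S$\ref{rz-con}–$\S 5$; but for this theorem one only needs it at the level already guaranteed by Hamacher--Kim's construction of $\CN_L$.
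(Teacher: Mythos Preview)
Your strategy is essentially correct in spirit and correctly identifies the main obstacle, but it differs from the paper's proof in a way worth noting: you propose to resolve the hard direction (from ``$x_0$ is integral'' back to ``all $\GSpin(V)$-tensors are integral'') by a direct Tannakian/crystalline argument, whereas the paper sidesteps this entirely.

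The paper's argument runs as follows. The containment $\CN_L \subseteq \CZ^\sharp(x_0)$ is the easy direction (your $\Rightarrow$), cited from \cite[Proposition 6.2.2]{HM20}. For the reverse containment, the paper does \emph{not} attempt to show the moduli functors agree directly. Instead it establishes that both $\CN_L$ and $\CZ^\sharp(x_0)$ are regular of dimension $n$ by independent means: regularity of $\CN_L$ comes from the local model theory \cite{Mad16}\cite[Proposition 6.3.3]{HM20}, while regularity of $\CZ^\sharp(x_0)$ comes from the observation that, since $\nu_\varpi(q(x_0))=1$, the special divisor $\CZ^\sharp(x_0)$ \emph{equals} the difference divisor $\mathcal{D}^\sharp(x_0)$, whose regularity is the main theorem of \cite{Zhu23diff}. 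Once both sides are regular of the same dimension and one is contained in the other, the surjection of local rings at any common point is a surjection of regular local rings of equal dimension, hence an isomorphism; thus $\CN_L$ is open (and closed) in $\CZ^\sharp(x_0)$, and one finishes by checking equality on $\mathbb{F}$-points via Dieudonn\'e theory or the Bruhat--Tits stratification.

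So the regularity of $\CZ^\sharp(x_0)$ is not an afterthought or consistency check as you present it, but the pivot of the argument: it is what lets one avoid proving the delicate $\Leftarrow$ direction at the level of Tate tensors. Your deformation-theoretic route via Lemma \ref{deformo} and the local model $Q(L) \hookrightarrow Q(L^\sharp)$ would also succeed in comparing formal completions, and is a reasonable alternative; but you should be careful not to invoke Lemma \ref{deformo2}, since that lemma is \emph{derived} from Theorem \ref{almost-divisor} in the paper and using it would be circular. In either approach you still need the comparison of $\mathbb{F}$-points, which you do not explicitly address.
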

\begin{proof}
This is a local analog of \cite[Lemma 7.1]{Mad16} (when $t=1$) over generic fiber. 
 The construction in \cite[Section 4]{HodgeRZparahoric} uses global uniformization from Hodge type Shimura varieties and embeddings to Siegel modular varieties. We have a similar uniformization result on global integral models of orthogonal Shimura varieties \cite[Theorem 7.4]{Mad16} with almost self-dual and self-dual level at $p$ over $\Zp$. The theorem could be proved via global uniformizations. We now give a local proof.

Firstly, we claim that $\CN_{L} \subseteq \CZ^{\sharp}(x_0)$. We need to show over $\CN_L$, $x_0 \in \BV^\sharp$ lifts to a special homomorphism, which follows from \cite[Proposition 6.2.2]{HM20}.

Secondly, we claim that $\CZ^{\sharp}(x_0) = \CN_{L}$. Note both are regular of dimension $n$. The regularity for $\CN_L$ follows from local models as \cite{Mad16}\cite[Proposition 6.3.3.]{HM20} .   Since $\nu_p(q(x_0))=1$, we have $\mathcal{Z}^{\sharp}(x_0)$ equals to the difference divisor on $\CN_{L^{\sharp}}$ associated to $x_0$, we know $\CZ^{\sharp}(x_0)$ is regular by the regularity of difference divisors on $\CN_{L^\sharp }$ prove \cite{Zhu23diff}. We just need to check the closed subscheme $\CN_L$ is the whole $\CZ(x_0)$ as Cartier divisors over $\CN_{L^\sharp}$. This follows from the identification of mod $p$ geometric points via Dieudonne theory (or Bruhat-Tits stratification in \S 4.6).
\end{proof}

\subsection{Special cycles: the almost self-dual case}
By similar arguments in $\S$\ref{spcyc-self-dual}. The lattice $L$ gives a natural isocrystal $(L_{\breve{F}}, \Phi=\bar{b} \circ \sigma)$ where $\bar{b} \in \SO(V)(\breve{F})$ is the image of $b$ under the quotient map $\GSpin(V) \to \SO(V)$.  Taking the $\Phi$-fixed point, we obtain the space of special quasi-endomorphisms for $\BX^\sharp $
\[
\BV = (L_{\breve{F}})^{\Phi=\text{id}} \subseteq \End^\circ(\BX^{\sharp}):=\End(\BX^{\sharp})[1/p].
\]
We have a natural decomposition $\BV^\sharp =\BV \obot F x_0$ by (\ref{relation-basic}).
\begin{definition}\label{spe-almo}
    Let $S\subset\BV$ be a subset. Define the special cycle $\mathcal{Z}(\Lambda)\subset\CN_L$ to be closed formal subscheme of $\CN_{L}$ cut out by the condition
    \begin{equation*}
        \rho^{\textup{univ}}\circ x \circ(\rho^{\textup{univ}})^{-1}\subset\textup{End}(X^{\textup{univ}})
    \end{equation*}
    for all $x\in S$. When $S=\{x\}$ consists of one single element, denote by $\CZ(x)$ the closed formal subscheme $\CZ(\{x\})$.
\end{definition}
The following lemma follows from comparing Definition \ref{spe-unram} and Definition \ref{spe-almo}.
\begin{lemma}
    Let $x\in\BV$ be a nonzero and integral element. We have the following equality of closed formal subschemes of $\CN_L$:
    \begin{equation*}
        \CZ(x)=\iota_L^{-1}\left(\CZ^{\sharp}(x)\cap\CZ^{\sharp}(x_0)\right).
    \end{equation*}
    Hence the special cycle $\CZ^{\sharp}(x)$ is an effective Cartier divisor on the formal scheme $\CN_{L}$.\label{cancellation-law}
\end{lemma}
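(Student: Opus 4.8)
The plan is to reduce everything to the self-dual case by means of the closed immersion $i_L\colon\CN_L\hookrightarrow\CN_{L^\sharp}$ and the identification $\CN_L\simeq\CZ^\sharp(x_0)$ of Theorem~\ref{almost-divisor}.

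\textbf{The equality of cycles.} By the functoriality built into the construction of the two Rapoport--Zink spaces (both are cut out compatibly inside $\textup{RZ}_{\textup{GSp}(C^\sharp)}$), the universal $p$-divisible group $X^{\textup{univ}}$ over $\CN_L$, together with its framing quasi-isogeny $\rho^{\textup{univ}}$, is the pullback along $i_L$ of $(X^{\sharp,\textup{univ}},\rho^{\sharp,\textup{univ}})$. Hence, for any test algebra $R$ and any $z\in\CN_L(R)$, the deformation condition of Definition~\ref{spe-almo} defining $\CZ(x)$ --- that $\rho^{\textup{univ}}\circ x\circ(\rho^{\textup{univ}})^{-1}$ lift to an endomorphism of $X^{\textup{univ}}_z$ --- is exactly the pullback along $i_L$ of the condition of Definition~\ref{spe-unram} defining $\CZ^\sharp(x)$. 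Therefore $\CZ(x)=i_L^{-1}\bigl(\CZ^\sharp(x)\bigr)$ as closed formal subschemes of $\CN_L$. Since Theorem~\ref{almost-divisor} gives $i_L^{-1}\bigl(\CZ^\sharp(x_0)\bigr)=\CN_L$, intersecting with $\CZ^\sharp(x_0)$ before pulling back changes nothing, and one obtains the asserted identity $\CZ(x)=i_L^{-1}\bigl(\CZ^\sharp(x)\cap\CZ^\sharp(x_0)\bigr)$.

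\textbf{The Cartier divisor property.} By \cite[Proposition~4.10.1]{LiZhang-orthogonalKR}, $\CZ^\sharp(x)$ is an effective Cartier divisor on $\CN_{L^\sharp}$, hence locally defined by one equation $f$; its pullback $\CZ(x)=i_L^{-1}(\CZ^\sharp(x))$ is then locally defined on $\CN_L$ by $f|_{\CN_L}$. Because $\CN_L$ is a connected regular formal scheme (Theorem~\ref{almost-divisor}) it is integral, so $f|_{\CN_L}$ is a non-zero-divisor --- making $\CZ(x)$ an effective Cartier divisor on $\CN_L$ --- unless $f|_{\CN_L}=0$, i.e.\ unless $\CZ^\sharp(x_0)=\CN_L\subseteq\CZ^\sharp(x)$. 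So it remains only to exclude this containment.

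\textbf{The main point.} Since $\BV^\sharp=\BV\obot Fx_0$ and $x\in\BV$ is nonzero, $x$ and $x_0$ are linearly independent in $\BV^\sharp$; I claim this forces $\CN_L\not\subseteq\CZ^\sharp(x)$. Fix any $z_0\in\CN_L(\BF)$. As $\CN_{L^\sharp}$ is formally smooth over $\ofb$ of relative dimension $n$, Lemma~\ref{deformo}(i) identifies $\widehat{\CN}_{L^\sharp,z_0}$ with the formal completion, at $[\textup{Fil}^1\mathbf{V}_{\crys}^{\sharp}(z_0)]$, of the smooth $\ofb$-quadric of isotropic lines in $\mathbf{V}_{\crys}^{\sharp}$ (of rank $n+2$), and Lemma~\ref{deformo}(ii) identifies $\widehat{\CZ^\sharp(y)}_{z_0}$, for $y\in\{x,x_0\}$, with the divisor cut out on it by the linear form $(y_{\crys},-)$. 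Over $\breve F$ this quadric is smooth of dimension $n\ge 2$, and since $q(x_0)=\varpi\neq 0$ the form $(x_{0,\crys},-)$ cuts out a reduced, irreducible hypersurface section; as $\CN_L$ is irreducible, the inclusion $\CN_L\subseteq\CZ^\sharp(x)$ would make this section an irreducible component of the section cut out by $(x_{\crys},-)$, forcing the two sections to coincide and hence $(x_{\crys},-)$ and $(x_{0,\crys},-)$ to be proportional, i.e.\ $x\in Fx_0$ --- a contradiction. (Equivalently, one may read this off from the Bruhat--Tits stratification of $\overline{\CN}_{L^\sharp}$ developed in \S4.6: an $\BF$-point corresponds to a special lattice $M\subseteq\BV_{\breve F}^\sharp$, which lies on $\CZ^\sharp(y)$ precisely when $y\in M$, and a lattice $M$ with $x_0\in M$ but $x\notin M$ exists exactly because $x\notin Fx_0$.) I expect this last step --- that the rank-$2$ special cycle $\CZ^\sharp(x)\cap\CZ^\sharp(x_0)$ has codimension $\ge 2$ in $\CN_{L^\sharp}$ and so cannot contain the divisor $\CN_L$ --- to be the only genuine content; the rest is formal.
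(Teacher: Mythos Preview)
Your approach matches the paper's, whose entire proof is the one sentence ``follows from comparing Definition~\ref{spe-unram} and Definition~\ref{spe-almo}''; you correctly flesh out the Cartier-divisor claim by reducing it to the non-containment $\CN_L\not\subseteq\CZ^\sharp(x)$, which the paper leaves implicit. Your local quadric argument is not quite airtight as stated --- from an identity $(x_{\crys},\boldsymbol{l})=u\cdot(x_{0,\crys},\boldsymbol{l})$ in a single completed local ring one only extracts a linear relation among $x_{\crys,z_0}$, $x_{0,\crys,z_0}$ and the isotropic vector $l_0$ inside $\mathbf{V}^\sharp_{z_0}$, not proportionality of $x$ and $x_0$ in $\BV^\sharp$ --- but your parenthetical Bruhat--Tits argument (exhibiting an $\BF$-point of $\CZ^\sharp(x_0)$ outside $\CZ^\sharp(x)$) does the job, and this non-containment is in any case exactly \cite[Lemma~4.11.1]{LiZhang-orthogonalKR}, which the paper invokes later.
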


\subsection{Bruhat--Tits stratification}\label{BT-strat}
Let $\mathcal{L}\subset\mathbb{V}$ be a vertex lattice. Then $W_{\mathcal{L}}\coloneqq \mathcal{L}^{\vee}/\mathcal{L}$ is an $\mathbb{F}_p$-vector space of dimension $t(\mathcal{L})$, equipped with a non-degenerate quadratic form induced from $\mathbb{V}$. By \cite[Corollary 7.3]{Oki20}, the type $t(\mathcal{L})$ of a vertex lattice $\mathcal{L}\subset\mathbb{V}$ is always an odd integer such that $1 \leq t(\mathcal{L}) \leq t_{\max}$, where
\begin{equation*}
    t_{\max}=\begin{cases}
        n, &\textup{if $n$ is odd;}\\
        n-1, &\textup{if $n$ is even and $\epsilon(\mathbb{V})=(p,-1)_p^{n/2}$;}\\
        n+1, &\textup{if $n$ is even and $\epsilon(\mathbb{V})\neq(p,-1)_p^{n/2}$.}
    \end{cases}
\end{equation*}
Denote by $\textup{Vert}^{t}(\mathbb{V})$ the set of vertex lattices of type $t$ in the space $\mathbb{V}$. For the vertex lattice $\mathcal{L}$, we have the associated generalized Deligne--Lusztig variety $Y_{W_{\mathcal{L}}}$ of dimension $(t(\mathcal{L})-1)/2$. The reduced subscheme of the minuscule special cycle $\mathcal{V}(\mathcal{L}) \coloneqq \mathcal{Z}(\mathcal{L})^{\textup{red}}$ is isomorphic to $Y_{W_{\mathcal{L}},\mathbb{F}}$. In fact $\mathcal{Z}(\mathcal{L})$ itself is already reduced (\cite[Corollary 7.7]{Oki20}, see also \cite[Theorem B]{Li_Zhu_2018}), so $\mathcal{V}(\mathcal{L}) = \mathcal{Z}(\mathcal{L})$.
\par
The reduced subscheme of $\CN_L$ satisfies $\CN_L^{\textup{red}}=\bigcup\limits_{\mathcal{L}} \mathcal{V}(\mathcal{L})$, where $\mathcal{L}$ runs over all vertex lattices $\mathcal{L}\subset\mathbb{V}$. For two vertex lattices $\mathcal{L}, \mathcal{L}^{\pr}$, we have $\mathcal{V}(\mathcal{L}) \subset \mathcal{V}(\mathcal{L}^{\pr})$ if and only if $\mathcal{L}\supset\mathcal{L}^{\pr}$; and $\mathcal{V}(\mathcal{L}) \cap \mathcal{V}(\mathcal{L}^{\pr})$ is nonempty if and only if $\mathcal{L} +\mathcal{L}^{\pr}$ is also a vertex lattice, in which case it is equal to $\mathcal{V}(\mathcal{L} + \mathcal{L}^{\pr})$. In this way we obtain a Bruhat--Tits stratification of $\CN_L^{\textup{red}}$ by locally closed subvarieties (\cite[Theorem 7.5]{Oki20}, see also \cite[$\S$6.5]{HP17}),
\begin{equation*}
    \CN_L^{\textup{red}}=\bigcup\limits_{\mathcal{L}}\mathcal{V}(\mathcal{L})^{\circ},\,\,\mathcal{V}(\mathcal{L})^{\circ}\coloneqq\mathcal{V}(\mathcal{L})\backslash\left(\bigcup\limits_{\mathcal{L}\subsetneq\mathcal{L}^{\pr}}\mathcal{V}(\mathcal{L}^{\pr})\right).
\end{equation*}

\subsection{The crystal $\mathbf{V}_{\textup{crys}}$}
Denote by $\mathbf{V}^{\sharp}_{\crys}\vert_{\CN_L}$ the pullback of the crystal $\mathbf{V}_{\crys}^{\sharp}$ on $\CN_{L^{\sharp}}$ to the closed formal subscheme $\CN_L$. It is an $\mathcal{O}_{\CN_{L}/\mathcal{O}_{\breve{F}}}$-module crystal of rank $n+2$. The isomorphism $\mathcal{Z}^{\sharp}(x_0)\simeq\CN_L$ implies that there exists an isogeny
\begin{equation*}
    x_0^{\univ}:X^{\univ}\rightarrow X^{\univ}
\end{equation*}
which lifts the quasi-isogeny $x_0\in\mathbb{V}^{\sharp}$. For an object $(S,T,\delta)$ in the site $\textup{NCRIS}_{\ofb}(\CN_L/\textup{Spec}\,\ofb)$ where $S\rightarrow\CN_L$ is a morphism over $\ofb$ and $S\rightarrow T$ is a divided power thickening, denote by $x_{0,\crys}(S,T,\delta)$ the endomorphism of the $\mathcal{O}_T$-module $\mathbb{D}(X^{\univ})(S,T,\delta)$ induced by the isogeny $x_0^{\univ}$, we have
\begin{equation*}
    x_{0,\crys}(S,T,\delta)\in\mathbf{V}^{\sharp}_{\crys}(S,T,\delta)
\end{equation*}
by the definition of the sheaf $\mathbf{V}^{\sharp}_{\crys}$. Denote by $(\cdot,\cdot):\mathbf{V}^{\sharp}_{\crys}\vert_{\CN_L}\times\mathbf{V}^{\sharp}_{\crys}\vert_{\CN_L}\rightarrow\mathcal{O}_{\CN_{L}/\mathcal{O}_{\breve{F}}}$ the bilinear form induced from the bilinear form on $\mathbf{V}^{\sharp}_{\crys}$.
\begin{definition}
    The crystalline sheaf $\mathbf{V}_{\crys}$ in the site $\textup{NCRIS}_{\ofb}(\CN_L/\textup{Spec}\,\ofb)$ is defined in the following way: For the object $(S,T,\delta)$ in this site where $S\rightarrow\CN_L$ is a morphism over $\ofb$ and $S\rightarrow T$ is a divided power thickening, then 
    \begin{equation*}
        \mathbf{V}_{\crys}(S,T,\delta)=\{v\in\mathbf{V}^{\sharp}_{\crys}(S,T,\delta):(v,x_{0,\crys}\left(S,T,\delta\right))=0\}.
    \end{equation*}
\end{definition}

\begin{lemma}
    The crystalline sheaf $\mathbf{V}_{\crys}$ is an $\mathcal{O}_{\CN_{L}/\mathcal{O}_{\breve{F}}}$-module crystal of rank $n+1$.
\end{lemma}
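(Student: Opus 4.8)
The plan is to realize $\mathbf{V}_{\crys}$ as the kernel of an explicit surjection of crystals and then to deduce all the assertions formally; the only substantial input will be the primitivity of the section $x_{0,\crys}$. By construction $\mathbf{V}_{\crys}$ is precisely the kernel of the morphism of crystals
\[
\phi\colon \mathbf{V}^{\sharp}_{\crys}\vert_{\CN_L}\longrightarrow \mathcal{O}_{\CN_{L}/\ofb},\qquad v\longmapsto (v,x_{0,\crys}),
\]
which is well defined since $x_{0,\crys}$ is a compatible global section of $\mathbf{V}^{\sharp}_{\crys}\vert_{\CN_L}$ and the symmetric form on $\mathbf{V}^{\sharp}_{\crys}$ is $\mathcal{O}_{\CN_{L}/\ofb}$-bilinear. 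Because $L^{\sharp}$ is self-dual, this form is a perfect pairing on the finite locally free module $\mathbf{V}^{\sharp}_{\crys}(S,T,\delta)$; hence on each divided-power thickening $(S,T,\delta)$ the map $\phi$ is surjective exactly when $x_{0,\crys}(S,T,\delta)$ is a primitive vector, i.e. generates a rank-one locally free direct summand. So the whole lemma reduces to showing that $x_{0,\crys}$ is primitive.

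Granting this, consider the short exact sequence
\[
0\longrightarrow \mathbf{V}_{\crys}(S,T,\delta)\longrightarrow \mathbf{V}^{\sharp}_{\crys}(S,T,\delta)\xrightarrow{\ \phi\ }\mathcal{O}_{T}\longrightarrow 0 .
\]
Its last term is free of rank one, so the sequence splits; therefore $\mathbf{V}_{\crys}(S,T,\delta)$ is a direct summand of $\mathbf{V}^{\sharp}_{\crys}(S,T,\delta)$, hence finite locally free of rank $n+1$. Moreover this split exact sequence is functorial in $(S,T,\delta)$, the section $x_{0,\crys}$ being compatible with transition maps, and a split exact sequence of modules remains split exact after base change along any transition map $(S',T',\delta')\to (S,T,\delta)$; combined with the crystal property of $\mathbf{V}^{\sharp}_{\crys}$ and of $\mathcal{O}_{\CN_{L}/\ofb}$, this forces the transition maps of $\mathbf{V}_{\crys}$ to be isomorphisms. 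Thus $\mathbf{V}_{\crys}$ is a crystal of $\mathcal{O}_{\CN_{L}/\ofb}$-modules of rank $n+1$.

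It remains to prove that $x_{0,\crys}$ is primitive. This can be checked Zariski-locally, and by the crystal property together with Nakayama along the nilpotent divided-power ideals it suffices to check it on the trivial thickenings over $\mathbb{F}$-algebras, hence, by coherence, at the closed points $z\in\CN_L(\mathbb{F})$; there one must show that $x_{0,\crys}$ is nonzero in $\mathbf{V}^{\sharp}_{\crys,z}(\mathbb{F})=\mathbf{V}^{\sharp}_{\crys,z}(\ofb)\otimes_{\ofb}\mathbb{F}$, equivalently that $x_0$ is a primitive vector of the $\ofb$-lattice $\mathbf{V}^{\sharp}_{\crys,z}(\ofb)$. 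By Theorem \ref{almost-divisor}, $z\in\CN_L=\CZ^{\sharp}(x_0)$, so the quasi-endomorphism $x_0$ lifts to an integral special endomorphism at $z$; via Dieudonn\'e theory (as in \cite[$\S$4.6]{LiZhang-orthogonalKR}, see also $\S$\ref{BT-strat}) this places $x_0$ in the integral special-endomorphism lattice $L_z\subset\mathbb{V}^{\sharp}$, a $\Phi$-stable lattice which generates $\mathbf{V}^{\sharp}_{\crys,z}(\ofb)$ over $\ofb$. If $x_0$ were divisible by $\varpi$ in $L_z$, say $x_0=\varpi y$ with $y\in L_z$, then $\nu_{\varpi}(q(y))=\nu_{\varpi}(q(x_0))-2=-1$, contradicting the integrality of $L_z$. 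Hence $x_0$, and therefore $x_{0,\crys}$, is primitive.

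The formal steps and the sheaf-theoretic reductions above are routine; the genuine input — and the only place the geometry enters — is the last paragraph, relying on the identification $\CN_L\simeq\CZ^{\sharp}(x_0)$ of Theorem \ref{almost-divisor} and the Dieudonn\'e-theoretic description of special cycles at geometric points (this is the main obstacle). Note that it is exactly the relation $\nu_{\varpi}(q(x_0))=1$ that forces primitivity; the same relation makes $x_{0,\crys}$ isotropic modulo $\varpi$, so the form induced on $\mathbf{V}_{\crys}$ is no longer perfect, but this affects neither the rank nor the crystal property asserted in the lemma.
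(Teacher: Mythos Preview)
Your approach is the paper's approach: reduce to primitivity of $x_{0,\crys}$ at geometric points and use $\nu_{\varpi}(q(x_0))=1$. The paper's two-sentence proof says exactly this, and the same observation (``the element $\overline{x_{0,z_0}}$ is nonzero since $\nu_\varpi(q(x_0))=1$'') reappears verbatim in the proof of Lemma~\ref{singular-criterion}.

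One step in your write-up does not work as stated, though the fix is immediate. You claim that the $\Phi$-invariant lattice $L_z\subset\mathbb{V}^{\sharp}$ generates $\mathbf{V}^{\sharp}_{\crys,z}(\ofb)$ over $\ofb$, and then argue primitivity inside $L_z$. But the special lattice $\mathbf{V}^{\sharp}_{z_0}$ is generically \emph{not} $\Phi$-stable (the defining condition is that $(\mathbf{V}^{\sharp}_{z_0}+\Phi(\mathbf{V}^{\sharp}_{z_0}))/\mathbf{V}^{\sharp}_{z_0}$ has length~$1$; cf.\ \cite[\S5]{LiZhang-orthogonalKR} and the use of \cite[Lemma~3.2.4--3.2.5]{Zhu23diff} in this paper), so $L_z\otimes_{\mathcal{O}_F}\ofb$ is a proper $\Phi$-stable sublattice of $\mathbf{V}^{\sharp}_{z_0}$ at most points. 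The detour through $L_z$ is unnecessary anyway: argue directly on $\mathbf{V}^{\sharp}_{z_0}=\mathbf{V}^{\sharp}_{\crys,z}(\ofb)$, which is a self-dual $\ofb$-lattice (hence carries an integral quadratic form), so $x_{0,z_0}=\varpi v$ with $v\in\mathbf{V}^{\sharp}_{z_0}$ would force $\nu_{\varpi}(q(v))=-1$, a contradiction.
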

\begin{proof}
    For an object $(S,T,\delta)$ in the site $\textup{NCRIS}_{\ofb}(\CN_L/\textup{Spec}\,\ofb)$. By evaluating at geometric points, we can verify that the submodule generated by $x_{0,\crys}\left(S,T,\delta\right)$ is always a direct summand of the module $\mathbf{V}^{\sharp}_{\crys}(S,T,\delta)$. The non-degeneracy of the pairing $(\cdot,\cdot)$ on $\mathbf{V}^{\sharp}_{\crys}$ implies that $\mathbf{V}_{\crys}$ must be an $\mathcal{O}_{\CN_{L}/\mathcal{O}_{\breve{F}}}$-module crystal of rank $n+1$.
\end{proof}
Let $R\rightarrow S$ be a surjection in $\textup{Alg}_{\mathcal{O}_{\breve{F}}}$ whose kernel admits nilpotent divided power. For a point $z\in\CN_L(S)$, we have the following inclusion by Lemma \ref{deformo}
\begin{equation*}
    \textup{Fil}^{1}\mathbf{V}^{\sharp}_{\crys,z}(S)\subset\mathbf{V}_{\crys,z}(S).
\end{equation*}
Denote by $\textup{Fil}^{1}\mathbf{V}_{\crys,z}(S)$ this filtration. It is an isotropic $S$-line.
\begin{lemma}
    Let $R\rightarrow S$ be a surjection in $\textup{Alg}_{\mathcal{O}_{\breve{F}}}$ whose kernel admits nilpotent divided powers.\\
    (i)\,Let $z_0\in\mathcal{N}_{L}(\mathbb{F})$ and $\widehat{\mathcal{N}}_{L,z_0}$ be the completion of $\mathcal{N}_{L}$ at $z_0$. Let $z\in\widehat{\mathcal{N}}_{L,z_0}(S)$. Then there is a natural bijection
    \begin{equation*}
        \left\{\textup{Lifts}\,\, \tilde{z}\in\widehat{\mathcal{N}}_{L,z_0}(R)\,\,\textup{of}\,\,z. \right\}\stackrel{\sim}\longrightarrow\left\{\textup{Isotropic $R$-lines in}\,\,\mathbf{V}_{\textup{crys},z}(R)\,\,\textup{lifting}\,\,\textup{Fil}^{1}\mathbf{V}_{\textup{crys},z}(S).\right\},
    \end{equation*}
    which maps a lift $\Tilde{z}\in\widehat{\mathcal{N}}_{L,z_0}(R)$ to the Hodge filtration $\textup{Fil}^{1}\mathbf{V}_{\textup{crys},\Tilde{z}}(R)\subset\mathbf{V}_{\textup{crys},\Tilde{z}}(R)\simeq\mathbf{V}_{\textup{crys},z}(R)$.
    \\
    (ii)\,Let $M\subset\mathbb{V}$ be a $\mathcal{O}_{F}$-lattice of rank $r\geq1$. Let $z_0\in\mathcal{Z}(M)(\mathbb{F})$ and $\widehat{\mathcal{Z}(M)}_{z_0}$ be the completion of $\mathcal{Z}(M)$ at $z_0$, then there is a natural bijection compatible with the bijection in (i)
\begin{equation*}
      \left\{\textup{Lifts}\,\, \tilde{z}\in\widehat{\mathcal{Z}(M)}_{z_0}(R)\,\,\textup{of}\,\,z. \right\}\stackrel{\sim}\longrightarrow 
    \left\{\begin{array}{ll}
     \textup{Isotropic $R$-lines in $\mathbf{V}_{\textup{crys},z}(R)$ lifting $\textup{Fil}^{1}\vcrys(S)$}\\
    \textup{and orthogonal to $x_{\textup{crys},z}(R)$ for any $x\in M$.}
    \end{array}\right\}.
\end{equation*}
\label{deformo2}
\end{lemma}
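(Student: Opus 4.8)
The plan is to deduce both parts from the already-established deformation description on the self-dual side (Lemma~\ref{deformo}), using the identification $\CN_L\simeq\CZ^{\sharp}(x_0)\subseteq\CN_{L^{\sharp}}$ of Theorem~\ref{almost-divisor} together with the cancellation law of Lemma~\ref{cancellation-law}. The guiding principle is that intersecting with $\CZ^{\sharp}(x_0)$ on the $L^{\sharp}$-side corresponds, on the crystalline side, exactly to imposing orthogonality against $x_{0,\crys}$, i.e.\ to passing from $\mathbf{V}^{\sharp}_{\crys}$ to $\mathbf{V}_{\crys}$; everything then follows formally from Lemma~\ref{deformo} and the definition of $\mathbf{V}_{\crys}$.

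For part (i): under $i_L$, Theorem~\ref{almost-divisor} identifies $\widehat{\mathcal{N}}_{L,z_0}$ with $\widehat{\CZ^{\sharp}(x_0)}_{z_0}$, and since a quasi-endomorphism deforms if and only if the $\Of$-lattice it generates deforms, $\CZ^{\sharp}(x_0)=\CZ^{\sharp}(\Of x_0)$ as closed formal subschemes (this uses the moduli description in Definition~\ref{spe-unram}). Hence lifts of $z$ in $\widehat{\mathcal{N}}_{L,z_0}(R)$ are the same as lifts of $z$ in $\widehat{\CZ^{\sharp}(\Of x_0)}_{z_0}(R)$. Applying Lemma~\ref{deformo}(ii) to the rank-one lattice $\Of x_0\subset\BV^{\sharp}$, these are in bijection with the isotropic $R$-lines in $\mathbf{V}^{\sharp}_{\crys,z}(R)$ that lift $\textup{Fil}^1\mathbf{V}^{\sharp}_{\crys}(S)$ and are orthogonal to $x_{0,\crys,z}(R)$; here one checks that the crystalline realization of $x_0\in\BV^{\sharp}$ used in Lemma~\ref{deformo} agrees with the $x_{0,\crys}$ of this subsection, both being induced by the isogeny $x_0^{\univ}$. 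By the definition of $\mathbf{V}_{\crys}$, the submodule $\mathbf{V}_{\crys,z}(R)\subset\mathbf{V}^{\sharp}_{\crys,z}(R)$ is precisely the orthogonal complement of $x_{0,\crys,z}(R)$; moreover, since $x_0^{\univ}$ exists over $\CN_L$, the Hodge line $\textup{Fil}^1\mathbf{V}^{\sharp}_{\crys,z}(S)$ already lies in $\mathbf{V}_{\crys,z}(S)$ and equals $\textup{Fil}^1\mathbf{V}_{\crys,z}(S)$ by definition. Thus the target set is exactly the set of isotropic $R$-lines in $\mathbf{V}_{\crys,z}(R)$ lifting $\textup{Fil}^1\mathbf{V}_{\crys,z}(S)$, giving (i).

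For part (ii): by Lemma~\ref{cancellation-law}, under $\CN_L\simeq\CZ^{\sharp}(x_0)$ the cycle $\CZ(M)$ is identified with $\CZ^{\sharp}(M)\cap\CZ^{\sharp}(x_0)=\CZ^{\sharp}(M\obot\Of x_0)$, where the second equality is because the scheme-theoretic intersection of special cycles is the special cycle attached to the $\Of$-span of the union, and $x_0\perp\BV$ by the decomposition $\BV^{\sharp}=\BV\obot Fx_0$. Applying Lemma~\ref{deformo}(ii) to the lattice $M\obot\Of x_0$ and arguing as in (i) — the orthogonality to $x_{0,\crys,z}(R)$ being absorbed into the passage from $\mathbf{V}^{\sharp}_{\crys}$ to $\mathbf{V}_{\crys}$, while the orthogonality to $x_{\crys,z}(R)$ for $x\in M$ persists — yields the asserted bijection, compatibly with the one in (i) since all of this is obtained by restriction of the corresponding statement over $\CN_{L^{\sharp}}$. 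The only points requiring genuine care are the two identifications of closed formal subschemes ($\CZ^{\sharp}(x_0)=\CZ^{\sharp}(\Of x_0)$ and $\CZ^{\sharp}(M)\cap\CZ^{\sharp}(x_0)=\CZ^{\sharp}(M\obot\Of x_0)$), handled via Definitions~\ref{spe-unram} and~\ref{spe-almo}, and the matching of the two a priori different definitions of $x_{0,\crys}$; once these are settled the lemma is formal. I expect the bookkeeping around these identifications of formal subschemes (rather than merely of underlying reduced subschemes) to be the only real obstacle.
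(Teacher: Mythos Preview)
Your proposal is correct and follows exactly the same approach as the paper's own proof, which is the one-line ``This follows from combining Lemma~\ref{deformo}, Theorem~\ref{almost-divisor} and Lemma~\ref{cancellation-law}.'' You have simply spelled out in detail how those three ingredients combine, and the bookkeeping points you flag (the identifications $\CZ^{\sharp}(x_0)=\CZ^{\sharp}(\Of x_0)$ and $\CZ^{\sharp}(M)\cap\CZ^{\sharp}(x_0)=\CZ^{\sharp}(M\obot\Of x_0)$) are immediate from Definition~\ref{spe-unram}.
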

\begin{proof}
    This follows from combining Lemma \ref{deformo}, Theorem \ref{almost-divisor} and Lemma \ref{cancellation-law}.
\end{proof}

\subsection{Local models}
To understand the non-formally smooth locus of our Rapoport-Zink spaces, we need to use the theory of local models. For a quadratic lattice $\Lambda$ over $\mathcal{O}_F$, let $Q(\Lambda)$ be a quadratic over $\textup{Spec}\,\mathcal{O}_F$ such that for an $\mathcal{O}_F$-algebra $R$, the $R$-points of $Q(\Lambda)$ parameterize isotropic lines, i.e., $R$-locally free rank 1 direct summands 
\begin{equation*}
    \mathcal{F}\subset\Lambda\otimes_{\mathcal{O}_F}R,
\end{equation*}
with $(\mathcal{F},\mathcal{F})_R=0$. Here $(\cdot,\cdot)_R$ is the symmetric $R$-bilinear form induced from the quadratic form on $\Lambda$.
\par
Let $\textup{M}^{\textup{loc}}(L)$ (resp. $\textup{M}^{\textup{loc}}(L^{\sharp})$) be the local model for the triple $(G,\mu,\mathbf{x})$ (resp. $(G^{\sharp},\mu^{\sharp},\mathbf{x}^{\sharp})$) in the sense of Pappas and Zhu \cite{PZ13}.
\begin{lemma}\label{lemma: local model}
    There exists a $\textup{GSpin}(L)$ (resp. $\textup{GSpin}(L^{\sharp})$)-equivariant isomorphism
    \begin{equation*}
        \textup{M}^{\textup{loc}}(L)\stackrel{\sim}\rightarrow Q(L),\,\,\,(\textup{resp.}\,\, \textup{M}^{\textup{loc}}(L^{\sharp})\stackrel{\sim}\rightarrow Q(L^{\sharp}))
    \end{equation*}
\end{lemma}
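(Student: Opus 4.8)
The plan is to identify each local model with the scheme of isotropic lines in the corresponding lattice, which is precisely the quadric $Q(\cdot)$ in the statement. First I would reduce from $\GSpin$ to $\SO$: the central extension $1\to\mathbb{G}_m\to\GSpin(V)\to\SO(V)\to 1$ (and likewise for $V^{\sharp}$) carries $\mu$ to the minuscule cocharacter $\mu^{\flat}$ of $\SO(V)$ acting on $V$ with weights $\{1,0,\dots,0,-1\}$, and carries $\mathcal{G}_{\mathbf{x}}$ to the parahoric of $\SO(V)$ stabilising the self-dual lattice chain generated by $L$ (resp. carries $\mathcal{G}_{\mathbf{x}^{\sharp}}$ to the hyperspecial stabiliser of $L^{\sharp}$). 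Since the Beilinson--Drinfeld affine Grassmannian of $\mathbb{G}_m$ is discrete, the $\mu$-Schubert variety in the affine Grassmannian of $\mathcal{G}_{\mathbf{x}}$ maps isomorphically, and $\textup{GSpin}(L)$-equivariantly, onto the $\mu^{\flat}$-Schubert variety for the corresponding $\SO(V)$-parahoric. Hence it suffices to treat the orthogonal local models; this is the reduction already implicit in \cite{Mad16} and \cite{HodgeRZparahoric}.

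Next I would compute the orthogonal local model. Because $\mu^{\flat}$ is minuscule, its Pappas--Zhu Schubert variety is the associated partial affine flag variety, and an explicit check with the Iwahori--Weyl group of $\SO(V)$ --- equivalently, directly with the lattice chain $L\subset L^{\vee}$ --- should identify it with the closed subscheme of $\mathbb{P}(L)$ cut out by the isotropy condition $(\mathcal{F},\mathcal{F})=0$, i.e.\ with $Q(L)$; similarly $\textup{M}^{\textup{loc}}(L^{\sharp})\cong Q(L^{\sharp})$. To pin down the flat closure on the integral level, note that since $L^{\sharp}$ is self-dual the defining quadratic form of $Q(L^{\sharp})$ reduces mod $\varpi$ to a non-degenerate form, so $Q(L^{\sharp})$ is smooth over $\mathcal{O}_F$ of relative dimension $n$ --- consistent with the formal smoothness of $\CN_{L^{\sharp}}$ --- and the flat closure of its irreducible generic fibre is all of it. For $L$, the embedding $\iota_L\colon L\hookrightarrow L^{\sharp}$ realises
\[
Q(L)=Q(L^{\sharp})\cap\{\mathcal{F}:(\mathcal{F},x_0)=0\}
\]
as the hyperplane section of the smooth quadric $Q(L^{\sharp})$ along the hyperplane orthogonal to $x_0$; since $\nu_\varpi(q(x_0))=1$ this hyperplane is tangent to $Q(L^{\sharp})_{\mathbb{F}}$ at the point $[\bar x_0]$, so the section is $\mathcal{O}_F$-flat with geometrically integral fibres, its special fibre being a quadric cone with a single singular point. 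Hence $Q(L)$ is normal with reduced special fibre and is the closure of its generic fibre inside $\mathbb{P}(L)$, which forces $\textup{M}^{\textup{loc}}(L)\cong Q(L)$. (The unique singular point of $Q(L)_{\mathbb{F}}$ matches, via the local model diagram, the discrete non-formally-smooth locus of $\CN_L$.)

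The $\textup{GSpin}(L)$-equivariance (resp.\ $\textup{GSpin}(L^{\sharp})$) is then automatic: the action on $\textup{M}^{\textup{loc}}(L)$ is induced from the action on the affine Grassmannian, and under the identifications above it is the natural action of $\textup{GSpin}(L)$ on $Q(L)$ through its image in $\SO(L)\subset\textup{O}(L)$, which preserves isotropic lines. The hard part will be the parahoric (almost self-dual) case of the identification $\textup{M}^{\textup{loc}}(L)\cong Q(L)$: a priori the Pappas--Zhu model is only the flat closure of a $\mu^{\flat}$-orbit in a twisted affine Grassmannian over a group scheme on $\mathcal{O}_F[u]$, and one must know it coincides on the nose with the explicit quadric rather than merely dominating it. I would settle this either by invoking the explicit determination of orthogonal local models for the ``vector'' minuscule coweight, or --- more in keeping with this paper --- by using the closed immersion $\CN_L\hookrightarrow\CN_{L^{\sharp}}=\CZ^{\sharp}(x_0)$ of Theorem~\ref{almost-divisor} together with the local model diagram for $\CN_{L^{\sharp}}$: the crystalline realisation of $x_0$ (Lemma~\ref{deformo2}) cuts out $\textup{M}^{\textup{loc}}(L)$ inside $\textup{M}^{\textup{loc}}(L^{\sharp})=Q(L^{\sharp})$ precisely as the hyperplane section $Q(L)$.
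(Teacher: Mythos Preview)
The paper's own proof is a one-line citation to \cite[Proposition 12.7]{HPR20}; your sketch is essentially an outline of what that reference (together with \cite{Mad16}, \cite{HP17}) proves: the reduction from $\GSpin$ to $\SO$ via the central $\mathbb{G}_m$, and the identification of the minuscule Schubert variety for the standard cocharacter of $\SO$ with the quadric of isotropic lines. So you are not taking a different route so much as unpacking the citation.

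Two comments on the details. First, your claim that $Q(L)$ has ``geometrically integral fibres'' and is normal fails for $n=2$: the special fibre is then geometrically a pair of lines through the cone point. This does not matter for the argument, since $Q(L)\subset\mathbb{P}(L)$ is a relative effective Cartier divisor (the defining form is nonzero mod $\varpi$), hence already $\mathcal{O}_F$-flat and equal to the closure of its generic fibre; flatness alone is what pins down the Pappas--Zhu model. Second, your alternative of deducing $\textup{M}^{\textup{loc}}(L)\cong Q(L)$ from Theorem~\ref{almost-divisor} and Lemma~\ref{deformo2} is logically delicate in this paper: the proof of Theorem~\ref{almost-divisor} already invokes local models for the regularity of $\CN_L$, and the local model diagram for $\CN_L$ relates it to $\textup{M}^{\textup{loc}}(L)$ only \'etale-locally, so you would still need a global functoriality statement for local models under the embedding $(G,\mu,\mathcal{G}_{\mathbf{x}})\hookrightarrow(G^{\sharp},\mu^{\sharp},\mathcal{G}_{\mathbf{x}^{\sharp}})$ to cut $\textup{M}^{\textup{loc}}(L)$ out of $Q(L^{\sharp})$ as the hyperplane section. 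Your first alternative---appealing to the explicit orthogonal computation---is the clean way and is exactly what the citation does.
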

\begin{proof}
    This is proved in \cite[Proposition 12.7]{HPR20}.
\end{proof}

\subsection{non-formally smooth point on the local model $\textup{M}^{\textup{loc}}(L)$}
The lattice $L$ admits a decomposition
\begin{equation*}
    L=H\obot\mathcal{O}_F\cdot l,
\end{equation*}
where $l\in L$ is a vector such that $\nu_\varpi(q_L(l))=1$. Let $\overline{l}\in L/pL$ be the image of the element $l$. Let $s_0\in\textup{M}^{\textup{loc}}(L)(\mathbb{F})$ be the $\mathbb{F}$-point corresponding to the isotropic line $\mathbb{F}\cdot\overline{l}\subset L\otimes_{\Zp}\mathbb{F}$. It is the unique non-smooth point of $\textup{M}^{\textup{loc}}(L)$. Let $\textup{M}^{\textup{loc}}(L)_{\mathcal{O}_{\Breve{F}}}=\textup{M}^{\textup{loc}}(L)\times_{\textup{Spec}\,\mathcal{O}_F}\textup{Spec}\,{\mathcal{O}_{\Breve{F}}}$. Let $\widehat{\mathcal{O}}_{\textup{M}^{\textup{loc}}(L)_{\mathcal{O}_{\Breve{F}}},s_0}$ be the completion of the local ring of the scheme $\textup{M}^{\textup{loc}}(L)_{\mathcal{O}_{\Breve{F}}}$ at the point $s_0$. Let $\{e_i\}_{i=1}^{n}$ be an orthogonal normal basis of $H$. Let $a_i=q_L(e_i)\in\mathcal{O}_F^{\times}$. Then by the local model diagram for Rapoport--Zink spaces (Lemma \ref{lemma: local model}), we have
\begin{equation}
    \widehat{\mathcal{O}}_{\textup{M}^{\textup{loc}}(L)_{\mathcal{O}_{\Breve{F}}},s_0}\simeq\mathcal{O}_{\Breve{F}}[[s_1,\cdots,s_n]]\bigg/\left(q_L(l)+\sum\limits_{i=1}^{n}a_is_i^{2}\right).
    \label{nfs-local-ring}
\end{equation}

\subsection{Geometric points on $\CN_L$}
Let $z_0\in\CN_L(\mathbb{F})$ be a point of $\CN_L$. Denote by $\mathbf{V}_{z_0}^{\sharp}$ (resp. $\mathbf{V}_{z_0}$) the rank $n+2$ (resp. $n+1$) free $\ofb$-module $\mathbf{V}_{\crys}^{\sharp}(\textup{Spec}\,\mathbb{F},\textup{Spf}\,\ofb,\delta)$ (resp. $\mathbf{V}_{\crys}(\textup{Spec}\,\mathbb{F},\textup{Spf}\,\ofb,\delta)$) where the morphism $\textup{Spec}\,\mathbb{F}\rightarrow\CN_L$ corresponds to the point $z_0$. Denote by $\mathbb{D}_{z_0}$ the (covariant) Dieudonne module of the base change of the universal $p$-divisible group $X^{\univ}$ to the point $z_0$. Denote by $x_{0,z_0}$ the crystalline realization of the isogeny $x_0$ at the point $z_0$, it is an element in the $\ofb$-module $\mathbf{V}_{z_0}$, hence also $\mathbf{V}_{z_0}^{\sharp}$. Denote by $\overline{x_{0,z_0}}$ the image of $x_{0,z_0}$ in the $\mathbb{F}$-vector space $\overline{\mathbf{V}}_{z_0}^{\sharp}\coloneqq\mathbf{V}_{z_0}^{\sharp}/\varpi\mathbf{V}_{z_0}^{\sharp}$ (resp. $\overline{\mathbf{V}}_{z_0}\coloneqq\mathbf{V}_{z_0}/\varpi\mathbf{V}_{z_0}$). Denote by $\textup{Fil}^{1}\overline{\mathbf{V}}_{z_0}^{\sharp}$ (resp. $\textup{Fil}^{1}\overline{\mathbf{V}}_{z_0}$) the isotropic line corresponding to the point $z_0$.
\begin{lemma}\label{singular-criterion}
    Let $z_0\in\CN_L(\mathbb{F})$ be a point. The point $z_0$ is a non-formally smooth point of the formal scheme $\CN_L$ if and only if
    \begin{equation*}
        \overline{x_{0,z_0}}\in\textup{Fil}^{1}\overline{\mathbf{V}}_{z_0}\,\,(=\textup{Fil}^{1}\overline{\mathbf{V}}_{z_0}^{\sharp}),
    \end{equation*}
    i.e., $\textup{Fil}^{1}\overline{\mathbf{V}}_{z_0}$ is the radical of the quadratic space $\overline{\mathbf{V}}_{z_0}$.   
\end{lemma}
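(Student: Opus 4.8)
The plan is to identify $\widehat{\mathcal O}_{\CN_L,z_0}$ with a completion of the quadric local model of $\CN_L$ and then read the answer off (\ref{nfs-local-ring}). First I dispose of the ``i.e.'' by locating the radical of $\overline{\mathbf V}_{z_0}$. The $\ofb$-lattice $\mathbf V_{z_0}^{\sharp}$ is self-dual --- it is the evaluation of the crystal $\mathbf V^{\sharp}_{\crys}$ at a point of $\CN_{L^{\sharp}}$, which carries hyperspecial level --- so $\overline{\mathbf V}_{z_0}^{\sharp}$ is a non-degenerate quadratic $\mathbb F$-space; and $x_{0,z_0}\in\mathbf V_{z_0}^{\sharp}$ is primitive (the $\ofb$-submodule it generates is a direct summand, exactly as in the proof that $\mathbf V_{\crys}$ has rank $n+1$) with $(x_{0,z_0},x_{0,z_0})\in\varpi\ofb$ because $(x_0,x_0)=\varpi$, so $\overline{x_{0,z_0}}$ is a nonzero isotropic vector. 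Since $\overline{\mathbf V}_{z_0}=\overline{x_{0,z_0}}^{\perp}$ inside $\overline{\mathbf V}_{z_0}^{\sharp}$, non-degeneracy of the ambient form and isotropy of $\overline{x_{0,z_0}}$ give
\[
\textup{rad}\bigl(\overline{\mathbf V}_{z_0}\bigr)=\overline{x_{0,z_0}}^{\perp}\cap\bigl(\overline{x_{0,z_0}}^{\perp}\bigr)^{\perp}=\overline{x_{0,z_0}}^{\perp}\cap\,\mathbb F\cdot\overline{x_{0,z_0}}=\mathbb F\cdot\overline{x_{0,z_0}}.
\]
As $\textup{Fil}^{1}\overline{\mathbf V}_{z_0}$ is a line and $\overline{x_{0,z_0}}\neq 0$, the condition $\overline{x_{0,z_0}}\in\textup{Fil}^{1}\overline{\mathbf V}_{z_0}$ is equivalent to $\textup{Fil}^{1}\overline{\mathbf V}_{z_0}=\textup{rad}(\overline{\mathbf V}_{z_0})$; it therefore suffices to prove the lemma with the latter condition.

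Next I pass to the local model. By Lemma \ref{deformo2}(i) (with $S=\mathbb F$, $z=z_0$, and square-zero, hence nilpotent-PD, extensions), the deformation functor of $z_0$ on $\CN_L$ is naturally isomorphic to the functor of isotropic lines in $\mathbf V_{z_0}\otimes_{\ofb}(-)$ lifting $\textup{Fil}^{1}\overline{\mathbf V}_{z_0}$. Since $\mathbf V_{z_0}$ is almost self-dual of rank $n+1$ over the strictly henselian ring $\ofb$ (being the orthogonal complement in the self-dual lattice $\mathbf V_{z_0}^{\sharp}$ of the primitive vector $x_{0,z_0}$, whose norm has valuation $1$), it is isometric to $L\otimes_{\Zp}\ofb$; composing with Lemma \ref{lemma: local model} we obtain an isomorphism of $\ofb$-algebras
\[
\widehat{\mathcal O}_{\CN_L,z_0}\;\simeq\;\widehat{\mathcal O}_{\,\mathrm M^{\mathrm{loc}}(L)_{\ofb},\,s},
\]
where $s\in Q(L)(\mathbb F)=\mathrm M^{\mathrm{loc}}(L)(\mathbb F)$ is the point corresponding to the line $\textup{Fil}^{1}\overline{\mathbf V}_{z_0}$ under the chosen isometry. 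Thus $z_0$ is a formally smooth point of $\CN_L$ if and only if $s$ is a smooth point of $\mathrm M^{\mathrm{loc}}(L)$ over $\ofb$.

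The local model side is already understood: $\mathrm M^{\mathrm{loc}}(L)$ has a unique non-smooth point $s_0$, and by (\ref{nfs-local-ring}) --- or directly from the decomposition $L=H\obot\mathcal O_F l$ with $H$ self-dual and $\nu_\varpi(q_L(l))=1$ --- this $s_0$ is the isotropic line $\mathbb F\cdot\overline l\subset L\otimes_{\Zp}\mathbb F$, which is precisely $\textup{rad}(L\otimes_{\Zp}\mathbb F)$. Transporting through the isometry $\overline{\mathbf V}_{z_0}\simeq L\otimes_{\Zp}\mathbb F$, the point $s$ fails to be smooth exactly when $\textup{Fil}^{1}\overline{\mathbf V}_{z_0}=\textup{rad}(\overline{\mathbf V}_{z_0})$, equivalently $\overline{x_{0,z_0}}\in\textup{Fil}^{1}\overline{\mathbf V}_{z_0}$. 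This is the asserted criterion.

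The step needing the most care is a matter of provenance, not of calculation: one must know that Lemma \ref{deformo2}(i) is an isomorphism of deformation functors --- so that formal smoothness over $\ofb$, and not merely a bijection on finitely many test rings, is transported to the quadric local model --- and that the distinguished point it attaches to $z_0$ is genuinely the Hodge line $\textup{Fil}^{1}\overline{\mathbf V}_{z_0}$ rather than $\textup{Fil}^{1}\overline{\mathbf V}_{z_0}^{\sharp}$. Granting that, what remains is the linear algebra of the first paragraph and the citation of (\ref{nfs-local-ring}).
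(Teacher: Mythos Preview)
Your proof is correct and takes a different route from the paper's. The paper argues via the identification $\CN_L\simeq\CZ^{\sharp}(x_0)$ (Theorem \ref{almost-divisor}) and results from \cite{Zhu23diff}: it uses the criterion that $\CZ^{\sharp}(x_0)$ is non-formally-smooth at $z_0$ exactly when $z_0$ does not lift to $\CZ^{\sharp}(x_0)(\ofb/\varpi^2)$, and then checks both directions by explicit manipulation with a lift $l_0\in\mathbf V_{z_0}^{\sharp}$ of the Hodge line --- appealing to \cite[Proposition 5.3.3]{Zhu23diff} for one direction, and constructing a lift to $\ofb/\varpi^2$ by hand for the other. You instead go directly through the local model: Lemma \ref{deformo2}(i) identifies the deformation functor at $z_0$ with that of the quadric $Q(\mathbf V_{z_0})\simeq \mathrm M^{\mathrm{loc}}(L)_{\ofb}$ at the Hodge line, and the unique singular point of the latter is the radical, as recorded before (\ref{nfs-local-ring}). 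Your argument is self-contained within the paper (no appeal to \cite{Zhu23diff}) and essentially anticipates Remark \ref{coordinate}; the paper's is more hands-on and stays within the divisor picture already central to its strategy. Your closing caveat is well placed: upgrading the bijections of Lemma \ref{deformo2}(i) to an isomorphism of completed local rings requires their functoriality under composition of square-zero thickenings together with the crystal identification $\mathbf V_{\crys,z}(R)\simeq\mathbf V_{z_0}\otimes_{\ofb}R$ along such towers --- this is precisely the local model diagram, standard but worth naming.
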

\begin{proof}
    By Theorem \ref{almost-divisor}, we have an isomorphism $\CN_L\simeq\mathcal{Z}^{\sharp}(x_0)$. By \cite[Theorem 1.2.1]{Zhu23diff}, the divisor $\mathcal{Z}^{\sharp}(x_0)$ is not formally smooth at $z_0\in\mathcal{Z}^{\sharp}(x_0)(\mathbb{F})$ if and only if there doesn't exist a lift of $z_0$ to $z\in\mathcal{Z}^{\sharp}(x_0)(\ofb/\varpi^{2})$. Let $l_0\in\mathbf{V}^{\sharp}_{z_0}$ be an isotropic vector whose image $\overline{l}_0\in\overline{\mathbf{V}}^{\sharp}_{z_0}$ generates the filtration $\textup{Fil}^{1}\overline{\mathbf{V}}_{z_0}^{\sharp}$.
    \par
    If $\overline{x_{0,z_0}}\in\textup{Fil}^{1}\overline{\mathbf{V}}_{z_0}=\textup{Fil}^{1}\overline{\mathbf{V}}_{z_0}^{\sharp}$ (notice that the element $\overline{x_{0,z_0}}$ is nonzero since $\nu_\varpi(q(x_0))=1$), there exists $a\in\ofb^{\times}$ and $v\in\mathbf{V}^{\sharp}_{z_0}$ such that $l_0=a\cdot x_{0,z_0}+\varpi\cdot v$. The isotropicity of $l_0$ implies that $0=a^{2}\cdot q(x_0)+a\varpi\cdot(x_{0,z_0},v)+\varpi^{2}q(v)$. Hence we have
    \begin{align*}
        (l,x_{0,z_0})=2aq(x_0)+\varpi\cdot(v,x_{0,z_0})\equiv aq(x_0)\,\,\textup{mod}\,\,\varpi^{2}.
    \end{align*}
    Therefore $\nu_\varpi\left((l,x_{0,z_0})\right)=1$. Hence $z_0$ is a non-formally smooth point of the formal scheme $\mathcal{Z}^{\sharp}(x_0)\simeq\CN_L$ by \cite[Proposition 5.3.3]{Zhu23diff}.
    \par
    Now we assume $\overline{x_{0,z_0}}\notin\textup{Fil}^{1}\overline{\mathbf{V}}_{z_0}=\textup{Fil}^{1}\overline{\mathbf{V}}_{z_0}^{\sharp}$. If $\nu_{\varpi}((l_0,x_{0,z_0}))\geq2$, the point $z_0$ is a formally smooth point of the formal scheme $\mathcal{Z}^{\sharp}(x_0)\simeq\CN_L$ by \cite{Zhu23diff}. If $\nu_{\varpi}((l_0,x_{0,z_0}))=1$. Then there exists an element $v\in\mathbf{V}^{\sharp}_{z_0}$ such that
    \begin{equation*}
        (v,x_{0,z_0})\equiv-\varpi^{-1}(l_0,x_{0,z_0})\,\,\textup{mod}\,\,\varpi,\,\,\,\,(v,l_0)\equiv0\,\,\textup{mod}\,\,\varpi.
    \end{equation*}
    Then $q(l_0+\varpi v)\equiv 0\,\,\textup{mod}\,\,\varpi^{2}$ and $(l_0+\varpi v,x_{0,z_0})\equiv 0\,\,\textup{mod}\,\,\varpi^{2}$. The isotropic vector $\overline{l_0+\varpi v}$ represents a lift of $z_0$ to $z\in\mathcal{Z}^{\sharp}(x_0)(\ofb/\varpi^{2})$ by Lemma \ref{deformo}. Hence the point $z_0$ is a formally smooth point of the formal scheme $\mathcal{Z}^{\sharp}(x_0)\simeq\CN_L$.
\end{proof}
\begin{remark}
    By the local model diagrams of Rapoport--Zink spaces, we have the following isomorphism if $z_0\in\CN_L(\mathbb{F})$ is a non-formally smooth point of the formal scheme $\CN_L$:
    \begin{equation}
    \widehat{\mathcal{O}}_{\CN_{L},z_0}\simeq\widehat{\mathcal{O}}_{\textup{M}^{\textup{loc}}(L)_{\mathcal{O}_{\Breve{F}}},s_0}\simeq\mathcal{O}_{\Breve{F}}[[s_1,\cdots,s_n]]\bigg/\left(q_L(l)+\sum\limits_{i=1}^{n}a_is_i^{2}\right),
    \label{nfs-local-ring-2}
\end{equation}
i.e., the completed local ring of the formal scheme $\CN_L$ at the non-formally smooth point $z_0$ is isomorphic to the completed local ring of the moduli space of isotropic lines in the lattice $L\otimes_{\mathcal{O}_F}\ofb\simeq\mathbf{V}_{z_0}$ at the radical subspace of $\mathbf{V}_{z_0}/\varpi\mathbf{V}_{z_0}$.
\par
We will use a slightly different coordinate system of the local ring $\widehat{\mathcal{O}}_{\CN_{L},z_0}$. Let $x_1\in\mathbf{V}^{\sharp}_{z_0}$ such that $(x_{0,z_0},x_1)=1$. Let $\mathbf{H}\subset\mathbf{V}^{\sharp}_{z_0}$ be the orthogonal complement of the $\ofb$-submodule spanned by $x_{0,z_0}$ and $x_1$. Then $\mathbf{H}$ is a self-dual quadratic lattice of rank $n$. Let $\{l_i\}_{i=1}^{n}$ be an orthogonal normal basis of $\mathbf{H}$. Let $\boldsymbol{l}$ be the universal isotropic vector over the local ring $\widehat{\mathcal{O}}_{\CN_{L},z_0}$, we have the following expression by Lemma \ref{singular-criterion}, 
    \begin{equation*}
        \boldsymbol{l}=a\cdot x_{0,z_0} +t_0\cdot x_1 +\sum\limits_{i=1}^{n}t_i\cdot l_i,
    \end{equation*}
    for some element $a\in\widehat{\mathcal{O}}_{\CN_{L},z_0}^{\times}$. Then as $\boldsymbol{l}$ is isotropic, we get
    \begin{equation*}
        t_0\left(a+t_0q(x_0)\right)=-a^{2}-\sum\limits_{i=1}^{n}q(l_i)t_i^{2}.
    \end{equation*}
    We can solve $t_0$ in terms of $t_1,\cdots,t_n$. By isomorphism (\ref{nfs-local-ring-2}), we get
     \begin{equation*}
         \widehat{\mathcal{O}}_{\CN_{L^{\sharp}},z_0}\simeq\mathcal{O}_{\Breve{F}}[[t_1,\cdots,t_n]].
     \end{equation*}
     If $\boldsymbol{l}\in\textup{M}^{\textup{loc}}(L)$, we have $(\boldsymbol{l},x_{0,z_0})=0$, then $t_0=-2aq(x_0)$. By isomorphism (\ref{nfs-local-ring-2}), we get
    \begin{equation*}
        \widehat{\mathcal{O}}_{\CN_{L},z_0}\simeq\mathcal{O}_{\Breve{F}}[[t_1,\cdots,t_n]]\bigg/\left(a^{\prime}q(x_0)+\sum\limits_{i=1}^{n}q(l_i)t_i^{2}\right).
    \end{equation*}
    where $a^{\pr}\in\widehat{\mathcal{O}}_{\CN_{L},z_0}^{\times}$. 
    \label{coordinate}
\end{remark}
\begin{corollary}
    Let $z_0\in\CN_L(\mathbb{F})$ be a point. Let $l\in\mathbf{V}_{z_0}$ be an element such that its image in $\overline{\mathbf{V}}_{z_0}$ generates the isotropic line $\textup{Fil}^{1}\overline{\mathbf{V}}_{z_0}$. The formal scheme $\CN_L$ is formally smooth over $\ofb$ at the point $z_0$ if and only if
    \begin{equation*}
        l\circ x_{0,z_0}\notin\varpi\cdot\textup{End}_{\ofb}(\mathbb{D}_{z_0}).
    \end{equation*}
    \label{smooth-criterion2}
\end{corollary}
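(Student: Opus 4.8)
The plan is to deduce the corollary from Lemma \ref{singular-criterion} by translating the condition $\overline{x_{0,z_0}}\in\textup{Fil}^{1}\overline{\mathbf{V}}_{z_0}$ into a statement about the composition $l\circ x_{0,z_0}$ inside $\textup{End}_{\ofb}(\mathbb{D}_{z_0})$. Recall that $\mathbb{D}_{z_0}$ is a module over the Clifford algebra $C^{\sharp}\otimes_{\mathcal{O}_F}\ofb$, that $\mathbf{V}^{\sharp}_{z_0}$ embeds into $\textup{End}_{\ofb}(\mathbb{D}_{z_0})$ with composition given by Clifford multiplication --- so that $v\circ v=q(v)\cdot\textup{id}$ and $v\circ w+w\circ v=(v,w)\cdot\textup{id}$ for all $v,w\in\mathbf{V}^{\sharp}_{z_0}$ --- and that, since $L^{\sharp}$ is self-dual, $\mathbb{D}_{z_0}$ is a faithful $C^{\sharp}\otimes_{\mathcal{O}_F}\ofb$-module whose reduction $\overline{\mathbb{D}}_{z_0}$ remains faithful over the semisimple $\mathbb{F}$-algebra $C^{\sharp}\otimes_{\mathcal{O}_F}\mathbb{F}$. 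Writing $\bar{l},\overline{x_{0,z_0}}\in\textup{End}_{\mathbb{F}}(\overline{\mathbb{D}}_{z_0})$ for the reductions of $l,x_{0,z_0}$ (both of which are integral endomorphisms, so the composition is well defined), we have $l\circ x_{0,z_0}\in\varpi\cdot\textup{End}_{\ofb}(\mathbb{D}_{z_0})$ if and only if $\bar{l}\circ\overline{x_{0,z_0}}=0$.

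I would first record the relevant elementary properties: both $\bar{l}$ and $\overline{x_{0,z_0}}$ are nonzero (for $\overline{x_{0,z_0}}$, use $\nu_\varpi(q(x_0))=1$, as in the proof of Lemma \ref{singular-criterion}); both are isotropic ($\bar{l}$ because it spans the isotropic line $\textup{Fil}^{1}\overline{\mathbf{V}}_{z_0}$, and $\overline{x_{0,z_0}}$ because $\nu_\varpi(q(x_0))=1$); and they are mutually orthogonal, since $l\in\mathbf{V}_{z_0}$ is by construction orthogonal to $x_{0,z_0}$. The heart of the argument is then the equivalence $\overline{x_{0,z_0}}\in\mathbb{F}\bar{l}$ (equivalently $\overline{x_{0,z_0}}\in\textup{Fil}^{1}\overline{\mathbf{V}}_{z_0}$, as $\textup{Fil}^{1}\overline{\mathbf{V}}_{z_0}=\mathbb{F}\bar{l}$) $\iff$ $\bar{l}\circ\overline{x_{0,z_0}}=0$. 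In one direction, if $\overline{x_{0,z_0}}=c\bar{l}$ then $\bar{l}\circ\overline{x_{0,z_0}}=c\,\bar{l}\circ\bar{l}=c\,q(\bar{l})\cdot\textup{id}=0$. In the other, if $\bar{l}$ and $\overline{x_{0,z_0}}$ are linearly independent then, being orthogonal isotropic vectors, they span a totally isotropic plane, so their product in the Clifford algebra is the image of the nonzero element $\bar{l}\wedge\overline{x_{0,z_0}}$ under the standard $\mathbb{F}$-linear identification $\bigwedge^{\bullet}(L^{\sharp}\otimes_{\mathcal{O}_F}\mathbb{F})\xrightarrow{\sim}C^{\sharp}\otimes_{\mathcal{O}_F}\mathbb{F}$ (valid since $p$ is odd), hence is itself nonzero, and faithfulness of $\overline{\mathbb{D}}_{z_0}$ forces $\bar{l}\circ\overline{x_{0,z_0}}\neq0$. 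Combining this equivalence with Lemma \ref{singular-criterion} and passing to negations shows that $\CN_L$ is formally smooth over $\ofb$ at $z_0$ precisely when $l\circ x_{0,z_0}\notin\varpi\cdot\textup{End}_{\ofb}(\mathbb{D}_{z_0})$.

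The one point that is not a formal consequence of the Clifford relations and Lemma \ref{singular-criterion} is the faithfulness of the reduced Clifford module $\overline{\mathbb{D}}_{z_0}$, used to pass from ``$\bar{l}\wedge\overline{x_{0,z_0}}\neq0$ in the Clifford algebra'' to ``$\bar{l}\circ\overline{x_{0,z_0}}\neq0$ in $\textup{End}_{\mathbb{F}}(\overline{\mathbb{D}}_{z_0})$''. This is a standard feature of GSpin Rapoport--Zink data attached to a self-dual lattice: it follows from the identification \eqref{dieu} of $\mathbb{D}(\mathbb{X}^{\sharp})$ with (a form of) the regular representation of $C^{\sharp}$, together with the semisimplicity of $C^{\sharp}\otimes_{\mathcal{O}_F}\mathbb{F}$, cf. \cite{HP17}. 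Everything else reduces to a short computation with the Clifford relations once Lemma \ref{singular-criterion} is granted.
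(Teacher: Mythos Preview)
Your proof is correct and follows essentially the same route as the paper's: reduce modulo $\varpi$, identify $\overline{\mathbb{D}}_{z_0}$ with (the regular representation of) the Clifford algebra $C(\overline{\mathbf{V}}^{\sharp}_{z_0})$, observe that the product of two vectors in this Clifford algebra vanishes if and only if the vectors are linearly dependent, and conclude via Lemma~\ref{singular-criterion}. The paper simply asserts the Clifford-algebra equivalence and the faithfulness in one sentence, whereas you unpack both (via the $\bigwedge^{\bullet}\xrightarrow{\sim}C$ identification and the regular-representation remark); this extra detail is fine and does not constitute a different approach.
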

\begin{proof}
    The element $l\circ x_{0,z_0}\notin\varpi\cdot\textup{End}_{\ofb}(\mathbb{D}_{z_0})$ if and only if the element $\overline{l}\circ\overline{x_{0,z_0}}\in\textup{End}_{\mathbb{F}}(\mathbb{D}_{z_0}/\varpi\mathbb{D}_{z_0})$ is nonzero. Since $\mathbb{D}_{z_0}/\varpi\mathbb{D}_{z_0}$ is isomorphic to the Clifford algebra $C(\overline{\mathbf{V}}^{\sharp}_{z_0})$, the element $\overline{l}\circ\overline{x_{0,z_0}}\in\textup{End}_{\mathbb{F}}(\mathbb{D}_{z_0}/\varpi\mathbb{D}_{z_0})$ is nonzero if and only if the two vectors $\overline{l}$ and $\overline{x_{0,z_0}}$ are linearly independent. The two vectors $\overline{l}$ and $\overline{x_{0,z_0}}$ are linearly independent if and only if the formal scheme $\CN_L$ is formally smooth over $\ofb$ at $z_0$ by Lemma \ref{singular-criterion}.
\end{proof}

\subsection{Vertex lattices of type 1 and non-formally smooth points}
Let $\mathcal{L}\subset\mathbb{V}$ be a vertex lattice of type 1. Recall that in $\S$\ref{BT-strat}, we explained that the special cycle $\mathcal{Z}(\mathcal{L})=\mathcal{V}(\mathcal{L})$ is a reduced projective variety of dimension $0$, hence it is just an $\mathbb{F}$-point of $\CN_L$.
\begin{lemma}
    Let $z_0\in\CN_L(\mathbb{F})$ be a non-formally smooth point of $\CN_L$. There exists a unique vertex lattice $\mathcal{L}\subset\mathbb{V}$ of type $1$ such that $\mathcal{Z}(\mathcal{L})=\{z_0\}$.\label{cor-ver-1-singular}
\end{lemma}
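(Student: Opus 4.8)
The plan is to combine the Bruhat--Tits stratification of $\CN_L^{\textup{red}}$ from $\S\ref{BT-strat}$ with the pointwise criterion of Lemma \ref{singular-criterion}. The key preliminary observations are lattice-theoretic: if $\mathcal{L}\subsetneq\mathcal{M}$ are vertex lattices then $t(\mathcal{M})\le t(\mathcal{L})$ (since $\mathcal{M}^{\vee}/\mathcal{M}$ is a subquotient of $\mathcal{L}^{\vee}/\mathcal{L}$), and comparing the indices in the chain $\mathcal{L}\subseteq\mathcal{M}\subseteq\mathcal{M}^{\vee}\subseteq\mathcal{L}^{\vee}$ gives $[\mathcal{L}^{\vee}:\mathcal{L}]=[\mathcal{M}:\mathcal{L}]^{2}\,[\mathcal{M}^{\vee}:\mathcal{M}]$, so $t(\mathcal{M})=t(\mathcal{L})$ already forces $\mathcal{M}=\mathcal{L}$; in particular the type-$1$ vertex lattices are exactly the maximal ones, and for such $\mathcal{L}$ the cycle $\mathcal{Z}(\mathcal{L})=\mathcal{V}(\mathcal{L})\cong Y_{W_{\mathcal{L}}}$ is a single reduced point (its dimension being $(t(\mathcal{L})-1)/2=0$). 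Using moreover that $\mathcal{V}(\mathcal{L})\cap\mathcal{V}(\mathcal{L}^{\pr})$ is nonempty iff $\mathcal{L}+\mathcal{L}^{\pr}$ is a vertex lattice, in which case it equals $\mathcal{V}(\mathcal{L}+\mathcal{L}^{\pr})$, one checks that every $z_0\in\CN_L^{\textup{red}}(\mathbb{F})$ lies in $\mathcal{V}(\mathcal{L})$ exactly for the vertex lattices $\mathcal{L}\subseteq\mathcal{L}_0$, where $\mathcal{L}_0$ is the unique vertex lattice with $z_0\in\mathcal{V}(\mathcal{L}_0)^{\circ}$. Hence a type-$1$ vertex lattice $\mathcal{L}$ with $\mathcal{Z}(\mathcal{L})=\{z_0\}$ exists if and only if $t(\mathcal{L}_0)=1$, and then necessarily $\mathcal{L}=\mathcal{L}_0$; so the whole lemma reduces to the single implication ``$z_0$ non-formally smooth $\Longrightarrow t(\mathcal{L}_0)=1$'', with uniqueness automatic.

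To prove that implication I would feed the criterion of Lemma \ref{singular-criterion} into the Bruhat--Tits description of $z_0$. By that lemma---and because $\overline{\mathbf{V}}^{\sharp}_{z_0}$ is nondegenerate, so that the radical of $\overline{\mathbf{V}}_{z_0}=\overline{x_{0,z_0}}^{\perp}$ is the line $\mathbb{F}\,\overline{x_{0,z_0}}$---the point $z_0$ is non-formally smooth precisely when the Hodge line $\textup{Fil}^{1}\overline{\mathbf{V}}_{z_0}$ equals $\mathbb{F}\,\overline{x_{0,z_0}}$. On the other hand, for $z_0\in\mathcal{V}(\mathcal{L}_0)^{\circ}$ one can, using the isomorphism $\mathcal{V}(\mathcal{L}_0)\cong Y_{W_{\mathcal{L}_0}}$ together with Oki's Bruhat--Tits analysis \cite{Oki20} (cf.\ \cite{HP17}), describe the pair $(\overline{\mathbf{V}}_{z_0},\textup{Fil}^{1}\overline{\mathbf{V}}_{z_0})$ in terms of $W_{\mathcal{L}_0}=\mathcal{L}_0^{\vee}/\mathcal{L}_0$ and the isotropic subspace of $W_{\mathcal{L}_0}\otimes\mathbb{F}$ attached to $z_0$; tracing this through, the Hodge line can equal the radical only when $\dim W_{\mathcal{L}_0}=t(\mathcal{L}_0)=1$, which together with the first paragraph finishes the proof. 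A more hands-on route that avoids the general dictionary: by Theorem \ref{almost-divisor} the point $z_0$ is a singular point of the difference divisor $\mathcal{Z}^{\sharp}(x_0)\cong\CN_L$ inside the smooth $\CN_{L^{\sharp}}$ (cf.\ \cite{Zhu23diff}); using the explicit completed local ring (\ref{nfs-local-ring-2}) and Corollary \ref{smooth-criterion2} one shows directly that the lattice $\mathcal{L}:=\{x\in\mathbb{V}:x \text{ lifts to an endomorphism of } X_{z_0}\}$ is a vertex lattice of type $1$ and, via Lemma \ref{deformo2}, that $\mathcal{Z}(\mathcal{L})=\mathcal{V}(\mathcal{L})=\{z_0\}$.

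The genuinely substantive point---the main obstacle---is exactly this translation between the crystalline data at $z_0$ (the Hodge filtration of $\mathbf{V}_{\textup{crys}}$, or equivalently the completed local ring $\widehat{\mathcal{O}}_{\CN_L,z_0}$) and the combinatorics of the vertex lattice $\mathcal{L}_0$ indexing the Bruhat--Tits stratum through $z_0$: one must know precisely how the position of $\textup{Fil}^{1}\overline{\mathbf{V}}_{z_0}$ relative to $\overline{x_{0,z_0}}$ is encoded by the type of $\mathcal{L}_0$. Everything else---the index computations for vertex lattices, the reduction to a single implication, and the bookkeeping with Lemma \ref{deformo2}---is routine.
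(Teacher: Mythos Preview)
The paper's own proof is a one-line citation: it is exactly \cite[Theorem 7.16]{Oki20}. Your proposal, by contrast, tries to give a self-contained argument by combining the Bruhat--Tits stratification with the crystalline singularity criterion of Lemma \ref{singular-criterion}. Your first paragraph is a clean and correct reduction: using that type-$1$ vertex lattices are maximal and that $z_0\in\mathcal{V}(\mathcal{L})$ forces $\mathcal{L}\subseteq\mathcal{L}_0$ (where $\mathcal{L}_0$ is the stratum through $z_0$), you correctly isolate the single implication ``$z_0$ non-formally smooth $\Rightarrow t(\mathcal{L}_0)=1$'', with uniqueness then automatic. That reduction is genuinely useful and not in the paper.

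The gap is in the second paragraph. Both of the routes you sketch for the remaining implication stop short of the actual computation. The first route says ``tracing this through'' via Oki's description of $(\overline{\mathbf{V}}_{z_0},\textup{Fil}^{1}\overline{\mathbf{V}}_{z_0})$ in terms of $W_{\mathcal{L}_0}$, but this tracing \emph{is} the content of \cite[Theorem 7.16]{Oki20}, so you are effectively citing the same result the paper cites, just obliquely. The second ``hands-on'' route asserts that the lattice of special endomorphisms lifting at $z_0$ is a type-$1$ vertex lattice, but this too needs the Dieudonn\'e-theoretic analysis of the special lattice attached to $z_0$ (as in \cite{HP17,Oki20}); neither Corollary \ref{smooth-criterion2} nor the local-ring formula (\ref{nfs-local-ring-2}) by itself identifies the type. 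You correctly flag this translation as ``the main obstacle'', but you do not resolve it---so as written the proposal is a sound reduction followed by a deferral to the same external input the paper uses.
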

\begin{proof}
    This is proved in \cite[Theorem 7.16]{Oki20}.
\end{proof}

\subsection{The blow up}
Let $\CN^{\textup{nfs}}_L$ be the non-formally smooth locus of the formal scheme $\CN_L$. It is a closed subscheme of both of the formal schemes $\CN_{L}$ and $\CN_{L^{\sharp}}$. Let $\M$ and $\Ms$ be the blow ups of the formal schemes $\CN_L$ and $\CN_{L^{\sharp}}$ along the closed subscheme $\CN^{\textup{nfs}}$ respectively. Denote by $\pi_L$ and $\pis$ be the blow up morphisms $\M\rightarrow\CN_L$ and $\Ms\rightarrow\CN_{L}$ respectively. Denote by $\exc_{L}$ and $\exc_{L^{\sharp}}$ the exceptional divisors on $\M$ and $\Ms$ respectively.
\par
Let a subset $M\subset\mathbb{V}^{\sharp}$ be a nonzero element. Denote by $\mathcal{Z}^{\sharp,\textup{bl}}(M)$ the pullback of the special cycle $\mathcal{Z}^{\sharp}(M)\subset\CN_{L^{\sharp}}$ to $\Ms$, i.e., $\mathcal{Z}^{\sharp,\textup{bl}}(M)=\mathcal{Z}^{\sharp}(M)\times_{\CN_{L^{\sharp}},\pis}\Ms$. For an element $x\in\mathbb{V}^{\sharp}$, denote by $\widetilde{\mathcal{Z}}^{\sharp,\textup{bl}}(x)$ the strict transform of the special divisor $\mathcal{Z}^{\sharp}(x)$. By Theorem \ref{almost-divisor}, the closed immersion $i_L:\CN_L\rightarrow\CN_{L^{\sharp}}$ induces the following isomorphism:
\begin{equation}
    \iota:\M\xrightarrow{\sim}\widetilde{\mathcal{Z}}^{\sharp,\textup{bl}}(x_0).\label{almost-spe-blow-up}
\end{equation}
We still use $i_L$ to denote the closed immersion $\M\rightarrow\Ms$.
\par
Let $z_0\in\CN_L(\mathbb{F})$ be a non-formally smooth point of the formal scheme $\CN_L$. Let $\widehat{\CN}_{L,z_0}$ (resp. $\widehat{\CN}_{L^{\sharp},z_0}$) be the completion of the formal scheme $\CN_L$ at the point $z_0$. Let $\widehat{\mathcal{M}}_{L,z_0}=\M\times_{\CN_L}\widehat{\CN}_{L,z_0}$ (resp. $\widehat{\mathcal{M}}_{L^{\sharp},z_0}=\Ms\times_{\CN_{L^{\sharp}}}\widehat{\CN}_{L,z_0}$). Using the coordinate system in Remark \ref{coordinate}, the formal scheme $\widehat{\mathcal{M}}_{L^{\sharp},z_0}$ is covered by $n+1$ open affine formal subschemes $\{\widehat{\mathcal{M}}_{L^{\sharp},z_0,i}\}_{i=0}^{n}$:
\begin{equation}
    \widehat{\mathcal{M}}_{L^{\sharp},z_0,0}\simeq\textup{Spf}\,\ofb[\{v_{i}\}_{i=1}^{n}],\,\,\,\,\widehat{\mathcal{M}}_{L^{\sharp},z_0,i}\simeq\textup{Spf}\,\ofb[\{u_{ji}\}_{j\neq i},u_i][[t_i]].
    \label{local-piece-smooth}
\end{equation}
Over $\widehat{\mathcal{M}}_{L,z_0,0}$, we have $t_i=v_{i}\varpi$ for $1\leq i\leq n$. While over $\widehat{\mathcal{M}}_{L^{\sharp},z_0,i}$ where $i\geq1$, we have $t_j=u_{ji}t_i$ for $j\neq i$ and $\varpi=u_it_i$. The formal scheme $\widehat{\mathcal{M}}_{L,z_0}$ is covered by $n$ open affine formal subschemes $\{\widehat{\mathcal{M}}_{L,z_0,i}\}_{i=1}^{n}$:
\begin{equation}
    \widehat{\mathcal{M}}_{L,z_0,i}\simeq\textup{Spf}\,\ofb[\{u_{ji}\}_{j\neq i}][[t_i]]\bigg/\left(a^{\pr}q(x_0)+t_i^{2}\cdot\left(q(l_i)+\sum\limits_{j\neq i}q(l_j)u_{ji}^{2}\right)\right).
    \label{local-piece}
\end{equation}
Let $z\in\widehat{\mathcal{M}}_{L,z_0}$ be a point. Denote by $\widehat{\mathcal{M}}_{L,z}$ the completion of the formal scheme $\M$ at the point $z$. Let $\mathfrak{m}_z$ be the maximal ideal of the completed local ring $\widehat{\mathcal{O}}_{\M,z}$. It's easy to see from (\ref{local-piece}) that $\varpi\notin\mathfrak{m}_z^{5}$.
\begin{lemma}
    The following facts are true:
    \begin{itemize}
        \item [(a)]The formal scheme $\Ms$ is formally smooth over $\ofb$ of relative dimension $n$. The formal scheme $\M$ is regular of total dimension $n$.
        \item[(b)] The multiplicity of the exceptional divisor $\exc_L$ on $\M$ in the principal divisor $\textup{div}(\varpi)$ on $\M$ is 2. 
        \item[(c)] Let $z_0\in\CN^{\textup{nfs}}_L$ be a point. Then $i_L:\M\rightarrow\Ms$ induces the following closed immersion:
        \begin{equation*}
            \pi_L^{-1}(z_0)\simeq\mathbb{P}^{n-1}_{\mathbb{F}}\hookrightarrow\pis^{-1}(z_0)\simeq\mathbb{P}^{n}_{\mathbb{F}}.
        \end{equation*}
    \end{itemize}  
    \label{geo-blowup}
\end{lemma}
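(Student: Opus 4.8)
The plan is to reduce everything to the formal completion at a non-formally-smooth point. Away from the finitely many points $z_0\in\CN_L^{\textup{nfs}}$ the morphisms $\pi_L,\pis$ are isomorphisms and $\exc_L$ is empty, while $\CN_L$ is regular and $\CN_{L^{\sharp}}$ is formally smooth over $\ofb$ there; so all three assertions reduce to the neighbourhoods $\widehat{\mathcal{M}}_{L,z_0}$ and $\widehat{\mathcal{M}}_{L^{\sharp},z_0}$, for which the explicit charts (\ref{local-piece-smooth}) and (\ref{local-piece}) are available. The structural fact behind all three parts is that $\nu_\varpi(q(x_0))=1$ (cf.\ Lemma~\ref{singular-criterion}): in the coordinates of Remark~\ref{coordinate} the hypersurface $\CN_L=\CZ^{\sharp}(x_0)\subseteq\CN_{L^{\sharp}}$ is cut out near $z_0$ by $f=a'q(x_0)+\sum_{i}q(l_i)t_i^{2}$, where $q(x_0)=\varpi\cdot(\text{unit})$ and $a'$ is a unit, so $f\in\mathfrak m_{z_0}\setminus\mathfrak m_{z_0}^{2}$ with linear term $\varpi\cdot(\text{unit})$.

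For (a): inspection of (\ref{local-piece-smooth}) gives that $\Ms$ is formally smooth over $\ofb$ of relative dimension $n$, hence of total dimension $n+1$. For $\M$, each chart $\widehat{\mathcal{M}}_{L,z_0,i}$ in (\ref{local-piece}) is cut in the regular ring $\ofb[\{u_{ji}\}_{j\ne i}][[t_i]]$ (of dimension $n+1$) by the single element $f_i=a'q(x_0)+t_i^{2}\bigl(q(l_i)+\sum_{j\ne i}q(l_j)u_{ji}^{2}\bigr)$, whose linear term is $\varpi\cdot(\text{unit})$; as this element is part of a regular system of parameters, $\widehat{\mathcal{M}}_{L,z_0,i}$, and hence $\M$, is regular of total dimension $n$. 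For (b): on $\widehat{\mathcal{M}}_{L,z_0,i}$ the defining relation reads $\varpi=-(\text{unit})\,t_i^{2}\bigl(q(l_i)+\sum_{j\ne i}q(l_j)u_{ji}^{2}\bigr)$; here $V(t_i)$ is the part of $\exc_L$ lying in this chart, and $q(l_i)+\sum_{j\ne i}q(l_j)u_{ji}^{2}$ restricts on $\{u_{ji}=0\}\subseteq V(t_i)$ to the unit $q(l_i)$, so does not contain $\exc_L$. Therefore $\textup{div}(\varpi)=2\,\exc_L+(\text{strict transform of }\CN_L\times_{\ofb}\BF)$ locally, so $\exc_L$ occurs in $\textup{div}(\varpi)$ with multiplicity exactly $2$, the reading being the same in every chart and at every $z_0$; note that $\varpi$ is a nonzerodivisor on the regular $\M$, so $\textup{div}(\varpi)$ is well defined.

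For (c): near $z_0$ the center $\CN_L^{\textup{nfs}}$ is the reduced point, with ideal $\mathfrak m_{z_0}=(\varpi,t_1,\dots,t_n)$ in $\mathcal O_{\CN_{L^{\sharp}},z_0}$ (regular of dimension $n+1$) and ideal $\mathfrak m'_{z_0}=(t_1,\dots,t_n)$ in $\mathcal O_{\CN_L,z_0}$ (regular of dimension $n$, and here $\varpi\in(t_1,\dots,t_n)^{2}$ by $f=0$, so $(t_1,\dots,t_n)$ is already a regular system of parameters). Blowing up the maximal ideal of a regular local ring then gives $\pis^{-1}(z_0)=\mathbb P\bigl(\mathfrak m_{z_0}/\mathfrak m_{z_0}^{2}\bigr)\cong\mathbb P^{n}_{\BF}$ and $\pi_L^{-1}(z_0)=\mathbb P\bigl(\mathfrak m'_{z_0}/(\mathfrak m'_{z_0})^{2}\bigr)\cong\mathbb P^{n-1}_{\BF}$. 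By (\ref{almost-spe-blow-up}), $i_L\colon\M\hookrightarrow\Ms$ identifies $\M$ with the strict transform of $\CZ^{\sharp}(x_0)$ in $\Ms$, so on exceptional fibres it becomes the inclusion of the projectivized tangent cone $\mathbb P\bigl(C_{z_0}\CZ^{\sharp}(x_0)\bigr)\hookrightarrow\mathbb P\bigl(\mathfrak m_{z_0}/\mathfrak m_{z_0}^{2}\bigr)$. Since $\CZ^{\sharp}(x_0)=V(f)$ with $f$ of order $1$ and initial form the nonzero linear form $\varpi\cdot(\text{unit})$, this tangent cone is the hyperplane $\{\varpi=0\}$, and $\pi_L^{-1}(z_0)\hookrightarrow\pis^{-1}(z_0)$ is the standard linear embedding $\mathbb P^{n-1}_{\BF}\hookrightarrow\mathbb P^{n}_{\BF}$.

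The bookkeeping for (a) and (b) is routine; the step needing genuine care is the last one in (c), namely that $i_L$ restricts on exceptional fibres to a \emph{linear} embedding of projective spaces. This is precisely where $\nu_\varpi(q(x_0))=1$ is essential: it forces the initial form of the equation of $\CN_L$ inside $\CN_{L^{\sharp}}$ at $z_0$ to be a single nonvanishing linear form, so that the tangent cone there is a hyperplane rather than a higher-degree cone.
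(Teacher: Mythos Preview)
Your proof is correct and follows essentially the same approach as the paper: both reduce to the explicit affine charts (\ref{local-piece-smooth}) and (\ref{local-piece}) over a non-formally-smooth point, from which (a) and (b) are read off directly. For (c) the paper simply says ``combine the two formulas,'' whereas you give the more conceptual (but equivalent) argument via the projectivized tangent cone of $\CZ^{\sharp}(x_0)$ at $z_0$; this is a nice way to see why the embedding is linear, and it makes explicit the role of $\nu_\varpi(q(x_0))=1$.
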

\begin{proof}
    The morphism $\pi_L:\M\rightarrow\CN_L$ (resp. $\pi_{L^{\sharp}}:\mathcal{M}_{L^{\sharp}}\rightarrow\CN_{L^{\sharp}}$) is an isomorphism over $\CN_L\backslash\CN^{\textup{nfs}}_L$, we only need to study the local ring of the formal scheme $\CM$ (resp. $\CM_{L^{\sharp}}$) at a point $z\in\pi_L^{-1}(\CN_L^{\textup{nfs}})$ (resp. $z\in\pi_{L^{\sharp}}^{-1}(\CN_L^{\textup{nfs}})$). The formulas (\ref{local-piece-smooth}) and (\ref{local-piece}) give an explicit description of them, from which (a) and (b) are obvious. Part (c) can also be proved by combining the two formulas (\ref{local-piece-smooth}) and (\ref{local-piece}).
\end{proof}
The exceptional divisor $\exc_L$ (resp. $\exc_{L^{\sharp}}$) is a disjoint union of projective spaces $\mathbb{P}_{\mathbb{F}}^{n-1}$ (resp. $\mathbb{P}_{\mathbb{F}}^{n}$) indexed by points in $\CN_L^{\textup{nfs}}$. Let $S\subset\CN_L^{\textup{nfs}}$ be a point, denote by $\exc_{L,S}\subset\M$ (resp. $\exc_{L^{\sharp},z}\subset\Ms$) the union of projective spaces $\pi_{L}^{-1}(z)\subset\M$ (resp. $\pis^{-1}(z)\subset\Ms$) indexed by the set $S$. For integer $k$ and non-negative integer $m$, denote by $\mathcal{O}_{\exc_{L,S}}(k)$ (resp. $\mathcal{O}_{\exc_{L^{\sharp},S}}(k)$) the line bundle on $\exc_{L,S}$ (resp. $\exc_{L^{\sharp},S}$) which corresponds to the line bundles $\mathcal{O}_{\mathbb{P}_{\mathbb{F}}^{n-1}}(k)$ (resp. $\mathcal{O}_{\mathbb{P}_{\mathbb{F}}^{n}}(k)$) on each connected component of $\exc_{L,S}$.
\begin{lemma}
    The following facts are true:
    \begin{itemize}
        \item [(a)] We have the following identity in the group $\textup{Gr}^{2}K_0^{\exc_{L^{\sharp}}}(\Ms)\simeq\textup{Pic}(\exc_{L^{\sharp}})$:
    \begin{equation*}
        [\mathcal{O}_{\exc_{L^{\sharp}}}\otimes_{\mathcal{O}_{\Ms}}^{\BL}\mathcal{O}_{\exc_{L^{\sharp}}}]=\mathcal{O}_{\exc_{L^{\sharp}}}(-1).
    \end{equation*}
        \item[(b)] Let $x\in\mathbb{V}^{\sharp}$ be a nonzero element. We have the following identity in the group $\textup{Gr}^{2}K_0^{\exc_{L^{\sharp}}}(\Ms)\simeq\textup{Pic}(\exc_{L^{\sharp}})$:
    \begin{equation*}
        [\mathcal{O}_{\mathcal{Z}^{\sharp,\textup{bl}}(x)}\otimes_{\mathcal{O}_{\Ms}}^{\BL}\mathcal{O}_{\exc_{L^{\sharp}}}]=\mathcal{O}_{\exc_{L^{\sharp}}}(0)
    \end{equation*}
    \end{itemize}\label{int-spe-exc-unramified}
\end{lemma}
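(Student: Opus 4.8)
The plan is to compute each of the two derived tensor products by resolving the relevant divisor on $\Ms$ by a two-term complex of line bundles, restricting that complex to $\exc_{L^{\sharp}}$, and reading off the resulting class in $\textup{Gr}^{2}K_0^{\exc_{L^{\sharp}}}(\Ms)\simeq\textup{Pic}(\exc_{L^{\sharp}})$ via first Chern classes. Throughout I would use, from Remark \ref{coordinate} (in particular \eqref{nfs-local-ring-2}, \eqref{local-piece-smooth}) and Lemma \ref{geo-blowup}, that $\CN_{L^{\sharp}}$ is formally smooth over $\ofb$ of relative dimension $n$, that $\Ms\to\CN_{L^{\sharp}}$ is the blow up along the reduced zero-dimensional subscheme $\CN^{\textup{nfs}}_L$, and hence that each component of $\exc_{L^{\sharp}}$ is a copy of $\mathbb{P}^{n}_{\mathbb{F}}$, contracted by $\pis$ to a single point of $\CN^{\textup{nfs}}_L$.

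For part (a): since we blow up the reduced point $z_0$, whose maximal ideal in $\CN_{L^{\sharp}}$ is generated by the regular system of parameters $(\varpi,t_1,\dots,t_n)$, the standard description of the exceptional divisor of a point blow up (its total space is the total space of $\mathcal{O}_{\mathbb{P}^{n}}(-1)$, with $\exc_{L^{\sharp}}$ the zero section) gives $\mathcal{O}_{\Ms}(-\exc_{L^{\sharp}})\big|_{\exc_{L^{\sharp}}}\cong N^{\vee}_{\exc_{L^{\sharp}}/\Ms}\cong\mathcal{O}_{\exc_{L^{\sharp}}}(1)$; this can also be read directly off the charts in \eqref{local-piece-smooth}. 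I would then tensor the tautological sequence $0\to\mathcal{O}_{\Ms}(-\exc_{L^{\sharp}})\to\mathcal{O}_{\Ms}\to\mathcal{O}_{\exc_{L^{\sharp}}}\to 0$ with $\mathcal{O}_{\exc_{L^{\sharp}}}$ over $\mathcal{O}_{\Ms}$: the connecting map becomes zero (the section of $\mathcal{O}_{\Ms}(\exc_{L^{\sharp}})$ cutting out $\exc_{L^{\sharp}}$ restricts to $0$ there), so the derived tensor product is the complex $[\mathcal{O}_{\exc_{L^{\sharp}}}(1)\to\mathcal{O}_{\exc_{L^{\sharp}}}]$ with zero differential, of class $[\mathcal{O}_{\exc_{L^{\sharp}}}]-[\mathcal{O}_{\exc_{L^{\sharp}}}(1)]$. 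The two terms have the same $\mathcal{O}_{\exc_{L^{\sharp}}}$-rank, so this class lies in $\textup{Gr}^{2}$, and under the Chern-class identification with $\textup{Pic}(\exc_{L^{\sharp}})$ it equals $\mathcal{O}_{\exc_{L^{\sharp}}}(-1)$.

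For part (b): by \cite[Proposition 4.10.1]{LiZhang-orthogonalKR} (after rescaling $x$ to an integral vector if necessary) $\CZ^{\sharp}(x)$ is a Cartier divisor on $\CN_{L^{\sharp}}$, and since $\pis$ is surjective and birational with $\Ms$ regular, its total transform $\CZ^{\sharp,\textup{bl}}(x)=\CZ^{\sharp}(x)\times_{\CN_{L^{\sharp}},\pis}\Ms$ is again a Cartier divisor, with $\mathcal{O}_{\Ms}(\CZ^{\sharp,\textup{bl}}(x))\cong\pis^{*}\mathcal{O}_{\CN_{L^{\sharp}}}(\CZ^{\sharp}(x))$. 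Tensoring $0\to\mathcal{O}_{\Ms}(-\CZ^{\sharp,\textup{bl}}(x))\to\mathcal{O}_{\Ms}\to\mathcal{O}_{\CZ^{\sharp,\textup{bl}}(x)}\to 0$ with $\mathcal{O}_{\exc_{L^{\sharp}}}$ gives a two-term complex of line bundles on $\exc_{L^{\sharp}}$ of class $[\mathcal{O}_{\exc_{L^{\sharp}}}]-[\mathcal{O}_{\Ms}(-\CZ^{\sharp,\textup{bl}}(x))|_{\exc_{L^{\sharp}}}]$. But $\pis$ contracts $\exc_{L^{\sharp}}$ onto the finite set $\CN^{\textup{nfs}}_L$, so the restriction to $\exc_{L^{\sharp}}$ of any line bundle pulled back from $\CN_{L^{\sharp}}$ is trivial; hence this class is $[\mathcal{O}_{\exc_{L^{\sharp}}}]-[\mathcal{O}_{\exc_{L^{\sharp}}}]=0$, i.e.\ $\mathcal{O}_{\exc_{L^{\sharp}}}(0)$. (On those components $\pis^{-1}(z_0)$ with $z_0\notin\CZ^{\sharp}(x)$ the derived tensor product simply vanishes, in agreement with the claim.)

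I expect the genuinely delicate points to be bookkeeping rather than substance: fixing the orientation in the isomorphism $\textup{Gr}^{2}K_0^{\exc_{L^{\sharp}}}(\Ms)\simeq\textup{Pic}(\exc_{L^{\sharp}})$ — which I would pin down by requiring the structure sheaf of a linearly embedded $\mathbb{P}^{n-1}\subset\mathbb{P}^{n}=\pis^{-1}(z_0)$ to have class its hyperplane bundle $\mathcal{O}(1)$, equivalently $[\mathcal{L}_1]-[\mathcal{L}_2]\mapsto\mathcal{L}_1\otimes\mathcal{L}_2^{\vee}$ on line bundles — and verifying $N^{\vee}_{\exc_{L^{\sharp}}/\Ms}\cong\mathcal{O}_{\exc_{L^{\sharp}}}(1)$ from the explicit blow up charts \eqref{local-piece-smooth}. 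Everything else is formal manipulation with the two-term resolutions of Cartier divisors together with the projection formula.
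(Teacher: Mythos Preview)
Your proposal is correct. For (a) you do essentially what the paper does: the paper computes the transition functions for the local equation of $\exc_{L^{\sharp}}$ in the charts \eqref{local-piece-smooth} and reads off that they give $\mathcal{O}_{\mathbb{P}^n}(-1)$, while you phrase the same computation as identifying the conormal bundle of the exceptional divisor of a point blow-up; these are the same calculation.

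For (b) the approaches genuinely differ. The paper works locally at $z_0\in\CN_L^{\textup{nfs}}$, writes the local equation as $f_x=t_i^{n_{z_0}}\cdot(\tilde f_{x,i}+t_i g_i)$ where $n_{z_0}$ is the multiplicity $f_x\in\mathfrak m_{z_0}^{n_{z_0}}\setminus\mathfrak m_{z_0}^{n_{z_0}+1}$, and then computes the contribution as $\mathcal{O}_{\mathbb{P}^n}(-n_{z_0})$ (from the $n_{z_0}$ copies of the exceptional divisor, via part (a)) plus $\mathcal{O}_{\mathbb{P}^n}(n_{z_0})$ (from the strict transform, which meets $\exc_{L^{\sharp}}$ along the degree-$n_{z_0}$ hypersurface $\{\tilde f_{x,i}=0\}$), summing to $\mathcal{O}(0)$. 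Your argument bypasses this decomposition entirely by noting that $\mathcal{O}_{\Ms}(\CZ^{\sharp,\textup{bl}}(x))=\pis^*\mathcal{O}_{\CN_{L^{\sharp}}}(\CZ^{\sharp}(x))$ is pulled back along a map contracting $\exc_{L^{\sharp}}$ to points, hence restricts trivially. Your route is shorter and needs no information about $f_x$; the paper's route has the advantage of making the cancellation $\mathcal{O}(-n_{z_0})+\mathcal{O}(n_{z_0})$ visible, which is in the spirit of the later arguments (e.g.\ Lemma \ref{regularity-diff-divisor-blow-up}) that track strict transforms separately.
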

\begin{proof}
We first prove (a). We only need to prove this locally at every point $z_0\in\CN^{\textup{nfs}}_L$. The element $[\mathcal{O}_{\exc_{L^{\sharp}}}\otimes^{\mathbb{L}}_{\mathcal{O}_{\Ms}}\mathcal{O}_{\exc_{L^{\sharp}}}]$ is given by restricting the line bundle corresponding to $\exc_{L^{\sharp}}$ on $\mathcal{M}_{L^{\sharp}}$ to $\exc_{L^{\sharp}}$. Using the open cover (\ref{local-piece-smooth}) of $\widehat{\mathcal{M}}_{L^{\sharp},z_0}$, we have
\begin{equation*}
     \begin{tikzcd}
     \widehat{\mathcal{M}}_{L^{\sharp},z_0,0} & \widehat{\mathcal{M}}_{L^{\sharp},z_0,i} & \widehat{\mathcal{M}}_{L^{\sharp},z_0,j} 
    \\
\varpi \arrow[r,shift left,"\times v_i"] \arrow[Subseteq]{u}{}
    &
t_i \arrow[l,shift left,"\times u_i"] \arrow[r,shift left, "\times u_{ji}"] \arrow[Subseteq]{u}{}
&
t_j \arrow[l, shift left, "\times u_{ij}"] \arrow[Subseteq]{u}{}   
\end{tikzcd}
 \end{equation*}
 The same transformation rule also applies to the corresponding open cover of $\exc_{L^{\sharp}}\simeq\mathbb{P}_{\mathbb{F}}^{n}$. Therefore the corresponding line bundle is $\mathcal{O}_{\mathbb{P}_{\mathbb{F}}^{n}}(-1)$.
 \par
    Now we prove (b). Let $z_0\in\CN_L^{\textup{nfs}}$ be a point. Let $f_{x}\in\widehat{\mathcal{O}}_{\CN_{L},z_0}$ the equation of the special divisor $f_x$. Denote by $\mathfrak{m}_{z_0}$ the maximal ideal of the local ring $\widehat{\mathcal{O}}_{\CN_{L},z_0}$. Then there exists an integer $n_{z_0}\geq0$ such that $f_x\in\mathfrak{m}^{n_{z_0}}\backslash\mathfrak{m}^{n_{z_0}+1}$. On each piece $\widehat{\mathcal{M}}_{L^{\sharp},z_0,i}$ of the cover (\ref{local-piece-smooth}), we found that locally at $z_0$, we have
    \begin{equation*}
        f_x=t_i^{n_{z_0}}\cdot (\tilde{f}_{x,i}+t_i\cdot g_i),
    \end{equation*}
    where $t_i$ is the local equation of the exceptional divisor. The element $\tilde{f}_{x,i}\in\ofb[\{v_i\}_{i=1}^{n}]$ if $i=0$ or $\tilde{f}_{x,i}\in\ofb[\{u_{ji}\}_{j\neq i},u_i]$ if $i\geq1$ and is a polynomial of degree $n_{z_0}$. The element $g_i\in\mathcal{O}_{\widehat{\mathcal{M}}_{L^{\sharp},z_0,i}}$. Therefore
    \begin{equation*}
        [\mathcal{O}_{\mathcal{Z}^{\sharp,\textup{bl}}(x)}\otimes_{\mathcal{O}_{\M}}^{\BL}\mathcal{O}_{\exc_{L^{\sharp}}}]=\sum\limits_{z_0\in\CN_L^{\textup{nfs}}}\left(\mathcal{O}_{\mathbb{P}_{\mathbb{F}}^{n}}(-n_{z_0})+\mathcal{O}_{\mathbb{P}_{\mathbb{F}}^{n}}(n_{z_0})\right)=\mathcal{O}_{\exc_{L^{\sharp}}}(0).
    \end{equation*}
\end{proof}

\section{Derived cycles and arithmetic intersection numbers}

\subsection{Special cycles on $\M$}\label{special-cycle-on-blow-up}
\begin{definition}\label{cycle-on-blow-up}
    Let $S\subset\BV$ be a subset. 
    \begin{itemize}
        \item $\mathcal{Z}$-cycles: Define the special cycle $\zm(S)\subset\M$ to be the closed formal subscheme
    \begin{equation*}
        \zm(S)=\CZ(S)\times_{\CN_L}\M.
    \end{equation*}
    When $S=\{x\}$ consists of one single element, denote by $\zm(x)$ the closed formal subscheme $\zm(\{x\})$.\\
        \item $\mathcal{Y}$-cycles: Define the special cycle $\mathcal{Y}(S)\subset\CN_L$ to be closed formal subscheme cut out by the condition
    \begin{equation*}
        \rho^{\textup{univ}}\circ x_0\circ x \circ(\rho^{\textup{univ}})^{-1}\subset\textup{End}(X^{\textup{univ}})
    \end{equation*}
    for all $x\in S$. Moreover, define the special cycle $\ym(S)\subset\M$ to be the closed formal subscheme
    \begin{equation*}
        \ym(S)=\mathcal{Y}(S)\times_{\CN_L}\M.
    \end{equation*}
    \par
    When $S=\{x\}$ consists of one single element, denote by $\CY(x)$ (resp. $\ym(x)$) the closed formal subscheme $\CY(\{x\})$ (resp. $\ym(\{x\})$).\\
    \item Exceptional divisors: Define $\exc_L(S)$ to be
    \begin{equation*}
        \exc_L(S)=\pi^{-1}\left(\mathcal{N}^{\textup{nfs}}\cap\mathcal{Y}(S)\right)=\exc_{L,\mathcal{N}^{\textup{nfs}}\cap\mathcal{Y}(S)}.
    \end{equation*}
    When $S=\{x\}$ consists of one single element, denote by $\exc_L(x)$ the closed formal subscheme $\exc_L(\{x\})$. Notice that $\exc_L(S)$ is a disjoint union of projective space $\mathbb{P}_{\mathbb{F}}^{n-1}$ by Lemma \ref{geo-blowup}, we use the simplified symbols $\mathcal{O}_{\exc_{L}(S)}(k)$ (resp. ${^{\mathbb{L}}\mathcal{O}}^{\otimes m}_{\exc_{L}(S)}(k)$) to denote the corresponding line bundle $\mathcal{O}_{\exc_{L,\mathcal{Y}(S)\cap\CN_L^{\textup{nfs}}}}(k)$ (resp. derived tensor of line bundles ${^{\mathbb{L}}\mathcal{O}}^{\otimes m}_{\exc_{L,\mathcal{Y}(S)\cap\CN_L^{\textup{nfs}}}}(k)$) for all integers $k\in\mathbb{Z}$.
    \end{itemize}
\end{definition}
For a point $z_0\in\CN_L(\mathbb{F})$, recall that we use $\mathbf{V}_{z_0}$ (resp. $\mathbf{V}_{z_0}^{\sharp}$) to denote the $\ofb$-module $\mathbf{V}_{\crys}(\ofb,\mathbb{F},\delta)$ (resp. $\mathbf{V}_{\crys}^{\sharp}(\ofb,\mathbb{F},\delta)$).
\begin{lemma}
    Let $x\in\BV$ be a nonzero element. Let $z_0\in\mathcal{Y}(x)(\mathbb{F})$ be a point such that $z_0\notin\CZ(x)$.
    \begin{itemize}
        \item [(a)]If $z_0\in\CN_L^{\textup{nfs}}$, we have $\varpi x_{\textup{crys},z_0}\in\mathbf{V}_{z_0}$ and
    \begin{equation*}
        \mathbb{F}\cdot\overline{\varpi x_{\textup{crys},z_0}}=\textup{Fil}^{1}\overline{\mathbf{V}}_{z_0}(=\textup{Fil}^{1}\overline{\mathbf{V}}_{z_0}^{\sharp}).
    \end{equation*}
    Moreover, let $f_{\varpi x}$ be the local equation of the special divisor $\mathcal{Z}(\varpi x)$ in the completed local ring $\widehat{\mathcal{O}}_{\CN_{L},z_0}$ of the formal scheme $\CN_{L}$ at $z_0$. Let $\mathfrak{m}_{z_0}\subset\widehat{\mathcal{O}}_{\CN_{L},z_0}$ be the maximal ideal. Then we have
    \begin{equation*}
        f_{\varpi x}\in\mathfrak{m}_{z_0}^{2}\backslash\mathfrak{m}_{z_0}^{3}.
    \end{equation*}
    \item[(b)] If $z_0\notin\CN_L^{\textup{nfs}}$, there exists an element $a\in\Of^{\times}$ such that
    \begin{equation*}
        z_0\in\CZ^{\sharp}\left(x+\frac{a}{\varpi}\cdot x_0\right)\cap\CN_L(\mathbb{F}).
        \end{equation*}
    \end{itemize}\label{from-y-to-weighted}
\end{lemma}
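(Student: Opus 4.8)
The plan is to reduce everything to linear algebra on the crystalline module $\mathbf{V}^{\sharp}_{z_0}$, using that $\mathbb{D}_{z_0}$ is free of rank one over the Clifford algebra $C(\mathbf{V}^{\sharp}_{z_0})$ (and $\mathbb{D}_{z_0}/\varpi\mathbb{D}_{z_0}\simeq C(\overline{\mathbf{V}}^{\sharp}_{z_0})$, as in the proof of Corollary \ref{smooth-criterion2}), so that every $y\in\mathbb{V}^{\sharp}$ acts on $\mathbb{D}_{z_0}$ by right Clifford multiplication by $y_{\textup{crys},z_0}$. Write $x_{\textup{crys},z_0}=\varpi^{-k}v$ with $v\in\mathbf{V}^{\sharp}_{z_0}$ primitive; since $z_0\notin\CZ(x)$ we have $k\geq 1$, and since $(x,x_0)=0$ in $\mathbb{V}^{\sharp}=\mathbb{V}\obot Fx_0$ we get $(v,x_{0,z_0})=0$, hence $v\in\mathbf{V}_{z_0}$. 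The hypothesis $z_0\in\CY(x)$ says that $x_0\circ x$, which acts on $\mathbb{D}_{z_0}$ by right multiplication by the Clifford product $x_{\textup{crys},z_0}\cdot x_{0,z_0}=\varpi^{-k}\,v\,x_{0,z_0}$, is an honest endomorphism; equivalently $v\,x_{0,z_0}\in\varpi^{k}C(\mathbf{V}^{\sharp}_{z_0})$, so in particular $\overline v\,\overline{x_{0,z_0}}=0$ in $C(\overline{\mathbf{V}}^{\sharp}_{z_0})$.

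The technical core, which I expect to be the main obstacle, is the following Clifford‑algebra normalization: $k=1$ and $\overline v$ is a nonzero multiple of $\overline{x_{0,z_0}}$. Indeed, under the filtration whose associated graded is $\bigwedge^{\bullet}\overline{\mathbf{V}}^{\sharp}_{z_0}$, the degree‑two symbol of $\overline v\,\overline{x_{0,z_0}}$ is $\overline v\wedge\overline{x_{0,z_0}}$; so $\overline v\,\overline{x_{0,z_0}}=0$ forces $\overline v$ proportional to $\overline{x_{0,z_0}}$, and $\overline v\neq 0$ by primitivity. Thus $v=c_0x_{0,z_0}+\varpi v_1$ with $c_0\in\ofb^{\times}$, $v_1\in\mathbf{V}^{\sharp}_{z_0}$, and $(v_1,x_{0,z_0})=2q(x_0)^{-1}(\,(v,x_{0,z_0})-\cdots\,)$ a unit. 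If $k\geq 2$, then $v\,x_{0,z_0}=c_0\,q(x_{0,z_0})+\varpi\,v_1x_{0,z_0}=\varpi(\text{unit}+v_1x_{0,z_0})$ must lie in $\varpi^{2}C(\mathbf{V}^{\sharp}_{z_0})$, so $\overline{\text{unit}}+\overline{v_1}\,\overline{x_{0,z_0}}=0$ in $C(\overline{\mathbf{V}}^{\sharp}_{z_0})$; but $(\overline{v_1},\overline{x_{0,z_0}})\neq 0$ makes $\overline{v_1},\overline{x_{0,z_0}}$ linearly independent, so $\overline{v_1}\,\overline{x_{0,z_0}}$ has nonzero degree‑two symbol and adding a scalar cannot kill it — a contradiction. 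Hence $k=1$, i.e. $\varpi x_{\textup{crys},z_0}=v\in\mathbf{V}_{z_0}$ and $x_{\textup{crys},z_0}=\tfrac{c_0}{\varpi}x_{0,z_0}+v_1$.

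Given this normal form, part (a) follows quickly. When $z_0\in\CN_L^{\textup{nfs}}$, Lemma \ref{singular-criterion} identifies $\textup{Fil}^{1}\overline{\mathbf{V}}_{z_0}=\textup{Fil}^{1}\overline{\mathbf{V}}^{\sharp}_{z_0}$ with $\mathbb{F}\cdot\overline{x_{0,z_0}}$, so $\mathbb{F}\cdot\overline{\varpi x_{\textup{crys},z_0}}=\mathbb{F}\cdot\overline v=\textup{Fil}^{1}\overline{\mathbf{V}}_{z_0}$, and in particular $z_0\in\CZ(\varpi x)$. For the multiplicity statement I would compute in the coordinates of Remark \ref{coordinate}: by Lemma \ref{cancellation-law} and Lemma \ref{deformo2}(ii) the local equation $f_{\varpi x}$ of $\CZ(\varpi x)\subset\CN_L$ at $z_0$ is, up to a unit, the pairing $(\boldsymbol l,v)$ of $v$ with the universal isotropic vector $\boldsymbol l=a\,x_{0,z_0}+t_0x_1+\sum_i t_il_i$, $t_0=-2a\,q(x_0)$; writing $v=\alpha x_{0,z_0}-2\alpha q(x_0)x_1+\varpi\sum_i\delta_il_i$ with $\alpha$ a unit (forced by $(v,x_{0,z_0})=0$ and $\overline v\parallel\overline{x_{0,z_0}}$) and substituting, one gets $f_{\varpi x}\equiv-2a\alpha\,\varpi\pmod{\mathfrak{m}_{z_0}^{3}}$. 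Since $\varpi$ is a unit times $\sum_iq(l_i)t_i^{2}$ in $\widehat{\mathcal{O}}_{\CN_L,z_0}\simeq\ofb[[t_1,\dots,t_n]]/(a'q(x_0)+\sum_iq(l_i)t_i^{2})$, and this quadratic form is nonzero in $\mathfrak{m}_{z_0}^{2}/\mathfrak{m}_{z_0}^{3}$, this gives $f_{\varpi x}\in\mathfrak{m}_{z_0}^{2}\setminus\mathfrak{m}_{z_0}^{3}$.

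For part (b), $z_0\notin\CN_L^{\textup{nfs}}$ but we still have $x_{\textup{crys},z_0}=\tfrac{c_0}{\varpi}x_{0,z_0}+v_1$ with $v_1\in\mathbf{V}^{\sharp}_{z_0}$. For $a\in\Of^{\times}$ one has $z_0\in\CZ^{\sharp}(x+\tfrac{a}{\varpi}x_0)$ iff $\tfrac{c_0+a}{\varpi}x_{0,z_0}+v_1\in\mathbf{V}^{\sharp}_{z_0}$, iff $c_0+a\in\varpi\ofb$, iff $\overline a=-\overline{c_0}$ in $\mathbb{F}$; so it suffices to show $\overline{c_0}\in\mathbb{F}_p^{\times}$, after which we take any $a\in\Zp^{\times}=\Of^{\times}$ with that reduction. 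Here I would use rationality: $x_0$ and $\varpi x$ both lie in the $\Zp$‑lattice of integral special endomorphisms of $X^{\textup{univ}}_{z_0}$ ($x_0$ because $z_0\in\CZ^{\sharp}(x_0)$, and $\varpi x$ because $(\varpi x)_{\textup{crys},z_0}=v$ is integral by part (b)'s normalization), and both are primitive in $\mathbf{V}^{\sharp}_{z_0}$; hence the reductions $\overline{x_{0,z_0}},\overline v$ lie in a common $\mathbb{F}_p$‑subspace of $\overline{\mathbf{V}}^{\sharp}_{z_0}$ and are nonzero, so since they are proportional over $\mathbb{F}$ the ratio $\overline{c_0}$ lies in $\mathbb{F}_p^{\times}$. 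Besides the Clifford computation of the second paragraph, the care needed to set up this $\mathbb{F}_p$‑rational structure and the primitivity of $\varpi x$ is the other point I would have to be precise about; the rest is formal.
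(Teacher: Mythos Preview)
Your Clifford-algebra normalization (the second paragraph) is a clean and correct alternative to the paper's $l\circ\mathbb{D}(x_0x)\circ l$ manipulation: the paper instead expands this triple product, extracts $\overline{l}\cdot\overline{v}=0$ for $v=x_0-c\varpi x$ with $c=(l,x_0)/(\varpi(l,x))$, and argues from there. Your direct use of the filtration on $C(\overline{\mathbf{V}}^\sharp_{z_0})$ with associated graded $\bigwedge^\bullet$ is efficient, and the proof that $k=1$ is correct. For the multiplicity statement in (a), your coordinate computation does give $(\boldsymbol{l},v)\equiv(\text{unit})\cdot\varpi\pmod{\mathfrak{m}_{z_0}^3}$, but you are implicitly identifying $f_{\varpi x}$ with $(\boldsymbol{l},v)$ modulo $\mathfrak{m}_{z_0}^3$; this uses that the crystalline realization of $\varpi x$ over $\widehat{\mathcal{O}}_{\CN_L,z_0}/\mathfrak{m}_{z_0}^3$ is the constant $v$, which holds via the nilpotent PD structure on $\mathfrak{m}_{z_0}/\mathfrak{m}_{z_0}^3$ but should be said. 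The paper sidesteps this by evaluating at a single test point in $W=\ofb[\pi]/(\pi^2-\varpi)$ and checking $(z^\sharp(f_{\varpi x}))=(\varpi)=(\pi^2)\not\subset(\pi^3)$.

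The genuine gap is in (b). Your rationality argument --- that $\overline{x_{0,z_0}}$ and $\overline{v}$ lie in the image of $\mathbb{L}_{z_0}/\varpi$ (an $\mathbb{F}_p$-subspace of $\overline{\mathbf{V}}^\sharp_{z_0}$) and are $\mathbb{F}$-proportional, hence $\mathbb{F}_p$-proportional --- is false as stated: two nonzero vectors in an $\mathbb{F}_p$-subspace $W\subset\overline{\mathbf{V}}^\sharp_{z_0}$ can be $\mathbb{F}$-proportional without being $\mathbb{F}_p$-proportional (take $W=\mathbb{F}_p\cdot 1+\mathbb{F}_p\cdot\alpha$ inside $\mathbb{F}$ with $\alpha\notin\mathbb{F}_p$). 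The conclusion would follow if $W\otimes_{\mathbb{F}_p}\mathbb{F}\hookrightarrow\overline{\mathbf{V}}^\sharp_{z_0}$, equivalently if $\mathbb{L}_{z_0}\otimes_{\Zp}\ofb=\mathbf{V}^\sharp_{z_0}$; but this amounts to $\Phi_{z_0}$ preserving the lattice $\mathbf{V}^\sharp_{z_0}$, which fails at a general point. The paper's argument here is genuinely different: it applies the Frobenius $\Phi_{z_0}$ to the equation $x_0-c\varpi x=bl+\varpi h$ (with $l$ chosen isotropic so that $(l,h)=0$) and invokes the structural facts $\Phi_{z_0}(l)\in\varpi\mathbf{V}^\sharp_{z_0}$ and $\Phi_{z_0}(h)\in\mathbf{V}^\sharp_{z_0}$ from \cite{Zhu23diff} to deduce $\overline{x_0}=\overline{c}^{\sigma}\cdot\overline{\varpi x}$; comparing with the original equation and using $\overline{x_0}\notin\mathbb{F}\cdot\overline{l}$ (since $z_0\notin\CN_L^{\textup{nfs}}$) forces $\overline{c}^{\sigma}=\overline{c}$, hence $\overline{c_0}=\overline{c}^{-1}\in\mathbb{F}_p^\times$. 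The Hodge-filtration generator $l$ and the explicit control of $\Phi_{z_0}$ on $l$ and on vectors orthogonal to $l$ are essential; the $\mathbb{F}_p$-rationality cannot be read off from the lattice $\mathbb{L}_{z_0}$ alone.
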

\begin{proof}
     For simplicity, we use $x$ to denote $x_{\textup{crys},z_0}$. Denote by $\mathbb{D}_{z_0}$ the Dieudonne module of the $p$-divisible group $X_{z_0}$. Denote by $\Phi_{z_0}$ the Frobenius-linear morphism on $\mathbf{V}_{z_0}^{\sharp}\otimes_{\ofb}\Breve{F}$ (cf. \cite[$\S$3.2.2]{Zhu23diff}). Let $l\in\mathbf{V}_{z_0}^{\sharp}$ be an element such that its image $\overline{l}$ generates the Hodge filtration $\textup{Fil}^{1}\overline{\mathbf{V}}_{z_0}^{\sharp}$. Notice that the point $z_0\in\mathcal{Y}(x)$ implies that
        \begin{equation*}
            l\circ\mathbb{D}(x_0\circ x)\circ l\in\varpi\cdot\textup{End}_{\ofb}(\mathbb{D}_{z_0}),
        \end{equation*}
    as well as $\mathbb{D}(x_0\circ x)\in\textup{End}_{\ofb}(\mathbb{D}_{z_0})$ and $z_0\in\CZ(\varpi x)$.
    \par
    Using the quadratic form $(\cdot,\cdot)$ on the lattice $\mathbf{V}_{z_0}^{\sharp}$, we have
    \begin{align*}
        l\circ\mathbb{D}(x_0\circ x)\circ l&=l\circ\mathbb{D}(x_0)\circ\mathbb{D}(x)\circ l=l\circ\mathbb{D}(x_0)\circ\left((l,x)-l\circ\mathbb{D}(x)\right)\\
        &=(l,x)\cdot l\circ\mathbb{D}(x_0)-l\circ\left((l,x_0)-l\circ\mathbb{D}(x_0)\right)\circ\mathbb{D}(x)\\
        &=(l,x)\cdot l\circ\left(\mathbb{D}(x_0)-\frac{(l,x_0)}{(l,x)}\mathbb{D}(x)\right)+q(l)\cdot\mathbb{D}(x_0\circ x).
    \end{align*}
    Since $\overline{l}$ is isotropic, we have $\nu_{\varpi}(q(l))\geq1$. The point $z_0\notin\mathcal{Z}(x)$ implies that $\nu_{\varpi}((l,x))=0$. Therefore we have
    \begin{equation}
        l\circ\left(\mathbb{D}(x_0)-\frac{(l,x_0)}{(l,x)}\mathbb{D}(x)\right)\in\varpi\cdot\textup{End}_{\ofb}(\mathbb{D}_{z_0}).
        \label{pr1}
    \end{equation}
    \par
    Let $v=x_0-c\cdot \varpi x$ where $c=\frac{(l,x_0)}{\varpi\cdot(l,x)}\in\ofb$. Since we know that $\nu_{\varpi}((l,x_0))\geq1$ and $z_0\in\CZ(\varpi x)$, we have $v\in\mathbf{V}_{z_0}^{\sharp}$. Then (\ref{pr1}) is equivalent to 
    \begin{equation*}
        \overline{l}\circ \overline{v}=0\in\textup{End}_{\mathbb{F}}(\mathbb{D}_{z_0}/\varpi\mathbb{D}_{z_0}).
    \end{equation*}
    Since $\mathbb{D}_{z_0}/\varpi\mathbb{D}_{z_0}$ is isomorphic to the Clifford algebra associated to the quadratic space $\mathbf{V}_{z_0}^{\sharp}/\varpi\mathbf{V}_{z_0}^{\sharp}$ over $\mathbb{F}$, we conclude that $\overline{v}\in\mathbb{F}\cdot\overline{l}$. Therefore there exists $b\in\ofb$ and $h\in\mathbf{V}_{z_0}^{\sharp}$ such that
    \begin{equation}
        x_0-c\cdot \varpi x=b\cdot l+\varpi\cdot h.
        \label{equation-v}
    \end{equation}
    \par
    Now we prove (a). Since $z_0\in\CN_L^{\textup{nfs}}$, we have $\mathbb{F}\cdot\overline{x_0}=\textup{Fil}^{1}\overline{\mathbf{V}}_{z_0}=\textup{Fil}^{1}\overline{\mathbf{V}}_{z_0}^{\sharp}$ by Lemma \ref{singular-criterion}, and $\nu_{\varpi}((x_0,l))=1$ by \cite[Lemma 5.3.5]{Zhu23diff}, i.e., $c\in\ofb^{\times}$. The equation (\ref{equation-v}) implies that
    \begin{equation*}
        \mathbb{F}\cdot\overline{\varpi x_{\textup{crys},z_0}}=\textup{Fil}^{1}\overline{\mathbf{V}}_{z_0}(=\textup{Fil}^{1}\overline{\mathbf{V}}_{z_0}^{\sharp}).
    \end{equation*}
    Then there exists $a\in\ofb$ and an element $x^{\prime}\in\mathbf{V}_{z_0}^{\sharp}$ such that $\varpi x=ax_0+\varpi x^{\pr}$. Since $(x_0,x)=0$, we get $(x_0,x^{\prime})\in\ofb^{\times}$.
    \par
    Let $W=\ofb[\pi]/(\pi^{2}-\varpi)$, the maximal ideal $(\pi)\subset W$ is equipped with a divided power structure $\delta_W$. For a point $z\in\widehat{\CN}_{L,z_0}(W)$, it corresponds to an $\ofb$-algebra homomorphism $z^{\sharp}:\widehat{\mathcal{O}}_{\CN_{L},z_0}\rightarrow W$. It also corresponds to an isotropic vector $l\in\mathbf{V}_{z_0}(W,\mathbb{F},\delta_W)\simeq\mathbf{V}_{z_0}\otimes_{\ofb}W$ such that $(l,x_0)=0$. Notice that $\mathbb{F}\cdot\overline{l}=\mathbb{F}\cdot\overline{x_0}$ since $z_0\in\CN^{\textup{nfs}}_L$, we have $(l,x^{\prime})\in\ofb^{\times}$. Hence $(l,x)=\varpi(l,x^{\prime})$. Therefore by Lemma \ref{deformo2} (b), we have
    \begin{equation*}
        \left(z^{\sharp}(f_{\varpi x})\right)=\left((l,x)\right)=(\varpi)\subset W.
    \end{equation*}
    Let's now assume $f_{\varpi x}\in\mathfrak{m}_{z_0}^{3}$, we should get $\left(z^{\sharp}(f_{\varpi x})\right)\subset \left(z^{\sharp}(\mathfrak{m}_{z_0}^{3})\right)\subset (\pi^{3})\subset W$, which is a contradiction. Hence we must have $f_{\varpi x}\in\mathfrak{m}_{z_0}^{2}\backslash\mathfrak{m}_{z_0}^{3}$.
    \par
    Next, we prove (b). Since $\CN_L$ is formally smooth over $\ofb$ at $z_0$, we can choose the element $l$ such that it is isotropic and $(l,x_0)=0$. Since $(l,x_0-c\cdot \varpi x)=0$ and $l$ is isotropic, we have $(l,h)=0$. Applying $\Phi_{z_0}$ to (\ref{equation-v}), we get
    \begin{equation}
        x_0-c^{\sigma}\cdot\varpi x=b^{\sigma}\cdot\Phi_{z_0}(l)+\varpi\cdot\Phi_{z_0}(h).
        \label{equation-v-2}
    \end{equation}
    By \cite[Lemma 3.2.4]{Zhu23diff}, we have $\Phi_{z_0}(l)\in\varpi\cdot\mathbf{V}_{z_0}^{\sharp}$. We also have $\Phi_{z_0}(h)\in\mathbf{V}_{z_0}^{\sharp}$ by \cite[Lemma 3.2.5]{Zhu23diff} since $(l,h)=0$. Therefore $\overline{x_0}=\overline{c^{\sigma}}\cdot\overline{\varpi x}$ in $\mathbf{V}_{z_0}^{\sharp}/\varpi\mathbf{V}_{z_0}^{\sharp}$. If $c^{\sigma}\neq c$, we have $\overline{x_0}\in\mathbb{F}\cdot\overline{l}$, which is a contradiction by Lemma \ref{singular-criterion}. Therefore $c^{\sigma}=c$, i.e., $c\in\mathcal{O}_F$. Then (\ref{equation-v-2}) implies that 
    \begin{equation*}
        x_0-c\cdot\varpi x\in\varpi\mathbf{V}_{z_0}^{\sharp},\,\,\textup{i.e.,}\,\,x-\frac{c^{-1}}{\varpi}\cdot x_0\in \mathbf{V}_{z_0}^{\sharp}.
    \end{equation*}
    Then we have $z_0\in\CZ^{\sharp}\left(x-\frac{c^{-1}}{\varpi}\cdot x_0\right)$ by \cite[Proposition 5.3.3]{Zhu23diff}.
\end{proof}

\subsection{non-formally smooth points on $\mathcal{Y}$-cycles}
Let $z_0\in\CN_L^{\textup{nfs}}(\mathbb{F})$ be a point. By Lemma \ref{cor-ver-1-singular}, there exists a unique vertex lattice $\mathcal{L}\subset\mathbb{V}$ of type $1$ such that $\{z_0\}=\mathcal{Z}(\mathcal{L})$.
\begin{lemma}
    Let $\mathcal{L}\subset\mathbb{V}$ be a vertex lattice of type $1$. Let $z_\mathcal{L}$ be the unique $\mathbb{F}$-point of $\mathcal{Z}(\mathcal{L})$. Let $x\in\mathbb{V}$ be an element. The point $z_\mathcal{L}\in\mathcal{Y}(x)$ if and only if $x\in\mathcal{L}^{\vee}$.\label{singular-on-Y} 
\end{lemma}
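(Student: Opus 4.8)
The plan is to translate the condition $z_{\mathcal{L}}\in\mathcal{Y}(x)$ into a statement about the crystalline realization $x_{\textup{crys},z_{\mathcal{L}}}$ and the filtration $\textup{Fil}^1$, using the moduli description of $\mathcal{Y}$-cycles together with the dictionary between non-formally smooth points and type~$1$ vertex lattices. First I would recall, via Lemma~\ref{singular-criterion}, that at the non-formally smooth point $z_{\mathcal{L}}$ one has $\mathbb{F}\cdot\overline{x_{0,z_{\mathcal{L}}}}=\textup{Fil}^1\overline{\mathbf{V}}_{z_{\mathcal{L}}}=\textup{Fil}^1\overline{\mathbf{V}}^\sharp_{z_{\mathcal{L}}}$, i.e.\ the Hodge line is spanned by the reduction of $x_0$. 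By Definition~\ref{cycle-on-blow-up}, $z_{\mathcal{L}}\in\mathcal{Y}(x)$ means the quasi-isogeny $x_0\circ x$ deforms to an endomorphism over $z_{\mathcal{L}}$, which (working over $\mathbb{F}$, so the PD thickening is trivial) amounts to $\mathbb{D}(x_0\circ x)\in\textup{End}_{\ofb}(\mathbb{D}_{z_{\mathcal{L}}})$, equivalently $x_0\circ x_{\textup{crys},z_{\mathcal{L}}}\in\textup{End}_{\ofb}(\mathbb{D}_{z_{\mathcal{L}}})$. Since $x_0$ already acts integrally and $\mathbb{D}_{z_{\mathcal{L}}}/\varpi\mathbb{D}_{z_{\mathcal{L}}}$ is the Clifford algebra of $\overline{\mathbf{V}}^\sharp_{z_{\mathcal{L}}}$, Clifford relations give $x_0\circ x + x\circ x_0 = (x_0,x)=0$ in $\mathbf{V}^\sharp$, so the integrality of $x_0\circ x_{\textup{crys}}$ is governed by the position of $x_{\textup{crys},z_{\mathcal{L}}}$ relative to $x_{0,z_{\mathcal{L}}}$.

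Next I would make precise the claim that this integrality condition is equivalent to $x\in\mathcal{L}^\vee$. The identification $\{z_{\mathcal{L}}\}=\mathcal{Z}(\mathcal{L})$ from the Bruhat--Tits stratification (\S\ref{BT-strat}) says that $z_{\mathcal{L}}$ is the point at which every element of the vertex lattice $\mathcal{L}$ deforms to an endomorphism; concretely this pins down the Dieudonné lattice $\mathbb{D}_{z_{\mathcal{L}}}$ so that the special endomorphism lattice at $z_{\mathcal{L}}$ is exactly $\mathcal{L}$ (for the $\mathbf{V}$-side), and on the $\mathbf{V}^\sharp$-side the relevant lattice is $\mathcal{L}\obot \mathcal{O}_F x_0$, whose dual inside $\mathbf{V}^\sharp$ is $\mathcal{L}^\vee\obot \varpi^{-1}\mathcal{O}_F x_0$. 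The point is that for $x\in\mathbf{V}$, the composite $x_0\circ x$ lands in $\textup{End}(\mathbb{D}_{z_{\mathcal{L}}})$ precisely when $x$ lies in the lattice $\mathcal{L}^\vee$: writing $x = x' + \text{(multiple of $x_0$)}$ is irrelevant since $x\perp x_0$, and the failure of $x_0\circ x$ to be integral is measured by how far $x$ sticks out of $\mathcal{L}$ — being killed after left-multiplication by $x_0$, which has $\nu_\varpi(q(x_0))=1$, relaxes the constraint from $\mathcal{L}$ to $\mathcal{L}^\vee$ (since $\mathcal{L}^\vee/\mathcal{L}\simeq\mathbb{F}_q$ is one-dimensional, this is exactly one "jump"). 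I would verify this by direct computation inside the Clifford algebra: choosing a basis of $\overline{\mathbf{V}}^\sharp_{z_{\mathcal{L}}}$ adapted to the radical line $\mathbb{F}\cdot\overline{x_0}$, the operator $\overline{x_0}\circ\overline{x}$ on $C(\overline{\mathbf{V}}^\sharp_{z_{\mathcal{L}}})$ vanishes iff $\overline{x}$ is proportional to $\overline{x_0}$ in $\overline{\mathbf{V}}^\sharp_{z_{\mathcal{L}}}$, and then lift this $\mathbb{F}$-level computation to the $\ofb$-level statement $x_0\circ x_{\textup{crys}}\in\textup{End}_{\ofb}$ via a valuation bookkeeping argument à la \cite[\S5.3]{Zhu23diff}, tracking the exact power of $\varpi$.

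I expect the main obstacle to be the precise matching between the two lattice-theoretic descriptions: on one hand the Dieudonné-module integrality condition "$x_0\circ x_{\textup{crys},z_{\mathcal{L}}}\in\textup{End}_{\ofb}(\mathbb{D}_{z_{\mathcal{L}}})$", and on the other hand the clean arithmetic condition "$x\in\mathcal{L}^\vee$". Bridging these requires knowing that the special endomorphism lattice realized at $z_{\mathcal{L}}$ is exactly $\mathcal{L}$ (not merely contains $\mathcal{L}$), which is the content of the fact that $\mathcal{Z}(\mathcal{L})$ is a single reduced point with $\mathcal{L}$ of type $1$ — I would cite \cite[Theorem 7.16]{Oki20} (via Lemma~\ref{cor-ver-1-singular}) and the reducedness in \S\ref{BT-strat} for this. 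The remaining work is then the elementary but slightly delicate valuation computation inside $\mathbf{V}^\sharp_{z_{\mathcal{L}}}$ showing that left composition with $x_0$ converts the condition $x\in\mathcal{L}$ into $x\in\mathcal{L}^\vee$; this is parallel to the computation already carried out in the proof of Lemma~\ref{from-y-to-weighted}, and I would reuse that method.
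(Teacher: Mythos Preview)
Your plan is correct and follows essentially the same route as the paper. The paper splits into two directions: for $z_{\mathcal{L}}\in\mathcal{Y}(x)\Rightarrow x\in\mathcal{L}^\vee$ it argues by contradiction, invoking Lemma~\ref{from-y-to-weighted}(a) at the non-formally-smooth point to conclude $\overline{\varpi x}$ spans the Hodge line, then pairs against $\mathcal{L}\subset\mathbf{V}_{z_\mathcal{L}}$; for $x\in\mathcal{L}^\vee\Rightarrow z_{\mathcal{L}}\in\mathcal{Y}(x)$ it identifies $\mathbf{V}_{z_\mathcal{L}}=\mathcal{L}\otimes\ofb$ explicitly (by choosing $f\in\mathbf{V}^\sharp_{z_\mathcal{L}}$ with $(f,x_0)=1$ and decomposing) and then checks by a direct Clifford computation that $\mathbb{D}(x_0 x)$ preserves the Hodge filtration $\textup{Fil}^1\mathbb{D}_\mathcal{L}=\mathbb{D}(x_0)\mathbb{D}_\mathcal{L}+\varpi\mathbb{D}_\mathcal{L}$.

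One point of care in your write-up: your reduction ``$\overline{x_0}\circ\overline{x}=0$ in $C(\overline{\mathbf{V}}^\sharp_{z_\mathcal{L}})$ iff $\overline{x}\propto\overline{x_0}$'' presupposes that $x_{\textup{crys},z_\mathcal{L}}\in\mathbf{V}^\sharp_{z_\mathcal{L}}$ is integral, so that $\overline{x}$ is defined. For $x\notin\mathcal{L}$ this is not known a priori in the forward direction --- it is precisely the output of Lemma~\ref{from-y-to-weighted}(a) (which gives $\varpi x_{\textup{crys}}\in\mathbf{V}_{z_\mathcal{L}}$ and its reduction spans the Hodge line). So invoke that lemma \emph{before} passing to the mod-$\varpi$ Clifford algebra, not as an afterthought; otherwise the argument is circular. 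With that ordering fixed, your valuation bookkeeping and the paper's explicit filtration check amount to the same computation.
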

\begin{proof}
For simplicity, denote by $\mathbb{D}_{\mathcal{L}}$ the Dieudonne module of the $p$-divisible group $X_{z_\mathcal{L}}$. Since $z_\mathcal{L}$ is a non-formally smooth point, the filtration $\textup{Fil}^{1}\overline{\mathbf{V}}_{z_{\mathcal{L}}}\subset\overline{\mathbf{V}}_{z_\mathcal{L}}$ is generated by the element $\overline{x_{0,z_{\mathcal{L}}}}$ by Lemma \ref{singular-criterion}.
\par
    Let's first assume that $z_\mathcal{L}\in\mathcal{Y}(x)$. We want to show that $x\in\mathcal{L}^{\vee}$. If $x\notin\mathcal{L}^{\vee}$, we get $z_\mathcal{L}\notin\mathcal{Z}(x)$ since $\{z_\mathcal{L}\}=\mathcal{Z}(\mathcal{L})$. By Lemma \ref{from-y-to-weighted} (a), we get 
    \begin{equation*}
        \textup{Fil}^{1}\overline{\mathbf{V}}_{z_{\mathcal{L}}}=\mathbb{F}\cdot\overline{x_{0,z_{\mathcal{L}}}}=\mathbb{F}\cdot\overline{\varpi x}.
    \end{equation*}
    Therefore there exists an element $a\in\ofb^{\times}$ such that $\varpi x=ax_{0,z_{\mathcal{L}}}+\varpi v$ for some $v\in\mathbf{V}_{z_\mathcal{L}}$. The fact that $z_{\mathcal{L}}\in\mathcal{Z}(\mathcal{L})$ implies that $\mathcal{L}\subset\mathbf{V}_{z_{\mathcal{L}}}$. Therefore for an element $\mathcal{L}\in\mathcal{L}$, we have
    \begin{equation*}
        (\varpi x, \mathcal{L})=(ax_{0,z_{\mathcal{L}}}+\varpi v,\mathcal{L})=\varpi(v,\mathcal{L})\subset(\varpi).
    \end{equation*}
    Therefore we have $(x,\mathcal{L})\in\mathcal{O}_F$, hence $x\in\mathcal{L}^{\vee}$.
    \par
    Now we assume that $x\in\mathcal{L}^{\vee}$, we want to show that $z_\mathcal{L}\in\mathcal{Y}(x)$. If $x\in\mathcal{L}$, we have $z_\mathcal{L}\in\mathcal{Z}(\mathcal{L})\subset\mathcal{Z}(x)\subset\mathcal{Y}(x)$, so we assume that $x\in\mathcal{L}^{\vee}\backslash\mathcal{L}$ from now on. Let $f\in\mathbf{V}_{z_{\mathcal{L}}}^{\sharp}$ be an element such that $(f,x_0)=1$. Then $\mathbf{V}_{z_{\mathcal{L}}}^{\sharp}=\langle f, x_0\rangle\obot\mathbf{V}_0$ where $\mathbf{V}_0$ is a self-dual $\ofb$-lattice of rank $n-1$. Moreover,
    \begin{equation}
        \mathbf{V}_{z_\mathcal{L}}=\mathbf{V}_0\obot\ofb\cdot\left(2q(x_0)f-x_0\right),\,\,\mathbf{V}_{z_\mathcal{L}}^{\vee}=\mathbf{V}_0\obot\ofb\cdot\left(f-\frac{x_0}{2q(x_0)}\right).\label{exp-vz}
    \end{equation}
    Notice that $\mathbf{V}_{z_\mathcal{L}}$ is a vertex lattice of type $1$ in the $\breve{F}$-space $\mathbf{V}_{z_\mathcal{L}}\otimes_{\ofb}\Breve{F}\simeq\mathbb{V}\otimes_{F}\Breve{F}$ and $\mathcal{L}\subset\mathbf{V}_{z_\mathcal{L}}$, we must have $\mathbf{V}_{z_\mathcal{L}}=\mathcal{L}\otimes_{\mathcal{O}_F}\ofb$ and hence $\mathbf{V}_{z_{\mathcal{L}}}^{\vee}/\mathbf{V}_{z_\mathcal{L}}\simeq\mathcal{L}^{\vee}/\mathcal{L}\otimes_{\mathbb{F}_p}\mathbb{F}$. Therefore
    \begin{equation*}
        x=a q(x_0)^{-1}\cdot x_0+v\,\,\,\,\textup{for an element $a\in\ofb^{\times}$ and $v\in\mathbf{V}_{z_\mathcal{L}}$}.
    \end{equation*}
    Notice that
    \begin{align*}
        \mathbb{D}(x_0x)\cdot \mathbb{D}(x_0)\mathbb{D}_{\mathcal{L}}=-\mathbb{D}(x)\cdot\mathbb{D}(x_0^{2})\mathbb{D}_\mathcal{L}=-(a\mathbb{D}(x_0)+q(x_0)v)\mathbb{D}_\mathcal{L}\subset\mathbb{D}(x_0)\mathbb{D}_\mathcal{L}+\varpi\mathbb{D}_\mathcal{L},
    \end{align*}
    \begin{align*}
        \mathbb{D}(x_0x)\cdot \varpi\mathbb{D}_{\mathcal{L}}=(a+x_0v)\cdot\varpi\mathbb{D}_{\mathcal{L}}\subset\mathbb{D}(x_0)\mathbb{D}_\mathcal{L}+\varpi\mathbb{D}_\mathcal{L}.
    \end{align*}
    Therefore the quasi-endomorphism $\mathbb{D}(x_0x)$ preserves the filtration $\textup{Fil}^1\mathbb{D}_\mathcal{L}=\mathbb{D}(x_0)\mathbb{D}_\mathcal{L}+\varpi\mathbb{D}_\mathcal{L}\subset\mathbb{D}_\mathcal{L}$. Therefore $x_0x$ is an endomorphism of $X_{z_\mathcal{L}}$, i.e., $z_\mathcal{L}\in\mathcal{Y}(x)$.
\end{proof}

\subsection{Weighted cycles}
Recall that we have defined an isomorphism between $\Of$-modules $r:\Lambda^{\vee}/\Lambda\xrightarrow{\sim}L^{\vee}/L$, where $\Lambda$ is a rank $1$ $\Of$-quadratic lattice generated by $x_0$. Let $\eta:\Lambda^{\vee}/\Lambda:L^{\vee}/L\rightarrow\Lambda^{\vee}$ be an arbitrary lift of the map $r^{-1}:L^\vee/L \cong \Lambda^\vee/\Lambda$.
\begin{definition}\label{def KR}
For a non-zero $x \in \BV$ and $\mu \in L^\vee/L$, define the weighted cycle $\CZ(x,\mu)$ on $\CN_L$ associated to the pair $(x,\mu)$ to be the closed formal subscheme cut out by the condition
    \begin{equation}
        \rho^{\textup{univ}}\circ (x+\eta(\mu)) \circ(\rho^{\textup{univ}})^{-1}\subset\textup{End}(X^{\textup{univ}})\label{weighted-moduli}
    \end{equation}
Define the weighted cycle $\zm(x,\mu)$ on $\M$ associated to the pair $(x,\mu)$ to be
\begin{equation*}
    \zm(x,\mu)=\CZ(x,\mu)\times_{\CN_L}\M.
\end{equation*}
\end{definition}

\begin{remark}
    This definition is independent of the choice of the map $\eta$ because different choices differ by a multiple of $x_0$, then the condition (\ref{weighted-moduli}) is unchanged because $x_0$ is always an isogeny on $X^{\textup{univ}}$. It's easy to see that under the closed immersion $i_L:\CN_L\rightarrow\CN_{L^{\sharp}}$,
\begin{equation*}
    \CZ(x,\mu)=\CZ^{\sharp}(x+\eta(\mu))\cap\CN_L.
\end{equation*}
Therefore the weighted cycle $\CZ(x, \mu) \to \CN_L$ (resp. $\zm(x, \mu) \to \M$) is a Cartier divisor on $\CN_L$ (resp. $\M$). See also \cite[Proposition 6.5.2.]{HM20} for the global analog. We may regard the above construction as a local analog of the subset $V_\mu(\mathcal{A}) \subseteq V(\mathcal{A})$ in \cite[(6.4.9)]{HM20}.
\end{remark}
\begin{remark}
    Lemma \ref{from-y-to-weighted} can be re-interpreted in the following way: Let $x\in\BV$ be a nonzero element. Let $z_0\in\mathcal{Y}(x)(\mathbb{F})$ be a point such that $z_0\notin\CN_L^{\textup{nfs}}$. Then there exists an element $\mu\in L^{\vee}/L$ such that
    \begin{equation*}
        z_0\in\CZ(x,\mu)(\mathbb{F}).
    \end{equation*}
    \label{from-y-to-weighted-2}
\end{remark}

\subsection{Special divisors and the exceptional divisor}
\begin{lemma}
    The following facts are true:
    \begin{itemize}
        \item [(a)] We have the following identity in the group $\textup{Gr}^{2}K_0^{\exc_{L}}(\M)\simeq\textup{Pic}(\exc_{L})$:
    \begin{equation*}
        [\mathcal{O}_{\exc_{L}}\otimes_{\mathcal{O}_{\M}}^{\BL}\mathcal{O}_{\exc_{L}}]=\mathcal{O}_{\exc_{L}}(-1).
    \end{equation*}
        \item[(b)] Let $x\in\mathbb{V}$ be a nonzero element and $\mu\in L^{\vee}/L$. We have the following identity in the group $\textup{Gr}^{2}K_0^{\exc_{L}}(\M)\simeq\textup{Pic}(\exc_{L})$:
    \begin{equation*}
        [\mathcal{O}_{\mathcal{Z}^{\textup{bl}}(x,\mu)}\otimes_{\mathcal{O}_{\M}}^{\BL}\mathcal{O}_{\exc_{L}}]=\mathcal{O}_{\exc_{L}}(0).
    \end{equation*}
    \end{itemize}
    \label{inter-spe-exc-M}
\end{lemma}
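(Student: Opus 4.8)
The plan is to mimic the proof of Lemma~\ref{int-spe-exc-unramified}, reducing both identities to a computation of line bundles on the exceptional fibers $\pi_L^{-1}(z_0)\cong\mathbb{P}^{n-1}_{\mathbb{F}}$ over the finitely many points $z_0\in\CN_L^{\textup{nfs}}$. The common input (used identically in the proof of Lemma~\ref{int-spe-exc-unramified}) is that for any effective Cartier divisor $D$ on the regular formal scheme $\M$, the Koszul resolution $0\to\mathcal{O}_{\M}(-D)\to\mathcal{O}_{\M}\to\mathcal{O}_D\to0$ yields
\begin{equation*}
    [\mathcal{O}_{D}\otimes^{\BL}_{\mathcal{O}_{\M}}\mathcal{O}_{\exc_{L}}]=\mathcal{O}_{\M}(D)\big|_{\exc_{L}}\quad\text{in}\quad\textup{Gr}^{2}K_0^{\exc_{L}}(\M)\simeq\textup{Pic}(\exc_{L}),
\end{equation*}
so it suffices to identify the restricted line bundles $\mathcal{O}_{\M}(\exc_{L})|_{\exc_{L}}$ and $\mathcal{O}_{\M}(\zm(x,\mu))|_{\exc_{L}}$.

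For (a), I would first record that under the closed immersion $i_L:\M\hookrightarrow\Ms$ from (\ref{almost-spe-blow-up}) one has $\exc_{L}=i_L^{*}\exc_{L^{\sharp}}$ as Cartier divisors: comparing the charts (\ref{local-piece}) of $\widehat{\mathcal{M}}_{L,z_0}$ with the charts (\ref{local-piece-smooth}) of $\widehat{\mathcal{M}}_{L^{\sharp},z_0}$, on the $i$-th chart of either space the exceptional divisor is cut out by the same function $t_i$, and $\M\cap\widehat{\mathcal{M}}_{L^{\sharp},z_0,i}$ is obtained from $\widehat{\mathcal{M}}_{L^{\sharp},z_0,i}$ by eliminating the variable $u_i$. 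Hence $\mathcal{O}_{\M}(\exc_{L})|_{\exc_{L}}$ is the restriction of $\mathcal{O}_{\Ms}(\exc_{L^{\sharp}})|_{\exc_{L^{\sharp}}}$ along the linear embedding $\pi_L^{-1}(z_0)\cong\mathbb{P}^{n-1}_{\mathbb{F}}\hookrightarrow\pis^{-1}(z_0)\cong\mathbb{P}^{n}_{\mathbb{F}}$ of Lemma~\ref{geo-blowup}(c). Since Lemma~\ref{int-spe-exc-unramified}(a) identifies $\mathcal{O}_{\Ms}(\exc_{L^{\sharp}})|_{\exc_{L^{\sharp}}}$ with $\mathcal{O}_{\mathbb{P}^{n}_{\mathbb{F}}}(-1)$ on each connected component, restricting to a linear $\mathbb{P}^{n-1}_{\mathbb{F}}$ gives $\mathcal{O}_{\mathbb{P}^{n-1}_{\mathbb{F}}}(-1)$, which is the assertion. (Equivalently, one can run the transition-function computation of Lemma~\ref{int-spe-exc-unramified}(a) directly with the charts (\ref{local-piece}): the gluing $t_j=u_{ji}t_i$ is exactly that of $\mathcal{O}_{\mathbb{P}^{n-1}_{\mathbb{F}}}(-1)$.)

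For (b), the point is that $\zm(x,\mu)$ is a \emph{pullback} from the base, so the contracted locus sees nothing. By the remark following Definition~\ref{def KR}, $\mathcal{Z}(x,\mu)=\CZ^{\sharp}(x+\eta(\mu))\cap\CN_L$ is an effective Cartier divisor on $\CN_L$, hence $\zm(x,\mu)=\mathcal{Z}(x,\mu)\times_{\CN_L}\M=\pi_L^{*}\mathcal{Z}(x,\mu)$ is an effective Cartier divisor on $\M$ with $\mathcal{O}_{\M}(\zm(x,\mu))=\pi_L^{*}\mathcal{O}_{\CN_L}(\mathcal{Z}(x,\mu))$. The morphism $\pi_L$ contracts each connected component of $\exc_{L}$ to a point of $\CN_L^{\textup{nfs}}$, so $\pi_L^{*}\mathcal{O}_{\CN_L}(\mathcal{Z}(x,\mu))|_{\exc_{L}}$ is the pullback of a line bundle on the reduced finite scheme $\CN_L^{\textup{nfs}}$, hence trivial; therefore $[\mathcal{O}_{\zm(x,\mu)}\otimes^{\BL}_{\mathcal{O}_{\M}}\mathcal{O}_{\exc_{L}}]=\mathcal{O}_{\exc_{L}}(0)$. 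This is the soft version of the explicit factorization $f_x=t_i^{\,n_{z_0}}(\tilde{f}_{x,i}+t_i g_i)$ carried out in the proof of Lemma~\ref{int-spe-exc-unramified}(b).

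The only real work is in part (a), namely identifying the normal bundle $\mathcal{O}_{\M}(\exc_{L})|_{\exc_{L}}$ with $\mathcal{O}_{\mathbb{P}^{n-1}_{\mathbb{F}}}(-1)$; but this is not a genuine obstacle, since via Lemma~\ref{geo-blowup}(c) and Lemma~\ref{int-spe-exc-unramified}(a) it reduces to restricting $\mathcal{O}_{\mathbb{P}^{n}_{\mathbb{F}}}(-1)$ along a linear hyperplane. Part (b) is then formal once $\zm(x,\mu)$ is recognized as the total transform of a Cartier divisor on $\CN_L$, together with the fact that $\pi_L$ is an isomorphism away from the contracted locus $\CN_L^{\textup{nfs}}$.
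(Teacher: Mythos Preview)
Your proof is correct. Part (a) matches the paper's approach exactly: the paper simply says the proof is ``similar to that of Lemma~\ref{int-spe-exc-unramified}(a)'' and omits it, which amounts to the transition-function computation on the charts (\ref{local-piece}) that you spell out (or, equivalently, your restriction argument via the linear embedding $\mathbb{P}^{n-1}_{\mathbb{F}}\hookrightarrow\mathbb{P}^{n}_{\mathbb{F}}$).

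For part (b) you take a genuinely different and more elementary route than the paper. The paper argues by embedding into $\Ms$: it writes
\[
[\mathcal{O}_{\mathcal{Z}^{\textup{bl}}(x,\mu)}\otimes_{\mathcal{O}_{\M}}^{\BL}\mathcal{O}_{\exc_{L}}]
=[\mathcal{O}_{\mathcal{Z}^{\sharp,\textup{bl}}(x+\eta(\mu))}\otimes_{\mathcal{O}_{\Ms}}^{\BL}\mathcal{O}_{\exc_{L^{\sharp}}}\otimes_{\mathcal{O}_{\Ms}}^{\BL}\mathcal{O}_{\widetilde{\mathcal{Z}}^{\sharp,\textup{bl}}(x_0)}],
\]
invokes Lemma~\ref{int-spe-exc-unramified}(b) to replace the first two factors by $\mathcal{O}_{\exc_{L^{\sharp}}}(0)$, and then restricts along $\exc_L\hookrightarrow\exc_{L^{\sharp}}$. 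This in turn relies on the local factorization $f_x=t_i^{\,n_{z_0}}(\tilde{f}_{x,i}+t_ig_i)$ carried out in the proof of Lemma~\ref{int-spe-exc-unramified}(b). Your argument bypasses all of this: since $\mathcal{Z}(x,\mu)$ is already a Cartier divisor on $\CN_L$ and $\zm(x,\mu)=\pi_L^{*}\mathcal{Z}(x,\mu)$ by definition, the line bundle $\mathcal{O}_{\M}(\zm(x,\mu))$ is pulled back from $\CN_L$ and hence trivial on each fiber $\pi_L^{-1}(z_0)$. This is shorter and works intrinsically on $\M$; the paper's approach has the minor virtue of fitting the uniform ``reduce to $\Ms$'' pattern used elsewhere, but your observation that total transforms of Cartier divisors have trivial normal data along exceptional fibers is the cleaner reason.
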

\begin{proof}
    The proof of (a) is similar to that of Lemma \ref{int-spe-exc-unramified} (a), so we omit it. Let's now prove (b). Notice that there is an isomorphism $\iota:\M\xrightarrow{\sim}\widetilde{\mathcal{Z}}^{\sharp,\textup{bl}}(x_0)\subset\mathcal{M}_{L^{\sharp}}$. Therefore
    \begin{align*}
        [\mathcal{O}_{\mathcal{Z}^{\textup{bl}}(x,\mu)}\otimes_{\mathcal{O}_{\M}}^{\BL}\mathcal{O}_{\exc_{L}}]&=[\mathcal{O}_{\mathcal{Z}^{\sharp,\textup{bl}}(x+\eta^{-1}(\mu))}\otimes_{\mathcal{O}_{\Ms}}^{\BL}\mathcal{O}_{\exc_{L^{\sharp}}}\otimes_{\mathcal{O}_{\Ms}}^{\BL}\mathcal{O}_{\widetilde{\mathcal{Z}}^{\sharp,\textup{bl}}(x_0)}]\\
        &\overset{\textup{Lemma}\,\ref{int-spe-exc-unramified} \textup{(b)}}{=}[\mathcal{O}_{\exc_{L^{\sharp}}}(0)\otimes_{\mathcal{O}_{\Ms}}^{\BL}\mathcal{O}_{\widetilde{\mathcal{Z}}^{\sharp,\textup{bl}}(x_0)}]=\mathcal{O}_{\exc_{L}}(0).
    \end{align*}
\end{proof}

\subsection{Decomposition of $\mathcal{Y}$-cycles}
Our goal in this subsection is to prove the following proposition:
\begin{proposition}
    Let $x\in\BV$ be a nonzero element. Then $\ym(x)\subset\M$ is an effective Cartier divisor. Moreover, we have the following equality between Cartier divisors on $\M$:
    \begin{equation*}
        \ym(x)=\sum\limits_{\mu\in\Lambda^{\vee}/\Lambda}\zm(x,\mu)+\exc_L(x).
    \end{equation*}\label{decom-Y}
\end{proposition}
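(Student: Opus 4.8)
The plan is to establish the identity on $\M$ \emph{locally}, i.e.\ after completing at each point. Since the right-hand side is visibly an effective Cartier divisor (the $\zm(x,\mu)$ are Cartier by the remark after Definition \ref{def KR}, and $\exc_L(x)$ is a disjoint union of connected components of the Cartier divisor $\exc_L$, cf.\ the charts (\ref{local-piece})), a scheme-theoretic local equality $\ym(x)=\sum_\mu\zm(x,\mu)+\exc_L(x)$ will simultaneously settle the first assertion. I would open with three structural facts. First, $\CZ(x)\subseteq\CY(x)\subseteq\CZ(\varpi x)$, because $x_0^2\in\varpi\Of^\times$: if $x$ deforms so does $x_0\circ x$, and if $x_0\circ x$ deforms so does $\varpi x=2x_0\circ(x_0\circ x)$. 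Second, $\zm(x,\mu)\subseteq\ym(x)$ for every $\mu$, since $x+\eta(\mu)$ deforming forces $x_0\circ x=x_0\circ(x+\eta(\mu))-x_0\circ\eta(\mu)$ to deform, $x_0\circ\eta(\mu)$ being multiplication by an element of $\Of$. Third, for $\mu\neq\mu'$ the supports of $\zm(x,\mu)$ and $\zm(x,\mu')$ are disjoint: a common point would make $\eta(\mu)-\eta(\mu')\in\varpi^{-1}\Of^\times x_0$ deform to an endomorphism, hence --- as $x_0$ already deforms over $\CN_L$ --- make $\varpi^{-1}x_0$ deform, forcing $x_{0,\crys}\in\varpi\mathbf V^\sharp_{\crys}$ and so $\nu_\varpi(q(x_0))\geq 2$, contrary to $\nu_\varpi(q(x_0))=1$. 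Together with Lemmas \ref{from-y-to-weighted} and \ref{singular-on-Y}, these identify $\operatorname{supp}\ym(x)$ set-theoretically with $\big(\bigsqcup_\mu\operatorname{supp}\zm(x,\mu)\big)\cup\exc_L(x)$; it then remains to match schemes (equivalently, multiplicities of divisors) locally.

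Over the formally smooth locus $\M\setminus\exc_L\cong\CN_L\setminus\CN_L^{\textup{nfs}}$ I would prove $\ym(x)=\sum_\mu\zm(x,\mu)$. Near a point $w$ lying on $\zm(x,\mu_0)$ and --- by the disjointness above --- on no other $\zm(x,\mu)$, it suffices to show $\CY(x)=\CZ(x,\mu_0)$ on $\widehat{\CN}_{L,w}$. The inclusion $\supseteq$ is the second fact; for $\subseteq$ I would run the deformation argument of Lemma \ref{from-y-to-weighted}: translate ``$x_0\circ x$ deforms'' via the Clifford action on $\mathbb D(X^{\textup{univ}})$ into a condition on the universal isotropic line in $\mathbf V^\sharp_{\crys}$, using $(x_0\circ x)^2=x_0\circ\eta(\mu_0)\cdot(x_0\circ x)-\varpi\,q(x+\eta(\mu_0))$, the fact that at $w$ the non-integral part of $(x+\eta(\mu_0))_{\crys}$ is a unit multiple of $\varpi^{-1}x_{0,\crys}$, and that $x_0$ already preserves the Hodge filtration there, and check this reproduces the equation of $\CZ(x,\mu_0)=\CZ^\sharp(x+\eta(\mu_0))\cap\CN_L$ cut out by Lemma \ref{deformo2}. (Alternatively: $\widehat{\CN}_{L,w}$ is regular, hence factorial, so $\CY(x)$ is a Cartier divisor there with support $\operatorname{supp}\CZ(x,\mu_0)$, and one finishes by a length computation at the generic point of $\CZ(x,\mu_0)$, where both sides are reduced.)

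The decisive case is a point $w_0\in\CN_L^{\textup{nfs}}$ with $w_0\in\CY(x)$ (the remaining nfs points being trivial, as then neither side meets $\pi_L^{-1}(w_0)$). By Lemmas \ref{cor-ver-1-singular} and \ref{singular-on-Y}, $w_0=\CZ(\mathcal L)$ for the unique type-$1$ vertex lattice $\mathcal L$ with $x\in\mathcal L^\vee$, and by the explicit shape (\ref{exp-vz}) of $\mathbf V^\sharp_{z_{\mathcal L}}$ exactly one $\mu_0$ has $w_0\in\CZ(x,\mu_0)$. Here $\CY(x)$ \emph{fails} to be Cartier on $\CN_L$ (this is forced by $\exc_L(x)$ appearing on the right), and the blow-up is precisely what repairs it. Using the coordinates of Remark \ref{coordinate} and the charts (\ref{local-piece}) for $\widehat{\mathcal M}_{L,w_0}$ I would: determine the ideal of $\CY(x)$ in $\widehat{\CN}_{L,w_0}$ --- it is squeezed between those of $\CZ(x)$ and $\CZ(\varpi x)$, and Lemma \ref{from-y-to-weighted}(a) controls the $\mathfrak m_{w_0}$-adic orders of the relevant equations (an element of $\mathfrak m_{w_0}^2\setminus\mathfrak m_{w_0}^3$) --- then pull this ideal back along each chart, and, using that $\exc_L$ occurs with multiplicity $2$ in $\operatorname{div}(\varpi)$ on $\M$ (Lemma \ref{geo-blowup}(b)), read off that the scheme-theoretic preimage $\ym(x)$ becomes the Cartier divisor $\zm(x,\mu_0)+\pi_L^{-1}(w_0)$ on a neighbourhood of $\pi_L^{-1}(w_0)$; since $\zm(x,\mu)$ for $\mu\neq\mu_0$ vanishes there by disjointness and $\pi_L^{-1}(w_0)=\exc_L(x)$ locally, this is the asserted identity near $\pi_L^{-1}(w_0)$ and completes the proof. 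I expect this last computation --- the precise bookkeeping of the powers of $\varpi$ and of the exceptional parameter $t_i$ that turns the non-principal ideal of $\CY(x)$ into a Cartier divisor with exceptional multiplicity exactly $1$ --- to be the main obstacle, being sensitive to the exact equations in Remark \ref{coordinate}.
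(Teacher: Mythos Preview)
Your overall strategy --- set-theoretic support identification first, then local scheme-theoretic multiplicities --- matches the paper's. On the formally smooth locus your plan is essentially correct in spirit, though the paper uses a cleaner Clifford identity than your $(x_0\circ x)^2$ relation: for the universal isotropic generator $\boldsymbol{l}$ of $\textup{Fil}^1\mathbf{V}_{\crys}$ one computes directly
\[
\boldsymbol{l}\circ\BD(x_0\circ x)\circ\boldsymbol{l}=(\boldsymbol{l},x)\cdot\boldsymbol{l}\circ\BD(x_0),
\]
which factors the $\CY(x)$-condition as ``$(\boldsymbol{l},x)$ kills $\boldsymbol{l}\circ\BD(x_0)$''. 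At a formally smooth point $\boldsymbol{l}\circ\BD(x_0)$ has an invertible matrix entry (Corollary \ref{smooth-criterion2}), so the condition collapses to $(\boldsymbol{l},x)=0$, i.e.\ to $\CZ(x)$. (Your ``alternative'' via factoriality has a gap: you have not shown $\CY(x)$ is even Cartier on $\widehat{\CN}_{L,w}$; that is part of what must be proved, and the sandwich between two principal ideals does not force principality.)

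The genuine error is at the non-formally-smooth points. You assert that for $w_0=z_{\mathcal L}$ with $x\in\mathcal L^\vee$, \emph{exactly one} $\mu_0$ satisfies $w_0\in\CZ(x,\mu_0)$, and that locally $\ym(x)=\zm(x,\mu_0)+\pi_L^{-1}(w_0)$. This is correct only in the sub-case $x\in\mathcal L$ (then $\mu_0=0$). When $x\in\mathcal L^\vee\setminus\mathcal L$ it is \emph{false}: no $\mu_0$ works, and the correct local identity is $\ym(x)=\pi_L^{-1}(w_0)$ with no $\zm$-term at all. Indeed Lemma \ref{from-y-to-weighted}(b), which is what produces such a $\mu_0$, applies only off the nfs locus; its proof breaks at $z_{\mathcal L}$ because one cannot choose the isotropic generator $l$ with $(l,x_0)=0$ there. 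The paper accordingly splits the nfs analysis into two sub-cases. For $x\in\mathcal L$ it uses the factored identity above together with the charts (\ref{local-piece}) to see $\boldsymbol l\circ\BD(x_0)=t_i\cdot(\text{unit entry})$, whence $\mathcal I'_{x_0x}=\mathcal I'_x\cdot\mathcal I_{\exc}$. For $x\in\mathcal L^\vee\setminus\mathcal L$ it argues quite differently: the sandwich gives no lower bound (since $\CZ(x)$ is empty near $w_0$), and one instead combines the factored identity, the $\mathfrak m^2\setminus\mathfrak m^3$ control of $f_{\varpi x}$ from Lemma \ref{from-y-to-weighted}(a), the explicit charts, and crucially $p>3$, to pin the ideal of $\ym(x)$ down to exactly $\mathcal I_{\exc}$. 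Your single ``squeeze and read off multiplicities'' does not see this dichotomy and would output the wrong divisor in the second sub-case.
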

\begin{proof}
    Notice that both statements in the proposition are local. Let $z\in\M(\mathbb{F})$ be a point. It's easy to check that the equality holds if $z\notin\mathcal{Y}^{\textup{bl}}(x)$. Therefore we will work locally around a point $z\in\ym(y)(\mathbb{F})\subset\M(\mathbb{F})$. Let $z_0=\pi(z)\in\CN_L(\mathbb{F})$. It's easy to see that $z_0\in\mathcal{Y}(x)$, hence $z_0\in\CZ(\varpi x)$ by the definition of $\mathcal{Y}$-cycles. Let $\widehat{\CN}_{L,z_0}$ and $\widehat{\mathcal{M}}_{L,z}$ be the completion of the formal schemes $\CN_L$ and $\M$ at the points $z_0$ and $z$ respectively. For simplicity, denote by $R_0=\mathcal{O}_{\widehat{\CN}_{L,z_0}}$, denote by $R_1=\mathcal{O}_{\widehat{\mathcal{M}}_{L,z}}$. Denote by $\mathfrak{m}_{0}$ and $\mathfrak{m}_{1}$ the maximal ideal of $R_0$ and $R_1$ respectively. Denote by $\mathcal{I}_{\exc}\subset R_1$ the ideal of the exceptional divisor.
    \par
    Let $\mathbf{V}$ (resp. $\mathbf{V}^{\sharp}$) be the $R_0$-module $\mathbf{V}_{\crys}(R_0,R_0,\delta)$ (resp. $\mathbf{V}_{\crys}^{\sharp}(R_0,R_0,\delta)$) where $\widetilde{z}_0:\textup{Spf}\,R_0\rightarrow\CN_L$ is the canonical morphism. Let $X_{\tilde{z}_0}$ be the base change of the universal $p$-divisible group to $R_0$ by the morphism $\tilde{z}_0$. Let $S\in\textup{Alg}_{\ofb}$. For a point $\tilde{z}\in\widehat{\CN}_{L,z_0}(S)$, let $X_{\tilde{z}}=X_{\tilde{z}_0}\times_{\widehat{\CN}_{L,z_0}}S$. Let $\mathbb{D}(X_{\tilde{z}})(S)$ be the Dieudonne crystal of the $p$-divisible group $X_{\tilde{z}}$ evaluated at $S$. Let $\boldsymbol{l}\in\mathbf{V}$ be a generator of the filtration $\textup{Fil}^{1}\mathbf{V}$. Then the Hodge filtration of $X_{\tilde{z}}$ is given by
    \begin{equation*}
        0\rightarrow\textup{Im}(\boldsymbol{l})\otimes_{R_0}S\rightarrow\mathbb{D}(X_{\tilde{z}})(S).
    \end{equation*}
    \noindent$\bullet$ Case 1: The point $z_0\in\CZ(x)(\mathbb{F})$. Let $\mathcal{I}_x\subset R_0$ (resp. $\mathcal{I}_{x_0x}\subset R_0$) be the ideal sheaf of the special cycle $\mathcal{Z}(x)$ (resp. $\mathcal{Y}(x)$) at $z_0$. Denote by $\mathcal{I}^{\pr}_{x}=\mathcal{I}_{x}\cdot R_1$ and $\mathcal{I}^{\pr}_{x_0x}=\mathcal{I}_{x_0x}\cdot R_1$. Let's consider the thickening $R_0/\mathcal{I}_x^{2}\rightarrow R_0/\mathcal{I}_x$ which admits a nilpotent divided power structure. The Hodge filtration of the base change of the $p$-divisible group $X_{\tilde{z}_0}$ to $R_0/\mathcal{I}_x^{2}$ is given by
    \begin{equation*}
        0\rightarrow\textup{Im}(\boldsymbol{l})\otimes_{R_0}R_0/\mathcal{I}_x^{2}\rightarrow\mathbb{D}(X_{\tilde{z}_0})(R_0)\otimes_{R_0}R_0/\mathcal{I}_x^{2}\simeq\mathbb{D}(X_{\tilde{z}_0}\times_{R_0}R_0/\mathcal{I}_x^{2})(R_0/\mathcal{I}_x^{2}).
    \end{equation*}
    The endomorphism $\mathbb{D}(x_0\circ x)(R_0/\mathcal{I}_x^{2})$ preserves the above Hodge filtration if and only if
    \begin{equation*}
        \boldsymbol{l}\circ\mathbb{D}(x_0\circ x)(R_0/\mathcal{I}_x^{2})\circ \boldsymbol{l}=0\in\textup{End}_{R_0/\mathcal{I}_x^{2}}(\mathbb{D}(X_{\tilde{z}_0})(R_0)\otimes_{R_0}R_0/\mathcal{I}_x^{2}).
    \end{equation*}
    By the definition of the element $\boldsymbol{l}$, we have
    \begin{align}
        \boldsymbol{l}\,\circ&\,\mathbb{D}(x_0\circ x)(R_0/\mathcal{I}_x^{2})\circ \boldsymbol{l}=\boldsymbol{l}\circ\mathbb{D}(x_0)\circ\mathbb{D}(x)\circ \boldsymbol{l}=\boldsymbol{l}\circ\mathbb{D}(x_0)\circ\left((\boldsymbol{l},x)-\boldsymbol{l}\circ\mathbb{D}(x)\right)\label{ldl}\\
        &=(\boldsymbol{l},x)\cdot\boldsymbol{l}\circ\mathbb{D}(x_0)-\boldsymbol{l}\circ\mathbb{D}(x_0)\circ \boldsymbol{l}\circ\mathbb{D}(x)=(\boldsymbol{l},x)\cdot \boldsymbol{l}\circ\mathbb{D}(x_0)+\boldsymbol{l}\circ \boldsymbol{l}\circ\mathbb{D}(x_0)\circ\mathbb{D}(x)\notag\\
        &=(\boldsymbol{l},x)\cdot\boldsymbol{l}\circ\mathbb{D}(x_0).\notag
    \end{align}
    Notice that the above computation also apply to the thickening $R_1/\mathcal{I}_x^{\pr2}\rightarrow R_1/\mathcal{I}_x^{\pr}$. There are two different cases:
    \begin{itemize}
        \item [1a.] The point $z_0\notin\CN_L^{\textup{nfs}}$. Then we have $R_0\simeq R_1$. 
    If we pick a basis for the $R_0/\mathcal{I}_x^{2}$-module $\mathbb{D}(X_{\tilde{z}_0})(R_0)\otimes_{R_0}R_0/\mathcal{I}_x^{2}$, the endomorphism $\boldsymbol{l}\circ\mathbb{D}(x_0)$ is a matrix which contains invertible coefficients by Corollary \ref{smooth-criterion2}. Therefore $\boldsymbol{l}\circ\mathbb{D}(x_0\circ x)(R_0/\mathcal{I}_x^{2})\circ \boldsymbol{l}=0$ is equivalent to $(\boldsymbol{l},x)=0$. Hence
    \begin{equation*}
        \left(\mathcal{I}_{x_0x}+\mathcal{I}_x^{2}\right)\big/\mathcal{I}_x^{2}\simeq\mathcal{I}_x/\mathcal{I}_x^{2}
    \end{equation*}
    by Lemma \ref{deformo2}(ii). Notice that $\mathcal{I}_{x_0x}\subset\mathcal{I}_x$ since $\mathcal{Z}(x)\subset\mathcal{Y}(x)$, hence $\mathcal{I}_{x_0x}=\mathcal{I}_x$. Therefore the two statements of the proposition are true in this case.
    \item [1b.] The point $z_0\in\CN_L^{\textup{nfs}}$. We use the coordinate fixed in Remark \ref{coordinate}. Then
    \begin{equation*}
        \boldsymbol{l}\circ\mathbb{D}(x_0)=aq(x_0)(1-2x_1\circ x_0)+\sum\limits_{i=1}^{n}t_i\cdot l_i\circ x_{0,z_0}
    \end{equation*}
    for some invertible element $a\in\widehat{\mathcal{O}}_{\CN_{L},z_0}^{\times}$.
    The fiber product $\M\times_{\CN_L}\textup{Spf}\,\widehat{\mathcal{O}}_{\CN_L,z_0}$ is covered by $n$ open formal subschemes $\{\widehat{\mathcal{M}}_{L,z_0,i}\}_{i=1}^{n}$ in (\ref{local-piece}). Suppose that $z\in\widehat{\mathcal{M}}_{L,z_0,i}(\mathbb{F})$ for some $i$. Over the formal scheme $\widehat{\mathcal{M}}_{L,z_0,i}$, we have 
    \begin{equation*}
        \boldsymbol{l}\circ\mathbb{D}(x_0)=t_i\cdot\left(l_i\circ x_{0,z_0}+\sum\limits_{j\neq i}u_{ji}\cdot l_j\circ x_{0,z_0}+\tilde{a}\cdot t_i\right)
    \end{equation*}
    where $\tilde{a}\in\mathcal{O}_{\widehat{\mathcal{M}}_{L,z_0,i}}$. If we pick a basis for the $R_1/\mathcal{I}^{\pr2}_x$-module $\mathbb{D}(X_{\tilde{z}_0})(R_0)\otimes_{R_0}R_1/\mathcal{I}^{\pr2}_x$, the endomorphism $\boldsymbol{l}\circ\mathbb{D}(x_0)=t_i\cdot N$ where $N$ is a matrix containing an invertible entry since $\overline{l}_i$ and $\overline{x_{0,z_0}}$ are linearly independent in the $\mathbb{F}$-vector space $\mathbf{V}^{\sharp}_{z_0}/\varpi\mathbf{V}^{\sharp}_{z_0}$. Therefore
    \begin{equation*}
        \left(\mathcal{I}_{x_0x}^{\pr}+\mathcal{I}_x^{\pr2}\right)\big/\mathcal{I}_x^{\pr2}\simeq\left(\mathcal{I}_x^{\pr}\cdot\mathcal{I}_{\exc}\right)/\mathcal{I}_x^{\pr2}.
    \end{equation*}
    Notice that $\mathcal{I}_{x_0x}^{\pr}\subset\mathcal{I}_x^{\pr}$ and $\mathcal{I}_{\exc}\subset\mathcal{I}_x^{\pr}$, hence we have $\mathcal{I}_{x_0x}^{\pr}=\mathcal{I}_x^{\pr}\cdot\mathcal{I}_{\exc}$. Therefore the two statements of the proposition are true in this case.
    \end{itemize}
    \par
    \noindent$\bullet$ Case 2: The point $z_0\notin\CZ(x)(\mathbb{F})$. Recall that we use $\mathbf{V}_{z_0}$ (resp. $\mathbf{V}_{z_0}^{\sharp}$) to denote the $\ofb$-module $\mathbf{V}_{\crys}(\ofb,\mathbb{F},\delta)$ (resp. $\mathbf{V}_{\crys}^{\sharp}(\ofb,\mathbb{F},\delta)$). We use $\mathbb{D}_{z_0}$ to denote the Dieudonne module of the $p$-divisible group $X_{z_0}$. Let $l\in\mathbf{V}_{z_0}^{\sharp}$ be an element such that its image $\overline{l}$ generates the Hodge filtration $\textup{Fil}^{1}\overline{\mathbf{V}}_{z_0}^{\sharp}$. Notice that the point $z_0\in\mathcal{Y}_{\mathcal{M}}(x)$ if and only if
        \begin{equation*}
            l\circ\mathbb{D}(x_0\circ x)\circ l\in\varpi\cdot\textup{End}_{\ofb}(\mathbb{D}_{z_0}).
        \end{equation*}
        By similar computations as in (\ref{ldl}), we get
        \begin{equation*}
            l\circ\mathbb{D}(x_0\circ x)\circ l=(l,x)\cdot l\circ\mathbb{D}(x_0)+q(l)\cdot\mathbb{D}(x_0\circ x).
        \end{equation*}
        By Lemma \ref{deformo2}(ii), we have $(l,x)\in\ofb^{\times}$ since $z_0\notin\CZ(x)(\mathbb{F})$. By Corollary \ref{smooth-criterion2}, we know that the endomorphism $l\circ\mathbb{D}(x_0)\in\varpi\cdot\textup{End}_{\ofb}(\mathbb{D}_{z_0})$ if and only if $z_0\in\CN_L^{\textup{nfs}}$. Therefore we consider the following two cases:
    \begin{itemize}
        \item [2a.] The point $z_0\notin\CN_L^{\textup{nfs}}$. Then there exists $\mu\in L^{\vee}/L$ such that $z_0\in\CZ(x,\mu)$. We can replace $x$ by $x+r^{-1}(\mu)$ in (\ref{ldl}) and use the exact the same arguments in case (1a) to deduce that locally around the point $z_0\in\CN_L(\mathbb{F})$, we have
        \begin{equation*}
            \mathcal{Y}(x)=\mathcal{Z}(x,\mu).
        \end{equation*}
        This implies that locally around the point $z\in\M(\mathbb{F})$, we have
        \begin{equation*}
             \mathcal{Y}^{\textup{bl}}(x)=\mathcal{Z}^{\textup{bl}}(x,\mu).
        \end{equation*}
        \item [2b.] The point $z_0\in\CN_L^{\textup{nfs}}$. We know that $z_0\in\mathcal{Y}(x)$ by the above arguments. Hence $\exc_L\subset\mathcal{Y}_{\mathcal{M}}(x)$ on the fiber product $\M\times_{\CN_L}\textup{Spf}\,\widehat{\mathcal{O}}_{\CN_L,z_0}=\bigcup\limits_{i=1}^{n}\widehat{\mathcal{M}}_{L,z_0,i}$. We will now show we actually have an equality $\mathcal{Y}_{\mathcal{M}}(x)=\exc_L$ on each formal scheme $\widehat{\mathcal{M}}_{L,z_0,i}$. Let $\mathcal{O}_i$ be the structure ring of the affine formal scheme $\widehat{\mathcal{M}}_{L,z_0,i}$. Let $\overline{\mathcal{O}_i}=\mathcal{O}_i/(t_i)$. By (\ref{local-piece}), we have
        \begin{equation}
            \overline{\mathcal{O}_i}\simeq\mathbb{F}[\{u_{ji}\}_{j\neq i}]. \label{i-th coordinate}
        \end{equation}
        \par
        We consider the divided power thickening $R\coloneqq R_0/\mathcal{I}_{x_0x}^{ p}\rightarrow \overline{R}\coloneqq R_0/\mathcal{I}_{x_0x}$. Let $\mathcal{O}_i^{\pr}=\mathcal{O}_i/\mathcal{I}_{x_0x}^{p}\cdot\mathcal{O}_i\simeq R\otimes_{R_0}\mathcal{O}_i$. Fix a $R$-basis $\boldsymbol{e}=\{e_i\}_{i=1}^{2^{n+2}}$ of the Dieudonne crystal $\mathbb{D}(X_{\tilde{z}_0})\otimes_{R_0}R$. Let $M\in\textup{M}_{2^{n+2}}(R)$ be the matrix of the endomorphism $\boldsymbol{l}\circ\mathbb{D}(x_0x)(R)\circ\boldsymbol{l}$ under the basis $\boldsymbol{e}$. Let $f_{\varpi x}\in R_0$ be an element defining the divisor $\mathcal{Z}(\varpi x)$. Then $t_i\cdot f_{\varpi x}\in \mathcal{O}_i$ is the equation of the divisor $\mathcal{Y}_{\mathcal{M}}(\varpi x)$ (see the case (1b)). Therefore there exists a matrix $N\in\textup{M}_{2^{n+2}}(\mathcal{O}_i^{\pr})$ which contains an invertible entry such that, under the basis $\boldsymbol{e}$,
        \begin{equation*}
            \boldsymbol{l}\circ\mathbb{D}(\varpi x_0x)(\mathcal{O}_i^{\pr})\circ\boldsymbol{l}=\overline{t_i\cdot f_{\varpi x}}\cdot N =\varpi\cdot M\in\textup{M}_{2^{n+2}}(\mathcal{O}_i^{\pr}),
        \end{equation*}
        Then there exists an entry $\overline{m}\in R$ of the matrix $M$ such that $\overline{t_i\cdot f_{\varpi x}}=\overline{u\cdot \varpi\cdot m}\in\mathcal{O}_i^{\pr}$ where $u\in \mathcal{O}_i^{\times}$ is invertible. Therefore there exists an element $s\in\mathcal{I}_{x_0x}^{p}\cdot\mathcal{O}_i$ such that
        \begin{equation}
            t_i\cdot f_{\varpi x}=u\cdot \varpi\cdot m+s\in \mathcal{O}_i,
            \label{equ-val}
        \end{equation}
        where $m\in \mathfrak{m}_0\subset R_0$ have image $\overline{m}\in R$. By Lemma \ref{geo-blowup} (b), we have $\varpi=t_i^{2}\cdot g_i$ where $g_i\in\mathcal{O}_i$ and $\overline{g_i}\in\overline{\mathcal{O}_i}$ is a polynomial of degree $2$ under the isomorphism (\ref{i-th coordinate}). Let $m_i=t_i^{-1}m$ and $\tilde{s}_1=t_i^{-p}s_1\in\mathcal{O}_i$. Then (\ref{equ-val}) becomes
        \begin{equation}
            f_{\varpi x}=t_i^{2}\cdot\left(u\cdot g_i\cdot m_i+t_i^{p-3}\tilde{s}_1\right).\label{equ-val-2}
        \end{equation}
        Since $p>3$, the image of $\left(u\cdot g_i\cdot m_i+t_i^{p-3}\tilde{s}_1\right)\in\mathcal{O}_i$ in the ring $\overline{\mathcal{O}_i}$ is $\overline{u}\cdot\overline{g_i}\cdot\overline{m_i}$. It should be a degree $2$ polynomial under the isomorphism (\ref{i-th coordinate}) by Lemma \ref{from-y-to-weighted} (a). The polynomial $\overline{g_i}$ already has degree $2$, we conclude that $\overline{m_i}$ is a nonzero constant. Hence $m=u^{\prime}\cdot t_i$ for some invertible element $u^{\prime}\in \mathcal{O}_i$. Therefore we have the following identity of matrices
       \begin{equation*}
           M=\overline{t_i}\cdot M^{\prime}\in\textup{M}_{2^{n+2}}(\mathcal{O}_i^{\prime}),
       \end{equation*}
       where the matrix $M^{\pr}\in\textup{M}_{2^{n+2}}(\mathcal{O}_i^{\prime})$ contains an invertible element. Hence $\mathcal{I}_{x_0x}^{\prime}\vert_{\widehat{\mathcal{M}}_{L,z_0,i}}=(t_i)=\mathcal{I}_{\exc}\vert_{\widehat{\mathcal{M}}_{L,z_0,i}}$. Therefore $\mathcal{I}_{x_0x}^{\prime}=\mathcal{I}_{\exc}$.
    \end{itemize}
    
\end{proof}

\subsection{Derived $\mathcal{Z}$-cycles}
\begin{lemma}
    Let $x\in\mathbb{V}^{\sharp}$ be a nonzero element. Let $\widetilde{\mathcal{D}}^{\sharp,\textup{bl}}(x)$ be the strict transform of the difference divisor $\mathcal{D}^{\sharp}(x)\coloneqq\mathcal{Z}^{\sharp}(x)-\CZ^{\sharp}(\varpi x)$. Then $\widetilde{\mathcal{D}}^{\sharp,\textup{bl}}(x)$ is a regular formal scheme.\label{regularity-diff-divisor-blow-up}
\end{lemma}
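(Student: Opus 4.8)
The plan is to reduce the claim to a local statement at the finitely many $\mathbb{F}$-points blown up, and then to invoke the elementary fact that the blow up of a regular formal scheme at a regular closed point is again regular. First, recall that by \cite{Zhu23diff} the difference divisor $\mathcal{D}^{\sharp}(x)$ is a regular formal scheme; since it is moreover an effective Cartier divisor on $\CN_{L^{\sharp}}$, which is formally smooth of relative dimension $n$ over $\ofb$, it is regular and equidimensional of total dimension $n$, hence in particular reduced. The blow up morphism $\pis\colon\Ms\to\CN_{L^{\sharp}}$ is an isomorphism over the open formal subscheme $\CN_{L^{\sharp}}\setminus\CN_L^{\textup{nfs}}$, and over this locus the strict transform $\widetilde{\mathcal{D}}^{\sharp,\textup{bl}}(x)$ is identified with the restriction of $\mathcal{D}^{\sharp}(x)$, which is regular. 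So it remains to prove regularity of $\widetilde{\mathcal{D}}^{\sharp,\textup{bl}}(x)$ at the points lying above a given $z_0\in\CN_L^{\textup{nfs}}$.

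Next I would fix such a $z_0$ and work in $\widehat{\CN}_{L^{\sharp},z_0}$. By Remark \ref{coordinate} we have $\widehat{\mathcal{O}}_{\CN_{L^{\sharp}},z_0}\simeq\ofb[[t_1,\dots,t_n]]$, and the charts \eqref{local-piece-smooth} exhibit $\widehat{\mathcal{M}}_{L^{\sharp},z_0}\to\widehat{\CN}_{L^{\sharp},z_0}$ as the blow up at the maximal ideal $\mathfrak{m}_{z_0}=(\varpi,t_1,\dots,t_n)$, i.e. at the \emph{reduced} closed point $z_0$, which is a regular point of the regular formal scheme $\CN_{L^{\sharp}}$. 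If $z_0\notin\mathcal{D}^{\sharp}(x)$ there is nothing to check near $z_0$, so assume $z_0\in\mathcal{D}^{\sharp}(x)$. Since $\mathcal{D}^{\sharp}(x)$ is regular, $z_0$ is a regular point of it, so a local equation $f\in\widehat{\mathcal{O}}_{\CN_{L^{\sharp}},z_0}$ of $\mathcal{D}^{\sharp}(x)$ lies in $\mathfrak{m}_{z_0}\setminus\mathfrak{m}_{z_0}^{2}$; equivalently $\mathcal{D}^{\sharp}(x)$ passes through $z_0$ with multiplicity one.

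Because $\mathcal{D}^{\sharp}(x)$ is reduced of dimension $n\ge 2$, the closed point $z_0$ is not an associated point of $\mathcal{D}^{\sharp}(x)$, and $z_0$ is contained in $\mathcal{D}^{\sharp}(x)$; hence the strict transform of $\mathcal{D}^{\sharp}(x)$ under the blow up of $\CN_{L^{\sharp}}$ at $z_0$ coincides with the blow up $\mathrm{Bl}_{z_0}\bigl(\mathcal{D}^{\sharp}(x)\bigr)$ of $\mathcal{D}^{\sharp}(x)$ at the reduced closed point $z_0$. As $z_0$ is a regular (hence regularly embedded) closed point of the regular formal scheme $\mathcal{D}^{\sharp}(x)$, this blow up is again regular: concretely, after a formal change of coordinates one may take $f$ to be part of a regular system of parameters of $\widehat{\mathcal{O}}_{\CN_{L^{\sharp}},z_0}$, so that $\mathcal{D}^{\sharp}(x)$ becomes a coordinate divisor and its strict transform is visibly regular in each of the charts \eqref{local-piece-smooth}, being cut out there by the image of $f$, which becomes a regular parameter. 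Together with regularity over the isomorphism locus this finishes the argument. The one place that uses input beyond formal bookkeeping is the multiplicity-one statement $f\notin\mathfrak{m}_{z_0}^{2}$, which rests entirely on the regularity of $\mathcal{D}^{\sharp}(x)$ from \cite{Zhu23diff}; this, rather than the blow-up computation itself, is the real content, since without it the strict transform of a divisor singular at $z_0$ could easily fail to be regular.
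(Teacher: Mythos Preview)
Your proof is correct and rests on the same key input as the paper's proof, namely the regularity of $\mathcal{D}^{\sharp}(x)$ from \cite{Zhu23diff}, which forces the local equation $f$ to lie in $\mathfrak{m}_{z_0}\setminus\mathfrak{m}_{z_0}^{2}$. The paper proceeds from this point by an explicit chart computation: it distinguishes whether $\mathcal{D}^{\sharp}(x)$ is formally smooth at $z_0$ or not (in the latter case $d_{x,z_0}\equiv\varpi\pmod{\mathfrak{m}_{z_0}^{2}}$), writes out the total transform in each chart of \eqref{local-piece-smooth}, and reads off that the strict transform's equation is again a regular parameter. Your route is more conceptual: you identify the strict transform with $\mathrm{Bl}_{z_0}\bigl(\mathcal{D}^{\sharp}(x)\bigr)$ via the standard compatibility of strict transforms with blow-ups of the subscheme along the preimage, and then invoke that blowing up a regular scheme at a regular closed point remains regular. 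This avoids the paper's case analysis entirely and makes clear that the specific shape of $f$ modulo $\mathfrak{m}_{z_0}^{2}$ is irrelevant. The paper's explicit computation, on the other hand, has the minor advantage of being self-contained in the chosen coordinates and of yielding slightly more (e.g., the precise form $\widetilde{d}_{x,z}\equiv\varpi\pmod{\mathfrak{m}_z^{2}}$ in the non-smooth case), though that extra information is not used elsewhere.
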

\begin{proof}
    Let $z\in\widetilde{\mathcal{D}}^{\sharp,\textup{bl}}(x)(\mathbb{F})$ be a point. Let $z_0=\pis(z)\in\mathcal{D}^{\sharp,\textup{bl}}(x)(\mathbb{F})$. Let $\widetilde{d}_{x,z}$ (resp. $d_{x,z_0}$) be the local equation of the difference divisor $\widetilde{\mathcal{D}}^{\sharp,\textup{bl}}(x)$ (resp. $\mathcal{D}^{\sharp,\textup{bl}}(x)$) in the completed local ring $\widehat{\mathcal{O}}_{\mathcal{M}_{L^{\sharp}},z}$ (resp. $\widehat{\mathcal{O}}_{\mathcal{N}_{L^{\sharp}},z_0}$). Let $\mathfrak{m}_z\subset\widehat{\mathcal{O}}_{\mathcal{M}_{L^{\sharp}},z}$ (resp. $\mathfrak{m}_{z_0}\subset\widehat{\mathcal{O}}_{\mathcal{N}_{L^{\sharp}},z_0}$) be the maximal ideal. If $z_0\notin\mathcal{N}^{\textup{nfs}}_L(\mathbb{F})$, then we have $\widetilde{d}_{x,z}\in\mathfrak{m}_z\backslash\mathfrak{m}_z^{2}$ by \cite[Theorem 1.2.1]{Zhu23diff}, hence the formal scheme $\widetilde{\mathcal{D}}^{\sharp,\textup{bl}}(x)$ is regular at $z$. Now we consider the case $z_0\in\CN_L^{\textup{nfs}}$. We first consider the case that $\mathcal{D}^{\sharp,\textup{bl}}(x)$ is not formally smooth over $\ofb$ at $z_0$. Then $d_{x,z_0}\equiv\varpi\,\,\textup{mod}\,\,\mathfrak{m}_{z_0}^{2}$ by \cite{Zhu23diff}. Using the open cover $\{\widehat{\mathcal{M}}_{L^{\sharp},z_0,i}\}_{i=0}^{n}$ in (\ref{local-piece-smooth}). Let $d_{x,z_0,i}\in\mathcal{O}_{\widehat{\mathcal{M}}_{L^{\sharp},z_0,i}}$ be the image of $d_{x,z_0}$ in the $i$-th cover. Then we have
        \begin{equation*}
            d_{x,z_0,0}=(\textup{unit})\cdot\varpi,\,\,d_{x,z_0,i}=t_i\cdot\left(u_i+t_i\cdot g_i\right)\,\,\textup{where $g_i\in\mathcal{O}_{\widehat{\mathcal{M}}_{L^{\sharp},z_0,i}}$}.
        \end{equation*}
    Then we see that $z\in\widehat{\mathcal{M}}_{L^{\sharp},z_0,i}$ for some $i\geq1$ and we have $\widetilde{d}_{x,z}\equiv\varpi\,\,\textup{mod}\,\,\mathfrak{m}_{z}^{2}$. Therefore the formal scheme $\widetilde{\mathcal{D}}^{\sharp,\textup{bl}}(x)$ is regular at $z$. The proof for the case that $\mathcal{D}^{\sharp,\textup{bl}}(x)$ is formally smooth over $\ofb$ at $z_0$ is similar and easier, so we omit it.
\end{proof}
\begin{lemma}
    Let $x,y\in\mathbb{V}^{\sharp}$ be two linearly independent elements. Then the two divisors $\widetilde{\mathcal{Z}}^{\sharp,\textup{bl}}(x)$ and $\mathcal{Z}^{\sharp,\textup{bl}}(y)$ intersect properly, i.e.,
    \begin{equation*}
        \mathcal{O}_{\widetilde{\mathcal{Z}}^{\sharp,\textup{bl}}(x)}\otimes^{\mathbb{L}}_{\mathcal{O}_{\mathcal{M}_{L^{\sharp}}}}\mathcal{O}_{\mathcal{Z}^{\sharp,\textup{bl}}(y)}=\mathcal{O}_{\widetilde{\mathcal{Z}}^{\sharp,\textup{bl}}(x)}\otimes_{\mathcal{O}_{\mathcal{M}_{L^{\sharp}}}}\mathcal{O}_{\mathcal{Z}^{\sharp,\textup{bl}}(y)}
    \end{equation*}\label{proper-with-strict}
\end{lemma}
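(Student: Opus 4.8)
The plan is to reduce the claimed identity of (derived) tensor products to the statement that $\widetilde{\mathcal{Z}}^{\sharp,\textup{bl}}(x)$ and $\mathcal{Z}^{\sharp,\textup{bl}}(y)$ have no common irreducible component, and then to settle the latter by pushing forward to $\mathcal{N}_{L^{\sharp}}$. First I would record that both cycles are effective Cartier divisors on $\Ms$: the divisor $\mathcal{Z}^{\sharp,\textup{bl}}(y)=\pis^{-1}\big(\mathcal{Z}^{\sharp}(y)\big)$ is locally principal, cut out by $\pis^{*}f_{y}$, while in the charts (\ref{local-piece-smooth}) the strict transform $\widetilde{\mathcal{Z}}^{\sharp,\textup{bl}}(x)$ is, in the $i$-th chart, cut out by the single function $t_{i}^{-n_{z_{0}}}\pis^{*}f_{x}$, which by Lemma \ref{int-spe-exc-unramified}(b) reduces modulo the exceptional parameter $t_{i}$ to a nonzero polynomial of degree $n_{z_{0}}$, and coincides with $\mathcal{Z}^{\sharp}(x)$ away from $\exc_{L^{\sharp}}$. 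Since $\Ms$ is formally smooth over $\ofb$ by Lemma \ref{geo-blowup}(a), its local rings are regular domains, hence Cohen--Macaulay; for a Cartier divisor $V(g)$ the two-term Koszul complex $\big[\mathcal{O}_{\Ms}\xrightarrow{\,g\,}\mathcal{O}_{\Ms}\big]$ is a locally free resolution of $\mathcal{O}_{V(g)}$, so tensoring with $\mathcal{O}_{V(f)}$ leaves as the only possibly nonzero higher homology $\ker\!\big(g\colon\mathcal{O}_{V(f)}\to\mathcal{O}_{V(f)}\big)$, which vanishes exactly when $g$ is a nonzerodivisor on the Cohen--Macaulay ring $\mathcal{O}_{V(f)}$, i.e.\ exactly when $V(g)$ contains no irreducible component of $V(f)$. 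Thus ``proper intersection'' in the sense of the lemma is equivalent to the absence of a common component.

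Next, suppose for contradiction that $C$ is a common irreducible component of $\widetilde{\mathcal{Z}}^{\sharp,\textup{bl}}(x)$ and $\mathcal{Z}^{\sharp,\textup{bl}}(y)$. The strict transform $\widetilde{\mathcal{Z}}^{\sharp,\textup{bl}}(x)$ contains no component of the exceptional divisor $\exc_{L^{\sharp}}$ (its local equation is not divisible by $t_{i}$, as recalled above), so $C$ is not exceptional and $\pis$ maps it onto a codimension-one irreducible closed formal subscheme $\overline{C}=\overline{\pis(C)}\subseteq\mathcal{N}_{L^{\sharp}}$. From $C\subseteq\widetilde{\mathcal{Z}}^{\sharp,\textup{bl}}(x)\subseteq\pis^{-1}\big(\mathcal{Z}^{\sharp}(x)\big)$ and $C\subseteq\pis^{-1}\big(\mathcal{Z}^{\sharp}(y)\big)$ we get $\overline{C}\subseteq\mathcal{Z}^{\sharp}(x)\cap\mathcal{Z}^{\sharp}(y)$, i.e.\ the intersection of these two special divisors on $\mathcal{N}_{L^{\sharp}}$ contains a codimension-one subset.

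It remains to contradict this, i.e.\ to show that $\mathcal{Z}^{\sharp}(x)$ and $\mathcal{Z}^{\sharp}(y)$ intersect properly (in codimension $\geq 2$) on the formally smooth space $\mathcal{N}_{L^{\sharp}}$ whenever $x,y$ are linearly independent --- this is the heart of the matter and the step where linear independence is genuinely used. I would argue as follows: every component of $\widetilde{\mathcal{Z}}^{\sharp,\textup{bl}}(x)$ is a component of a strict transform $\widetilde{\mathcal{D}}^{\sharp,\textup{bl}}(x')$ of a difference divisor $\mathcal{D}^{\sharp}(x')$ with $x'\in F^{\times}x$ (the strict transform being additive on effective Cartier divisors when the blow-up center is $0$-dimensional, together with the decomposition of $\mathcal{Z}^{\sharp}(x)$ into difference divisors \cite{Zhu23diff}), and such $\widetilde{\mathcal{D}}^{\sharp,\textup{bl}}(x')$ is regular by Lemma \ref{regularity-diff-divisor-blow-up}, hence integral, while $\mathcal{D}^{\sharp}(x')$ is $\ofb$-flat (``horizontal'') by \cite{Zhu23diff}. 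A horizontal integral divisor contained in $\pis^{-1}\big(\mathcal{Z}^{\sharp}(y)\big)$ would give $\mathcal{D}^{\sharp}(x')_{\eta}\subseteq\mathcal{Z}^{\sharp}(x)_{\eta}\cap\mathcal{Z}^{\sharp}(y)_{\eta}$ on the smooth rigid generic fibre; but there the two conditions ``the Hodge line is orthogonal to $x_{\textup{crys}}$'' and ``\dots\ orthogonal to $y_{\textup{crys}}$'' are independent, so the right-hand side has codimension $2$, contradicting $\operatorname{codim}\mathcal{D}^{\sharp}(x')_{\eta}=1$. (This properness of special divisors on $\mathcal{N}_{L^{\sharp}}$ is also implicit in \cite{LiZhang-orthogonalKR}.) Combined with the first paragraph, this proves the lemma.
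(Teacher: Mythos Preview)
Your overall argument is correct and arguably cleaner than the paper's. Both proofs ultimately rest on the same input, namely that $\mathcal{Z}^{\sharp}(x)$ and $\mathcal{Z}^{\sharp}(y)$ intersect properly on $\CN_{L^{\sharp}}$ (this is \cite[Lemma~4.11.1]{LiZhang-orthogonalKR}), but they transport this fact up to the blow-up differently. The paper decomposes $\widetilde{\mathcal{Z}}^{\sharp,\textup{bl}}(x)$ into strict transforms $\widetilde{\mathcal{D}}^{\sharp,\textup{bl}}(\varpi^{-i}x)$ of difference divisors, invokes their regularity (Lemma~\ref{regularity-diff-divisor-blow-up}) to know that the locus $\mathfrak{B}$ of points where the local equations of $\widetilde{\mathcal{D}}^{\sharp,\textup{bl}}(x)$ and $\mathcal{Z}^{\sharp,\textup{bl}}(y)$ share a factor is clopen in $\widetilde{\mathcal{D}}^{\sharp,\textup{bl}}(x)^{\textup{red}}$, observes $\mathfrak{B}\subseteq\exc_{L^{\sharp}}$ by the downstairs result, and concludes since no connected component of the strict transform lies entirely in $\exc_{L^{\sharp}}$. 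Your route bypasses the difference-divisor machinery entirely: you just push any hypothetical common component $C$ (necessarily non-exceptional) down via the birational $\pis$ to get a codimension-one piece of $\mathcal{Z}^{\sharp}(x)\cap\mathcal{Z}^{\sharp}(y)$, contradicting \cite[Lemma~4.11.1]{LiZhang-orthogonalKR} directly. This is shorter and does not require Lemma~\ref{regularity-diff-divisor-blow-up}.

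One caveat: your final paragraph attempts an independent proof of the downstairs proper intersection via $\ofb$-flatness of $\mathcal{D}^{\sharp}(x')$ and passage to the rigid generic fibre. The citation to \cite{Zhu23diff} for flatness is not justified---that reference proves \emph{regularity} of difference divisors, not flatness, and regularity alone does not preclude purely vertical components (nor is ``regular $\Rightarrow$ integral'' valid without connectedness). Since you already note that the needed statement is in \cite{LiZhang-orthogonalKR}, this detour is both unnecessary and the weakest link; I would simply drop it and cite \cite[Lemma~4.11.1]{LiZhang-orthogonalKR} outright.
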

\begin{proof}
    The special divisor $\CZ^{\sharp}(x)$ is the summation of difference divisors $\mathcal{D}^{\sharp}(\varpi^{-i}x)\coloneqq\mathcal{Z}^{\sharp}(\varpi^{-i}x)-\CZ^{\sharp}(\varpi^{-i-1}x)$ where $0\leq i\leq [\frac{\nu_{\varpi}(q(x))}{2}]$. Denote by $\widetilde{\mathcal{D}}^{\sharp,\textup{bl}}(\varpi^{-i}x)$ the strict transform of the difference divisor $\mathcal{D}^{\sharp}(\varpi^{-i}x)$. It's easy to see that the divisor $\widetilde{\mathcal{Z}}^{\sharp,\textup{bl}}(x)$ is the summation of divisors $\widetilde{\mathcal{D}}^{\sharp,\textup{bl}}(\varpi^{-i}x)$. By Lemma \ref{regularity-diff-divisor-blow-up}, all the divisors $\widetilde{\mathcal{D}}^{\sharp,\textup{bl}}(\varpi^{-i}x)$ are regular formal schemes. We will prove that the two divisors $\widetilde{\mathcal{D}}^{\sharp,\textup{bl}}(\varpi^{-i}x)$ and $\mathcal{Z}^{\sharp,\textup{bl}}(y)$ intersect properly. Without loss of generality, we only need to consider the case $i=0$.
    \par
    Let $\mathfrak{X}$ be the formal completion of the formal scheme $\mathcal{M}_{L^{\sharp}}$ along the $\widetilde{\mathcal{D}}^{\sharp,\textup{bl}}(x)^{\textup{red}}$. Let $\mathfrak{B}$ be the set of points $z\in\mathfrak{X}(\mathbb{F})=\widetilde{\mathcal{D}}^{\sharp,\textup{bl}}(x)^{\textup{red}}(\mathbb{F})$ such that the local equations of $\widetilde{\mathcal{D}}^{\sharp,\textup{bl}}(x)$ and $\mathcal{Z}^{\sharp,\textup{bl}}(y)$ share a common divisor. Since the formal scheme $\widetilde{\mathcal{D}}^{\sharp,\textup{bl}}(x)$ is regular, the set $\mathfrak{B}$ is both open and closed in $\widetilde{\mathcal{D}}^{\sharp,\textup{bl}}(x)^{\textup{red}}$ by \cite[Lemma 3.6]{KR1}.
    \par
    Moreover, \cite[Lemma 4.11.1]{LiZhang-orthogonalKR} implies that $\mathfrak{B}\subset\widetilde{\mathcal{D}}^{\sharp,\textup{bl}}(x)^{\textup{red}}\cap\exc_{\Ms}$. If $\mathfrak{B}\neq\emptyset$, the set $\mathfrak{B}$ is a nonempty union of some of the connected components of the exceptional divisor $\exc_{L^{\sharp}}$. However, each connected component of $\exc_{L^{\sharp}}$ is not a connected component of $\widetilde{\mathcal{D}}^{\sharp,\textup{bl}}(x)^{\textup{red}}$. This is a contradiction. Therefore the set $\mathfrak{B}$ must be empty.
    \end{proof}

We have the following linear invariance result on these special cycles.
    
\begin{lemma}
    Let $M\subset\mathbb{V}^{\sharp}$ be an $\Of$-lattice of rank $2$. Let $x,y\in\mathbb{V}^{\sharp}$ a basis of $M$. Then $[\mathcal{O}_{\mathcal{Z}^{\sharp,\textup{bl}}(x)}\otimes^{\mathbb{L}}_{\mathcal{O}_{\mathcal{M}_{L^{\sharp}}}}\mathcal{O}_{\mathcal{Z}^{\sharp,\textup{bl}}(y)}]\in\textup{Gr}^{2}K_0^{\mathcal{Z}^{\sharp,\textup{bl}}(M)}(\mathcal{M}_{L^{\sharp}})$ is independent of the choice of the basis $x,y$ of $M$.\label{linear-invariance-smooth}
\end{lemma}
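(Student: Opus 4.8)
The plan is to prove the sharper statement that the class $[\mathcal{O}_{\mathcal{Z}^{\sharp,\textup{bl}}(x)}\otimes^{\mathbb{L}}_{\mathcal{O}_{\Ms}}\mathcal{O}_{\mathcal{Z}^{\sharp,\textup{bl}}(y)}]$ is already basis-independent in $K_0^{\mathcal{Z}^{\sharp,\textup{bl}}(M)}(\Ms)$, which gives the assertion after passing to $\textup{Gr}^{2}$. We may assume $x,y$ are integral (the general case is handled by the difference-divisor machinery of \S 5, cf. Lemma \ref{regularity-diff-divisor-blow-up} and \cite[Prop.~4.10.1, Lemma~4.11.1]{LiZhang-orthogonalKR}). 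The starting point is the moduli description of Lemma \ref{deformo}: it exhibits $\mathcal{Z}^{\sharp}(x)$ as the vanishing scheme $V(s_x)$ of the section $s_x$ of the fixed line bundle $\mathscr{L}\coloneqq(\textup{Fil}^{1}\mathbf{V}^{\sharp}_{\crys})^{\vee}$ on $\CN_{L^{\sharp}}$ given by the composite $\textup{Fil}^{1}\mathbf{V}^{\sharp}_{\crys}\hookrightarrow\mathbf{V}^{\sharp}_{\crys}\xrightarrow{(\,\cdot\,,\,x_{\crys})}\mathcal{O}$ (compare \cite[\S 4.6, Prop.~4.10.1]{LiZhang-orthogonalKR}); the crucial point is that $x\mapsto s_x$ is $\mathcal{O}_F$-linear. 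Pulling back along $\pis$ and keeping the notation $\mathscr{L},s_x$ for the pullbacks, we get $\mathcal{Z}^{\sharp,\textup{bl}}(x)=V(s_x)$; since $\Ms$ is regular (Lemma \ref{geo-blowup}(a)) and $\mathcal{Z}^{\sharp,\textup{bl}}(x)$ has codimension one, $s_x$ is locally a non-zero-divisor, so $[\,\mathscr{L}^{-1}\xrightarrow{\,s_x\,}\mathcal{O}\,]$ in degrees $-1,0$ is a finite locally free resolution of $\mathcal{O}_{\mathcal{Z}^{\sharp,\textup{bl}}(x)}$.

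Next I would identify the derived tensor product with a Koszul complex. Tensoring the resolutions of $\mathcal{O}_{\mathcal{Z}^{\sharp,\textup{bl}}(x)}$ and $\mathcal{O}_{\mathcal{Z}^{\sharp,\textup{bl}}(y)}$, the complex $\mathcal{O}_{\mathcal{Z}^{\sharp,\textup{bl}}(x)}\otimes^{\mathbb{L}}_{\mathcal{O}_{\Ms}}\mathcal{O}_{\mathcal{Z}^{\sharp,\textup{bl}}(y)}$ is computed by $\mathcal{K}(s_x,s_y)=\bigl[\,\mathscr{L}^{-2}\to\mathscr{L}^{-1}\oplus\mathscr{L}^{-1}\to\mathcal{O}\,\bigr]$ with differential $(s_x,s_y)$ in degree $-1$ and $(-s_y,s_x)^{t}$ in degree $-2$; this is a perfect complex whose support is $V(s_x)\cap V(s_y)=\mathcal{Z}^{\sharp,\textup{bl}}(x)\cap\mathcal{Z}^{\sharp,\textup{bl}}(y)=\pis^{-1}\bigl(\mathcal{Z}^{\sharp}(x)\cap\mathcal{Z}^{\sharp}(y)\bigr)=\mathcal{Z}^{\sharp,\textup{bl}}(M)$, which is already independent of the basis (Definition \ref{spe-unram}). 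Now let $x',y'$ be another basis, so that $(x',y')=(x,y)A$ for a unique $A\in\textup{GL}_{2}(\mathcal{O}_F)$, whence $(s_{x'},s_{y'})=(s_x,s_y)A$ by $\mathcal{O}_F$-linearity. I claim the triple $\bigl(\det(A)^{-1},\,A^{-1},\,\textup{id}_{\mathcal{O}}\bigr)$ — where $A$ and $A^{-1}$ act through their (unit) $\mathcal{O}_F$-entries on $\mathscr{L}^{-1}\oplus\mathscr{L}^{-1}$ and $\det(A)^{\pm1}$ act on $\mathscr{L}^{-2}$ — is an isomorphism of complexes $\mathcal{K}(s_x,s_y)\xrightarrow{\ \sim\ }\mathcal{K}(s_{x'},s_{y'})$: commutativity in degree $-1$ is immediate from $(s_x,s_y)=(s_{x'},s_{y'})A^{-1}$, and commutativity in degree $-2$ reduces to the elementary identity $A^{-1}J(A^{t})^{-1}J^{-1}=\det(A)^{-1}\cdot I$ for $2\times 2$ matrices, where $J=\left(\begin{smallmatrix}0&-1\\1&0\end{smallmatrix}\right)$, which follows from $JNJ^{-1}=\det(N)(N^{t})^{-1}$.

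Since isomorphic perfect complexes supported on $\mathcal{Z}^{\sharp,\textup{bl}}(M)$ represent the same class in $K_0^{\mathcal{Z}^{\sharp,\textup{bl}}(M)}(\Ms)$, and $\textup{GL}_{2}(\mathcal{O}_F)$ acts transitively on the bases of $M$, this yields the desired basis-independence, a fortiori in $\textup{Gr}^{2}$. I do not expect a serious obstacle here: this lemma is a direct analog of the corresponding linear-invariance statement on the smooth space $\CN_{L^{\sharp}}$ in \cite{LiZhang-orthogonalKR}, and the only point that genuinely uses the blow-up context is the regularity of $\Ms$ (Lemma \ref{geo-blowup}(a)), which is what guarantees that $s_x$ is a local non-zero-divisor so that the two-term Koszul resolution of $\mathcal{O}_{\mathcal{Z}^{\sharp,\textup{bl}}(x)}$ is valid and the derived tensor product is the Koszul complex $\mathcal{K}(s_x,s_y)$. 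The same section-theoretic/Koszul mechanism, applied on $\M$ to the sections $s_{x+\eta(\mu)}$ of $i_L^{*}\mathscr{L}$, will underlie the analogous linear-invariance statements for the weighted cycles used later (Corollary \ref{linear-invariance-al}).
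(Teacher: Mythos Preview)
Your Koszul-complex approach is a genuinely different route from the paper's, and the underlying idea is the one used by Howard in the unitary setting, but as written it has a real gap.

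The problem is your claim that $\mathcal{Z}^{\sharp}(x)=V(s_x)$ for a \emph{global} section $s_x$ of a fixed line bundle $\mathscr{L}=(\textup{Fil}^{1}\mathbf{V}^{\sharp}_{\crys})^{\vee}$ on $\CN_{L^{\sharp}}$ with $x\mapsto s_x$ being $\mathcal{O}_F$-linear. This fails for two reasons. First, the crystalline realization $x_{\crys}$ is an integral section of $\mathbf{V}^{\sharp}_{\crys}$ only over $\mathcal{Z}^{\sharp}(x)$; at a point $z_0\notin\mathcal{Z}^{\sharp}(x)$ the quasi-endomorphism $x$ does not preserve the Dieudonn\'e lattice, so $x_{\crys,z_0}\in\mathbf{V}^{\sharp}_{z_0}[1/p]\setminus\mathbf{V}^{\sharp}_{z_0}$ and $s_x$ is only a rational section. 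Second, even the local version requires more than Lemma~\ref{deformo}: that lemma only controls the functor of points on divided-power thickenings, and for instance $\mathbb{F}[t]/(t^{p+1})$ carries no PD-structure on its maximal ideal, so you cannot read off the full ideal $\mathcal{I}_x\subset \widehat{\mathcal{O}}_{\CN_{L^\sharp},z_0}$ from that description alone. Identifying the local equation with $(\boldsymbol{l},x_{\crys})$ over the entire completed local ring (and hence verifying linearity in $x$) needs the extra input underlying \cite[Prop.~4.10.1]{LiZhang-orthogonalKR} (windows/displays or a trivialization of the crystal via formal smoothness), which you do not supply. A quick sanity check also shows global linearity cannot hold: it would force $\mathcal{Z}^{\sharp}(\varpi x)=\textup{div}(\varpi)+\mathcal{Z}^{\sharp}(x)$, i.e.\ the difference divisor $\mathcal{D}^{\sharp}(\varpi x)$ would have to equal the special fiber globally, which it does not.

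The paper's proof avoids all of this. It first replaces $\mathcal{Z}^{\sharp,\textup{bl}}(x)$ by its strict transform $\widetilde{\mathcal{Z}}^{\sharp,\textup{bl}}(x)$, which is harmless in $K$-theory since the discrepancy lives on $\exc_{L^{\sharp}}$ with trivial class (Lemma~\ref{int-spe-exc-unramified}(b)). Then Lemma~\ref{proper-with-strict} (built on the regularity of strict transforms of difference divisors, Lemma~\ref{regularity-diff-divisor-blow-up}) shows $\widetilde{\mathcal{Z}}^{\sharp,\textup{bl}}(x)$ and $\mathcal{Z}^{\sharp,\textup{bl}}(y)$ intersect properly, so the derived tensor is the structure sheaf of the scheme-theoretic intersection $\widetilde{\mathcal{Z}}^{\sharp,\textup{bl}}(x)\cap\mathcal{Z}^{\sharp,\textup{bl}}(y)$. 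Finally, the moduli interpretation gives $\widetilde{\mathcal{Z}}^{\sharp,\textup{bl}}(x)\cap\mathcal{Z}^{\sharp,\textup{bl}}(y+cx)=\widetilde{\mathcal{Z}}^{\sharp,\textup{bl}}(x)\cap\mathcal{Z}^{\sharp,\textup{bl}}(y)$, and one reduces to elementary generators of $\GL_2(\mathcal{O}_F)$. This argument trades the clean Koszul picture for a hands-on reduction, but it never needs to produce the local equation of $\mathcal{Z}^{\sharp}(x)$ over the full local ring, which is exactly the step your proposal is missing.
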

\begin{proof}
By Proposition \ref{int-spe-exc-unramified}, the derived tensor product of $\CO_{\mathcal{Z}^{\sharp,\textup{bl}}(x)}$ with $\exc_{L^{\sharp}}$ is the trivial line bundle on $\exc_{L^{\sharp}}$. Therefore we have:
\begin{equation}
    [\mathcal{O}_{\mathcal{Z}^{\sharp,\textup{bl}}(x)}\otimes^{\mathbb{L}}_{\mathcal{O}_{\mathcal{M}_{L^{\sharp}}}}\mathcal{O}_{\mathcal{Z}^{\sharp,\textup{bl}}(y)}]=[\mathcal{O}_{\widetilde{\mathcal{Z}^{\sharp,\textup{bl}}}(x)}\otimes^{\mathbb{L}}_{\mathcal{O}_{\mathcal{M}_{L^{\sharp}}}}\mathcal{O}_{\mathcal{Z}^{\sharp,\textup{bl}}(y)}].\label{tilde-inv}
\end{equation}
    Let $x^{\prime},y^{\prime}\in M$ be another $\Of$-basis of $M$. Then there exist $a,b,c,d\in\Of$ such that
    \begin{equation*}
        x^{\prime}=ax+by,\,\,\,\,y^{\prime}=cx+dy,\,\,\,\,ad-bc\in\Of^{\times}.
    \end{equation*}
    We first consider some special cases. 
    \begin{itemize}
        \item [Case 1]: $x^{\prime}=x$, $y^{\prime}=cx+y$. Then by Lemma \ref{proper-with-strict}, we have
        \begin{equation*}
            \CO_{\widetilde{\mathcal{Z}}^{\sharp,\textup{bl}}(x^{\prime})}\otimes^{\mathbb{L}}_{\CO_{\mathcal{M}_{L^{\sharp}}}}\CO_{\mathcal{Z}^{\sharp,\textup{bl}}(y^{\prime})}=\CO_{\widetilde{\mathcal{Z}}^{\sharp,\textup{bl}}(x)}\otimes_{\CO_{\mathcal{M}_{L^{\sharp}}}}\CO_{\mathcal{Z}^{\sharp,\textup{bl}}(y^{\prime})}=\CO_{\widetilde{\mathcal{Z}}^{\sharp,\textup{bl}}(x)\cap\mathcal{Z}^{\sharp,\textup{bl}}(y^{\prime})}.
        \end{equation*}
        By the moduli interpretations of the special cycles, we have $\widetilde{\mathcal{Z}}^{\sharp,\textup{bl}}(x)\cap\mathcal{Z}^{\sharp,\textup{bl}}(y^{\prime})=\widetilde{\mathcal{Z}}^{\sharp,\textup{bl}}(x)\cap\mathcal{Z}^{\sharp,\textup{bl}}(y)$. Therefore
        \begin{equation*}
            \CO_{\widetilde{\mathcal{Z}}^{\sharp,\textup{bl}}(x^{\prime})}\otimes^{\mathbb{L}}_{\CO_{\mathcal{M}_{L^{\sharp}}}}\CO_{\mathcal{Z}^{\sharp,\textup{bl}}(y^{\prime})}=\CO_{\widetilde{\mathcal{Z}}^{\sharp,\textup{bl}}(x)}\otimes^{\mathbb{L}}_{\CO_{\mathcal{M}_{L^{\sharp}}}}\CO_{\mathcal{Z}^{\sharp,\textup{bl}}(y)}.
        \end{equation*}
        \item [Case 2]: $x^{\prime}=x+ay$, $y^{\prime}=y$. We have:
        \begin{align*}
        [\CO_{\widetilde{\mathcal{Z}}^{\sharp,\textup{bl}}(x^{\prime})}\otimes^{\mathbb{L}}_{\CO_{\Ms}}\CO_{\mathcal{Z}^{\sharp,\textup{bl}}(y^{\prime})}&]=[\CO_{\mathcal{Z}^{\sharp,\textup{bl}}(x^{\prime})}\otimes^{\mathbb{L}}_{\CO_{\Ms}}\CO_{\mathcal{Z}^{\sharp,\textup{bl}}(y)}]=[\CO_{\mathcal{Z}^{\sharp,\textup{bl}}(x^{\prime})}\otimes^{\mathbb{L}}_{\CO_{\Ms}}\CO_{\widetilde{\mathcal{Z}}^{\sharp,\textup{bl}}(y)}]\\
            &\overset{\textup{Case}\,1}{=}[\CO_{\mathcal{Z}^{\sharp,\textup{bl}}(x)}\otimes^{\mathbb{L}}_{\CO_{\Ms}}\CO_{\widetilde{\mathcal{Z}}^{\sharp,\textup{bl}}(y)}]=[\CO_{\widetilde{\mathcal{Z}}^{\sharp,\textup{bl}}(x)}\otimes^{\mathbb{L}}_{\CO_{\Ms}}\CO_{\mathcal{Z}^{\sharp,\textup{bl}}(y)}].
        \end{align*}
        \par
        Now we come back to prove the proposition. There are two situations.
    \begin{itemize}
        \item If $a\in\Of^{\times}$. Scaling $x^{\prime}$ by $a^{-1}\in\Of^{\times}$, we can assume $a=1$. Then $y^{\prime}=cx^{\prime}+(d-bc)y$. Scaling $y^{\prime}$ by $(d-bc)^{-1}\in\Of^{\times}$, we can assume $d-bc=1$. Then $x^{\prime}=x+by, y^{\prime}=cx^{\prime}+y$. Therefore
        \begin{align*}
            &[\CO_{\mathcal{Z}^{\sharp,\textup{bl}}(x^{\prime})}\otimes^{\mathbb{L}}_{\CO_{\Ms}}\CO_{\mathcal{Z}^{\sharp,\textup{bl}}(y^{\prime})}]=[\CO_{\widetilde{\mathcal{Z}}^{\sharp,\textup{bl}}(x^{\prime})}\otimes^{\mathbb{L}}_{\CO_{\Ms}}\CO_{\mathcal{Z}^{\sharp,\textup{bl}}(y^{\prime})}]\overset{\textup{Case}\,1}{=}[\CO_{\widetilde{\mathcal{Z}}^{\sharp,\textup{bl}}(x^{\prime})}\otimes^{\mathbb{L}}_{\CO_{\Ms}}\CO_{\mathcal{Z}^{\sharp,\textup{bl}}(y)}]\\&\stackrel{\textup{Case 2}}=[\CO_{\widetilde{\mathcal{Z}}^{\sharp,\textup{bl}}(x)}\otimes^{\mathbb{L}}_{\CO_{\Ms}}\CO_{\mathcal{Z}^{\sharp,\textup{bl}}(y)}]=[\CO_{\mathcal{Z}^{\sharp,\textup{bl}}(x)}\otimes^{\mathbb{L}}_{\CO_{\Ms}}\CO_{\mathcal{Z}^{\sharp,\textup{bl}}(y)}].
        \end{align*}
        \item If $\nu_\varpi(a)\geq1$. Then $b,c\in\Of^{\times}$. Scaling $x^{\prime}$ by $b^{-1}$, we can assume $b=1$. Then $y^{\prime}=dx^{\prime}+(c-ad)x$. Scaling $y^{\prime}$ by $(c-ad)^{-1}$, we can assume $c-ad=1$. Then $x^{\prime}=ax+y, y^{\prime}=x+dx^{\prime}$. Therefore
        \begin{align*}
            &[\CO_{\mathcal{Z}^{\sharp,\textup{bl}}(x^{\prime})}\otimes^{\mathbb{L}}_{\CO_{\Ms}}\CO_{\mathcal{Z}^{\sharp,\textup{bl}}(y^{\prime})}]=[\CO_{\widetilde{\mathcal{Z}}^{\sharp,\textup{bl}}(x^{\prime})}\otimes^{\mathbb{L}}_{\CO_{\Ms}}\CO_{\mathcal{Z}^{\sharp,\textup{bl}}(y^{\prime})}]\stackrel{\textup{Case 1}}=[\CO_{\widetilde{\mathcal{Z}}^{\sharp,\textup{bl}}(x^{\prime})}\otimes^{\mathbb{L}}_{\CO_{\Ms}}\CO_{\mathcal{Z}^{\sharp,\textup{bl}}(x)}]\\
            &=[\CO_{\widetilde{\mathcal{Z}}^{\sharp,\textup{bl}}(x)}\otimes^{\mathbb{L}}_{\CO_{\Ms}}\CO_{\mathcal{Z}^{\sharp,\textup{bl}}(x^{\prime})}]\stackrel{\textup{Case 1}}=[\CO_{\widetilde{\mathcal{Z}}^{\sharp,\textup{bl}}(x)}\otimes^{\mathbb{L}}_{\CO_{\Ms}}\CO_{\mathcal{Z}^{\sharp,\textup{bl}}(y)}]=[\CO_{\mathcal{Z}^{\sharp,\textup{bl}}(x)}\otimes^{\mathbb{L}}_{\CO_{\Ms}}\CO_{\mathcal{Z}^{\sharp,\textup{bl}}(y)}].
        \end{align*}
    \end{itemize}
    \end{itemize}
\end{proof}
Let $M\subset\mathbb{V}^{\sharp}$ be a $\Of$-lattice of rank $r$ where $1\leq r\leq n+1$. Let $\boldsymbol{x}=\{x_1,\cdots,x_r\}$ be a basis of $M$. Define
\begin{equation*}
        {^{\mathbb{L}}\mathcal{Z}^{\sharp,\textup{bl}}}(\boldsymbol{x})\coloneqq[\CO_{\mathcal{Z}^{\sharp,\textup{bl}}(x_1)}\otimes^{\mathbb{L}}_{\CO_{\Ms}}\cdots\otimes^{\mathbb{L}}_{\CO_{\Ms}}\CO_{\mathcal{Z}^{\sharp,\textup{bl}}(x_r)}]\in\textup{Gr}^{r}K_0^{\mathcal{Z}^{\sharp,\textup{bl}}(M)}(\Ms).
    \end{equation*}
Notice that the element ${^{\mathbb{L}}\mathcal{Z}^{\sharp,\textup{bl}}}(\boldsymbol{x})$ is invariant under permutations and operations of the form $x_i\rightarrow a_ix_i+a_jx_j$ for some $a_i\in\Of^{\times}$ and $a_j\in\Of$ by Lemma \ref{linear-invariance-smooth}. We can also transform the basis $\boldsymbol{x}$ by permutations and operations of the form $x_i\rightarrow a_ix_i+a_jx_j$ for some $a_i\in\Of^{\times}$ and $a_j\in\Of$ to get any another basis $\boldsymbol{x}^{\prime}=\{x_1^{\prime},\cdots,x_r^{\prime}\}$ of $M$. Therefore the element ${^{\mathbb{L}}\mathcal{Z}^{\sharp,\textup{bl}}}(\boldsymbol{x})$ only depend on $M$. We denote this element by ${^{\mathbb{L}}\mathcal{Z}^{\sharp,\textup{bl}}}(M)$.
\begin{corollary}
    Let $M\subset\mathbb{V}$ be an $\Of$-lattice of rank $2$. Let $x,y\in\mathbb{V}^{\sharp}$ a basis of $M$. Then $[\mathcal{O}_{\mathcal{Z}^{\textup{bl}}(x)}\otimes^{\mathbb{L}}_{\mathcal{O}_{\M}}\mathcal{O}_{\mathcal{Z}^{\textup{bl}}(y)}]\in\textup{Gr}^{2}K_0^{\mathcal{Z}^{\textup{bl}}(M)}(\M)$ is independent of the choice of the basis $x,y$ of $M$.\label{linear-invariance-al}
\end{corollary}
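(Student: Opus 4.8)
The plan is to deduce this from its self-dual counterpart, Lemma \ref{linear-invariance-smooth}, by pulling back along the closed immersion $\iota\colon\M\hookrightarrow\Ms$ of \eqref{almost-spe-blow-up}, which identifies $\M$ with the strict transform $\widetilde{\mathcal{Z}}^{\sharp,\textup{bl}}(x_0)\subset\Ms$.

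First I would record the basic moduli identity. Identifying $\CN_L$ with $\CZ^{\sharp}(x_0)\subset\CN_{L^{\sharp}}$ by Theorem \ref{almost-divisor}, Lemma \ref{cancellation-law} gives $\CZ(x)=\CZ^{\sharp}(x)\cap\CN_L$ for a nonzero $x\in\BV$; passing to the blow-ups and using that $\M\to\CN_{L^{\sharp}}$ factors as $\M\xrightarrow{\ \iota\ }\widetilde{\mathcal{Z}}^{\sharp,\textup{bl}}(x_0)\hookrightarrow\Ms\xrightarrow{\ \pis\ }\CN_{L^{\sharp}}$, one obtains the scheme-theoretic equality $\iota\!\left(\zm(x)\right)=\mathcal{Z}^{\sharp,\textup{bl}}(x)\cap\widetilde{\mathcal{Z}}^{\sharp,\textup{bl}}(x_0)$ inside $\Ms$, where $\mathcal{Z}^{\sharp,\textup{bl}}(x)$ denotes the total transform. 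Since $x$ and $x_0$ are linearly independent in $\BV^{\sharp}=\BV\obot Fx_0$, Lemma \ref{proper-with-strict} shows that $\widetilde{\mathcal{Z}}^{\sharp,\textup{bl}}(x_0)$ and $\mathcal{Z}^{\sharp,\textup{bl}}(x)$ intersect properly, so this intersection is already underived:
\[
\iota_{*}\CO_{\zm(x)}\;=\;\CO_{\mathcal{Z}^{\sharp,\textup{bl}}(x)}\otimes^{\BL}_{\CO_{\Ms}}\iota_{*}\CO_{\M},
\qquad\text{equivalently}\qquad
\CO_{\zm(x)}\;\simeq\;\iota^{*}\CO_{\mathcal{Z}^{\sharp,\textup{bl}}(x)}\ \text{in}\ D(\M),
\]
with vanishing higher Tor. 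This is the one point where it is essential to use the strict transform $\widetilde{\mathcal{Z}}^{\sharp,\textup{bl}}(x_0)$ rather than the total transform.

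Now let $(x,y)$ and $(x',y')$ be two $\Of$-bases of $M$. Since derived pullback commutes with derived tensor product,
\[
\CO_{\zm(x)}\otimes^{\BL}_{\CO_{\M}}\CO_{\zm(y)}\;\simeq\;\iota^{*}\!\left(\CO_{\mathcal{Z}^{\sharp,\textup{bl}}(x)}\otimes^{\BL}_{\CO_{\Ms}}\CO_{\mathcal{Z}^{\sharp,\textup{bl}}(y)}\right),
\]
and likewise with primes. As $\M$ and $\Ms$ are regular and $\iota$ is a regular closed immersion (a Cartier divisor on the regular $\Ms$), $\iota^{*}$ induces a homomorphism $\textup{Gr}^{2}K_0^{\mathcal{Z}^{\sharp,\textup{bl}}(M)}(\Ms)\to\textup{Gr}^{2}K_0^{\zm(M)}(\M)$; by Lemma \ref{linear-invariance-smooth} the class $[\CO_{\mathcal{Z}^{\sharp,\textup{bl}}(x)}\otimes^{\BL}_{\CO_{\Ms}}\CO_{\mathcal{Z}^{\sharp,\textup{bl}}(y)}]={^{\BL}\mathcal{Z}^{\sharp,\textup{bl}}}(M)$ depends only on $M$. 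Applying $\iota^{*}$ therefore gives $[\CO_{\zm(x)}\otimes^{\BL}_{\CO_{\M}}\CO_{\zm(y)}]=[\CO_{\zm(x')}\otimes^{\BL}_{\CO_{\M}}\CO_{\zm(y')}]$, which is the assertion.

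The main obstacle is not geometric but the bookkeeping in the last two steps: one must check that $\iota^{*}$ is well defined on the relevant $K$-groups with support and compatible with the codimension filtration — using that $\mathcal{Z}^{\sharp,\textup{bl}}(M)$ meets $\M$ in the expected codimension — and that the fibre product $\zm(x)=\mathcal{Z}^{\sharp,\textup{bl}}(x)\times_{\Ms}\M$ is computed by an honest intersection, which is exactly what Lemma \ref{proper-with-strict} supplies. If one prefers to avoid the pullback formalism, an equivalent route is to transcribe the ``Case 1 / Case 2 plus elementary operations'' argument of Lemma \ref{linear-invariance-smooth} directly onto $\M$, using the $\M$-analogue Lemma \ref{inter-spe-exc-M} in place of Lemma \ref{int-spe-exc-unramified} together with the identity $\CO_{\zm(x)}\simeq\iota^{*}\CO_{\mathcal{Z}^{\sharp,\textup{bl}}(x)}$ established above.
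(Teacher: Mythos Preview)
Your proof is correct and follows essentially the same route as the paper: both reduce to Lemma \ref{linear-invariance-smooth} on $\Ms$ via the identification $\M\simeq\widetilde{\mathcal{Z}}^{\sharp,\textup{bl}}(x_0)$ and Lemma \ref{proper-with-strict}. The only difference is packaging: the paper forms the rank-$3$ lattice $M'=M\obot\Of\cdot x_0$ and computes ${}^{\BL}\mathcal{Z}^{\sharp,\textup{bl}}(M')$ directly, whereas you phrase the same computation as a derived pullback $\iota^{*}$; the paper's version has the minor advantage that it avoids having to verify separately that $\iota^{*}$ respects supports and the codimension filtration, since those facts are absorbed into the explicit rank-$3$ identity.
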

\begin{proof}
    Let $M^{\prime}=M\obot\Of\cdot x_0$ be an $\Of$-lattice of rank $3$. We have
    \begin{align*}
        {^{\mathbb{L}}\mathcal{Z}^{\sharp,\textup{bl}}}(M^{\prime})&=[\CO_{\mathcal{Z}^{\sharp,\textup{bl}}(x_0)}\otimes^{\mathbb{L}}_{\CO_{\Ms}}\CO_{\mathcal{Z}^{\sharp,\textup{bl}}(x)}\otimes^{\mathbb{L}}_{\CO_{\Ms}}\CO_{\mathcal{Z}^{\sharp,\textup{bl}}(y)}]\\
        &=[\CO_{\widetilde{\mathcal{Z}}^{\sharp,\textup{bl}}(x_0)}\otimes^{\mathbb{L}}_{\CO_{\Ms}}\CO_{\mathcal{Z}^{\sharp,\textup{bl}}(x)}\otimes^{\mathbb{L}}_{\CO_{\Ms}}\CO_{\mathcal{Z}^{\sharp,\textup{bl}}(y)}]\\
        &=[\left(\CO_{\widetilde{\mathcal{Z}}^{\sharp,\textup{bl}}(x_0)}\otimes^{\mathbb{L}}_{\CO_{\Ms}}\CO_{\mathcal{Z}^{\sharp,\textup{bl}}(x)}\right)\otimes^{\mathbb{L}}_{\CO_{\widetilde{\mathcal{Z}}^{\sharp,\textup{bl}}(x_0)}}\left(\CO_{\widetilde{\mathcal{Z}}^{\sharp,\textup{bl}}(x_0)}\otimes^{\mathbb{L}}_{\CO_{\Ms}}\CO_{\mathcal{Z}^{\sharp,\textup{bl}}(y)}\right)]\\
        &=[\CO_{\widetilde{\mathcal{Z}}^{\sharp,\textup{bl}}(x_0)\cap\mathcal{Z}^{\sharp,\textup{bl}}(x)}\otimes^{\mathbb{L}}_{\CO_{\widetilde{\mathcal{Z}}^{\sharp,\textup{bl}}(x_0)}}\CO_{\widetilde{\mathcal{Z}}^{\sharp,\textup{bl}}(x_0)\cap\mathcal{Z}^{\sharp,\textup{bl}}(y)}].
    \end{align*}
    Under the isomorphism $\iota:\M\xrightarrow{\sim}\widetilde{\mathcal{Z}}^{\sharp,\textup{bl}}(x_0)$ in (\ref{almost-spe-blow-up}), we have
    \begin{equation*}
        [\CO_{\widetilde{\mathcal{Z}}^{\sharp,\textup{bl}}(x_0)\cap\mathcal{Z}^{\sharp,\textup{bl}}(x)}\otimes^{\mathbb{L}}_{\CO_{\widetilde{\mathcal{Z}}^{\sharp,\textup{bl}}(x_0)}}\CO_{\widetilde{\mathcal{Z}}^{\sharp,\textup{bl}}(x_0)\cap\mathcal{Z}^{\sharp,\textup{bl}}(y)}]\simeq[\mathcal{O}_{\mathcal{Z}^{\textup{bl}}(x)}\otimes^{\mathbb{L}}_{\mathcal{O}_{\M}}\mathcal{O}_{\mathcal{Z}^{\textup{bl}}(y)}].
    \end{equation*}
    The element ${^{\mathbb{L}}\mathcal{Z}^{\sharp,\textup{bl}}}(M^{\prime})$ is independent of the choice of the basis of $M^{\prime}$, hence $M$. Therefore the element $[\mathcal{O}_{\mathcal{Z}^{\textup{bl}}(x)}\otimes^{\mathbb{L}}_{\mathcal{O}_{\M}}\mathcal{O}_{\mathcal{Z}^{\textup{bl}}(y)}]\in\textup{Gr}^{2}K_0^{\mathcal{Z}^{\textup{bl}}(M)}(\M)$ is independent of the choice of the basis $x,y$ of $M$.
\end{proof}
\begin{definition}
    Let $M\subset\mathbb{V}$ be a $\Of$-lattice of rank $r$ where $1\leq r\leq n$. Let $\boldsymbol{x}=\{x_1,\cdots,x_r\}$ be a basis of $M$. Define 
    \begin{equation}
        {^{\mathbb{L}}\mathcal{Z}}(M)\coloneqq[\CO_{\mathcal{Z}(x_1)}\otimes^{\mathbb{L}}_{\CO_{\CN_L}}\cdots\otimes^{\mathbb{L}}_{\CO_{\CN_L}}\CO_{\mathcal{Z}}(x_r)]\in\textup{Gr}^{r}K_0^{\mathcal{Z}(M)}(\CN_L),\label{der-ori}
    \end{equation}
    and
    \begin{equation}
        {^{\mathbb{L}}\mathcal{Z}^{\textup{bl}}}(M)\coloneqq[\CO_{\mathcal{Z}^{\textup{bl}}(x_1)}\otimes^{\mathbb{L}}_{\CO_{\M}}\cdots\otimes^{\mathbb{L}}_{\CO_{\M}}\CO_{\mathcal{Z}^{\textup{bl}}(x_r)}]\in\textup{Gr}^{r}K_0^{\mathcal{Z}^{\textup{bl}}(M)}(\M).\label{der-bl}
    \end{equation}\label{derived-spe-cycle}
\end{definition}
\begin{remark}
    Lemma \ref{cancellation-law} and \cite[Corollary 4.11.2]{LiZhang-orthogonalKR} imply the right hand side of the formula (\ref{der-ori}) only depends on the lattice $M$. Corollary \ref{linear-invariance-al} implies that right hand side of the formula (\ref{der-bl}) only depends on the lattice $M$. See also \cite{Ho2} for the unitary case.
\end{remark}

\subsection{Derived $\mathcal{Y}$-cycles}
\begin{lemma}
    Let $M\subset\mathbb{V}$ be an $\Of$-lattice of rank $2$. Let $x,y\in\mathbb{V}$ a basis of $M$. Then $[\mathcal{O}_{\mathcal{Y}^{\textup{bl}}(x)}\otimes^{\mathbb{L}}_{\mathcal{O}_{\M}}\mathcal{O}_{\mathcal{Y}^{\textup{bl}}(y)}]\in\textup{Gr}^{2}K_0^{\mathcal{Y}^{\textup{bl}}(M)}(\M)$ is independent of the choice of the basis $x,y$ of $M$.\label{linear-invariance-Y}
\end{lemma}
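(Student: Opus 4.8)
The plan is to transport the computation to $\CN_{L^{\sharp}}$ through the closed immersion $i_L\colon\M\xrightarrow{\sim}\widetilde{\mathcal{Z}}^{\sharp,\textup{bl}}(x_0)\hookrightarrow\Ms$ of \eqref{almost-spe-blow-up} and to reduce to the linear invariance of $\mathcal{Z}^{\sharp,\textup{bl}}$-cycles already contained in Lemma~\ref{linear-invariance-smooth}, paying for it with the exceptional-divisor identities of Lemma~\ref{inter-spe-exc-M}. Since $\M$ is regular (Lemma~\ref{geo-blowup}(a)), the codimension filtration on $K_0(\M)$ is multiplicative, the derived tensor product defines a bilinear product on $\textup{Gr}^{\bullet}K_0$, and $[\mathcal{O}_{D+D'}]=[\mathcal{O}_{D}]+[\mathcal{O}_{D'}]$ in $\textup{Gr}^{1}$ for effective Cartier divisors; moreover $\mathcal{Y}^{\textup{bl}}(x)\cap\mathcal{Y}^{\textup{bl}}(y)=\mathcal{Y}^{\textup{bl}}(M)$ depends only on $M$, the defining condition of $\mathcal{Y}$-cycles being additive in the special endomorphism. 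Applying Proposition~\ref{decom-Y} to both factors and expanding, $[\mathcal{O}_{\ym(x)}\otimes^{\BL}_{\mathcal{O}_{\M}}\mathcal{O}_{\ym(y)}]$ becomes a sum of $\mathcal{Z}$--$\mathcal{Z}$, $\mathcal{Z}$--$\exc$ and $\exc$--$\exc$ contributions in $\textup{Gr}^{2}K_0^{\mathcal{Y}^{\textup{bl}}(M)}(\M)$.

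The $\mathcal{Z}$--$\exc$ contributions $\sum_{\mu}[\mathcal{O}_{\zm(x,\mu)}]\cdot[\mathcal{O}_{\exc_L(y)}]$ and $\sum_{\nu}[\mathcal{O}_{\exc_L(x)}]\cdot[\mathcal{O}_{\zm(y,\nu)}]$ vanish in $\textup{Gr}^{2}$: by Lemma~\ref{inter-spe-exc-M}(b) each summand is the trivial line bundle on every component $\pi_L^{-1}(z)\simeq\mathbb{P}^{n-1}$. The $\exc$--$\exc$ contribution is, by Lemma~\ref{inter-spe-exc-M}(a) and the disjointness of the fibers, $\sum_{z}\mathcal{O}_{\pi_L^{-1}(z)}(-1)$ over $z\in\CN_L^{\textup{nfs}}\cap\mathcal{Y}(x)\cap\mathcal{Y}(y)$; by Lemma~\ref{singular-on-Y} this index set is $\{z_{\mathcal{L}}:\mathcal{L}\in\textup{Vert}^{1}(\BV),\ M\subseteq\mathcal{L}^{\vee}\}$, which depends only on $M$.

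There remains the main term $\sum_{\mu,\nu}[\mathcal{O}_{\zm(x,\mu)}\otimes^{\BL}_{\mathcal{O}_{\M}}\mathcal{O}_{\zm(y,\nu)}]$. For $\mu\in\Lambda^{\vee}/\Lambda$ fix a lift $\widetilde{\mu}\in\Lambda^{\vee}$; then under $i_L$ one has $\zm(x,\mu)=\mathcal{Z}^{\sharp,\textup{bl}}(x+\widetilde{\mu})\cap\widetilde{\mathcal{Z}}^{\sharp,\textup{bl}}(x_0)$, and the intersections in play are proper by Lemma~\ref{proper-with-strict}. Hence the projection formula for $i_L$ together with the identity \eqref{tilde-inv} interchanging strict and total transforms in the presence of a $\mathcal{Z}^{\sharp,\textup{bl}}$-factor give
\[
i_{L,*}\!\left(\mathcal{O}_{\zm(x,\mu)}\otimes^{\BL}_{\mathcal{O}_{\M}}\mathcal{O}_{\zm(y,\nu)}\right)={}^{\BL}\mathcal{Z}^{\sharp,\textup{bl}}(N_{\mu,\nu}),\qquad N_{\mu,\nu}:=\langle x_0,\,x+\widetilde{\mu},\,y+\widetilde{\nu}\rangle,
\]
a rank-$3$ sublattice of $\BV^{\sharp}$ (recall $m=n+1\geq 3$). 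By Lemma~\ref{linear-invariance-smooth}, applied iteratively exactly as in Corollary~\ref{linear-invariance-al}, ${}^{\BL}\mathcal{Z}^{\sharp,\textup{bl}}(N)$ depends only on the lattice $N$. The key point is that, although no single $N_{\mu,\nu}$ depends only on $M$, the \emph{family} $\{N_{\mu,\nu}\}_{(\mu,\nu)\in(\Lambda^{\vee}/\Lambda)^{2}}$ does: it is exactly the set of $\Of$-lattices $N$ with $\Lambda\oplus\varpi M\subseteq N\subseteq\Lambda^{\vee}\oplus M$, $N\cap Fx_0=\Lambda$ and $\textup{pr}_{\BV}(N)=M$ --- equivalently the graphs of the $q^{2}$ $\mathbb{F}_q$-linear maps $M/\varpi M\to\Lambda^{\vee}/\Lambda$ over $\Lambda\oplus\varpi M$ --- a description involving only $M$ and the fixed $\Lambda$. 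Therefore $\sum_{\mu,\nu}{}^{\BL}\mathcal{Z}^{\sharp,\textup{bl}}(N_{\mu,\nu})$ is independent of the basis of $M$.

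Finally one must descend this from $\Ms$ to $\M$. I would argue that $i_{L,*}$ is injective on the relevant piece $\textup{Gr}^{2}K_0^{\mathcal{Y}^{\textup{bl}}(M)}(\M)$ once restricted to classes already known to agree over $\M\setminus\CN_L^{\textup{nfs}}$ --- and over that open locus the weighted cycles $\zm(x,\mu)$ are honest $\mathcal{Z}$-divisors, so linear invariance there is just Corollary~\ref{linear-invariance-al}. Equivalently, and this is where I expect the only genuine difficulty to lie, one can bypass $i_{L,*}$ altogether and verify invariance of the main term directly at each $z_{\mathcal{L}}\in\CN_L^{\textup{nfs}}$, using the local charts \eqref{local-piece-smooth}--\eqref{local-piece}, the description of $\BV_{z_{\mathcal{L}}}$ in \eqref{exp-vz}, and the multiplicity input of Lemma~\ref{from-y-to-weighted}(a) and the proof of Proposition~\ref{decom-Y} (which show that $\zm(x,\mu)$ meets $\pi_L^{-1}(z_{\mathcal{L}})$ only for $\mu=0$ and only when $x\in\mathcal{L}$): one computes all the exceptional multiplicities and checks that they are stable under the elementary basis moves $x\mapsto ax+by$ with $a\in\Of^{\times}$, $b\in\Of$, which together with permutations generate $\GL_2(\Of)$. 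The hardest part is precisely this bookkeeping of exceptional-fiber contributions --- reconciling the total-transform contributions of the weighted divisors with the explicit $\exc_L(x)$ term of Proposition~\ref{decom-Y} --- everything else being either formal (multiplicativity of $\textup{Gr}^{\bullet}$ on the regular $\M$) or a direct appeal to the already-established linear invariance on $\CN_{L^{\sharp}}$.
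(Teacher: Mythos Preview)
Your approach is the paper's own: expand via Proposition~\ref{decom-Y}, use Lemma~\ref{inter-spe-exc-M} to see that the $\mathcal{Z}$--$\exc$ cross terms vanish and the $\exc$--$\exc$ term is $[\mathcal{O}_{\exc_L(M)}(-1)]$, and reduce the main $\mathcal{Z}$--$\mathcal{Z}$ term to the sum of ${}^{\BL}\mathcal{Z}^{\sharp,\textup{bl}}(N_{\mu,\nu})$ over the family of rank-$3$ lattices $\{N_{\mu,\nu}\}_{(\mu,\nu)\in(\Lambda^{\vee}/\Lambda)^{2}}$, which depends only on $M$. The only divergence is your final paragraph, where you treat the descent from $\Ms$ to $\M$ as the genuine difficulty and propose either an injectivity argument for $i_{L,*}$ or a local computation over $\CN_L^{\textup{nfs}}$.

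This worry is misplaced. Because $\M\simeq\widetilde{\CZ}^{\sharp,\textup{bl}}(x_0)$ is a regular Cartier divisor in the regular $\Ms$, for any closed $\CZ\subset\M$ the pushforward $i_{L,*}$ gives an \emph{isomorphism}
\[
\textup{Gr}^{r-1}K_0^{\CZ}(\M)\ \xrightarrow{\ \sim\ }\ \textup{Gr}^{r}K_0^{\CZ}(\Ms),
\]
not merely an injection: by d\'evissage both sides identify with the same graded piece of $G_0(\CZ)$, the shift by one coming from the codimension of $\M$ in $\Ms$. The paper simply records this isomorphism at the outset and carries out the entire computation in $\textup{Gr}^{3}K_0^{\mathcal{Y}^{\textup{bl}}(M)}(\Ms)$, replacing $\CO_{\widetilde{\CZ}^{\sharp,\textup{bl}}(x_0)}$ by $\CO_{\CZ^{\sharp,\textup{bl}}(x_0)}$ via Lemma~\ref{int-spe-exc-unramified}(b) to obtain ${}^{\BL}\CZ^{\sharp,\textup{bl}}(N_{\mu,\nu})$ directly. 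Your proposed exceptional-fiber bookkeeping is thus unnecessary; once you note the isomorphism above, the proof is complete.
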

\begin{proof}
    Let $\mathcal{Z}\subset\widetilde{\CZ}^{\sharp,\textup{bl}}(x_0)\simeq\M$ be a closed formal subscheme. We have an isomorphism
    \begin{equation*}
        \textup{Gr}^{r}K_0^{\CZ}(\Ms)\simeq\textup{Gr}^{r-1}K_0^{\CZ}(\M)
    \end{equation*}
    By Proposition \ref{decom-Y}, we have the following equality for a nonzero element $x\in\mathbb{V}$:
    \begin{align*}
        [\mathcal{O}_{\mathcal{Y}^{\textup{bl}}(x)}]&=\sum\limits_{\mu\in L^{\vee}/L}[\CO_{\mathcal{Z}^{\textup{bl}}(x,\mu)}]+[\CO_{\exc_{L}(x)}]=\sum\limits_{\mu\in L^{\vee}/L}[\CO_{\mathcal{Z}^{\sharp,\textup{bl}}(x+\eta(\mu))}\otimes^{\mathbb{L}}_{\CO_{\Ms}}\CO_{\widetilde{\CZ}^{\sharp,\textup{bl}}(x_0)}]+[\CO_{\exc_{L}(x)}].
    \end{align*}
    Therefore
    \begin{align}
        [\mathcal{O}_{\mathcal{Y}^{\textup{bl}}(x)}\otimes^{\mathbb{L}}_{\mathcal{O}_{\M}}\mathcal{O}_{\mathcal{Y}^{\textup{bl}}(y)}]=\sum\limits_{\boldsymbol{\mu}\in(L^{\vee}/L)^{2}}&[\CO_{\mathcal{Z}^{\sharp,\textup{bl}}(x+\eta(\mu_1))}\otimes^{\mathbb{L}}_{\CO_{\Ms}}\CO_{\mathcal{Z}^{\sharp,\textup{bl}}(y+\eta(\mu_2))}\otimes^{\mathbb{L}}_{\CO_{\Ms}}\CO_{\widetilde{\CZ}^{\sharp,\textup{bl}}(x_0)}]\label{step1}\\&+[\CO_{\exc_{L}(M)}(-1)].\notag
    \end{align}
    Notice that
    \begin{align}
        [\CO_{\mathcal{Z}^{\sharp,\textup{bl}}(x+\eta(\mu_1))}\otimes^{\mathbb{L}}_{\CO_{\Ms}}&\CO_{\mathcal{Z}^{\sharp,\textup{bl}}(y+\eta(\mu_2))}\otimes^{\mathbb{L}}_{\CO_{\Ms}}\CO_{\widetilde{\CZ}^{\sharp,\textup{bl}}(x_0)}]\label{step2}\\
        &=[\CO_{\mathcal{Z}^{\sharp,\textup{bl}}(x+\eta(\mu_1))}\otimes^{\mathbb{L}}_{\CO_{\Ms}}\CO_{\mathcal{Z}^{\sharp,\textup{bl}}(y+\eta(\mu_2))}\otimes^{\mathbb{L}}_{\CO_{\Ms}}\CO_{\CZ^{\sharp,\textup{bl}}(x_0)}].\notag
    \end{align}
    For an element $\boldsymbol{\mu}=(\mu_1,\mu_2)\in (L^{\vee}/L)^{2}$, let $M(x,y,\boldsymbol{\mu})$ be the lattice spanned by $x_0,x+\eta(\mu_1),y+\eta(\mu_2)$. We get the following by (\ref{step1}) and (\ref{step2}):
    \begin{equation*}
        [\mathcal{O}_{\mathcal{Y}^{\textup{bl}}(x)}\otimes^{\mathbb{L}}_{\mathcal{O}_{\M}}\mathcal{O}_{\mathcal{Y}^{\textup{bl}}(y)}]=[\CO_{\exc_{L}(M)}(-1)]+\sum\limits_{\boldsymbol{\mu}\in(L^{\vee}/L)^{2}}{^{\mathbb{L}}\CZ^{\sharp,\textup{bl}}(M(x,y,\boldsymbol{\mu}))}.
    \end{equation*}
    Let $x^{\prime},y^{\prime}$ be another basis of $M$. The set $\{M(x^{\prime},y^{\prime},\boldsymbol{\mu})\}_{\boldsymbol{\mu}\in(L^{\vee}/L)^{2}}$ equals to the set $\{M(x,y,\boldsymbol{\mu})\}_{\boldsymbol{\mu}\in(L^{\vee}/L)^{2}}$. Therefore by Lemma \ref{linear-invariance-smooth}, we have
    \begin{equation*}
        [\mathcal{O}_{\mathcal{Y}^{\textup{bl}}(x)}\otimes^{\mathbb{L}}_{\mathcal{O}_{\M}}\mathcal{O}_{\mathcal{Y}^{\textup{bl}}(y)}]=[\mathcal{O}_{\mathcal{Y}^{\textup{bl}}(x^{\prime})}\otimes^{\mathbb{L}}_{\mathcal{O}_{\M}}\mathcal{O}_{\mathcal{Y}^{\textup{bl}}(y^{\prime})}].
    \end{equation*}
\end{proof}

\begin{definition}
    Let $M\subset\mathbb{V}$ be a $\Of$-lattice of rank $r$ where $1\leq r\leq n$. Let $\boldsymbol{x}=\{x_1,\cdots,x_r\}$ be a basis of $M$. Define 
    \begin{equation}
        {^{\mathbb{L}}\mathcal{Y}}^{\textup{bl}}(M)\coloneqq[\CO_{\mathcal{Y}^{\textup{bl}}(x_1)}\otimes^{\mathbb{L}}_{\CO_{\M}}\cdots\otimes^{\mathbb{L}}_{\CO_{\M}}\CO_{\mathcal{Y}^{\textup{bl}}}(x_r)]\in\textup{Gr}^{r}K_0^{\mathcal{Y}^{\textup{bl}}(M)}(\M),\label{der-ori-y}
    \end{equation}\label{derived-spe-cycle-Y}
\end{definition}

\subsection{Derived weighted cycles}
\begin{definition}
    Let $\boldsymbol{\mu}=(\mu_1, ..., \mu_r) \in (\Lambda^\vee)^r$ be an element where $1\leq r\leq n$. Let $\boldsymbol{x}=(x_1, \hdots x_r) \in \BV^{r}$ be an element such that the vectors $\{x_i\}_{i=1}^{r}$ are linearly independent. Define 
    \begin{equation*}
        {^{\mathbb{L}}\mathcal{Z}_{\boldsymbol{\mu}}}(\boldsymbol{x})\coloneqq[\CO_{\mathcal{Z}(x_1,\mu_1)}\otimes^{\mathbb{L}}_{\CO_{\CN_L}}\cdots\otimes^{\mathbb{L}}_{\CO_{\CN_L}}\CO_{\mathcal{Z}(x_r,\mu_r)}]\in\textup{Gr}^{r}K_0^{\bigcap_{i=1}^{r}\mathcal{Z}(x_i+\mu_i)}(\CN_L),
    \end{equation*}
    and
    \begin{equation*}
        {^{\mathbb{L}}\mathcal{Z}_{\boldsymbol{\mu}}^{\textup{bl}}}(\boldsymbol{x})\coloneqq[\CO_{\mathcal{Z}^{\textup{bl}}(x_1,\mu_1)}\otimes^{\mathbb{L}}_{\CO_{\M}}\cdots\otimes^{\mathbb{L}}_{\CO_{\M}}\CO_{\mathcal{Z}^{\textup{bl}}(x_r,\mu_r)}]\in\textup{Gr}^{r}K_0^{\bigcap_{i=1}^{r}\mathcal{Z}^{\textup{bl}}(x_i+\mu_i)}(\M).
    \end{equation*}
\end{definition}
\begin{remark}
    Notice that when $\boldsymbol{\mu}=\boldsymbol{0}$. Let $M\subset\mathbb{V}$ be the rank $n$ $\Of$-lattice spanned by $x_i$. Then we have
    \begin{equation*}
        {^{\mathbb{L}}\mathcal{Z}_{\boldsymbol{0}}}(\boldsymbol{x})={^{\mathbb{L}}\mathcal{Z}}(M),\,\,{^{\mathbb{L}}\mathcal{Z}_{\boldsymbol{0}}^{\textup{bl}}}(\boldsymbol{x})={^{\mathbb{L}}\mathcal{Z}^{\textup{bl}}}(M).
    \end{equation*}
\end{remark}

\subsection{Local arithmetic intersection numbers}
\begin{lemma}
    Let $M\subset\mathbb{V}$ be a rank $n$ $\Of$-lattice with basis $x_1,\cdots,x_n$. Let $\boldsymbol{\mu}\in(L^{\vee}/L)^{n}$ be an element. The formal schemes $\mathcal{Y}^{\textup{bl}}(M)$, $\mathcal{Z}^{\textup{bl}}(M)$ and $\bigcap_{i=1}^{n} \CZ^{\textup{bl}}(x_i, \mu_i)$ (resp. $\mathcal{Y}(M)$, $\mathcal{Z}(M)$ and $\bigcap_{i=1}^{n} \CZ(x_i, \mu_i)$) are all proper schemes.\label{proper-schemes}
\end{lemma}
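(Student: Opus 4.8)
\textit{Proof plan.} The plan is to reduce the six assertions to properness facts that are either already recorded on the self-dual Rapoport--Zink space $\CN_{L^{\sharp}}$ or immediate for the exceptional locus of the blow-up. The blow-up morphisms $\pi_L\colon\M\to\CN_L$ and $\pis\colon\Ms\to\CN_{L^{\sharp}}$ are blow-ups along closed subschemes, hence projective and in particular proper and surjective; since $\zm(M)=\pi_L^{-1}(\CZ(M))$, $\ym(M)=\pi_L^{-1}(\CY(M))$ and $\bigcap_i\zm(x_i,\mu_i)=\pi_L^{-1}\!\big(\bigcap_i\CZ(x_i,\mu_i)\big)$, each cycle on $\M$ is proper if and only if its image on $\CN_L$ is. So it suffices to prove properness of whichever of the two incarnations is more convenient.

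For the $\CZ$-cycle and the weighted cycles I would pass to $\CN_{L^{\sharp}}$. By Theorem~\ref{almost-divisor} the closed immersion $i_L$ identifies $\CN_L$ with $\CZ^{\sharp}(x_0)$, and by the remark after Definition~\ref{def KR} one has $\CZ(x,\mu)=\CZ^{\sharp}(x+\eta(\mu))\cap\CN_L$; hence
\[
\bigcap_{i=1}^{n}\CZ(x_i,\mu_i)=\CZ^{\sharp}\big(\widetilde{M}\big),\qquad \CZ(M)=\CZ^{\sharp}\big(M\obot\Of x_0\big),
\]
where $\widetilde{M}\subset\mathbb{V}^{\sharp}$ is the $\Of$-lattice spanned by $x_0$ and $x_1+\eta(\mu_1),\dots,x_n+\eta(\mu_n)$. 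Since the $x_i$ are linearly independent in $\mathbb{V}$ and each $\eta(\mu_i)$ lies in $Fx_0$, these $n+1$ vectors are linearly independent, so $\widetilde{M}$ (and likewise $M\obot\Of x_0$) has rank $n+1$, which is the total dimension of the regular formal scheme $\CN_{L^{\sharp}}$. Thus both cycles have the shape $\CZ^{\sharp}(N)$ with $N$ a lattice of the ``correct'' rank $n+1$, and the properness --- indeed finiteness --- of such special cycles on the self-dual GSpin Rapoport--Zink space is exactly the input underpinning the local arithmetic Siegel--Weil formula of \cite[Theorem~1.2.1]{LiZhang-orthogonalKR}; I would quote it directly.

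For the $\CY$-cycle I would use its decomposition. By Proposition~\ref{decom-Y}, as Cartier divisors on $\M$, $\ym(x_i)=\sum_{\mu\in\Lambda^{\vee}/\Lambda}\zm(x_i,\mu)+\exc_L(x_i)$ for each $i$. Intersecting over $i=1,\dots,n$, the reduced locus of $\ym(M)=\bigcap_i\ym(x_i)$ is covered by the finitely many proper schemes $\bigcap_i\zm(x_i,\mu_i)$, $\boldsymbol{\mu}\in(\Lambda^{\vee}/\Lambda)^{n}$ (proper by the previous step together with the first paragraph), and by the exceptional part $\exc_L(M)=\pi_L^{-1}\big(\CN_L^{\textup{nfs}}\cap\CY(M)\big)$. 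By Lemmas~\ref{cor-ver-1-singular} and~\ref{singular-on-Y}, the set $\CN_L^{\textup{nfs}}\cap\CY(M)$ consists of the points $z_{\mathcal{L}}$ attached to type-$1$ vertex lattices $\mathcal{L}\subset\mathbb{V}$ with $M\subset\mathcal{L}^{\vee}$; any such $\mathcal{L}$ satisfies $M\subset\mathcal{L}^{\vee}\subset\varpi^{-1}M^{\vee}$, so there are only finitely many of them, and $\exc_L(M)$ is a finite disjoint union of copies of $\mathbb{P}^{n-1}_{\mathbb{F}}$. Hence $\ym(M)$ is a closed subscheme of $\M$ whose reduced locus is proper over $\mathbb{F}$, so $\ym(M)$ is a proper scheme, and then so is $\CY(M)$.

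The one genuinely non-formal ingredient is the finiteness of $\CZ^{\sharp}(N)$ on $\CN_{L^{\sharp}}$ for a lattice $N$ of rank $n+1$: equivalently, that its rigid generic fibre is empty and its special fibre is $0$-dimensional, so that the $n+1$ equations of the divisors $\CZ^{\sharp}(y)$ ($y$ ranging over a basis of $N$) cut out an artinian closed subscheme of the $(n+1)$-dimensional regular $\CN_{L^{\sharp}}$. I expect this (available from \cite{LiZhang-orthogonalKR}) to be the main point to pin down; everything else is bookkeeping with the Bruhat--Tits stratification of $\S$\ref{BT-strat}, Theorem~\ref{almost-divisor}, and Proposition~\ref{decom-Y}.
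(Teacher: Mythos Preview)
Your treatment of the $\CZ$-cycles and weighted cycles matches the paper's: pass to $\CN_{L^{\sharp}}$ via Theorem~\ref{almost-divisor} and the identification $\CZ(x,\mu)=\CZ^{\sharp}(x+\eta(\mu))\cap\CN_L$, then cite properness of $\CZ^{\sharp}(N)$ for a rank-$(n+1)$ lattice $N\subset\mathbb{V}^{\sharp}$ from \cite{LiZhang-orthogonalKR} (the precise reference is \cite[Lemma~4.13.1]{LiZhang-orthogonalKR}, not the main theorem). Your reduction of the blow-up versions to the versions on $\CN_L$ is also correct.

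For the $\CY$-cycles, however, you are working much harder than necessary, and your argument has a gap. The paper's proof is a single line: from the definition one has $\CZ(x)\subset\CY(x)\subset\CZ(\varpi x)$ (indeed, if $x_0\circ x$ lifts to an endomorphism then so does $x_0\circ(x_0\circ x)=q(x_0)\cdot x$, and $q(x_0)$ is a unit times $\varpi$), hence $\CY(M)\subset\CZ(\varpi M)$ is a closed subscheme of a proper scheme. No appeal to Proposition~\ref{decom-Y} or to counting vertex lattices is needed.

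The gap in your route is the finiteness claim for $\exc_L(M)$. You write that any type-$1$ vertex lattice $\mathcal{L}$ with $M\subset\mathcal{L}^{\vee}$ satisfies $M\subset\mathcal{L}^{\vee}\subset\varpi^{-1}M^{\vee}$, ``so there are only finitely many of them.'' But $M$ has rank $n$ inside the $(n+1)$-dimensional space $\mathbb{V}$, so $M^{\vee}=\{v\in\mathbb{V}:(v,M)\subset\Of\}$ is \emph{not} a lattice: it contains the full line $M_F^{\perp}$. Thus the chain of inclusions does not bound $\mathcal{L}^{\vee}$ between two lattices, and finiteness does not follow. The conclusion is true, but the quickest fix is precisely the paper's observation $\CY(M)\subset\CZ(\varpi M)$, which immediately gives that $\CN_L^{\textup{nfs}}\cap\CY(M)$ is a closed subset of a proper scheme and hence finite.
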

\begin{proof}
We only need to prove the statement for $\mathcal{Z}(M)$ since we have $\CZ(x)\subset\mathcal{Y}(x)\subset\CZ(\varpi x)$ and the blow-up version only adds finitely many projective spaces to the special cycles. By Lemma \ref{cancellation-law}, we have $\mathcal{Z}(M)=\iota^{-1}\left(\mathcal{Z}^{\sharp}(M\obot\Of\cdot x_0)\right)$. The formal scheme $\mathcal{Z}^{\sharp}(M\obot\Of\cdot x_0)\subset\CN_{L^{\sharp}}$ is a proper scheme by \cite[Lemma 4.13.1]{LiZhang-orthogonalKR}. Hence $\mathcal{Z}(M)$ is also a proper scheme.
\end{proof}
\begin{definition}
    Let $M\subset\mathbb{V}$ be an $\Of$-lattice of rank $n$. Define \textit{local arithmetic intersection numbers}
    \begin{itemize}
        \item $\mathcal{Z}$-cycles:
        \begin{itemize}
            \item On $\M$: \begin{equation}
        \Int^{\mathcal{Z}}(\M,M)=\chi(\M, {^{\mathbb{L}}\mathcal{Z}^{\textup{bl}}}(M)).\label{int-bl}\end{equation}
        \item On $\CN_L$: \begin{equation}
        \Int^{\mathcal{Z}}(\CN_L,M)=\chi(\CN_L, {^{\mathbb{L}}\mathcal{Z}}(M)).\label{int-ori}
    \end{equation}
        \end{itemize}
    \item $\mathcal{Y}$-cycles:\begin{equation}
        \Int^{\mathcal{Y}}(\M,M)=\chi(\M, {^{\mathbb{L}}\mathcal{Y}^{\textup{bl}}}(M)),\,\,\Int^{\overline{\mathcal{Y}}}(\M,M)=\chi(\M, {^{\mathbb{L}}\overline{\mathcal{Y}}^{\textup{bl}}}(M)).\label{int-Y}\end{equation}
    \item Weighted cycles: Let $\boldsymbol{x}=(x_1,\cdots,x_n)\in\mathbb{V}^{n}$ be an element such that $\{x_{i}\}_{i=1}^{n}$ be a basis of $M$. Let $\boldsymbol{\mu}\in(L^{\vee}/L)^{n}$ be an element.
    \begin{itemize}
        \item On $\M$:\begin{equation}
        \Int_{\boldsymbol{\mu}}(\M,\boldsymbol{x})=\chi(\M,{^{\mathbb{L}}\mathcal{Z}_{\boldsymbol{\mu}}^{\textup{bl}}}(\boldsymbol{x})).
    \end{equation}
    \item On $\CN_L$:\begin{equation}
        \Int_{\boldsymbol{\mu}}(\CN_L,\boldsymbol{x})=\chi(\CN_L,{^{\mathbb{L}}\mathcal{Z}_{\boldsymbol{\mu}}}(\boldsymbol{x})).
    \end{equation}
    \end{itemize}
    \end{itemize}
    All this numbers are finite by Lemma \ref{proper-schemes}.
    \label{int-spe-cycle}
\end{definition}
Now we are able to state the main theorem of this article. Recall that the almost self-dual lattice $L\simeq H_{n}^{\varepsilon}\obot\langle x\rangle$. Let $\mathcal{L}\subset\mathbb{V}$ be the unique (up to isometry) type 1 lattice of rank $n+1$.
\begin{theorem}\label{main}
Let $M\subset\mathbb{V}$ be a $\Of$-lattice of rank $n$. Let $\boldsymbol{x}=(x_1,\cdots,x_n)\in\mathbb{V}^{n}$ be an element such that $\{x_{i}\}_{i=1}^{n}$ be a basis of $M$. Let $T$ be the inner product matrix of $M$ with respect to the basis $\boldsymbol{x}$. Let $\boldsymbol{\mu}\in(L^{\vee}/L)^{n}$ be an element. Then
    \begin{align}
        \Int_{\boldsymbol{\mu}}(\M,\boldsymbol{x})=\Int_{\boldsymbol{\mu}}(\CN_L,\boldsymbol{x})=\frac{2q^{n/2}}{\gamma(L)\cdot\log(q)\cdot\textup{Nor}(1,H_{n}^{\varepsilon})}\cdot W_{T}^{\prime}(1,0,1_{\boldsymbol{\mu}+L^{n}}).\label{weighted-int}
    \end{align}
Especially, when $\boldsymbol{\mu}=\boldsymbol{0}$, we have
\begin{equation}
    \Int^{\mathcal{Z}}(\M,M)=\Int(\CN_L,M)=\partial\Den^{L}(M).\label{Z-int}
\end{equation}
Moreover,
\begin{equation}\label{Y-int}
    \Int^{\mathcal{Y}}(\M,M)=\partial\Den^{L^{\vee}}(M)+(-1)^{n}\cdot\Den^{\mathcal{L}^{\vee}}(M).
\end{equation}
\end{theorem}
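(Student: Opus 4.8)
The plan is to push every intersection number down to the self-dual Rapoport--Zink space $\CN_{\LL}$, where the local arithmetic Siegel--Weil formula $\Int(\CN_{\LL},-)=\partial\Den(\LL,-)$ of \cite[Theorem 1.2.1]{LiZhang-orthogonalKR} is available, and then to read off the genus-$n$ side via the reduction formula of Theorem \ref{reduction}. The first step is blow-up invariance: $\Int_{\boldsymbol{\mu}}(\M,\boldsymbol{x})=\Int_{\boldsymbol{\mu}}(\CN_L,\boldsymbol{x})$, which at $\boldsymbol{\mu}=\boldsymbol{0}$ also gives $\Int^{\mathcal{Z}}(\M,M)=\Int(\CN_L,M)$. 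By Remark \ref{coordinate} the completed local ring of $\CN_L$ at a non-formally smooth point is regular of dimension $n$ with its maximal ideal a regular system of parameters, so $\pi_L\colon\M\to\CN_L$ is the blow-up of a regular point of a regular scheme and $R\pi_{L\,*}\mathcal{O}_{\M}=\mathcal{O}_{\CN_L}$ (theorem on formal functions together with $H^{>0}(\mathbb{P}^{n-1},\mathcal{O}(j))=0$ for $j\ge 0$). Each weighted cycle $\CZ(x_i,\mu_i)$ is an effective Cartier divisor on $\CN_L$ whose total transform $\zm(x_i,\mu_i)$ is again Cartier, so $\mathcal{O}_{\zm(x_i,\mu_i)}=L\pi_L^{*}\mathcal{O}_{\CZ(x_i,\mu_i)}$, hence ${}^{\mathbb{L}}\mathcal{Z}_{\boldsymbol{\mu}}^{\textup{bl}}(\boldsymbol{x})=L\pi_L^{*}\,{}^{\mathbb{L}}\mathcal{Z}_{\boldsymbol{\mu}}(\boldsymbol{x})$ and the projection formula finishes this step.

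Next I would transfer to $\CN_{\LL}$. Using $\CN_L=\CZ^{\sharp}(x_0)$ (Theorem \ref{almost-divisor}), the cancellation law $\CZ(x,\mu)=\CZ^{\sharp}(x+\eta(\mu))\cap\CN_L$ (Lemma \ref{cancellation-law} and the remark after Definition \ref{def KR}), and the linear independence of $x_0$ and the nonzero $x_i+\eta(\mu_i)\in\BV^{\sharp}$, the proper-intersection results of \cite[\S 4.11]{LiZhang-orthogonalKR} identify $\Int_{\boldsymbol{\mu}}(\CN_L,\boldsymbol{x})$ with $\chi\bigl(\CN_{\LL},\,\mathcal{O}_{\CZ^{\sharp}(x_0)}\otimes^{\mathbb{L}}\bigotimes_{i}\mathcal{O}_{\CZ^{\sharp}(x_i+\eta(\mu_i))}\bigr)$; by linear invariance on $\CN_{\LL}$ this equals $\partial\Den(\LL,N)$ for the rank $n+1$ lattice $N=\langle x_0,x_1+\eta(\mu_1),\dots,x_n+\eta(\mu_n)\rangle\subset\BV^{\sharp}$, which does not embed in $\LL$ since $\BV^{\sharp}$ has Hasse invariant opposite to $\LL_F$. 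Translating by \eqref{eq: Whitt=Den} ($\gamma=1$, $|\det\LL|_F=1$), $\partial\Den(\LL,N)$ is a fixed multiple of $W'_{M(T,\boldsymbol{\mu})}(1,0,1_{(\LL)^{n+1}})$, where $M(T,\boldsymbol{\mu})$ is the Gram matrix of $N$. Since $L$ is almost self-dual we have $l=1$, so the sum in Theorem \ref{reduction} collapses to its $i=0$ term, giving for all $k\ge 0$
\[
W_{M(T,\boldsymbol{\mu})}(1,k,1_{(\LL)^{n+1}})=\tfrac{q^{n/2}}{\gamma(L)}\,\textup{Pden}\bigl(\LL\obot H_{2k}^{+},\Lambda\bigr)\cdot W_{T}\bigl(1,k,1_{\boldsymbol{\mu}+L^{n}}\bigr).
\]
Differentiating at $k=0$ and using $W_T(1,0,1_{\boldsymbol{\mu}+L^n})=0$ (incoherence of $\BV$, i.e. the coset density polynomial vanishes at $X=1$) kills the term in which the derivative hits $\textup{Pden}$; with the normalization identities of \S3 (Remark \ref{nor-local-density}, the value of $\textup{Pden}(\LL,\Lambda)$, and $\textup{Nor}^{\flat}=2\,\textup{Nor}$) this yields the first asserted identity, and the specialization $\boldsymbol{\mu}=\boldsymbol{0}$, together with $W_T(1,k,1_{L^n})=\gamma(L)q^{-n/2}\Den(L\obot H_{2k}^{+},M)$, gives \eqref{Z-int}.

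For the $\mathcal{Y}$-cycle formula \eqref{Y-int} I would expand ${}^{\mathbb{L}}\mathcal{Y}^{\textup{bl}}(M)$ using the Cartier-divisor decomposition $\ym(x_i)=\sum_{\mu}\zm(x_i,\mu)+\exc_L(x_i)$ of Proposition \ref{decom-Y} and inclusion--exclusion. The ``interior'' contribution (one summand $\zm(x_i,\mu_i)$ in each of the $n$ factors) equals $\sum_{\boldsymbol{\mu}\in(L^{\vee}/L)^{n}}\Int_{\boldsymbol{\mu}}(\M,\boldsymbol{x})$, which by the first part of the theorem and $1_{(L^{\vee})^{n}}=\sum_{\boldsymbol{\mu}}1_{\boldsymbol{\mu}+L^{n}}$ is a multiple of $W'_{T}(1,0,1_{(L^{\vee})^{n}})$, hence $\partial\Den^{L^{\vee}}(M)$ via the density polynomial for $L^{\vee}$. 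Every remaining term contains at least one factor $\exc_L(x_i)$, so by Lemma \ref{inter-spe-exc-M} it is supported on the copies of $\mathbb{P}^{n-1}$ lying over the type-$1$ vertex lattices $\mathcal{L}\subset\BV$ with $M\subseteq\mathcal{L}^{\vee}$ (Lemma \ref{singular-on-Y}); a local computation in the charts \eqref{local-piece} using the cohomology of $\mathcal{O}_{\mathbb{P}^{n-1}}(j)$ evaluates the total exceptional contribution to $(-1)^{n}$ times the number of such $\mathcal{L}$, and a vertex-lattice counting identity of the type of Proposition \ref{lattice-counting}, now applied to the almost self-dual lattice $\mathcal{L}$, rewrites this as $(-1)^{n}\Den^{\mathcal{L}^{\vee}}(M)$.

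The main obstacle is Step 4: the inclusion--exclusion over the $n$ decompositions produces many mixed $\zm$--$\exc$ terms, most of which must cancel in pairs through the vanishing of $H^{\bullet}(\mathbb{P}^{n-1},\mathcal{O}(j))$ in the intermediate degrees, and carrying this out requires the precise local comparison --- already present in the proof of Proposition \ref{decom-Y} --- of the $\mathcal{Y}$-cycles with the weighted cycles and the exceptional divisor near the non-formally smooth points of $\CN_L$. The remaining difficulty is bookkeeping: tracking the Weil index $\gamma(L)$, the normalizers $\textup{Nor}$ and $\textup{Nor}^{\flat}$, and the factor $\textup{Pden}(\LL,\Lambda)$ through Steps 2--3 so that the displayed constants come out exactly.
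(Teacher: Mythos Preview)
Your proposal is correct and follows essentially the same route as the paper: blow-up invariance via the projection formula (Proposition \ref{inv}/Corollary \ref{cor: blowup do not change int}), transfer to $\CN_{\LL}$ through $\CN_L\simeq\CZ^{\sharp}(x_0)$, application of Li--Zhang together with the reduction formula Theorem \ref{reduction} (which for $l=1$ collapses to a single term), and for \eqref{Y-int} the expansion of $\ym(x_i)$ via Proposition \ref{decom-Y} followed by Proposition \ref{lattice-counting}.

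The one point where you are more hesitant than necessary is your ``main obstacle'', the mixed $\zm$--$\exc$ terms. These do not ``mostly cancel in pairs'' via cohomology of $\mathcal{O}_{\mathbb{P}^{n-1}}(j)$; they all vanish outright. The relevant input is Lemma \ref{inter-spe-exc-M}(b): $[\mathcal{O}_{\zm(x,\mu)}\otimes^{\mathbb L}_{\mathcal{O}_{\M}}\mathcal{O}_{\exc_L}]=\mathcal{O}_{\exc_L}(0)$ in $\mathrm{Gr}^{2}K_0^{\exc_L}(\M)\simeq\mathrm{Pic}(\exc_L)$, i.e.\ the trivial class. Thus any term in the expansion containing at least one $\exc_L$-factor and at least one $\zm$-factor is already zero in the graded $K$-group, and only the pure-$\zm$ term (giving $\sum_{\boldsymbol{\mu}}\Int_{\boldsymbol{\mu}}=\partial\Den^{L^{\vee}}(M)$) and the pure-$\exc_L$ term survive. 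The latter, by Lemma \ref{inter-spe-exc-M}(a) and Lemma \ref{singular-on-Y}, localises to one copy of $\chi(\mathbb{P}^{n-1}_{\mathbb{F}},{}^{\mathbb L}\!\otimes^{n}\mathcal{O}(-1))$ for each type-$1$ vertex lattice $\mathcal{L}'$ with $M\subset(\mathcal{L}')^{\vee}$, and Proposition \ref{lattice-counting} converts that count into $\Den^{\mathcal{L}^{\vee}}(M)$. So no delicate inclusion--exclusion is needed; the lemma you cite for the ``interior'' part already kills the cross terms.
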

The proof of this theorem will be given in the next section.

\section{Local modularity}
\subsection{Blow up invariance}
\begin{proposition}\label{inv}
Let $\boldsymbol{\mu}\in (\Lambda^\vee/\Lambda)^n$ be an element. Let $\boldsymbol{x}=(x_1, \hdots x_n) \in \BV^{n}$ be an element such that the vectors $\{x_i\}_{i=1}^{n}$ are linearly independent. We have
\begin{equation}\label{eq: pushforward along blow up}
    {}^\BL \mathcal{Z}_{\boldsymbol{\mu}}(\boldsymbol{x}) = \textup{\textup{R}} \pi_*({}^{\mathbb{L}}\mathcal{Z}_{\boldsymbol{\mu}}^{\mathrm{bl}}(\boldsymbol{x})).
\end{equation}
\end{proposition}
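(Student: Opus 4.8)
The plan is to show that ${}^{\mathbb{L}}\mathcal{Z}_{\boldsymbol{\mu}}^{\textup{bl}}(\boldsymbol{x})$ is the derived pullback along $\pi_L$ (the morphism denoted $\pi$ in the statement) of ${}^{\mathbb{L}}\mathcal{Z}_{\boldsymbol{\mu}}(\boldsymbol{x})$, and then to conclude by the projection formula together with the vanishing $\mathrm{R}\pi_{L\ast}\CO_{\M}=\CO_{\CN_L}$. First I would record the geometry: $\pi_L\colon\M\to\CN_L$ is the blow up of the regular formal scheme $\CN_L$ along the discrete reduced closed subscheme $\CN_L^{\textup{nfs}}$, whose points are regular points of $\CN_L$ (the completed local rings being the hypersurfaces (\ref{nfs-local-ring-2})), and $\M$ is again regular with $\pi_L^{-1}(z_0)\simeq\mathbb{P}^{n-1}_{\mathbb{F}}$ for $z_0\in\CN_L^{\textup{nfs}}$ (Lemma \ref{geo-blowup}). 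The supports of both sides of (\ref{eq: pushforward along blow up}) are proper (Lemma \ref{proper-schemes}), so both classes lie in $K$-groups with proper support on which $\mathrm{R}\pi_{L\ast}$ is defined, and the asserted identity is an equality there.

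The first step is to prove $\CO_{\zm(x_i,\mu_i)}=\mathrm{L}\pi_L^{\ast}\CO_{\CZ(x_i,\mu_i)}$ for each $i$. By the Remark following Definition \ref{def KR}, $\CZ(x_i,\mu_i)\subset\CN_L$ and $\zm(x_i,\mu_i)=\CZ(x_i,\mu_i)\times_{\CN_L}\M\subset\M$ are effective Cartier divisors, so $\CO_{\CZ(x_i,\mu_i)}$ is quasi-isomorphic to the two term complex $[\CO_{\CN_L}(-\CZ(x_i,\mu_i))\to\CO_{\CN_L}]$ of line bundles; applying $\mathrm{L}\pi_L^{\ast}$ (which for a complex of vector bundles is simply $\pi_L^{\ast}$) yields $[\CO_{\M}(-\zm(x_i,\mu_i))\to\CO_{\M}]$, whose differential is still injective because $\zm(x_i,\mu_i)$ is Cartier, so this resolves $\CO_{\zm(x_i,\mu_i)}$. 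Since $\mathrm{L}\pi_L^{\ast}$ is symmetric monoidal, it follows that
\begin{equation*}
    {}^{\mathbb{L}}\mathcal{Z}_{\boldsymbol{\mu}}^{\textup{bl}}(\boldsymbol{x})=\mathrm{L}\pi_L^{\ast}\bigl({}^{\mathbb{L}}\mathcal{Z}_{\boldsymbol{\mu}}(\boldsymbol{x})\bigr),
\end{equation*}
and then, using the projection formula and $\mathrm{R}\pi_{L\ast}\CO_{\M}=\CO_{\CN_L}$,
\begin{equation*}
    \mathrm{R}\pi_{L\ast}\bigl({}^{\mathbb{L}}\mathcal{Z}_{\boldsymbol{\mu}}^{\textup{bl}}(\boldsymbol{x})\bigr)=\mathrm{R}\pi_{L\ast}\,\mathrm{L}\pi_L^{\ast}\bigl({}^{\mathbb{L}}\mathcal{Z}_{\boldsymbol{\mu}}(\boldsymbol{x})\bigr)={}^{\mathbb{L}}\mathcal{Z}_{\boldsymbol{\mu}}(\boldsymbol{x})\otimes^{\mathbb{L}}_{\CO_{\CN_L}}\mathrm{R}\pi_{L\ast}\CO_{\M}={}^{\mathbb{L}}\mathcal{Z}_{\boldsymbol{\mu}}(\boldsymbol{x}),
\end{equation*}
which is the claim.

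The part I expect to require the most care is the vanishing $\mathrm{R}\pi_{L\ast}\CO_{\M}=\CO_{\CN_L}$. This is a statement about coherent sheaves, so it may be checked after completing at points of $\CN_L$, and it is trivial away from $\CN_L^{\textup{nfs}}$ where $\pi_L$ is an isomorphism. At $z_0\in\CN_L^{\textup{nfs}}$ the morphism $\widehat{\mathcal{M}}_{L,z_0}\to\Spf\,\widehat{\CO}_{\CN_L,z_0}$ is the blow up of the regular $n$-dimensional formal scheme $\Spf\,\widehat{\CO}_{\CN_L,z_0}$ at its closed point, a regularly embedded center of codimension $n$ with exceptional fibre $\mathbb{P}^{n-1}_{\mathbb{F}}$; normality of $\widehat{\CO}_{\CN_L,z_0}$ gives $\pi_{L\ast}\CO=\CO$, and the theorem on formal functions together with $H^{i}(\mathbb{P}^{n-1}_{\mathbb{F}},\CO)=0$ for $i>0$ — applied to the infinitesimal neighbourhoods of the exceptional fibre, described explicitly by the charts (\ref{local-piece}) — gives $\mathrm{R}^{i}\pi_{L\ast}\CO=0$ for $i>0$. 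The point worth emphasizing is that although the points of $\CN_L^{\textup{nfs}}$ fail to be formally smooth over $\ofb$, they are nevertheless regular points of $\CN_L$, so the classical computation of the higher direct images of the structure sheaf under a point blow up of a regular scheme applies directly; one should also record the routine formal-scheme bookkeeping (properness, indeed projectivity, of $\pi_L$ and coherence of its higher direct images, and compatibility of $\mathrm{R}\pi_{L\ast}$ with the grading on $K_0$ of the proper support). As an alternative to invoking the projection formula globally, once the first step is in hand the identity (\ref{eq: pushforward along blow up}) can equally be verified component by component after completion, reducing again to this same local model computation.
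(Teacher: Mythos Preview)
Your proposal is correct and follows essentially the same approach as the paper: identify ${}^{\mathbb{L}}\mathcal{Z}_{\boldsymbol{\mu}}^{\textup{bl}}(\boldsymbol{x})$ with $\mathrm{L}\pi_L^{\ast}\bigl({}^{\mathbb{L}}\mathcal{Z}_{\boldsymbol{\mu}}(\boldsymbol{x})\bigr)$ (the paper just says this ``follows from the fact that ${}^{\mathbb{L}}\mathcal{Z}_{\boldsymbol{\mu}}(\boldsymbol{x})$ is the derived tensor product of divisors''), then apply the projection formula together with $\mathrm{R}\pi_{L\ast}\CO_{\M}=\CO_{\CN_L}$. The only difference is that the paper asserts $\CO$-connectedness of $\pi_L$ without proof, whereas you supply the standard argument via blow-up at a regular point and the theorem on formal functions.
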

\begin{proof}
    Let $\pi_L: \mathcal{M}_L \to \CN_L$ denote the blow-up morphism. According to \cite[Lemma 20.54.3]{stack}, we have  
    \begin{align*}
        \bold{\textup{R}} \pi_{L,*} \CO_{\M} \otimes_{\mathcal{O}_{\M}}^{\BL}  {}^{\mathbb{L}}\mathcal{Z}_{\boldsymbol{\mu}}(\boldsymbol{x}) =\bold{\textup{R}} \pi_{L,*}\left( \CO_{\M} \otimes_{\mathcal{O}_{\M}}^{\BL} \boldsymbol{\textup{L}} \pi_L^* ({}^{\mathbb{L}}\mathcal{Z}_{\boldsymbol{\mu}}(\boldsymbol{x}))\right)=\bold{\textup{R}} \pi_{L,*}({}^{\mathbb{L}}\mathcal{Z}_{\boldsymbol{\mu}}^{\mathrm{bl}}(\boldsymbol{x})).
    \end{align*}
The second equality follows from the fact that ${}^{\mathbb{L}}\mathcal{Z}_{\boldsymbol{\mu}}(\boldsymbol{x})$ is the derived tensor product of divisors. Since $\pi$ is $\CO$-connected, we have $\boldsymbol{\textup{R}} \pi_{L,*} \CO_{\M}=\CO_{\CN_L},$ hence
\begin{align*}
   {}^\BL \mathcal{Z}_{\boldsymbol{\mu}}(\boldsymbol{x}) = \bold{\textup{R}} \pi_{L,*}({}^{\mathbb{L}}\mathcal{Z}_{\boldsymbol{\mu}}^{\mathrm{bl}}(\boldsymbol{x})).
    \end{align*}
\end{proof}
\begin{corollary}\label{cor: blowup do not change int}
    Let $\boldsymbol{\mu}\in (\Lambda^\vee/\Lambda)^n$ be an element. Let $\boldsymbol{x}=(x_1, \hdots x_n) \in \BV^{n}$ be an element such that the vectors $\{x_i\}_{i=1}^{n}$ are linearly independent. We have
    \begin{equation*}
        \Int_{\boldsymbol{\mu}}(\M,\boldsymbol{x})=\Int_{\boldsymbol{\mu}}(\CN_L,\boldsymbol{x}).
    \end{equation*}
    Especially if $\boldsymbol{\mu}=\boldsymbol{0}$. Let $M\subset\mathbb{V}$ be the lattice spanned by $x_1,\hdots,x_n$. We have
    \begin{equation*}
        \Int^{\mathcal{Z}}(\M,M)=\Int^{\mathcal{Z}}(\CN_L,M).
    \end{equation*}
\end{corollary}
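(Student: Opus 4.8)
The plan is to deduce this immediately from Proposition \ref{inv} together with the compatibility of the Euler--Poincaré characteristic with proper pushforward. First I would record that the blow-up morphism $\pi_L\colon\M\to\CN_L$ is proper, and that by Lemma \ref{proper-schemes} the complexes ${}^{\mathbb{L}}\mathcal{Z}_{\boldsymbol{\mu}}^{\textup{bl}}(\boldsymbol{x})$ and ${}^{\mathbb{L}}\mathcal{Z}_{\boldsymbol{\mu}}(\boldsymbol{x})$ are supported on the proper schemes $\bigcap_{i}\CZ^{\textup{bl}}(x_i,\mu_i)$ and $\bigcap_{i}\CZ(x_i,\mu_i)$ respectively. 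Hence all the Euler characteristics occurring in Definition \ref{int-spe-cycle} are finite, $\pi_L$ restricts to a proper morphism of the proper supports, and the classical Leray spectral sequence (equivalently, the fact that proper pushforward in $K$-theory is compatible with $\chi$) applies in our setting.

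Next I would apply this to ${}^{\mathbb{L}}\mathcal{Z}_{\boldsymbol{\mu}}^{\textup{bl}}(\boldsymbol{x})$: since $\pi_L$ is proper,
\[
\chi\bigl(\M,{}^{\mathbb{L}}\mathcal{Z}_{\boldsymbol{\mu}}^{\textup{bl}}(\boldsymbol{x})\bigr)=\chi\bigl(\CN_L,\textup{R}\pi_{L,*}\,{}^{\mathbb{L}}\mathcal{Z}_{\boldsymbol{\mu}}^{\textup{bl}}(\boldsymbol{x})\bigr).
\]
By Proposition \ref{inv} the right-hand complex equals ${}^{\mathbb{L}}\mathcal{Z}_{\boldsymbol{\mu}}(\boldsymbol{x})$ in $\textup{Gr}^{n}K_0^{\bigcap_i\mathcal{Z}(x_i,\mu_i)}(\CN_L)$, so the displayed quantity is $\chi\bigl(\CN_L,{}^{\mathbb{L}}\mathcal{Z}_{\boldsymbol{\mu}}(\boldsymbol{x})\bigr)$. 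Unwinding Definition \ref{int-spe-cycle} this is precisely the asserted identity $\Int_{\boldsymbol{\mu}}(\M,\boldsymbol{x})=\Int_{\boldsymbol{\mu}}(\CN_L,\boldsymbol{x})$.

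Finally, for the special case $\boldsymbol{\mu}=\boldsymbol{0}$ I would invoke the identifications ${}^{\mathbb{L}}\mathcal{Z}_{\boldsymbol{0}}(\boldsymbol{x})={}^{\mathbb{L}}\mathcal{Z}(M)$ and ${}^{\mathbb{L}}\mathcal{Z}_{\boldsymbol{0}}^{\textup{bl}}(\boldsymbol{x})={}^{\mathbb{L}}\mathcal{Z}^{\textup{bl}}(M)$ noted right after the definition of the derived weighted cycles, where $M\subset\mathbb{V}$ is the $\Of$-lattice spanned by $x_1,\dots,x_n$; the equality $\Int^{\mathcal{Z}}(\M,M)=\Int^{\mathcal{Z}}(\CN_L,M)$ is then the $\boldsymbol{\mu}=\boldsymbol{0}$ instance of the general statement just proved. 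The argument has essentially no obstacle beyond Proposition \ref{inv} itself; the only point requiring a word of care is that $\M$ and $\CN_L$ are formal schemes rather than schemes, which is harmless because all the cohomology in question is concentrated on the proper closed subschemes cut out by the special cycles (Lemma \ref{proper-schemes}), reducing everything to the usual Leray/projection-formula formalism over $\ofb$.
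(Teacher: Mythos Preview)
Your proposal is correct and follows essentially the same approach as the paper: the paper's proof is the one-line ``This follows from Proposition \ref{inv} and \cite[Lemma 10.52.3, Lemma 33.33.5]{stack},'' where the cited Stacks Project lemmas encode exactly the compatibility of $\chi$ with proper pushforward that you spell out via Leray.
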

\begin{proof}
This follows from Proposition \ref{inv} and \cite[Lemma 10.52.3, Lemma 33.33.5]{stack}.
\end{proof}

\subsection{Proof of Theorem \ref{main}}
In the following, we use the notation as the statement of Theorem \ref{main}. We first prove the equality (\ref{weighted-int}). Recall that $\eta:L^{\vee}/L\rightarrow\Lambda^{\vee}$ is a lift of the isomorphism $r^{-1}:\Lambda^{\vee}/\Lambda\rightarrow L^{\vee}/L$. By Theorem \ref{almost-divisor} and Lemma \ref{cancellation-law}, we have
\begin{align*}
    {^{\mathbb{L}}\mathcal{Z}_{\boldsymbol{\mu}}^{\textup{bl}}}(\boldsymbol{x})&=[\CO_{\mathcal{Z}(x_1,\mu_1)}\otimes^{\mathbb{L}}_{\CO_{\CN_L}}\cdots\otimes^{\mathbb{L}}_{\CO_{\CN_L}}\CO_{\mathcal{Z}(x_r,\mu_n)}]\\
    &=[\CO_{\mathcal{Z}^{\sharp}(x_0)}\otimes^{\mathbb{L}}_{\CO_{\CN_{L^{\sharp}}}}\CO_{\mathcal{Z}^{\sharp}(x_1+\eta(\mu_1))}\otimes^{\mathbb{L}}_{\CO_{\CN_{L^{\sharp}}}}\cdots\otimes^{\mathbb{L}}_{\CO_{\CN_{L^{\sharp}}}}\CO_{\mathcal{Z}^{\sharp}(x_r+\eta(\mu_n))}].
\end{align*}
Therefore we have
\begin{align*}
    \Int_{\boldsymbol{\mu}}(\CN_L,\boldsymbol{x})&=\chi\left(\CN_L,{^{\mathbb{L}}\mathcal{Z}_{\boldsymbol{\mu}}^{\textup{bl}}}(\boldsymbol{x})\right)\\&=\chi\left(\CN_{L^{\sharp}},[\CO_{\mathcal{Z}^{\sharp}(x_0)}\otimes^{\mathbb{L}}_{\CO_{\CN_{L^{\sharp}}}}\CO_{\mathcal{Z}^{\sharp}(x_1+\eta(\mu_1))}\otimes^{\mathbb{L}}_{\CO_{\CN_{L^{\sharp}}}}\cdots\otimes^{\mathbb{L}}_{\CO_{\CN_{L^{\sharp}}}}\CO_{\mathcal{Z}^{\sharp}(x_r+\eta(\mu_n))}]\right).
\end{align*}
By \cite[Theorem 1.2.1]{LiZhang-orthogonalKR} and (\ref{eq: Whitt=Den}), we have
\begin{align*}
    \chi\left(\CN_{L^{\sharp}},[\CO_{\mathcal{Z}^{\sharp}(x_0)}\otimes^{\mathbb{L}}_{\CO_{\CN_{L^{\sharp}}}}\CO_{\mathcal{Z}^{\sharp}(x_1+\eta(\mu_1))}\otimes^{\mathbb{L}}_{\CO_{\CN_{L^{\sharp}}}}\cdots\otimes^{\mathbb{L}}_{\CO_{\CN_{L^{\sharp}}}}\CO_{\mathcal{Z}^{\sharp}(x_r+\eta(\mu_n))}]\right)=\frac{W_{M(T,\eta(\boldsymbol{\mu}))}^{\pr}(1,0,1_{(L^{\sharp})^{n+1}})}{\log(q)\cdot\textup{Nor}(1,H_{n+2}^{\varepsilon})}.
\end{align*}
Notice that $\textup{Nor}(1,H_{n+2}^{\varepsilon})=\frac{1}{2}\cdot\textup{Pden}(H_{n+2}^{\varepsilon},\Lambda)\cdot\textup{Nor}(1,H_{n}^{\varepsilon})$. Combining with Theorem \ref{thm: reduction formula}, we get the desired formula (\ref{weighted-int}). When $\boldsymbol{\mu}=\boldsymbol{0}$, the formula (\ref{Z-int}) follows from (\ref{weighted-int}), (\ref{eq: Whitt=Den}) and the definition of the derived local density $\partial\Den^{L}(M)$ in Definition \ref{normalized-def}.
\par
Now we prove the formula (\ref{Y-int}) about the intersection of $\mathcal{Y}$-cycles. By Proposition \ref{decom-Y}, Lemma \ref{inter-spe-exc-M} and Lemma \ref{singular-on-Y}, we have
\begin{align*}
    \Int^{\mathcal{Y}}&(\M,M)=\sum\limits_{\boldsymbol{\mu}\in \left(L^{\vee}/L\right)^{n}}\Int_{\boldsymbol{\mu}}(\M,\boldsymbol{x})+\sum\limits_{z\in\CN_L^{\textup{nfs}}\cap\mathcal{Y}(M)}\chi\left(\mathbb{P}_{\mathbb{F}}^{n-1},{^{\mathbb{L}}\otimes}_{i=1}^{n}\mathcal{O}_{\mathbb{P}_{\mathbb{F}}^{n-1}}(-1)\right)\\
    &=\sum\limits_{\boldsymbol{\mu}\in \left(L^{\vee}/L\right)^{n}}\frac{2q^{n/2}}{\gamma(L)\cdot\log(q)\cdot\textup{Nor}(1,H_{n}^{\varepsilon})}\cdot W_{T}^{\prime}(1,0,1_{\boldsymbol{\mu}+L^{n}})+(-1)^{n}\cdot\sharp\left(\CN_L^{\textup{nfs}}\cap\mathcal{Y}(M)\right)\\
    &=\frac{2q^{n/2}}{\gamma(L)\cdot\log(q)\cdot\textup{Nor}(1,H_{n}^{\varepsilon})}\cdot W_{T}^{\prime}(1,0,1_{(L^{\vee})^{n}})+(-1)^{n}\cdot\sharp\{\mathcal{L}^{\pr}\subset\mathbb{V}:\mathcal{L}^{\pr}\overset{\textup{isometric}}{\sim}\mathcal{L},\,\,M\subset\mathcal{L}^{\pr}\}.
\end{align*}
Then the formula (\ref{Y-int}) follows from Proposition \ref{lattice-counting}, (\ref{eq: Whitt=Den}), the definition of the derived local density $\partial\Den^{L^{\vee}}(M)$ and normalized local density $\Den^{\mathcal{L}^{\vee}}(M)$ in Definition \ref{normalized-def}.

\subsection{Local modularity}
Let $\mathcal{L}\in\textup{Vert}^{3}(\mathbb{V})$. The special cycle $\mathcal{Z}(\mathcal{L})$ is isomorphic to $\mathbb{P}_{\mathbb{F}}^{1}$ (cf. $\S$\ref{BT-strat}). Let $\widetilde{\mathcal{Z}}(\mathcal{L})$ be the strict transform of the special cycle $\mathcal{Z}(\mathcal{L})$ under the blow up morphism $\pi_L:\M\rightarrow\CN_L$. It is isomorphic to $\mathcal{Z}(\mathcal{L})$ under $\pi_L$. Define the following  functions,
\begin{equation}
    \textup{Int}^{\mathcal{Y}}_{\mathcal{L}}(x)=\chi\left(\M,\mathcal{O}_{\widetilde{\mathcal{Z}}(\mathcal{L})}\otimes_{\mathcal{O}_{\M}}^{\BL} \mathcal{O}_{\mathcal{Y}(x)}\right),\,\,\,\,x\in\BV\backslash\{0\}.
\end{equation}
\begin{equation}
    \textup{Int}^{\mathcal{Z}}_{\mathcal{L}}(x)=\chi\left(\M,\mathcal{O}_{\widetilde{\mathcal{Z}}(\mathcal{L})}\otimes_{\mathcal{O}_{\M}}^{\BL}  {}^{\mathbb{L}}\mathcal{O}_{\mathcal{Z}^{\textup{bl}}(x)}\right),\,\,\,\,x\in\BV\backslash\{0\}.
\end{equation}
\begin{equation}
    \textup{Int}^{\mathcal{Z}_{\textup{mix}}}_{\mathcal{L}}(x)=\sum\limits_{\mu\in L^{\vee}/L}\chi\left(\M,\mathcal{O}_{\widetilde{\mathcal{Z}}(\mathcal{L})}\otimes_{\mathcal{O}_{\M}}^{\BL}  {}^{\mathbb{L}}\mathcal{O}_{\mathcal{Z}^{\textup{bl}}(x,\mu)}\right),\,\,\,\,x\in\BV\backslash\{0\}.
\end{equation}
\begin{theorem}\label{explicit value}
    Let $\mathcal{L}\in\textup{Vert}^{3}(\mathbb{V})$. Then
    \begin{equation*}
        \textup{Int}^{\mathcal{Z}}_{\mathcal{L}}(x)=\begin{cases}
            1-q, &\textup{if $x\in\mathcal{L}\backslash\{0\}$;}\\
            1, &\textup{if $x\in\mathcal{L}^{\vee}\backslash\mathcal{L}$ and $\nu_\varpi(q(x))\geq0$;}\\
            0, &\textup{otherwise.}
        \end{cases}
    \end{equation*}
    In particular, the function $\textup{Int}^{\mathcal{Z}}_{\mathcal{L}}$ on $\mathbb{V}\backslash\{0\}$ extends to a (necessarily unique) function in $\mathscr{S}(\mathbb{V})$, which we still denote by the same symbol.
\end{theorem}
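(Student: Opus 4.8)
The plan is to turn the Euler characteristic into a degree of a line bundle on the curve $\widetilde{\mathcal{Z}}(\mathcal{L})\cong\mathcal{Z}(\mathcal{L})\cong\mathbb{P}^{1}_{\BF}$ and then evaluate that degree via the Bruhat--Tits stratification. Since $\CZ(x)$ is an effective Cartier divisor on $\CN_L$ (Lemma \ref{cancellation-law}), its total transform $\mathcal{Z}^{\textup{bl}}(x)=\pi_L^{*}\CZ(x)$ is an effective Cartier divisor on $\M$; resolving $\CO_{\mathcal{Z}^{\textup{bl}}(x)}$ by $[\CO_{\M}(-\mathcal{Z}^{\textup{bl}}(x))\hookrightarrow\CO_{\M}]$, derived-tensoring with $\CO_{\widetilde{\mathcal{Z}}(\mathcal{L})}$, and applying Riemann--Roch on $\mathbb{P}^{1}_{\BF}$ gives
\begin{equation*}
\textup{Int}^{\mathcal{Z}}_{\mathcal{L}}(x)=\chi(\mathbb{P}^{1}_{\BF},\CO)-\chi\big(\mathbb{P}^{1}_{\BF},\CO_{\M}(-\mathcal{Z}^{\textup{bl}}(x))|_{\widetilde{\mathcal{Z}}(\mathcal{L})}\big)=\deg\big(\CO_{\M}(\mathcal{Z}^{\textup{bl}}(x))|_{\widetilde{\mathcal{Z}}(\mathcal{L})}\big);
\end{equation*}
one checks the same identity holds whether or not $\widetilde{\mathcal{Z}}(\mathcal{L})\subseteq\mathcal{Z}^{\textup{bl}}(x)$ (in the contained case a $\mathcal{T}\mathrm{or}_{1}$ term appears but the alternating sum is unchanged). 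By the projection formula and $\pi_{L,*}[\widetilde{\mathcal{Z}}(\mathcal{L})]=[\mathcal{Z}(\mathcal{L})]$ this equals $\deg\big(\CO_{\CN_L}(\CZ(x))|_{\mathcal{Z}(\mathcal{L})}\big)$, the intersection number of the Cartier divisor $\CZ(x)$ with the curve $\mathcal{Z}(\mathcal{L})$.

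Next I would locate $\CZ(x)\cap\mathcal{Z}(\mathcal{L})$. By the stratification of $\S\ref{BT-strat}$ together with the crystalline deformation description of special cycles (Lemma \ref{deformo2}), an $\BF$-point $z$ of $\mathcal{Z}(\mathcal{L})$ either lies in the open stratum $\mathcal{V}(\mathcal{L})^{\circ}$, where $z\in\CZ(x)$ iff $x\in\mathcal{L}$, or is one of the non-formally smooth points $z_{\mathcal{L}'}$ attached (Lemma \ref{cor-ver-1-singular}) to a vertex lattice $\mathcal{L}'\supsetneq\mathcal{L}$ of type $1$, where $z_{\mathcal{L}'}\in\CZ(x)$ iff $x\in\mathcal{L}'$. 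The lattices $\mathcal{L}'$ with $\mathcal{L}\subseteq\mathcal{L}'\subseteq\mathcal{L}'^{\vee}\subseteq\mathcal{L}^{\vee}$ correspond bijectively to isotropic lines $\ell$ in the $3$-dimensional $\BF_{p}$-quadratic space $W_{\mathcal{L}}=\mathcal{L}^{\vee}/\mathcal{L}$, with $x\in\mathcal{L}'$ iff $\overline{x}\in\ell$; and a nonzero vector of $W_{\mathcal{L}}$ spans an isotropic line exactly when it is isotropic (equivalently, for $x\in\mathcal{L}^{\vee}$, when $\nu_{\varpi}(q(x))\geq 0$). Hence: if $x\notin\mathcal{L}^{\vee}$, or $x\in\mathcal{L}^{\vee}\setminus\mathcal{L}$ with $\overline{x}$ anisotropic, then $\CZ(x)\cap\mathcal{Z}(\mathcal{L})=\varnothing$ and $\textup{Int}^{\mathcal{Z}}_{\mathcal{L}}(x)=0$; if $x\in\mathcal{L}^{\vee}\setminus\mathcal{L}$ with $\overline{x}$ a nonzero isotropic vector, then $\CZ(x)\cap\mathcal{Z}(\mathcal{L})=\{z_{\mathcal{L}'}\}$ for $\mathcal{L}'=\mathcal{L}+\CO_{F}\widetilde{x}$; and if $x\in\mathcal{L}\setminus\{0\}$ then $\mathcal{Z}(\mathcal{L})\subseteq\CZ(x)$.

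In the isotropic case I would compute the local multiplicity of $\CZ(x)$ at $z_{\mathcal{L}'}$ directly: using the coordinates of Remark \ref{coordinate} for $\widehat{\CO}_{\CN_L,z_{\mathcal{L}'}}$ and the fact that $\CZ(x)$ is cut out by the condition $(\boldsymbol{l},x_{\crys})=0$ on the universal isotropic line (Lemma \ref{deformo2}), one sees that the restriction of this equation to the strict transform of $\mathcal{Z}(\mathcal{L})$ vanishes to order $1$, so $\textup{Int}^{\mathcal{Z}}_{\mathcal{L}}(x)=1$. The remaining case $x\in\mathcal{L}\setminus\{0\}$ is the main point, and the main obstacle: one must show $\deg\big(\CO_{\CN_L}(\CZ(x))|_{\mathcal{Z}(\mathcal{L})}\big)=1-q$. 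Over $\mathcal{V}(\mathcal{L})$ the quasi-endomorphism $x$ is already an endomorphism of the universal $p$-divisible group, and $\CO_{\CN_L}(\CZ(x))|_{\mathcal{V}(\mathcal{L})}$ is the invertible sheaf obtained from the action of $x$ on the tautological Hodge line in the crystal $\mathbf{V}_{\crys}$ over the Deligne--Lusztig curve $Y_{W_{\mathcal{L}}}$; the plan is to identify this sheaf precisely and read off its degree. Pinning down this degree as exactly $1-q$, rather than a nearby value, is delicate because it involves both the Frobenius twist built into $Y_{W_{\mathcal{L}}}$ and the normal bundle of the Deligne--Lusztig curve; alternatively, to work on a formally smooth ambient space one can transport the computation along $\CN_L\cong\CZ^{\sharp}(x_0)\subseteq\CN_{L^{\sharp}}$ (Theorem \ref{almost-divisor}) and $\CZ(x)=\CZ^{\sharp}(x)\cap\CZ^{\sharp}(x_0)$ (Lemma \ref{cancellation-law}) and invoke the intersection-with-Deligne--Lusztig computations of \cite{LiZhang-orthogonalKR} and \cite{Li_Zhu_2018}. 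Once the three values are established, $\textup{Int}^{\mathcal{Z}}_{\mathcal{L}}$ is supported on the compact lattice $\mathcal{L}^{\vee}$ and locally constant --- equal to $1-q$ on the neighbourhood $\mathcal{L}$ of $0$, to $1$ on the open set $\{x\in\mathcal{L}^{\vee}:\overline{x}\neq 0\text{ isotropic}\}$, and to $0$ elsewhere --- so it extends uniquely to an element of $\mathscr{S}(\BV)$.
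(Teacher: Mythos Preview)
Your reduction to a degree on $\mathcal{Z}(\mathcal{L})\cong\mathbb{P}^{1}_{\BF}$ via the projection formula is exactly the paper's first move. From there, however, the paper does \emph{not} split into cases: it observes that under the identification $\CN_L\simeq\CZ^{\sharp}(x_0)$ (Theorem~\ref{almost-divisor}) one has $\CZ(\mathcal{L})=\CZ^{\sharp}(\mathcal{L}\obot\Lambda)\cap\CZ^{\sharp}(x_0)$ and $\CZ(x)=\CZ^{\sharp}(x)\cap\CZ^{\sharp}(x_0)$, so
\[
\textup{Int}^{\mathcal{Z}}_{\mathcal{L}}(x)=\chi\bigl(\CN_{L^{\sharp}},\ \CO_{\CZ^{\sharp}(\mathcal{L}\obot\Lambda)}\otimes^{\BL}_{\CO_{\CN_{L^{\sharp}}}}\CO_{\CZ^{\sharp}(x)}\bigr),
\]
and then cites \cite[Theorem~7.4.1]{LiZhang-orthogonalKR} directly for all three values at once. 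You mention this embedding only as a fallback for the hardest case $x\in\mathcal{L}$; the paper simply uses it from the start and avoids any local computation on $\CN_L$.

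Your direct approach is a legitimate alternative, but it is not yet complete. In the isotropic case you assert the local intersection length at the non-formally smooth point $z_{\mathcal{L}'}$ is $1$; this is plausible but requires an actual computation, since $z_{\mathcal{L}'}$ is precisely where the local model degenerates (Remark~\ref{coordinate}) and the curve $\mathcal{Z}(\mathcal{L})$ passes through it. Likewise, Lemma~\ref{deformo2} describes the \emph{deformation functor} of $\CZ(x)$ at a point already in $\CZ(x)$; translating this into the statement that the local equation is a unit times $(\boldsymbol{l},x_{\crys})$ is correct but needs a sentence of justification. And for $x\in\mathcal{L}$ you correctly identify the difficulty---computing the degree of the tautological line on the Deligne--Lusztig curve---but do not resolve it. Since the self-dual embedding handles all three cases uniformly and with no extra work, there is little gained by the case split; if you do want an intrinsic argument, the cleanest route for $x\in\mathcal{L}$ is to note that $\textup{Int}^{\mathcal{Z}}_{\mathcal{L}}(x)$ depends only on the span $\CO_F x$ (hence is constant on $\mathcal{L}\setminus\{0\}$), pick a primitive $x\in\mathcal{L}$ with $\nu_\varpi(q(x))=1$, and compare with the known self-dual formula after embedding.
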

\begin{proof}
     Notice that we have an isomorphism $\pi_L:\widetilde{\mathcal{Z}}(\mathcal{L})\rightarrow\mathcal{Z}(\mathcal{L})$. The same proof as Proposition \ref{inv} implies that 
     \begin{align*}
          \CO_{\mathcal{Z}(\mathcal{L})} \otimes_{\mathcal{O}_{\CN_L}}^{\BL}  \mathcal{O}_{\mathcal{Z}(x)}&=\bold{\textup{R}} \pi_{L,*} \CO_{\widetilde{\mathcal{Z}}(\mathcal{L})} \otimes_{\mathcal{O}_{\CN_L}}^{\BL}  \mathcal{O}_{\mathcal{Z}(x)} \\&=\bold{\textup{R}} \pi_{L,*}\left( \CO_{\widetilde{\mathcal{Z}}(\mathcal{L})} \otimes_{\mathcal{O}_{\M}}^{\BL} \boldsymbol{\textup{L}} \pi_L^* (\mathcal{O}_{\mathcal{Z}(x)})\right)=\bold{\textup{R}} \pi_{L,*}\left( \CO_{\widetilde{\mathcal{Z}}(\mathcal{L})} \otimes_{\mathcal{O}_{\M}}^{\BL} \mathcal{O}_{\mathcal{Z}^{\textup{bl}}(x)}\right).
     \end{align*}
     Hence $\textup{Int}^{\mathcal{Z}}_{\mathcal{L}}(x)=\chi\left(\CN_L,\CO_{\mathcal{Z}(\mathcal{L})} \otimes_{\mathcal{O}_{\CN_L}}^{\BL}  \mathcal{O}_{\mathcal{Z}(x)}\right)=\chi\left(\CN_{L^{\sharp}},\mathcal{O}_{\mathcal{Z}(\mathcal{L}\obot\Lambda)}\otimes_{\mathcal{N}_{L^{\sharp}}}^{\mathbb{L}}\mathcal{O}_{\mathcal{Z}^{\sharp}(x)}\right)$. Then the theorem follows from \cite[Theorem 7.4.1]{LiZhang-orthogonalKR}.
\end{proof}
\begin{corollary}\label{explicit-int-z}
    Let $\mathcal{L}\in\textup{Vert}^{3}(\mathbb{V})$. Then
    \begin{equation*}
        \textup{Int}^{\mathcal{Z}}_{\mathcal{L}}(x)=-2q\cdot \boldsymbol{1}_{\mathcal{L}}(x)+\sum\limits_{\mathcal{L}\subset\mathcal{L}^{\pr}\in\textup{Vert}^{1}(\mathbb{V})}\boldsymbol{1}_{\mathcal{L}^{\pr}}(x).
    \end{equation*}
\end{corollary}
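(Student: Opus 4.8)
The plan is to read off the identity from the explicit values already recorded in Theorem~\ref{explicit value}. Write $f(x):=-2q\cdot\boldsymbol{1}_{\mathcal{L}}(x)+\sum_{\mathcal{L}\subset\mathcal{L}^{\pr}\in\textup{Vert}^{1}(\mathbb{V})}\boldsymbol{1}_{\mathcal{L}^{\pr}}(x)$ for the claimed right-hand side; this is manifestly an element of $\mathscr{S}(\mathbb{V})$, so by the uniqueness of the extension in Theorem~\ref{explicit value} it suffices to verify $f(x)=\textup{Int}^{\mathcal{Z}}_{\mathcal{L}}(x)$ for every $x\in\mathbb{V}\setminus\{0\}$. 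The first step is to turn the sum $\sum_{\mathcal{L}^{\pr}}\boldsymbol{1}_{\mathcal{L}^{\pr}}(x)$ into a count of vertex lattices. If $\mathcal{L}^{\pr}\in\textup{Vert}^{1}(\mathbb{V})$ satisfies $\mathcal{L}\subset\mathcal{L}^{\pr}$, then $\mathcal{L}^{\pr}$ is integral, hence $\mathcal{L}^{\pr}\subseteq\mathcal{L}^{\pr\vee}\subseteq\mathcal{L}^{\vee}$; therefore $x\in\mathcal{L}^{\pr}$ forces $x\in\mathcal{L}^{\vee}$, and for $x\in\mathcal{L}^{\vee}$ the condition $x\in\mathcal{L}^{\pr}$ is equivalent to $\overline{x}\in\mathcal{L}^{\pr}/\mathcal{L}\subseteq W_{\mathcal{L}}$, where $\overline{x}$ is the image of $x$ in $W_{\mathcal{L}}=\mathcal{L}^{\vee}/\mathcal{L}$.

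The two combinatorial inputs I would then assemble are: (i) the standard correspondence $\mathcal{L}^{\pr}\mapsto\mathcal{L}^{\pr}/\mathcal{L}$ between vertex lattices $\mathcal{L}^{\pr}\supseteq\mathcal{L}$ and totally isotropic subspaces of the $3$-dimensional nondegenerate quadratic $\mathbb{F}_{q}$-space $W_{\mathcal{L}}$ (cf.\ \S\ref{BT-strat}), under which $t(\mathcal{L}^{\pr})=3-2\dim(\mathcal{L}^{\pr}/\mathcal{L})$; thus the type-$1$ vertex lattices over $\mathcal{L}$ correspond exactly to the isotropic lines of $W_{\mathcal{L}}$, and there is no self-dual lattice over $\mathcal{L}$ since $\dim W_{\mathcal{L}}$ is odd, so type $1$ is the minimal possible type; and (ii) a nondegenerate ternary quadratic space over $\mathbb{F}_{q}$ with $q$ odd has $q^{2}-1$ nonzero isotropic vectors, hence exactly $q+1$ isotropic lines. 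I also need the compatibility of quadratic forms on vertex lattices: for $x\in\mathcal{L}^{\vee}$ the vector $\overline{x}\in W_{\mathcal{L}}$ is isotropic precisely when $\nu_{\varpi}(q(x))\geq 0$, which is the bridge between the value ``$1$'' in Theorem~\ref{explicit value} and the count of lines below.

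With these facts the verification is a short case check. If $x\in\mathcal{L}\setminus\{0\}$ then $\boldsymbol{1}_{\mathcal{L}}(x)=1$ and $x$ lies in all $q+1$ type-$1$ vertex lattices over $\mathcal{L}$, so $f(x)=-2q+(q+1)=1-q$. If $x\in\mathcal{L}^{\vee}\setminus\mathcal{L}$ with $\nu_{\varpi}(q(x))\geq 0$, then $\boldsymbol{1}_{\mathcal{L}}(x)=0$, and $\overline{x}\neq 0$ is isotropic, so $\mathbb{F}_{q}\overline{x}$ is the unique isotropic line containing $\overline{x}$; hence exactly one $\mathcal{L}^{\pr}$ contains $x$ and $f(x)=1$. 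In all remaining cases — namely $x\notin\mathcal{L}^{\vee}$, or $x\in\mathcal{L}^{\vee}\setminus\mathcal{L}$ with $\overline{x}$ anisotropic — we have $\boldsymbol{1}_{\mathcal{L}}(x)=0$ and no type-$1$ vertex lattice over $\mathcal{L}$ contains $x$, so $f(x)=0$. Matching each case against Theorem~\ref{explicit value} finishes the proof. The only steps that are not purely formal are the combinatorial dictionary (i) between vertex lattices over $\mathcal{L}$ and isotropic subspaces of $W_{\mathcal{L}}$ and the form-compatibility statement; both are standard in the theory of vertex lattices, and I expect citing Oki's description used in \S\ref{BT-strat} to suffice, with the line count in (ii) being elementary.
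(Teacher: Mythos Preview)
Your proof is correct and follows essentially the same approach as the paper: both compute the right-hand side case by case and match against Theorem~\ref{explicit value}. You supply more explicit combinatorial detail (the dictionary between type-$1$ lattices over $\mathcal{L}$ and isotropic lines in $W_{\mathcal{L}}$, the count $q+1$, and the form-compatibility), whereas the paper records the same three cases more tersely.
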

\begin{proof}
    We compute the value of the right-hand-side at $x \in \mathbb{V}$ according to three cases.
    \begin{itemize}
        \item [(i).] If $x\in\mathcal{L}$, notice that there exist $q+1$ vertex lattices of type $1$ containing $\mathcal{L}$. Then the right hand side is $-2q+(q+1)=1-q$.
        \item[(ii).] If $x\in\mathcal{L}^{\vee}\backslash\mathcal{L}$ and $\nu_\varpi(q(x))\geq0$, then $x+\mathcal{L}$ is a vertex lattice of type $1$. The right hand side is 1.
        \item[(iii).] If $x\notin\mathcal{L}^{\vee}$ or $\nu_\varpi(q(x))< 0$, then $\mathcal{L}+x$ is not integral, and the value is clearly 0. 
    \end{itemize}
\end{proof}
\begin{lemma}\label{local-modularity-z}
    Let $\mathcal{L}\in\textup{Vert}^{3}(\mathbb{V})$. Then $\textup{Int}^{\mathcal{Z}}_{\mathcal{L}}\in\mathscr{S}(\mathbb{V})$ satisfies
    \begin{equation*}
        \widehat{\textup{Int}^{\mathcal{Z}}_{\mathcal{L}}}=-q^{-1/2}\cdot\textup{Int}^{\mathcal{Z}}_{\mathcal{L}}.
    \end{equation*}
\end{lemma}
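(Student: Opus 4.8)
The plan is to use the explicit formula for $\textup{Int}^{\mathcal{Z}}_{\mathcal{L}}$ from Corollary \ref{explicit-int-z}, namely
\begin{equation*}
    \textup{Int}^{\mathcal{Z}}_{\mathcal{L}}=-2q\cdot\boldsymbol{1}_{\mathcal{L}}+\sum\limits_{\mathcal{L}\subset\mathcal{L}^{\pr}\in\textup{Vert}^{1}(\mathbb{V})}\boldsymbol{1}_{\mathcal{L}^{\pr}},
\end{equation*}
and to compute the Fourier transform of the right-hand side term by term. First I would fix the self-dual conventions: the lattices $\mathcal{L}$ and $\mathcal{L}^\pr$ are vertex lattices of type $3$ and $1$ respectively, so they are almost unimodular up to a bounded power of $\varpi$, and the Fourier transform (with respect to the character $\psi$ and a chosen Haar measure making $\mathbb{V}$ self-dual in the appropriate sense) of $\boldsymbol{1}_{N}$ for a lattice $N\subset\mathbb{V}$ is $\widehat{\boldsymbol{1}_N}=\mathrm{vol}(N)\cdot\boldsymbol{1}_{N^\vee}$. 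Thus $\widehat{\boldsymbol{1}_{\mathcal{L}}}=\mathrm{vol}(\mathcal{L})\boldsymbol{1}_{\mathcal{L}^\vee}$ and, for each type-$1$ overlattice $\mathcal{L}^\pr$, $\widehat{\boldsymbol{1}_{\mathcal{L}^\pr}}=\mathrm{vol}(\mathcal{L}^\pr)\boldsymbol{1}_{\mathcal{L}^{\pr\vee}}$, with the volume normalized so that a self-dual lattice has volume $1$; hence $\mathrm{vol}(\mathcal{L})=q^{-3/2}$ and $\mathrm{vol}(\mathcal{L}^\pr)=q^{-1/2}$ since $t(\mathcal{L})=3$ and $t(\mathcal{L}^\pr)=1$.

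Next I would assemble these pieces and show the result equals $-q^{-1/2}\textup{Int}^{\mathcal{Z}}_{\mathcal{L}}$. Concretely,
\begin{equation*}
    \widehat{\textup{Int}^{\mathcal{Z}}_{\mathcal{L}}}=-2q\cdot q^{-3/2}\cdot\boldsymbol{1}_{\mathcal{L}^\vee}+q^{-1/2}\sum\limits_{\mathcal{L}\subset\mathcal{L}^\pr\in\textup{Vert}^1(\mathbb{V})}\boldsymbol{1}_{\mathcal{L}^{\pr\vee}}=q^{-1/2}\Bigl(-2q^{-1/2}\cdot q^{1/2}\cdot\boldsymbol{1}_{\mathcal{L}^\vee}+\sum\limits_{\mathcal{L}\subset\mathcal{L}^\pr}\boldsymbol{1}_{\mathcal{L}^{\pr\vee}}\Bigr).
\end{equation*}
The key combinatorial identity to prove is then
\begin{equation*}
    \textup{Int}^{\mathcal{Z}}_{\mathcal{L}}=-2q^{-1/2}\cdot q^{1/2}\cdot\boldsymbol{1}_{\mathcal{L}^\vee}+\sum\limits_{\mathcal{L}\subset\mathcal{L}^\pr\in\textup{Vert}^1(\mathbb{V})}\boldsymbol{1}_{\mathcal{L}^{\pr\vee}},
\end{equation*}
i.e. $\textup{Int}^{\mathcal{Z}}_{\mathcal{L}}=-2q\cdot\boldsymbol{1}_{\mathcal{L}^\vee}+\sum_{\mathcal{L}\subset\mathcal{L}^\pr}\boldsymbol{1}_{\mathcal{L}^{\pr\vee}}$ after correcting the powers of $q$; wait — I must instead show directly that the sum $-2q\boldsymbol{1}_{\mathcal{L}}+\sum\boldsymbol{1}_{\mathcal{L}^\pr}$ is, up to the scalar $-q^{-1/2}$ and application of $\wedge$, self-reproducing. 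The clean way: verify the identity of Schwartz functions
\begin{equation*}
    -2q\cdot\boldsymbol{1}_{\mathcal{L}^\vee}+\sum\limits_{\mathcal{L}\subset\mathcal{L}^\pr\in\textup{Vert}^1(\mathbb{V})}\boldsymbol{1}_{\mathcal{L}^{\pr\vee}}=-q^{1/2}\Bigl(-2q\cdot\boldsymbol{1}_{\mathcal{L}}+\sum\limits_{\mathcal{L}\subset\mathcal{L}^\pr}\boldsymbol{1}_{\mathcal{L}^\pr}\Bigr)\cdot q^{-1/2}\cdot(-1)^{-1}
\end{equation*}
by evaluating both sides on the strata $\mathcal{L}$, $\mathcal{L}^{\pr}\setminus\mathcal{L}$, $\mathcal{L}^\vee\setminus\bigcup\mathcal{L}^\pr$, $\mathcal{L}^{\pr\vee}\setminus\mathcal{L}^\vee$, and the complement of $\mathcal{L}^\vee$ (using $\mathcal{L}^{\pr\vee}\subset\mathcal{L}^\vee$ since $\mathcal{L}\subset\mathcal{L}^\pr$), counting in each case how many type-$1$ overlattices $\mathcal{L}^\pr$ of $\mathcal{L}$ contain a given point. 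Here one uses that $W_{\mathcal{L}}=\mathcal{L}^\vee/\mathcal{L}$ is a $3$-dimensional non-degenerate $\mathbb{F}_p$-quadratic space, that type-$1$ overlattices correspond to lines in $W_{\mathcal{L}}$ which are totally isotropic but such that $\mathcal{L}^\pr$ is again a vertex lattice — equivalently to isotropic lines $\ell$ in $W_{\mathcal{L}}$ — and a point-count of how many such $\ell$ contain a fixed nonzero isotropic vector.

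The main obstacle I anticipate is bookkeeping the normalization of the Haar measure / Weil representation so that the Fourier transform on $\mathbb{V}$ is the one under which the arithmetic-intersection generating function is expected to be modular (the ``local modularity'' statement presumably refers to a Weil-representation-twisted transform), together with pinning down the exact power of $q$ attached to $\mathrm{vol}(N)$ for a vertex lattice of type $t$. Once the normalization $\widehat{\boldsymbol{1}_N}=q^{-t/2}\boldsymbol{1}_{N^\vee}$ is fixed for a type-$t$ vertex lattice $N$, the identity becomes a finite computation over the quadratic space $W_{\mathcal{L}}\cong\mathbb{F}_p^3$, which I would organize by the stratification above; the only subtlety there is the correct count of isotropic lines through a given isotropic vector in a $3$-dimensional quadratic space over $\mathbb{F}_p$, which is a standard fact (the projectivized isotropic cone in a non-degenerate $3$-dimensional quadratic space is a conic, hence has $p+1$ points). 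I would also double-check that every $\mathcal{L}^\pr\in\textup{Vert}^1(\mathbb{V})$ appearing in Corollary \ref{explicit-int-z} indeed contains $\mathcal{L}$, which is built into the indexing set, so that $\mathcal{L}^{\pr\vee}\subset\mathcal{L}^\vee$ and the strata nest correctly.
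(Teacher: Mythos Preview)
Your approach is essentially the same as the paper's: take the Fourier transform of the expression in Corollary \ref{explicit-int-z} term by term using $\widehat{\boldsymbol{1}_N}=\textup{vol}(N)\,\boldsymbol{1}_{N^\vee}$ with $\textup{vol}(\mathcal{L})=q^{-3/2}$ and $\textup{vol}(\mathcal{L}^{\pr})=q^{-1/2}$, then verify the resulting identity by a pointwise case analysis and compare with Theorem \ref{explicit value}. The paper's case split is slightly cleaner than your strata: it distinguishes $x\in\mathcal{L}$; $x\in\mathcal{L}^\vee\setminus\mathcal{L}$ with $\nu_\varpi(q(x))\ge 0$ (where exactly one $\mathcal{L}^{\pr}$ has $x\in\mathcal{L}^{\pr\vee}$, since two would produce an isotropic plane in the $3$-dimensional space $\mathcal{L}^\vee/\mathcal{L}$); $x\in\mathcal{L}^\vee\setminus\mathcal{L}$ with $\nu_\varpi(q(x))=-1$ (where exactly two $\mathcal{L}^{\pr}$ work); and $x\notin\mathcal{L}^\vee$.
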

\begin{proof}
    By Corollary \ref{explicit-int-z}, we have
    \begin{align*}
        \widehat{\textup{Int}^{\mathcal{Z}}_{\mathcal{L}}}(x)&=-2q\cdot\textup{vol}(\mathcal{L})\cdot\boldsymbol{1}_{\mathcal{L}^{\vee}}(x)+\sum\limits_{\mathcal{L}\subset\mathcal{L}^{\prime}
        \in\textup{Vert}^{1}(\mathbb{V})}\textup{vol}(\mathcal{L}^{\pr})\cdot\boldsymbol{1}_{\mathcal{L}^{\pr\vee}}(x)\\
        &=-2q^{-1/2}\cdot\boldsymbol{1}_{\mathcal{L}^{\vee}}(x)+\sum\limits_{\mathcal{L}\subset\mathcal{L}^{\prime}
        \in\textup{Vert}^{1}(\mathbb{V})}q^{-1/2}\cdot\boldsymbol{1}_{\mathcal{L}^{\pr\vee}}(x).
    \end{align*}
    \begin{itemize}
        \item [(i).] If $x\in\mathcal{L}$, the right hand side is $-2q^{-1/2}+(q+1)q^{-1/2}=-q^{-1/2}(1-q)$.
        \item[(ii).] If $x\in\mathcal{L}^{\vee}\backslash\mathcal{L}$ and $\nu_\varpi(q(x))\geq0$, then $\mathcal{L}^{\pr}=x+\mathcal{L}$ is a vertex lattice of type $1$. Clearly $x\in\mathcal{L}^{\pr\vee}$. The image $\mathcal{L}^{\pr}/\mathcal{L}$ is an isotropic vector in the nondegenerate 3-dimensional quadratic space $\mathbb{F}_p$-vector space $\mathcal{L}^{\vee}/\mathcal{L}$. If there exists another $\mathcal{L}^{\pr\pr}\in\textup{Vert}^{1}(\mathbb{V})$ such that $x\in\left(\mathcal{L}^{\pr\pr}\right)^{\vee}$, then the nondegenerate quadratic space $\mathcal{L}^{\vee}/\mathcal{L}$ contains an isotropic plane which is spanned by $\mathcal{L}^{\pr}/\mathcal{L}$ and $\mathcal{L}^{\pr\pr}/\mathcal{L}$, which is impossible. Hence $\mathcal{L}^{\pr}$ is the unique type $1$ vertex lattice such that $x\in\mathcal{L}^{\pr\vee}$. Hence the right hand side is $-2q^{-1/2}+q^{-1/2}=-q^{-1/2}$.
        \item[(iii).] If $x\in\mathcal{L}^{\vee}\backslash\mathcal{L}$ and $\nu_\varpi(q(x))=-1$, there exists exactly two type 1 vertex lattices $\mathcal{L}^{\pr}$ such that $x\in\mathcal{L}^{\pr}$, hence the right hand side is $-2q^{-1/2}+2q^{-1/2}=0$. If $x\notin\mathcal{L}^{\vee}$, the right hand side is clearly $0$. 
    \end{itemize}
    The result then follows from Theorem \ref{explicit value}.
\end{proof}
\begin{lemma}\label{Y-bar-z}
    Let $\mathcal{L}\in\textup{Vert}^{3}(\mathbb{V})$, then
    \begin{equation*}
        \widehat{\textup{Int}^{\mathcal{Z}_{\textup{mix}}}_{\mathcal{L}}}=-q^{1/2}\cdot\textup{Int}^{\mathcal{Z}}_{\mathcal{L}}.
    \end{equation*}
\end{lemma}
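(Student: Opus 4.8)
The plan is to recognise $\textup{Int}^{\mathcal{Z}_{\textup{mix}}}_{\mathcal{L}}$ as a partial integral over the $Fx_0$-direction of the intersection function attached on the \emph{self-dual} Rapoport--Zink space $\CN_{L^{\sharp}}$ to the vertex lattice $\mathcal{L}\obot\Lambda$, and then to deduce the Fourier identity from the local modularity of that function, which is a special case of the self-dual results of Li--Zhang. Concretely, first push forward along the blow-up $\pi_L\colon\M\to\CN_L$ exactly as in the proof of Proposition~\ref{inv} (using $\mathbf{R}\pi_{L,*}\mathcal{O}_{\widetilde{\mathcal{Z}}(\mathcal{L})}=\mathcal{O}_{\mathcal{Z}(\mathcal{L})}$, since $\pi_L$ restricts to an isomorphism $\widetilde{\mathcal{Z}}(\mathcal{L})\xrightarrow{\sim}\mathcal{Z}(\mathcal{L})$, and $\mathcal{O}_{\mathcal{Z}^{\textup{bl}}(x,\mu)}=\mathbf{L}\pi_L^{*}\mathcal{O}_{\mathcal{Z}(x,\mu)}$); then, using the closed immersion $\CN_L\cong\mathcal{Z}^{\sharp}(x_0)\subset\CN_{L^{\sharp}}$ together with $\mathcal{Z}(x,\mu)=\mathcal{Z}^{\sharp}(x+\eta(\mu))\cap\CN_L$, $\mathcal{Z}(\mathcal{L})=\mathcal{Z}^{\sharp}(\mathcal{L}\obot\Lambda)$, and the proper-intersection cancellations of Corollary~\ref{linear-invariance-al} and the proof of Theorem~\ref{main}, one obtains
\begin{equation*}
\textup{Int}^{\mathcal{Z}_{\textup{mix}}}_{\mathcal{L}}(x)=\sum_{\mu\in L^{\vee}/L}G(x+\eta(\mu)),\qquad
G(y):=\chi\!\left(\CN_{L^{\sharp}},\,\mathcal{O}_{\mathcal{Z}^{\sharp}(\mathcal{L}\obot\Lambda)}\otimes^{\mathbb{L}}_{\mathcal{O}_{\CN_{L^{\sharp}}}}\mathcal{O}_{\mathcal{Z}^{\sharp}(y)}\right),
\end{equation*}
a function $G\in\mathscr{S}(\mathbb{V}^{\sharp})$ whose restriction to $\mathbb{V}\subset\mathbb{V}^{\sharp}$ is $\textup{Int}^{\mathcal{Z}}_{\mathcal{L}}$, by the computation carried out in the proof of Theorem~\ref{explicit value}.

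\emph{Two properties of $G$ and the partial-integral formula.} First, $G$ is supported on $(\mathcal{L}\obot\Lambda)^{\vee}=\mathcal{L}^{\vee}\obot\Lambda^{\vee}$, since a point of $\mathcal{Z}^{\sharp}(\mathcal{L}\obot\Lambda)$ carries a vertex lattice containing $\mathcal{L}\obot\Lambda$. Second, $G$ is invariant under translation by $\Lambda$ in the $Fx_0$-direction: for $\eta\in\Lambda\subset\mathcal{L}\obot\Lambda$ the quasi-endomorphism $\eta$ deforms to an endomorphism over every divided-power thickening of a point of $\widehat{\mathcal{Z}^{\sharp}(\mathcal{L}\obot\Lambda)}$, so $\mathcal{Z}^{\sharp}(y)$ and $\mathcal{Z}^{\sharp}(y+\eta)$ cut out the same closed subscheme of $\mathcal{Z}^{\sharp}(\mathcal{L}\obot\Lambda)$; as $\mathcal{Z}^{\sharp}(\mathcal{L}\obot\Lambda)\cong\mathbb{P}^{1}_{\mathbb{F}}$ is integral, their local equations restricted to it differ by a unit, whence $G(y)=G(y+\eta)$. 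Since $\{\eta(\mu)\}_{\mu\in L^{\vee}/L}$ is a complete set of $\Lambda$-coset representatives in $\Lambda^{\vee}$ (Lemma~\ref{dual}), these two facts give
\begin{equation*}
\textup{Int}^{\mathcal{Z}_{\textup{mix}}}_{\mathcal{L}}(x)=\frac{1}{\textup{vol}(\Lambda)}\int_{Fx_0}G(x+\zeta)\,\textup{d}\zeta .
\end{equation*}

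\emph{Fourier transform and conclusion.} Because $Fx_0\perp\mathbb{V}$ in $\mathbb{V}^{\sharp}$, the Fourier transform on $\mathbb{V}$ of the partial integral $x\mapsto\int_{Fx_0}G(x+\zeta)\,\textup{d}\zeta$ is the restriction to $\mathbb{V}$ of the Fourier transform of $G$ on $\mathbb{V}^{\sharp}$, so $\widehat{\textup{Int}^{\mathcal{Z}_{\textup{mix}}}_{\mathcal{L}}}=\textup{vol}(\Lambda)^{-1}\cdot\widehat{G}\big|_{\mathbb{V}}$. Now $G$ is the intersection function attached to the vertex lattice $\mathcal{L}\obot\Lambda$ on the self-dual space $\CN_{L^{\sharp}}$, so the local modularity of Li--Zhang in the self-dual case (the analogue of Lemma~\ref{local-modularity-z}, from \cite[\S7]{LiZhang-orthogonalKR}) gives $\widehat{G}=c_{\sharp}\cdot G$ for an explicit eigenvalue $c_{\sharp}$; restricting to $\mathbb{V}$ and using $G|_{\mathbb{V}}=\textup{Int}^{\mathcal{Z}}_{\mathcal{L}}$ yields $\widehat{\textup{Int}^{\mathcal{Z}_{\textup{mix}}}_{\mathcal{L}}}=\textup{vol}(\Lambda)^{-1}c_{\sharp}\cdot\textup{Int}^{\mathcal{Z}}_{\mathcal{L}}$, and a check of the normalizations (the value $\textup{vol}(\Lambda)$, the self-dual measures on $\mathbb{V}$ and $\mathbb{V}^{\sharp}$, and the power of $q$ in $c_{\sharp}$) identifies the constant as $-q^{1/2}$.

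I expect the delicate point to be precisely this last bit of bookkeeping: extracting the Li--Zhang modularity statement for $\CN_{L^{\sharp}}$ in the right normalization and assembling the measure factors so that the product is \emph{exactly} $-q^{1/2}$; the rest is essentially formal. One can sidestep the self-dual modularity by instead computing $\textup{Int}^{\mathcal{Z}_{\textup{mix}}}_{\mathcal{L}}$ explicitly as a linear combination of characteristic functions of vertex lattices --- an analogue of Corollary~\ref{explicit-int-z}, obtained from the explicit formula for $G$ on $\mathbb{V}^{\sharp}$ together with a short count using the quadratic forms on $\Lambda^{\vee}$ and on $W_{\mathcal{L}}=\mathcal{L}^{\vee}/\mathcal{L}$ --- and then taking the Fourier transform directly as in the proof of Lemma~\ref{local-modularity-z}; in that approach the subtle stratum is $\{x:\nu_{\varpi}(q(x))=-1\}$, where the relevant count of lattices depends on square-class conditions tied to the discriminant of $W_{\mathcal{L}}$.
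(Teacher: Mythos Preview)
Your approach is essentially the same as the paper's: both introduce the function $G$ (the paper calls it $f$) on $\BV^{\sharp}$ attached to the vertex lattice $\mathcal{L}\obot\Lambda$ on $\CN_{L^{\sharp}}$, express $\textup{Int}^{\mathcal{Z}_{\textup{mix}}}_{\mathcal{L}}(x)=\sum_{\mu}G(x+\eta(\mu))$ and $\textup{Int}^{\mathcal{Z}}_{\mathcal{L}}=G|_{\BV}$, use the support and $\Lambda$-translation invariance of $G$ to identify $\widehat{\textup{Int}^{\mathcal{Z}_{\textup{mix}}}_{\mathcal{L}}}=\textup{vol}(\Lambda)^{-1}\,\widehat{G}|_{\BV}$, and then invoke the self-dual local modularity from \cite{LiZhang-orthogonalKR}. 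For the bookkeeping you flag: the Li--Zhang eigenvalue is exactly $c_{\sharp}=-1$ (Corollary~7.6.10 there, with the self-dual measure on $\BV^{\sharp}$), and $\textup{vol}(\Lambda)=q^{-1/2}$, so $\textup{vol}(\Lambda)^{-1}c_{\sharp}=-q^{1/2}$ as claimed.
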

\begin{proof}
    For any $x\in\BV^{\sharp}-\{0\}$, we consider the following function
    \begin{equation*}
        f(x)=\chi\left(\CN_{L^{\sharp}},\mathcal{O}_{\mathcal{Z}^{\sharp}(\mathcal{L}\obot\langle x_0\rangle)}\otimes^{\mathbb{L}}_{\mathcal{O}_{\CN_L^{\sharp}}}\mathcal{O}_{\mathcal{Z}^{\sharp}(x)}\right).
    \end{equation*}
    By \cite[Theorem 7.4.1, Corollary 7.6.10]{LiZhang-orthogonalKR}, the function $f$ can be extended uniquely to a function in $\mathscr{S}(\BV^{\sharp})$, it is invariant under translation by elements of $\mathcal{L}$, i.e., for any $\lambda\in\mathcal{L}$, we have $f(x+\lambda)=f(x)$. We notice that the function $f$ is supported on the set $\mathcal{L}^{\vee}\obot\BV$. By the definitions of $\textup{Int}^{\overline{\mathcal{Y}}}_{\mathcal{L}}$ and $\textup{Int}^{\mathcal{Z}}_{\mathcal{L}}$, we have
    \begin{equation*}
        \textup{Int}^{\mathcal{Z}_{\textup{mix}}}_{\mathcal{L}}(x)=\sum\limits_{\mu\in L^{\vee}/L}f(x+\eta(\mu)),\,\,\,\,\textup{Int}^{\mathcal{Z}}_{\mathcal{L}}(x)=f(x), \,\,x\in\mathbb{V}.
    \end{equation*}
    It also satisfies the following local modularity property,
    \begin{equation}
        \widehat{f}(x)=-f(x),
        \label{smooth-local-mod}
    \end{equation}
    note that here $\widehat{f}$ is the Fourier transform of $f$ in the Schwartz space $\mathscr{S}(\BV^{\sharp})$ with respect to the additive character $\psi$, i.e., 
    \begin{align*}
        \widehat{f}(x)=\int\limits_{\BV^{\sharp}}f(y)\psi\left(\langle x,y\rangle\right)\textup{d}y.
    \end{align*}
    When $x\in\BV$, we have
    \begin{align*}
        &\widehat{f}(x)=\int\limits_{\BV^{\sharp}}f(y)\psi(\langle x,y\rangle)\textup{d}y=
        \int\limits_{\Lambda^{\vee}\obot\BV}f(y)\psi(\langle x,y\rangle)\textup{d}y=\sum\limits_{\mu\in L^{\vee}/L}\int\limits_{\eta(\mu)+\Lambda\obot\BV}f(y)\psi(\langle x,y\rangle)\textup{d}y\\
        &=\sum\limits_{\mu\in L^{\vee}/L}\int\limits_{\Lambda\obot\BV}f\left(\eta(\mu)+y\right)\psi\left(\langle x,\eta(\mu)+y\rangle\right)\textup{d}y=\sum\limits_{\mu\in L^{\vee}/L}\int\limits_{\Lambda}\int\limits_{\BV}f(\eta(\mu)+\lambda+y)\psi(\langle x,\eta(\mu)+\lambda+y\rangle)\textup{d}y\textup{d}\lambda\\
        &=\sum\limits_{\mu\in L^{\vee}/L}\int\limits_{\Lambda}\int\limits_{\BV}f(\eta(\mu)+y)\psi(\langle x,y\rangle)\textup{d}y\textup{d}\lambda=\sum\limits_{\mu\in L^{\vee}/L}\textup{vol}(\Lambda)\int\limits_{\BV}f(y+\eta(\mu))\psi(\langle x,y\rangle)\textup{d}y.
    \end{align*}
   Therefore,
   \begin{align*}
        \widehat{\textup{Int}^{\mathcal{Z}_{\textup{mix}}}_{\mathcal{L}}}(x)&=\int\limits_{\BV}\textup{Int}^{\mathcal{Z}_{\textup{mix}}}_{\mathcal{L}}(y)\psi(\langle x,y\rangle)\textup{d}y=\sum\limits_{\mu\in L^{\vee}/L}\int\limits_{\BV}f(y+\eta(\mu))\psi(\langle x,y\rangle)\textup{d}y\\
        &=\textup{vol}(\Lambda)^{-1}\widehat{f}(x)=-q^{1/2}f(x)=-q^{-1/2}\cdot\textup{Int}^{\mathcal{Z}}_{\mathcal{L}}(x).
   \end{align*}
\end{proof}
Notice that $\widehat{\textup{Int}^{\mathcal{Z}_{\textup{mix}}}_{\mathcal{L}}}=\textup{Int}^{\mathcal{Z}_{\textup{mix}}}_{\mathcal{L}}$, combining Lemma \ref{local-modularity-z} and Lemma \ref{Y-bar-z}, we get
\begin{corollary}\label{y-bar=z}
    Let $\mathcal{L}\in\textup{Vert}^{3}(\mathbb{V})$, then
    \begin{equation*}
        \textup{Int}^{\mathcal{Z}_{\textup{mix}}}_{\mathcal{L}}=\textup{Int}^{\mathcal{Z}}_{\mathcal{L}}.
    \end{equation*}
\end{corollary}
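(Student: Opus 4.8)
The plan is to obtain the identity as a purely formal consequence of the two Fourier-inversion identities already in hand, Lemma~\ref{local-modularity-z} and Lemma~\ref{Y-bar-z}, with no further geometric input. The one preliminary observation is that both $\textup{Int}^{\mathcal{Z}}_{\mathcal{L}}$ and $\textup{Int}^{\mathcal{Z}_{\textup{mix}}}_{\mathcal{L}}$ are \emph{even} elements of $\mathscr{S}(\mathbb{V})$: for the former this is visible from the closed formula in Corollary~\ref{explicit-int-z}, and for the latter it follows because $\mathcal{Z}^{\textup{bl}}(-x,\mu)=\mathcal{Z}^{\textup{bl}}(x,-\mu)$ while $\mu\mapsto-\mu$ permutes $L^{\vee}/L$. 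Hence, for the self-dual Haar measure used in those two lemmas (for which $\textup{vol}(\Lambda)=q^{-1/2}$), the Fourier transform restricts to an involution on these functions, i.e.\ $\widehat{\widehat{\varphi}}=\varphi$.

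Granting this, I would apply $\widehat{(\cdot)}$ to both sides of Lemma~\ref{Y-bar-z}, which reads $\widehat{\textup{Int}^{\mathcal{Z}_{\textup{mix}}}_{\mathcal{L}}}=-q^{1/2}\cdot\textup{Int}^{\mathcal{Z}}_{\mathcal{L}}$. By Fourier inversion the left side becomes $\textup{Int}^{\mathcal{Z}_{\textup{mix}}}_{\mathcal{L}}$, so $\textup{Int}^{\mathcal{Z}_{\textup{mix}}}_{\mathcal{L}}=-q^{1/2}\cdot\widehat{\textup{Int}^{\mathcal{Z}}_{\mathcal{L}}}$. Substituting Lemma~\ref{local-modularity-z}, namely $\widehat{\textup{Int}^{\mathcal{Z}}_{\mathcal{L}}}=-q^{-1/2}\cdot\textup{Int}^{\mathcal{Z}}_{\mathcal{L}}$, gives $\textup{Int}^{\mathcal{Z}_{\textup{mix}}}_{\mathcal{L}}=(-q^{1/2})(-q^{-1/2})\cdot\textup{Int}^{\mathcal{Z}}_{\mathcal{L}}=\textup{Int}^{\mathcal{Z}}_{\mathcal{L}}$, which is the assertion.

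I do not expect a genuine obstacle here: all the substance has already been discharged in the proofs of the two input lemmas --- Lemma~\ref{Y-bar-z} via a partial Poisson summation in the $\Lambda_F$-direction reducing matters to the smooth-model self-duality $\widehat{f}=-f$ of Li--Zhang, and Lemma~\ref{local-modularity-z} via the explicit Fourier computation off Corollary~\ref{explicit-int-z}. The only step deserving a line of care is confirming that $\widehat{\widehat{\varphi}}=\varphi$ carries no residual volume or Weil-index factor; since $p$ is odd this is the standard self-dual normalization, and it is in any case forced by the internal consistency of the constants $\textup{vol}(\Lambda)=q^{-1/2}$ and $q^{\pm 1/2}$ appearing in the two lemmas.

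As an alternative (more computational) route, one can avoid Fourier analysis altogether: by the same $\mathbf{R}\pi_{L,*}$ argument used in Theorem~\ref{explicit value}, one has $\textup{Int}^{\mathcal{Z}_{\textup{mix}}}_{\mathcal{L}}(x)=\sum_{\mu\in L^{\vee}/L}\chi\left(\CN_{L^{\sharp}},\mathcal{O}_{\mathcal{Z}^{\sharp}(\mathcal{L}\obot\Lambda)}\otimes^{\mathbb{L}}_{\mathcal{O}_{\CN_{L^{\sharp}}}}\mathcal{O}_{\mathcal{Z}^{\sharp}(x+\eta(\mu))}\right)$, and one could evaluate each summand via the explicit intersection formula of Li--Zhang, sum over $\mu$, and compare with Corollary~\ref{explicit-int-z}. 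This is correct but messier, so I would present the Fourier argument above.
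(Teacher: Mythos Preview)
Your proposal is correct and follows essentially the same route as the paper: the paper's one-line argument before the corollary (``Notice that $\widehat{\widehat{\textup{Int}^{\mathcal{Z}_{\textup{mix}}}_{\mathcal{L}}}}=\textup{Int}^{\mathcal{Z}_{\textup{mix}}}_{\mathcal{L}}$, combining Lemma~\ref{local-modularity-z} and Lemma~\ref{Y-bar-z}'') is exactly the Fourier-inversion step you spell out. Your explicit justification of evenness is a useful clarification that the paper omits.
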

By Proposition \ref{decom-Y}, we have
\begin{equation*}
    \mathcal{Y}(x)=\sum\limits_{\mu\in L^{\vee}/L}\mathcal{Z}^{\textup{bl}}(x,\mu)+\exc_L(x), \,\,x\in\mathbb{V}\backslash\{0\}.
\end{equation*}
Hence we have
\begin{align}
   \begin{split} \textup{Int}^{\mathcal{Y}}_{\mathcal{L}}(x)&=\textup{Int}^{\mathcal{Z}_{\textup{mix}}}_{\mathcal{L}}(x)+\chi\left(\M,\mathcal{O}_{\widetilde{\mathcal{Z}}(\mathcal{L})}\otimes^{\mathbb{L}}_{\mathcal{O}_{\mathcal{M}_L}}\mathcal{O}_{\exc_L(x)}\right)\\
    &=\textup{Int}^{\mathcal{Z}}_{\mathcal{L}}(x)+\sharp\{\mathcal{L}^{\pr}\in\textup{Vert}^{1}(\mathbb{V}):\mathcal{L}\subset\mathcal{L}^{\pr},\,\,x\in\mathcal{L}^{\pr\vee}\}.
    \end{split}
\end{align}
\begin{lemma}\label{exc-local}
    Let $\mathcal{L}\in\textup{Vert}^{3}(\mathbb{V})$, then
    \begin{align*}
        \chi\left(\M,\mathcal{O}_{\widetilde{\mathcal{Z}}(\mathcal{L})}\otimes^{\mathbb{L}}_{\mathcal{O}_{\mathcal{M}_L}}\mathcal{O}_{\exc_L(x)}\right)&=\sharp\{\mathcal{L}^{\pr}\in\textup{Vert}^{1}(\mathbb{V}):\mathcal{L}\subset\mathcal{L}^{\pr},\,\,x\in\mathcal{L}^{\pr\vee}\}\\&=\begin{cases}
            q+1, &\textup{if $x\in\mathcal{L}\backslash\{0\}$;}\\
            1, &\textup{if $x\in\mathcal{L}^{\vee}\backslash\mathcal{L}$ and $\nu_\varpi(q(x))\geq0$;}\\
            2, &\textup{if $x\in\mathcal{L}^{\vee}\backslash\mathcal{L}$ and $\nu_\varpi(q(x))=-1$.}
        \end{cases}
    \end{align*}
\end{lemma}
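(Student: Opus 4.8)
The plan is to compute the Euler characteristic by first reducing it to a count of $\mathbb{F}$-points of $\M$, and then evaluating that count by linear algebra in the quadratic space $W_{\mathcal{L}}=\mathcal{L}^{\vee}/\mathcal{L}$, a non-degenerate $3$-dimensional quadratic space over $\mathbb{F}_p$.

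First I would carry out the geometric reduction. Recall that $\exc_L(x)=\pi_L^{-1}\left(\CN_L^{\textup{nfs}}\cap\mathcal{Y}(x)\right)$ is a disjoint union of copies of $\mathbb{P}_{\mathbb{F}}^{n-1}$, one for each point of the finite set $\CN_L^{\textup{nfs}}\cap\mathcal{Y}(x)$, and that $\pi_L$ restricts to an isomorphism $\widetilde{\mathcal{Z}}(\mathcal{L})\xrightarrow{\sim}\mathcal{Z}(\mathcal{L})\cong\mathbb{P}_{\mathbb{F}}^{1}$, so $\widetilde{\mathcal{Z}}(\mathcal{L})$ is a regular curve on the regular formal scheme $\M$. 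For a point $z\in\CN_L^{\textup{nfs}}$ I would distinguish two cases: if $z\notin\mathcal{Z}(\mathcal{L})$ then $\widetilde{\mathcal{Z}}(\mathcal{L})\cap\pi_L^{-1}(z)=\varnothing$; if $z\in\mathcal{Z}(\mathcal{L})$ then, $\mathcal{Z}(\mathcal{L})$ being regular at $z$, its strict transform meets the exceptional divisor $\pi_L^{-1}(z)\cong\mathbb{P}_{\mathbb{F}}^{n-1}$ transversally in a single reduced point, namely the point corresponding to the tangent line of $\mathcal{Z}(\mathcal{L})$ at $z$. Since $\pi_L^{-1}(z)$ is a Cartier divisor on $\M$ not containing the reduced curve $\widetilde{\mathcal{Z}}(\mathcal{L})$, the higher Tor sheaves vanish and $\mathcal{O}_{\widetilde{\mathcal{Z}}(\mathcal{L})}\otimes^{\mathbb{L}}_{\mathcal{O}_{\M}}\mathcal{O}_{\pi_L^{-1}(z)}$ is the structure sheaf of that reduced point. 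Summing Euler characteristics over $z$ yields
\begin{equation*}
    \chi\left(\M,\mathcal{O}_{\widetilde{\mathcal{Z}}(\mathcal{L})}\otimes^{\mathbb{L}}_{\mathcal{O}_{\M}}\mathcal{O}_{\exc_L(x)}\right)=\sharp\left(\CN_L^{\textup{nfs}}\cap\mathcal{Z}(\mathcal{L})\cap\mathcal{Y}(x)\right).
\end{equation*}

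Next I would identify this set and perform the count. By Lemma \ref{cor-ver-1-singular} and the Bruhat--Tits stratification of $\S$\ref{BT-strat}, the non-formally smooth points of $\CN_L$ are exactly the $z_{\mathcal{L}'}$ with $\mathcal{L}'\in\textup{Vert}^{1}(\mathbb{V})$, and $z_{\mathcal{L}'}$ lies on $\mathcal{Z}(\mathcal{L})=\mathcal{V}(\mathcal{L})$ if and only if $\mathcal{L}\subset\mathcal{L}'$; by Lemma \ref{singular-on-Y}, $z_{\mathcal{L}'}\in\mathcal{Y}(x)$ if and only if $x\in\mathcal{L}'^{\vee}$. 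This proves the first equality. For the explicit values, the map $\mathcal{L}'\mapsto\mathcal{L}'/\mathcal{L}$ identifies the type-$1$ vertex lattices containing $\mathcal{L}$ with the isotropic lines of $W_{\mathcal{L}}$ (a conic with $q+1$ points), and for $x\in\mathcal{L}^{\vee}$ the condition $x\in\mathcal{L}'^{\vee}$ translates into $\mathcal{L}'/\mathcal{L}\subset(\overline{x})^{\perp}$, where $\overline{x}$ denotes the image of $x$ in $W_{\mathcal{L}}$; hence the count equals the number of isotropic lines of $W_{\mathcal{L}}$ contained in $(\overline{x})^{\perp}$. If $x\in\mathcal{L}$ then $\overline{x}=0$ and $(\overline{x})^{\perp}=W_{\mathcal{L}}$, so all $q+1$ lines qualify. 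If $x\in\mathcal{L}^{\vee}\setminus\mathcal{L}$ with $\nu_{\varpi}(q(x))\geq 0$ then $\overline{x}$ is a nonzero isotropic vector, so $(\overline{x})^{\perp}$ is $2$-dimensional with radical $\mathbb{F}_p\overline{x}$ and anisotropic quotient $(\overline{x})^{\perp}/\mathbb{F}_p\overline{x}$, whence $\mathbb{F}_p\overline{x}$ is its only isotropic line and the count is $1$. If $x\in\mathcal{L}^{\vee}\setminus\mathcal{L}$ with $\nu_{\varpi}(q(x))=-1$ then $\overline{x}$ is anisotropic, $(\overline{x})^{\perp}$ is a non-degenerate $\mathbb{F}_p$-plane, and one checks from the explicit description of $W_{\mathcal{L}}$ that it is hyperbolic, so it contains exactly $2$ isotropic lines.

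The step I expect to be the main obstacle is the last case $\nu_{\varpi}(q(x))=-1$: showing that $(\overline{x})^{\perp}\subset W_{\mathcal{L}}$ is always a hyperbolic plane requires an explicit description of the quadratic space $W_{\mathcal{L}}$ attached to a type-$3$ vertex lattice in $\mathbb{V}$, obtained from a Jordan decomposition of $\mathcal{L}$, together with the observation that $\nu_{\varpi}(q(x))=-1$ pins down the relevant square class. The transversality and Tor-vanishing needed in the first step are comparatively routine, given the regularity of $\M$ (Lemma \ref{geo-blowup}) and the identification $\widetilde{\mathcal{Z}}(\mathcal{L})\cong\mathbb{P}_{\mathbb{F}}^{1}$.
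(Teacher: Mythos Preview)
Your approach matches the paper's: its proof reads in full ``The argument is exactly the same as Lemma \ref{local-modularity-z}'', which is precisely the case-by-case count of type-$1$ vertex lattices $\mathcal{L}'\supset\mathcal{L}$ with $x\in\mathcal{L}'^{\vee}$ that you carry out, while the first equality (the geometric reduction to this count) is already asserted in the display immediately preceding the lemma. The obstacle you flag in the case $\nu_\varpi(q(x))=-1$---showing that $(\overline{x})^{\perp}\subset W_{\mathcal{L}}$ is a hyperbolic plane---is likewise left implicit in the paper's argument.
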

\begin{proof}
    The argument is exactly the same as Lemma \ref{local-modularity-z}.
\end{proof}
Combining Lemma \ref{exc-local}, Lemma \ref{y-bar=z} and Theorem \ref{explicit value}, we get
\begin{corollary}
    Let $\mathcal{L}\in\textup{Vert}^{3}(\mathbb{V})$, then
    \begin{equation*}
        \textup{Int}^{\mathcal{Y}}_{\mathcal{L}}=2\cdot\boldsymbol{1}_{\mathcal{L}^{\vee}}.
    \end{equation*}
\end{corollary}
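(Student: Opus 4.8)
The plan is to assemble the result from three computations already carried out in this section—Theorem~\ref{explicit value} (equivalently Corollary~\ref{explicit-int-z}), Corollary~\ref{y-bar=z}, and Lemma~\ref{exc-local}—via the divisorial decomposition of the $\mathcal{Y}$-cycle. The starting point is the identity recorded just above the corollary, namely
$$\textup{Int}^{\mathcal{Y}}_{\mathcal{L}}(x)=\textup{Int}^{\mathcal{Z}_{\textup{mix}}}_{\mathcal{L}}(x)+\chi\left(\M,\mathcal{O}_{\widetilde{\mathcal{Z}}(\mathcal{L})}\otimes^{\mathbb{L}}_{\mathcal{O}_{\mathcal{M}_L}}\mathcal{O}_{\exc_L(x)}\right),$$
which is a formal consequence of Proposition~\ref{decom-Y} (the equality $\mathcal{Y}^{\textup{bl}}(x)=\sum_{\mu\in L^{\vee}/L}\mathcal{Z}^{\textup{bl}}(x,\mu)+\exc_L(x)$ of Cartier divisors on $\M$) together with additivity of Euler characteristics along this decomposition.

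Next I would substitute the explicit values. By Corollary~\ref{y-bar=z} the first term equals $\textup{Int}^{\mathcal{Z}}_{\mathcal{L}}(x)$, whose value is supplied by Theorem~\ref{explicit value}: it is $1-q$ when $x\in\mathcal{L}\setminus\{0\}$, it is $1$ when $x\in\mathcal{L}^{\vee}\setminus\mathcal{L}$ with $\nu_\varpi(q(x))\geq0$, and it is $0$ otherwise. The second term is computed by Lemma~\ref{exc-local}: it is $q+1$ when $x\in\mathcal{L}\setminus\{0\}$, it is $1$ when $x\in\mathcal{L}^{\vee}\setminus\mathcal{L}$ with $\nu_\varpi(q(x))\geq0$, it is $2$ when $x\in\mathcal{L}^{\vee}\setminus\mathcal{L}$ with $\nu_\varpi(q(x))=-1$, and it is $0$ when $x\notin\mathcal{L}^{\vee}$ (since any type-$1$ vertex lattice $\mathcal{L}'\supseteq\mathcal{L}$ satisfies $\mathcal{L}'^{\vee}\subseteq\mathcal{L}^{\vee}$, so $\widetilde{\mathcal{Z}}(\mathcal{L})\cap\exc_L(x)$ is empty in that range).

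Finally I would add the two tables case by case on $\mathbb{V}\setminus\{0\}$: for $x\in\mathcal{L}\setminus\{0\}$ the sum is $(1-q)+(q+1)=2$; for $x\in\mathcal{L}^{\vee}\setminus\mathcal{L}$ with $\nu_\varpi(q(x))\geq0$ it is $1+1=2$; for $x\in\mathcal{L}^{\vee}\setminus\mathcal{L}$ with $\nu_\varpi(q(x))=-1$ it is $0+2=2$; and for $x\notin\mathcal{L}^{\vee}$ it is $0+0=0$. In every case this equals $2\cdot\boldsymbol{1}_{\mathcal{L}^{\vee}}(x)$, and since $2\,\boldsymbol{1}_{\mathcal{L}^{\vee}}$ is the constant $2$ on all of $\mathcal{L}^{\vee}$—in particular across the origin—it is the (necessarily unique) Schwartz extension of $\textup{Int}^{\mathcal{Y}}_{\mathcal{L}}$. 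I do not expect a genuine obstacle here: all the real content has been front-loaded into the local modularity of $\textup{Int}^{\mathcal{Z}}_{\mathcal{L}}$ (Lemmas~\ref{local-modularity-z} and~\ref{Y-bar-z}) and the two vertex-lattice counts, so the corollary is essentially bookkeeping; the only point demanding a moment of care is verifying that the ``otherwise'' regions of the two tables coincide, which holds because the exceptional contribution to $\textup{Int}^{\mathcal{Y}}_{\mathcal{L}}(x)$ vanishes exactly when $x\notin\mathcal{L}^{\vee}$.
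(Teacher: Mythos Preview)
Your proposal is correct and matches the paper's approach exactly: the paper simply writes ``Combining Lemma~\ref{exc-local}, [Corollary]~\ref{y-bar=z} and Theorem~\ref{explicit value}, we get'' the result, and your case-by-case addition is precisely the bookkeeping this one-liner suppresses. Your remark that the three subcases $\nu_\varpi(q(x))\geq 0$, $\nu_\varpi(q(x))=-1$, and $x\notin\mathcal{L}^\vee$ exhaust $\mathbb{V}\setminus\mathcal{L}$ (since $\varpi\mathcal{L}^\vee\subset\mathcal{L}$ forces $\nu_\varpi(q(x))\geq -1$ on $\mathcal{L}^\vee$) closes the only gap one might worry about.
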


\bibliographystyle{alpha}
\bibliography{reference}

\end{document}